\newtheorem{theorem}{Theorem}[section]}
\newtheorem{corollary}[theorem]{Corollary}}
\theoremstyle{definition} \newtheorem{definition}[theorem]{Definition}
	\newtheorem{recollection}[theorem]{Recollection}
	\newtheorem{proposition}[theorem]{Proposition}
	\newtheorem{lemma}[theorem]{Lemma}}
\theoremstyle{remark} \newtheorem{remark}[theorem]{Remark}
	\newtheorem{example}[theorem]{Example}}
\crefname{exemple}{Example}{Examples}
\crefname{lemma}{Lemma}{Lemmas}
\crefname{corollary}{Corollary}{Corollaries}
\crefname{proposition}{Proposition}{Propositions}
\crefname{conjecture}{Conjecture}{Conjectures}
\crefname{theo}{Theorem}{Theorems}
\crefname{construction}{Construction}{Constructions}
\crefname{recollection}{Recollection}{Recollections}
	\newcommand{\Id}{1}	
\newcommand{\Set}{\text{Set}}
\newcommand{\op}{op}
\newcommand{\sS}{\mathrm{sSet}}
\newcommand{\Hom}{\mathrm{Hom}}
\newcommand{\Top}{\mathrm{Top}}
\newcommand{\Real}[1]{{}|#1|}
\newcommand{\RealP}[1]{{}\Real{#1}_P}
\newcommand{\Sing}{\mathrm{Sing}}
\newcommand{\pr}{pr}
\newcommand{\colim}{\operatornamewithlimits{colim}}
\newcommand{\hocolim}{\operatornamewithlimits{hocolim}}
\newcommand{\Ex}{\mathrm{Ex}}
\newcommand{\sd}{\mathrm{sd}}
\newcommand{\sdn}{\sd^{\text{naiv}}_P}
\newcommand{\Exi}{\mathrm{Ex}^{\infty}}
\newcommand{\Exn}{\mathrm{Ex}_P^{\text{naiv}}}
\newcommand{\lv}{\mathrm{l.v}}
\newcommand{\Map}{\mathrm{Map}}
\newcommand{\Fun}{\mathrm{Fun}}
\newcommand{\nd}{\mathrm{n.d.}}
\newcommand{\ev}{\mathrm{ev}}
\newcommand{\R}{\mathbb{R}}
\newcommand{\fil}[1]{(#1,\varphi_{#1})}
\newcommand{\C}{\textbf{C}}
\newcommand{\N}{\mathbb{N}}
\newcommand{\Diag}{\mathrm{Diag}}
\newcommand{\RealNP}[1]{\Real{#1}_{N(P)}}
\newcommand{\Int}{\mathrm{Int}}
\newcommand{\cof}{\mathrm{cof}}
\newcommand{\Ho}{\mathrm{Ho}}
\newcommand{\Strat}{\mathrm{Strat}}
\newcommand{\sStrat}{\mathrm{sStrat}}
\newcommand{\Poset}{\mathrm{Poset}}
\newcommand{\sSJK}{\sS^{\mathrm{Joyal-Kan}}}
\newcommand{\TopNP}{\Top_{N(P)}}
\newcommand{\fib}{\mathrm{fib}}
\newcommand{\CW}{\mathrm{CW}}
\newcommand{\lab}[1]{(#1,\lambda_{#1})}
\renewcommand{\Im}{\mathrm{Im}}
\newcommand{\define}[1]{{\bf \boldmath{#1}}}
\newcommand{\Link}[1]{\mathrm{Link}_{#1}}
\newcommand{\redu}{\text{red}}
\newcommand{\nondegen}[1]{\widehat{#1}}
\newcommand{\Hol}{\mathrm{HoLink}}
\newcommand{\HolINP}{\textit{H\!\hspace{1pt}o}\mathrm{Link}_{\I}}
\newcommand{\HolIP}{\mathcal{H}\!\textit{o}\mathrm{Link}_{\I}}
\newcommand{\hol}{\mathcal{H}\!\textit{o}\mathrm{Link}}
\newcommand{\RealStrat}[1]{\Real{#1}_{\Strat}}
\newcommand{\SingStrat}{\Sing_{\Strat}}
\newcommand{\TopP}{\Top_P}
\newcommand{\Con}{\mathrm{Con}}
\newcommand{\PsS}{P\text{-}\sS}
\newcommand{\PCW}{P\text{-}\CW}
\pgfplotsset{compat=1.16}
\newcommand{\spaceperiod}{\makebox[0pt][l]{\,.}}
\newcommand{\spacecomma}{\makebox[0pt][l]{\,,}}
\newcommand{\I}{\mathcal{I}}
\newcommand{\J}{\mathcal{J}}
\author{Sylvain Douteau}
\address{Sylvain Douteau \\ Matematiska institutionen, Stockholm universitet}
\email{sylvain.douteau@gmail.com}
\author{Lukas Waas}
\address{Lukas Waas
 \\ Mathematisches Institut, Universität Heidelberg, Heidelberg, Germany}
\email{lwaas@mathi.uni-heidelberg.de}
\title{From homotopy links to 
stratified homotopy theories}
\begin{document}
\maketitle
\begin{abstract}
In previous work, the first author defined homotopy theories for stratified spaces from a simplicial and a topological perspective. In both frameworks stratified weak-equivalences are detected by suitable generalizations of homotopy links. These two frameworks are connected through a stratified version of the classical adjunction between the realization and the functor of singular simplices. Using a modified version of this adjunction, the first author showed that over a fixed poset of strata the two homotopy theories were equivalent. Building on this result we now show that the unmodified adjunction induces an equivalence between the global homotopy theories of stratified spaces and of stratified simplicial sets. We do so through an in depth study of the homotopy links.  As a consequence, we prove that the classical homotopy theory of conically stratified spaces embeds fully-faithfully into the homotopy theory of all stratified spaces.
\end{abstract}

	\tableofcontents

\section{Introduction}\label{sec:introduction}

Stratifications and stratified spaces were first introduced by Whitney \cite{Whitney}, Thom \cite{Thom} and Mather to describe manifolds with singularities. In this context, stratifications correspond to decompositions into strata that are themselves manifolds and satisfy certain compatibility conditions. Stratified spaces of this kind are usually called pseudo-manifolds, since they provide an extension of the class of manifolds over which many invariants can be extended while retaining their key properties. Most notably, intersection cohomology, introduced by Goresky and MacPherson in \cite{IntersectionHomologyI} is an extension of singular cohomology to pseudo-manifolds that still satisfies Poincaré Duality. Intersection cohomology is only an invariant up to stratum preserving homotopy equivalence, not arbitrary homotopy equivalences, which motivates the developpment of a homotopy theory for stratified spaces. This question was already asked by Goresky and MacPherson  (see for example \cite[Problems 4 and 11]{Borel}).

 Motivated by the study of the homotopical properties of stratified objects, Quinn \cite{quinn1988homotopically} introduced a more general notion of stratified spaces: Homotopically stratified sets. While for pseudo-manifolds compatibility conditions between the strata are characterized by geometric links, homotopically stratified sets make use of homotopy links (or holinks), which are spaces of paths going from one stratum to another. 
As Miller later showed in \cite[Theorem 6.3]{miller2013}, stratified homotopy equivalences between homotopically stratified sets can be fully characterized as those maps inducing weak equivalences on all strata and homotopy links. This inspired the idea that homotopy links and strata should be the basic blocks for defining a stratified homotopy type.

Let us now recall a few classical results of homotopy theory: In \cite{Quillen} Quillen introduced the notion of a model category and immediately gave two seminal examples. The model category of topological spaces, $\Top$, and the model category of simplicial sets, $\sS$. He also showed that the adjunction relating those two categories $\Real{-}\colon \sS\leftrightarrow\Top\colon\Sing$ was in fact a Quillen equivalence, meaning that the two homotopy theories are equivalent. This is a key result in homotopy theory since it allows one to prove results about the homotopy theory of spaces while working in a purely combinatorial setting.

In this paper, we consider stratified spaces in the broadest sense - those are just spaces with a continuous map towards a poset, $X\to P$, as defined by Woolf in \cite{Woolf} and popularized by Lurie in \cite{HigherAlgebra}. This point of view gives rise to two kind of categories. First, the categories of stratified objects and stratum preserving map over a fixed poset. Here, objects can be taken to mean either topological spaces or simplicial sets, producing the categories $\Top_P$ (resp $\sS_P$), of topological spaces (resp simplicial sets) stratified over the poset $P$. Secondly, there are the categories of stratified objects over all posets, where maps are given by commutative squares. Those are denoted $\Strat$ and $s\Strat$ for the topological and simplicial versions. Note that the categories $\Top_P$, correspond to the fibers of the functor $\Strat\to\text{Poset}$ over the discrete categories $\{P\}$, and similarly for $\sS_P$ and $s\Strat$.

The emerging field of stratified homotopy theory has seen a lot of recent activity (See for example \cite{AFRStratifiedHomotopyHypothesis,nand2019simplicial,haine2018homotopy}, and \cite[Appendix A]{HigherAlgebra}) with applications to algebraic geometry \cite{Exodromy}, $G$-isovariant homotopy theory \cite{IsovariantHomotopyTheory}, and to the study of classical invariants of stratified spaces \cite{NaturalOperationsIntersectionCohomology}.
In this context, the first author showed that there exist two independent model structures on the category $\Top_P$, of spaces \cite{douteauEnTop} and  $\sS_P$, of simplicial sets \cite{douSimp}, stratified over a fixed poset $P$. See also \cite{douteauFren}. Both are defined from appropriate notions of homotopy links (and generalization of those to tuples of strata), i.e. weak-equivalences between stratified objects are maps inducing weak-equivalences between all strata and homotopy links. In addition, it is shown in \cite{douteauEnTop} that the model structures on $\Top_P$ and $\sS_P$, for varying $P$, assemble to form model structures on $\Strat$ and $s\Strat$ respectively. Furthermore, the classical adjunction $\Real{-}\colon\sS\leftrightarrow\Top\colon\Sing$ admits  stratified versions, $\RealP{-}\colon\sS_P\leftrightarrow\Top_P\colon\Sing_P$, for all posets $P$. Those can be glued together, producing a global adjunction $\RealStrat{-}\colon s\Strat\leftrightarrow\Strat\colon\SingStrat$.

 The adjunction $\RealStrat{-}\colon s\Strat\leftrightarrow\Strat\colon\SingStrat$ is not a Quillen equivalence, however. In fact, if $P$ is not discrete, the adjunction $\RealP{-}\colon\sS_P\leftrightarrow\Top_P\colon\Sing_P$ is not even a Quillen adjunction. Nevertheless, by composing the above adjunction with a suitably defined stratified subdivision, the first author showed in \cite{douteau2021stratified} that there exists a modified Quillen equivalence
\begin{equation}\label{eq:QE_sd_intro}
    \RealP{\sd_P(-)}\colon\sS_P\leftrightarrow\Top_P\colon\Ex_P\Sing_P,
\end{equation}
meaning that the homotopy theory of spaces, and simplicial sets, stratified over the same poset, coincide.
On the other hand the adjunction \eqref{eq:QE_sd_intro} is no longer compatible with the gluing process producing $\Strat$ and $s\Strat$, which means that it does not allow for a direct comparison between the homotopy theory associated to the two global model categories.

Relatedly, one would want to interpret Miller's theorem \cite[Theorem 6.3]{miller2013} - which characterizes stratum-preseving homotopy equivalences between suitably nice stratified spaces - as a statement about cofibrant-fibrant objects in a model category. But in fact, the objects appearing in Miller's theorem are almost never cofibrant as objects of $\Strat$. 

Both of those observations might motivate one to consider some other model structure for the category of stratified spaces, to remedy these problems. It turns out however that such a structure does not exist, without making major changes to the topological setup, as we show in \cref{section:appendix}. What we show instead is that even though the adjunctions $(\RealP{-},\Sing_P)$ and $(\RealStrat{-},\SingStrat)$ are not Quillen equivalences, they still induce - in a very strong sense - isomorphisms between the homotopy theories of stratified simplicial sets and stratified spaces (see \cref{theo:Equivalence_Simplicial_Homotopy_Category} and \cref{rem:Equiv_Simplicial_Localization}).
\begin{theorem}
The adjoint pairs $\RealP{-}\dashv \Sing_P$ and $\RealStrat{-}\dashv \SingStrat$ descend to equivalences of homotopy categories
\begin{align*}
    \RealP{-}\colon \Ho\sS_P&\leftrightarrow\Ho\Top_P\colon \Sing_P,\\ 
    \RealStrat{-}\colon \Ho s\Strat&\leftrightarrow\Ho\Strat \colon \SingStrat .
\end{align*}
\end{theorem}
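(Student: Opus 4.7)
The plan is to leverage the existing Quillen equivalence \eqref{eq:QE_sd_intro} and transfer it to the unmodified adjunction through an in-depth analysis of homotopy links. First, I would establish that both $\RealP{-}$ and $\Sing_P$ preserve all weak equivalences---not merely those between cofibrant/fibrant objects---so that they descend directly to functors on the homotopy categories without recourse to derived functors. Since weak equivalences in $\sS_P$ and $\Top_P$ are characterized by inducing weak equivalences on all strata and on all homotopy links between tuples of strata, this reduces to understanding how $\RealP{-}$ and $\Sing_P$ interact with these constructions. For the right adjoint $\Sing_P$ the compatibility should be fairly direct, as strata and homotopy links are limit-like. The realization $\RealP{-}$ is more delicate, and here I would exploit the behavior of the stratified subdivision: by comparing $\RealP{X}$ with $\RealP{\sd_P X}$---whose homotopy links are controlled by the Quillen equivalence \eqref{eq:QE_sd_intro}---I would produce a natural weak equivalence $\RealP{\sd_P X} \to \RealP{X}$ and use it to deduce that $\RealP{-}$ preserves weak equivalences.

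Once both functors descend, it remains to show that the unit $X \to \Sing_P\RealP{X}$ and counit $\RealP{\Sing_P Y} \to Y$ are weak equivalences. I would do this by fitting them into commutative diagrams with the unit and counit of the modified Quillen equivalence \eqref{eq:QE_sd_intro}, combined with the comparison maps $\sd_P X \to X$ and $Y \to \Ex_P\Sing_P Y$. Using two-out-of-three, together with the fact that the stratified subdivision and extension maps are themselves weak equivalences (which again should follow from a careful analysis of their effect on strata and homotopy links), the unmodified unit and counit inherit the weak equivalence property from the modified versions.

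For the global statement concerning $\Strat$ and $s\Strat$, I would use that the global model structures are obtained by gluing the fixed-poset model structures, and that the global adjunction restricts on each fiber to the corresponding fixed-$P$ adjunction. Since weak equivalences in the global categories are detected fiberwise (after suitable base-change along maps of posets, which again must be shown to be compatible with $\RealStrat{-}$ and $\SingStrat$), the fixed-$P$ results assemble to prove that the global adjoint pairs descend to an equivalence of homotopy categories.

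The main obstacle I expect is controlling the homotopy links of $\RealP{X}$ directly in terms of the combinatorics of $X$. Homotopy links are defined as path spaces, and such path spaces typically behave poorly under colimits such as the geometric realization; this is precisely the pathology that motivated the introduction of the subdivision in the first place, and the reason $\RealP{-}\dashv\Sing_P$ fails to be a Quillen adjunction. Producing the natural weak equivalence $\RealP{\sd_P X} \to \RealP{X}$ at the level of homotopy links, and showing it respects all tuple-of-strata data, is therefore likely the technical heart of the argument.
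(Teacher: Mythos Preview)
Your overall architecture matches the paper's: show both functors preserve weak equivalences so they descend, then show the unit and counit are weak equivalences by comparison with the modified Quillen equivalence $\RealP{\sd_P(-)} \dashv \Ex_P\Sing_P$ and two-out-of-three. Your identification of the $\RealP{-}$ side as requiring a hard comparison of homotopy links is also correct; in the paper this is the content of \cref{Section:Real_Char_WE}, and the map $\RealP{\sd_P K}\to\RealP{K}$ is indeed the bridge.

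There is, however, a genuine gap in your treatment of $\Sing_P$. You say its preservation of weak equivalences ``should be fairly direct, as strata and homotopy links are limit-like.'' It is true that $\Hol_{\I}(\Sing_P(X)) \cong \Sing(\HolIP(X))$, so a weak equivalence $f$ in $\Top_P$ gives weak equivalences $\Hol_{\I}(\Sing_P(f))$ of simplicial sets. But that is \emph{not} the definition of weak equivalence in $\sS_P$: by \cref{theo:CMF_sSetP}, weak equivalences there are detected by $\Hol_{\I}$ only \emph{after a fibrant replacement}. That the naive and fibrant simplicial homotopy links agree is itself a theorem (\cref{Cor:Diagram_Preserve_WE}), and in the paper it is established only after the equivalence of homotopy categories, so invoking it here would be circular.

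The paper closes this gap by a different route: it proves (\cref{PropositionExPFibrantReplacement}) that $K \hookrightarrow \Ex_P(K)$ is a strong anodyne extension for every $K\in\sS_P$, via an explicit combinatorial pairing argument adapting Moss's proof---not by analyzing homotopy links. From this, $\Sing_P$ preserves weak equivalences by comparison with $\Ex_P\Sing_P$ (the right Quillen functor) and two-out-of-three. So the step you dismiss as routine is in fact a second technical core of the argument, of comparable weight to the homotopy-link analysis you correctly anticipate for $\RealP{-}$; and the method used to handle it is combinatorial rather than link-theoretic.
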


Note that an even stronger statement holds. Instead of the homotopy categories, one can consider the simplicial localizations in the sense of Dwyer and Kan \cite{DwyerKanCalculating}. Then, one sees that in addition to inducing equivalences between the homotopy categories, the pairs $\RealP{-}\dashv\Sing_P$ and $\RealStrat{-}\dashv\SingStrat$ induce Dwyer-Kan equivalences between the simplicial localizations. In particular, this means that the $\infty$-categories associated to stratified spaces and stratified simplicial sets are equivalent.

We also consider the case of triangulable conically stratified spaces (see \cref{rem:conical_pman_homotopically_stratified}). As mentioned earlier, stratum preserving homotopy equivalences between those spaces can be explicitly characterized (see \cite[Theorem 6,3]{miller2013}). In particular, for those objects weak-equivalences and stratum preserving homotopy equivalences coincide. In fact, even though those objects are not fibrant-cofibrant, they behave as if they were. What this means is that in order to study their homotopy theory, one does not need to work with their hom-set in the homotopy category $\Ho\Strat$, but one can work instead with the much more explicit set of stratified maps up to stratum preserving homotopies, because the two coincide. This can be phrased more rigorously as follows (\cref{Cor:Conic_embed,rem:Embedding_Global}):

\begin{theorem}\label{theo:intro_Con_embeds_HoStrat}
Let $\mathrm{Con}\subset \Strat$ be the full subcategory of triangulable conically stratified spaces, and $\simeq$ the relation of stratified homotopy. Then the induced functor 
\begin{equation*}
    \mathrm{Con}/{\simeq}\hookrightarrow\Ho\Strat
\end{equation*}
is a fully faithful embedding. \\
Let $\mathrm{Con}_P\subset \Top_P$ be the full subcategory of triangulable conically stratified spaces over $P$, and $\simeq_P$ be the relation of stratum preserving homotopy. Then the induced functor 
\begin{equation*}
    \mathrm{Con}_P/{\simeq_P}\hookrightarrow\Ho\Top_P
\end{equation*}
is a fully faithful embedding.
\end{theorem}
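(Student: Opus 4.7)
The plan is to prove the fixed-poset statement and then bootstrap to the global one. For the fixed-poset case, one wants to compute the target $\Hom_{\Ho\Top_P}(X,Y)$ using the model structure on $\Top_P$ from \cite{douteauEnTop}. Two observations reduce this to a Hom set of a standard form. First, if $X\in\mathrm{Con}_P$ is triangulable, then $X\cong\RealP{K}$ for some $K\in\sS_P$; realizations of stratified simplicial sets are cofibrant in $\Top_P$, as $K$ is built by successive attachment along the stratified boundary inclusions that generate the cofibrations. Second, a fibrant replacement $Y\to Y^f$ gives, by the standard model-categorical computation for cofibrant source,
$$\Hom_{\Ho\Top_P}(X,Y) \cong \pi(X,Y^f).$$
Full faithfulness of $\mathrm{Con}_P/{\simeq_P}\hookrightarrow\Ho\Top_P$ is thus equivalent to showing that the canonical map $\pi(X,Y)\to\pi(X,Y^f)$ is a bijection.

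I expect the main obstacle to be precisely this bijection, since $Y$ itself need not be fibrant. My preferred strategy is to transfer the problem to $\sS_P$ via the equivalence $\Ho\sS_P\simeq\Ho\Top_P$ already established. Writing $X=\RealP{K}$ and applying the adjunction on Hom sets in the homotopy category,
$$\Hom_{\Ho\Top_P}(X,Y) \cong \Hom_{\Ho\sS_P}(K,\Sing_P Y).$$
The central technical step is then the claim that $\Sing_P Y$ is fibrant in $\sS_P$ whenever $Y$ is conical. Fibrancy in $\sS_P$ (per \cite{douSimp}) is controlled by horn-filling conditions phrased in terms of simplicial homotopy links, and the local cone structure of $Y$ should supply these lifts through a combination of classical Kan-type arguments applied stratum by stratum and Miller's theorem \cite[Theorem 6.3]{miller2013} providing the required identifications of the homotopy links of $Y$ up to weak equivalence. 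Granted this fibrancy, the right-hand side becomes $\pi(K,\Sing_P Y)$, and compatibility of the cylinder objects $K\times\Delta^1$ and $X\times I$ under $\RealP{-}$ identifies this set with $[X,Y]_{\simeq_P}$, closing the loop.

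The global statement then reduces to the fixed-poset case. Weak equivalences in $\Strat$ cover identities of posets, so any morphism in $\Ho\Strat$ from $X\in\mathrm{Con}_P$ to $Y\in\mathrm{Con}_Q$ descends to a well-defined map of posets $\sigma\colon P\to Q$, and morphisms over $\sigma$ identify with morphisms in $\Ho\Top_P$ from $X$ to the pullback $\sigma^*Y$. Conicality and triangulability being preserved under base change along a map of posets, $\sigma^*Y$ lies again in $\mathrm{Con}_P$, and the fixed-poset statement assembles, after summing over $\sigma$, to full faithfulness of $\mathrm{Con}/{\simeq}\hookrightarrow\Ho\Strat$.
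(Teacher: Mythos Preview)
Your ``preferred strategy'' for the fixed-poset case is exactly the paper's argument: pass to $\sS_P$ via the equivalence $\Ho\sS_P\simeq\Ho\Top_P$, identify $\Ho\Top_P(\RealP{K},Y)\cong\Ho\sS_P(K,\Sing_P Y)$, use that $K$ is cofibrant and $\Sing_P Y$ is fibrant to reduce to homotopy classes, then adjoint back to $[\RealP{K},Y]_P$. So the main line is correct and matches.

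A few corrections along the way. Your opening setup in $\Top_P$ is off on both points: realizations $\RealP{K}$ are \emph{not} generally cofibrant in $\Top_P$ (the generating cofibrations there are $\RealP{\Delta^{\I}}\times S^{n-1}\hookrightarrow\RealP{\Delta^{\I}}\times B^n$ for \emph{regular} flags $\I$, not realizations of arbitrary $\partial\Delta^{\J}\hookrightarrow\Delta^{\J}$; the paper explicitly notes that even PL pseudomanifolds fail to be cofibrant), and conversely \emph{every} object of $\Top_P$ is fibrant, so your ``main obstacle'' $\pi(X,Y)\to\pi(X,Y^f)$ is vacuous while the formula $\Ho\Top_P(X,Y)\cong\pi(X,Y^f)$ you wrote down fails for the reason that $X$ is not cofibrant. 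Fortunately none of this matters once you transfer to $\sS_P$, where everything is cofibrant; that is precisely why the paper (and you, ultimately) go there.

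On the fibrancy of $\Sing_P Y$ for conical $Y$: this is not a consequence of Miller's theorem, which characterizes stratified homotopy equivalences and says nothing about horn filling. The relevant input is Lurie's result \cite[Theorem~A.6.4]{HigherAlgebra} (and Nand-Lal for homotopically stratified sets) that $\Sing_P$ of a conically stratified space is a quasi-category; combined with \cite[Proposition~4.12]{douSimp} this gives fibrancy in $\sS_P$. The paper imports this as a black box.

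For the global statement, your base-change reduction is more involved than necessary and contains an unverified step: it is not clear that conicality (or triangulability) survives pullback along an arbitrary poset map $\sigma\colon P\to Q$. The paper's route is simpler: run the identical diagram argument directly in $\Strat$ and $s\Strat$, using that the global equivalence $\Ho s\Strat\simeq\Ho\Strat$ is already established and that fibrant objects in $s\Strat$ are detected fiberwise, so $\SingStrat(Y)$ is fibrant whenever each $\Sing_P(Y)$ is.
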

Again, this statement can also be strengthened to a fully faithful inclusion of infinity categories. 
In the proofs contained in this paper, we mainly focus on stratified objects over a fixed poset, $P$. In this context,
our approach is twofold. On one hand, we give a precise comparison of three model categories, and in particular of their classes of weak-equivalences. Those model categories are those of spaces and simplicial sets stratified over $P$, as well as the category of diagrams of simplicial sets, indexed by strictly increasing chains in $P$ (which we call regular flags). An object in this last category corresponds to the data of the strata and (generalized) homotopy links of a stratified object. This is summed up in the following theorem (\cref{Cor:Realization_Preserve_Weak_Equivalences,cor:Real_CP_reflects_WE,cor:CP_reflects_WE,theo:SingP_Characterize_WeakEquivalences,Cor:Diagram_Preserve_WE}).

\begin{theorem}\label{theo:Intro_triangle}
All functors in the following commutative diagram preserve and characterize weak-equivalences between arbitrary objects.
\begin{equation}\label{eq:Triangle_Functors}
    \begin{tikzcd}
         & \arrow[ld, "D_P", shift left = 4pt ]\sS_P \arrow[rd, "\RealP{-}"]& \\
         \Diag_P \arrow[ru, "C_P"] \arrow[rr]&& \arrow[ll, shift left = 4pt] \arrow[lu, "\Sing_P",  shift left = 4 pt ]\Top_P
    \end{tikzcd}
    \end{equation}
Furthermore, all of these functors descend to equivalences of the underlying homotopy categories.
\end{theorem}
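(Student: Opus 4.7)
The plan is to leverage the fact that in all three categories, weak equivalences are characterized via (generalized) homotopy links. The functor $D_P$ is designed to extract precisely the data of strata and homotopy links, so its compatibility with weak equivalences should be essentially tautological; the other comparisons reduce to relating simplicial and topological homotopy links under $\RealP{-}$ and $\Sing_P$, together with a careful study of the adjunction $C_P \dashv D_P$.

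I would begin with $D_P \colon \sS_P \to \Diag_P$. The weak equivalences in $\sS_P$ are defined as maps inducing weak equivalences on strata and all generalized simplicial homotopy links indexed by regular flags; since $D_P$ is literally the assignment sending a stratified simplicial set to this diagram, it both preserves and reflects weak equivalences once one checks that the diagram-category weak equivalences are the levelwise ones. Next, I would produce the adjoint $C_P \dashv D_P$ (realizing a flag-indexed diagram as a stratified simplicial set by gluing standard simplices over the flags) and show that the unit and counit are levelwise weak equivalences in $\Diag_P$; the counit case follows from the computation of simplicial homotopy links of a "free" stratified simplicial set built from a flag, and the unit case from the fact that $D_P C_P D_P \to D_P$ splits the counit. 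Combining these yields that $C_P$ also preserves and reflects weak equivalences and that $(C_P, D_P)$ descends to inverse equivalences on homotopy categories.

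For the topological side, I would proceed in two stages. First, I would show that the unit $X \to \Sing_P \RealP{X}$ of the stratified adjunction is a weak equivalence in $\sS_P$, which by the characterization via $D_P$ amounts to showing that it induces a weak equivalence on every simplicial homotopy link indexed by a regular flag. This requires a comparison map $\RealP{\Hol_\sigma(X)} \to \Hol_\sigma(\RealP{X})$ (for each regular flag $\sigma$) and the verification that it is a weak equivalence of spaces. Once this is established, a 2-out-of-3 argument applied to the triangle (using that $D_P$ and $D_P \Sing_P \RealP{-}$ characterize weak equivalences) shows that $\RealP{-}$ preserves and reflects weak equivalences. Characterization of weak equivalences for $\Sing_P$ then follows from the same adjunction, once one verifies that the counit $\RealP{\Sing_P Y} \to Y$ is a weak equivalence in $\Top_P$; here again the criterion via homotopy links is used, reducing the problem to a strata-wise and flag-wise statement about the classical adjunction.

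The main obstacle is the comparison between the simplicial homotopy link functor and the topological one under realization: $\RealP{\Hol_\sigma(X)} \simeq \Hol_\sigma(\RealP{X})$. This is a genuinely nontrivial statement, as homotopy links involve Moore-path-like or mapping-space-like constructions that do not a priori commute with $\RealP{-}$, and the underlying unstratified adjunction is already known not to be Quillen over a non-discrete poset. I expect this step to require explicit models for the homotopy links of realizations (e.g.\ describing paths in $\RealP{X}$ that cross strata in a prescribed order via the combinatorics of the simplices of $X$), combined with gluing/cofinality arguments to reduce to the nondegenerate case. Once this technical input is in place, the rest of the theorem is formal: the triangle commutes up to the natural comparison maps just analyzed, each functor preserves and characterizes weak equivalences, and the induced functors on homotopy categories are equivalences.
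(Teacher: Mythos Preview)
Your proposal has a genuine gap at its very first step. You assert that $D_P$ preserves and reflects weak equivalences ``tautologically'' because weak equivalences in $\sS_P$ are \emph{defined} as maps inducing weak equivalences on all simplicial homotopy links. This is not how the model structure on $\sS_P$ is set up: weak equivalences are the maps $f$ such that $\Hol_\I(f^{\fib})$ is a weak equivalence for all regular flags $\I$, \emph{after passing to a fibrant replacement} (see \cref{theo:CMF_sSetP}). The statement that the naive $\Hol_\I(K)\to\Hol_\I(K^{\fib})$ is already a weak equivalence---equivalently, that $D_P$ reflects weak equivalences without fibrant replacement---is precisely the nontrivial content of \cref{Cor:Diagram_Preserve_WE}, and the paper proves it \emph{last}, in \cref{Section:LastHolink}, after everything else is in place. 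Since the rest of your argument (the unit/counit analysis, the deduction for $\RealP{-}$ and $\Sing_P$) is built on this step, the whole structure is circular as written.

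The paper's order is essentially the reverse of yours. It first shows $\RealP{-}$ preserves and reflects weak equivalences by comparing the \emph{simplicial link} $\Link{\I}(K)$, the geometric fiber $\Real{K}_b$, the strong homotopy link $\HolINP$, and the ordinary homotopy link $\HolIP$ (\cref{theo:Link_Summary}), and then invoking the already-known Quillen equivalence $\RealP{\sd_P(-)}\dashv \Ex_P\Sing_P$ together with $\lv_P$ being a weak equivalence. Separately, it shows $\Sing_P$ reflects weak equivalences by proving $K\hookrightarrow \Ex_P(K)$ is a strong anodyne extension (\cref{PropositionExPFibrantReplacement}), again bootstrapping from the $\sd_P$-modified Quillen equivalence. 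Only then are the unit and counit shown to be weak equivalences (\cref{lem:Unit_Counit_RealP_SingP_WE}), and only after that is the comparison $\Hol_\I(K)\to\Sing(\HolIP(\RealP{K}))$ established via vertically stratified objects and a bespoke simplicial approximation argument (\cref{prop:another_approximation}), finally yielding the result for $D_P$. Your identification of the holink comparison as the crux is correct, but you have underestimated both which direction it runs in the logic and how much machinery (the $\sd_P/\Ex_P$ theory, strong anodyne extensions, vertical approximation) is needed to close the loop.
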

Again, \cref{theo:Intro_triangle} strengthens to a statement about the corresponding infinity categories. 
\cref{theo:Intro_triangle} suggests a somewhat richer picture than what one has in the non-stratified case. Indeed, in the case where $P=\{*\}$, the categories $\Diag_P$ and $\sS_P$ are both canonically equivalent to $\sS$, turning this triangle into the familiar adjoint pair $\sS\leftrightarrow\Top$. In general however, the categories $\Diag_P$ and $\sS_P$ are very different, and the usefulness of thinking in terms of diagrams is illustrated in \cref{Section:Vertical,section:appendixB}, through the lens of vertical objects.

Furthermore, the fact that the functors in Diagram \eqref{eq:Triangle_Functors} preserve and reflect all weak-equivalences is a stronger property than one might expect. Indeed, four of those six functors are part of Quillen equivalences. One expects such a functor to only reflect weak-equivalences either between cofibrant or between fibrant objects. In fact, weak-equivalences in $\sS_P$ can even be defined as those maps that are sent to weak-equivalences in $\Diag_P$, \textbf{after} a suitable fibrant replacement. \cref{theo:Intro_triangle} then implies that this fibrant replacement is not needed. This gives some insight as to why the model structure on $\sS_P$ described by Henriques in \cite{Henriques} and the one studied here coincide (see \cref{rem:Henriques}).

On the other hand, we also have a more topological approach. Recall that pseudo-manifolds (and more generally conically stratified objects) can be described locally via the data of their strata and their (local) links. Indeed, in such a space, any stratum has a neighborhood that is locally homeomorphic to the product of the stratum and a cone on the (local) link. This is the phenomenon that we want to generalize to be able to reconstruct a stratified homotopy type, from the data of strata and homotopy links.

First note that the geometric definitions for the (global) link in terms of a boundary of a regular neighborhood readily extend to arbitrary stratified simplicial objects. This can be done either through the simplicial structure (by making use of the subdivision), or by working topologically. On the other hand, the homotopy links - which are nothing more than spaces of exit-paths - can be defined, as certain mapping spaces, for arbitrary stratified objects. Though, \textit{a priori}, the result might depend on the category in which we compute those mapping spaces.

Note also that while the historical definition of homotopy links, given by Quinn in \cite{quinn1988homotopically}, is only concerned with pairs of strata $[p<q]$, we study a generalized version of those homotopy links, which is defined for any increasing chain of strata $\I=[p_0<\dots <p_n]$. In this case, instead of exit-paths, the elements of the $\I$-th homotopy link of a stratified object can be thought of as "exit-simplices".

We show that, for a stratified simplicial set, all definitions of links and homotopy links coincide up to weak-equivalence (see \cref{theo:Link_Summary,theo:simHolink_v_Top_Hol,rem:Henriques}). This is summed up in the following theorem, where the geometric interpretations given for the different definitions hold when investigating pairs of strata, i.e. when $\I=[p<q]$. The two notions of links are illustrated in \cref{Fig:Link_And_Fiber}  and elements of the three different homotopy links are represented in \cref{Fig:Holinks}.

\begin{figure}[h]
\begin{tikzpicture}

\draw (0,0)--(3,0)--(-2,-2)--(-2,2)--(3,0);
\draw (-2,2)--(0,0)--(-2,-2);
\filldraw[red] (0,0) circle (2pt);
\filldraw[blue] (3,0) circle (2pt) (-2,-2) circle (2pt) (-2,2) circle (2pt);

\draw[shift={(7,0)}] (0,0)--(3,0)--(-2,-2)--(-2,2)--(3,0);
\draw[shift={(7,0)}] (-2,2)--(0,0)--(-2,-2);
\filldraw[shift={(7,0)},red] (0,0) circle (2pt);
\filldraw[shift={(7,0)},blue] (3,0) circle (2pt) (-2,-2) circle (2pt) (-2,2) circle (2pt);

\draw[shift={(7,0)}, thick, green] (1.5,0)--(0.33,0.66)--(-1,1)--(-1.33,0)--(-1,-1)--(0.33,-0.66)--(1.5,0);
\draw[shift={(7,0)}, dotted] (0.5,1)--(0.33,0.66)--(1.5,0)--(0.33,-0.66)--(0,0)--(-1.33,0)--(-1,1)--(0.33,0.66)--(0,0);
\draw[shift={(7,0)}, dotted] (-2,2)--(0.33,0.66)--(3,0);
\draw[shift={(7,0)}, dotted] (3,0)--(0.33,-0.66)--(0.5,-1);
\draw[shift={(7,0)}, dotted] (-2,-2)--(0.33,-0.66)--(-1,-1)--(-1.33,0)--(-2,-2);
\draw[shift={(7,0)}, dotted] (-2,0)--(-1.33,0)--(-2,2);

\filldraw[shift={(3.5,-4)}, opacity=0.3, blue] (3,0)--(-2,-2)--(-2,2)--(3,0);
\draw[shift={(3.5,-4)},blue](0,0)--(3,0) (0,0)--(-2,-2) (0,0)--(-2,2);
\filldraw[shift={(3.5,-4)},red] (0,0) circle (2pt);

\draw[shift={(3.5,-4)},thick, green] (1.5,0)--(-1,1)--(-1,-1)--(1.5,0);

\end{tikzpicture}

\caption{
 A simplicial set $K$ stratified over $P=\{0<1\}$, its simplicial link, $\Link{[0<1]}(K)$, and the topological link, $\Real{K}_b$, with $b$ in the interior of the interval $N(\{0<1\})$.
} \label{Fig:Link_And_Fiber}
\end{figure}
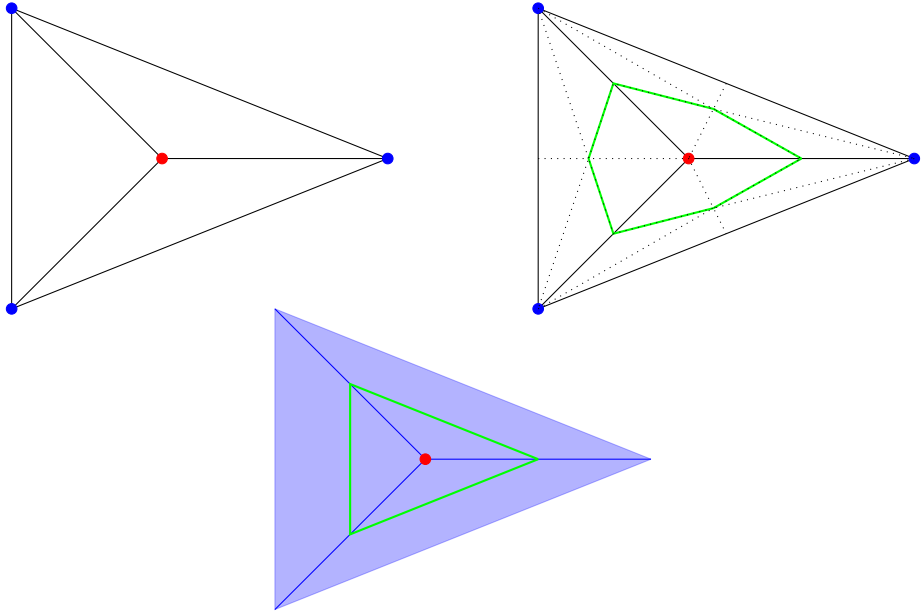

\begin{theorem}\label{theo:Intro_Holink}
Let $K\in \sS_P$ be a stratified simplicial set, $\I$ a flag and $b\in \Real{\Delta^{\I}}$ the barycenter. Then the following spaces are weakly equivalent:
\begin{itemize}
    \item $\Real{\Hol_{\I}(K)}$, a simplicial version of the space of exit-paths,
    \item $\Real{\Hol_{\I}(K^{\fib})}$,  same as the above, but computed on a fibrant replacement,
    \item $\Real{\Link{\I}(K)}$, the usual simplicial link, defined in terms of the subdivision,
    \item $\Real{K}_b$, a geometric notion of link, defined from the realization, 
    \item $\HolINP(\RealNP{K})$, a space of exit-paths with extra conditions,
    \item $\HolIP(\RealP{K})$, the space of exit-paths defined by Quinn.
\end{itemize}
\end{theorem}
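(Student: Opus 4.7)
The plan is to establish the weak equivalences via a zigzag of natural maps, organized into three groups: the purely combinatorial holink-type constructions (items 1, 2), the simplicial-to-geometric link comparison (items 3, 4), and the topological exit-path constructions (items 5, 6), with bridges between the three groups. I would first reduce the entire problem to the case of a fibrant $K$ by arguing that each of the six constructions takes weak equivalences in $\sS_P$ (respectively, their realizations) to weak equivalences. For items 1 and 2 this is immediate from the fact that weak equivalences in $\sS_P$ are detected by homotopy links—which is essentially the content of \cref{theo:Intro_triangle} applied to the diagram functor $D_P\colon\sS_P\to\Diag_P$, since $D_P(K)$ is built from exactly these holinks. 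Thus $\Hol_{\I}(K)\to\Hol_{\I}(K^{\fib})$ is a weak equivalence, and realizing yields item 1 $\simeq$ item 2.

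For the comparison between the combinatorial holink and the subdivision-based link (items 1/2 versus item 3), I would work with a fibrant simplicial model and exploit that, after the stratified subdivision $\sd_P$, the simplices sitting inside a regular neighborhood of the stratum boundary stratify combinatorially into a cone over $\Link{\I}(K)$. One then constructs an explicit natural zigzag between $\Hol_{\I}(K^{\fib})$ and $\Link{\I}(K)$ using that an $\I$-indexed "exit-simplex" restricts to its boundary in the link, with the retraction onto the link providing a homotopy inverse on realization. This is where I expect the subdivision calculus developed in connection with \eqref{eq:QE_sd_intro} to play a central role.

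On the topological side (items 4, 5, 6), the geometric link $\Real{K}_b$ is the preimage of the barycenter $b$ of $\Real{\Delta^{\I}}$ under the natural map $\Real{K}\to\Real{\Delta^{\I}}$, and the two exit-path spaces $\HolINP(\RealNP{K})$ and $\HolIP(\RealP{K})$ are mapping spaces of stratum-preserving maps out of $\Real{\Delta^{\I}}$ with different levels of stratification on the target. Evaluation at $b$ provides a natural projection $\HolIP(\RealP{K})\to \Real{K}_b$ and factors through $\HolINP(\RealNP{K})$. Showing these evaluation maps are weak equivalences is a fibration argument: both exit-path spaces deformation-retract (via rescaling paths toward $b$) onto their value at the barycenter, and the comparison between the naïve and full poset stratifications is a cofinality/retraction argument. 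To bridge the simplicial and topological groups, I would compare $\Hol_{\I}(K)$ to $\HolIP(\RealP{K})$ via the adjunction $\RealP{-}\dashv \Sing_P$ together with \cref{theo:Intro_triangle}, reducing the comparison to the known case of mapping spaces.

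The main obstacle will be the bridge between the simplicial homotopy link $\Real{\Hol_{\I}(K)}$ and the topological exit-path space $\HolIP(\RealP{K})$: although both are moduli of "exit-simplices", the simplicial version only sees maps coming from simplicial structure, while Quinn's version sees all continuous paths/simplices. Controlling this comparison requires either a small-simplices/Ex-style argument replacing arbitrary continuous exit-simplices by simplicial ones without changing the homotopy type, or invoking that the cone/subdivision interplay supplies a concrete homotopy inverse after taking realizations. The weak-equivalence-reflection properties guaranteed by \cref{theo:Intro_triangle} are what make this last step tractable rather than requiring a bespoke fibrancy argument.
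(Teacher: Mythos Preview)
Your organization into three groups is sensible, and your treatment of the geometric side (items 3--6) is close to the paper's: the homeomorphism $\Real{\Link{\I}(K)}\cong\Real{K}_b$ comes from subdivision, the evaluation-at-$b$ map $\HolINP(\RealNP{K})\to\Real{K}_b$ is a deformation retraction via rescaling toward the barycenter, and the comparison $\HolINP\to\HolIP$ is handled by a more delicate point-set argument (the paper introduces auxiliary subspaces $\RealP{K}^{\redu,l}$ and uses paracompactness of the holink space; your ``cofinality/retraction'' gesture undersells this). So far so good.

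The genuine gap is circularity in your treatment of items 1 and 2 and of the bridge. You invoke \cref{theo:Intro_triangle}---specifically, that $D_P$ characterizes weak equivalences---to conclude $\Hol_{\I}(K)\to\Hol_{\I}(K^{\fib})$ is a weak equivalence. But recall that weak equivalences in $\sS_P$ are \emph{defined} via holinks \emph{after} fibrant replacement (\cref{theo:CMF_sSetP}); the statement that $D_P$ detects weak equivalences \emph{without} fibrant replacement is precisely \cref{Cor:Diagram_Preserve_WE}, which the paper derives as a \emph{corollary} of the bridge comparison $\Real{\Hol_{\I}(K)}\simeq\HolIP(\RealP{K})$ (\cref{theo:simHolink_v_Top_Hol}). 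You then invoke \cref{theo:Intro_triangle} again to handle the bridge itself. Since the $D_P$ part of \cref{theo:Intro_triangle} and the bridge are logically equivalent in the paper's development, this is circular: you cannot reduce to fibrant $K$ for item 1 before establishing the bridge, and you cannot use the $D_P$ clause to establish the bridge.

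The paper's route out is to prove the bridge $\Hol_{\I}(K)\hookrightarrow\Sing(\HolIP(\RealP{K}))$ directly for \emph{arbitrary} $K$, by a bespoke simplicial approximation argument: one introduces \emph{vertically stratified} objects (essentially images of cofibrant diagrams under $C_P$), shows every stratified map from $S\times\Delta^{\I}$ is homotopic to a vertical one, and proves a refined approximation theorem (\cref{prop:another_approximation}) asserting that any continuous $\RealP{S\times\Delta^{\I}}\to\RealP{K}$ can be approximated by a simplicial map after subdividing \emph{only the $S$ factor}. This, combined with sd-homotopies, gives the required bijection on homotopy groups. Your ``small-simplices/$\Ex$-style'' remark is in the right spirit, but the actual mechanism is this vertical approximation, and nothing in the rest of \cref{theo:Intro_triangle} (the $\RealP{-}$, $\Sing_P$, or $C_P$ clauses, which \emph{are} available at that point) short-circuits it. Your proposed direct comparison of $\Hol_{\I}(K^{\fib})$ with $\Link{\I}(K)$ via cone-over-link geometry is also not how the paper proceeds and would face the same circularity, since it presupposes the fibrant reduction.
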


\begin{figure}[h]
\begin{tikzpicture}

\filldraw[black, opacity=0.1](0,0)--(1,2)--(2,0)--(0,0);
\draw (0,0)--(1,2)--(2,0)--(0,0);
\filldraw[red] (0,0) circle (1pt);
\filldraw[blue] (2,0) circle (1pt) (1,2) circle (1pt);
\draw[thick,green] (0,0)--(1,2);

\filldraw[shift={(3.5,0)},blue, opacity=0.2](0,0)--(1,2)--(2,0)--(0,0);
\draw[shift={(3.5,0)},blue] (0,0)--(1,2)--(2,0)--(0,0);
\filldraw[shift={(3.5,0)},red] (0,0) circle (1pt);
\draw[shift={(3.5,0)},thick,green] (0,0)..controls (0.5,0.1) and (0.6,0.2)..(0.6,0.6)..controls (0.6,0.7) and (0.6,0.65)..(0.8,0.7).. controls (1.1,0.6) and (1.3,0.4)..(1.5,0.6).. controls (1.55,0.65) and (1.5,0.8)..(1.5,1);

\filldraw[shift={(7,0)},blue, opacity=0.2](0,0)--(1,2)--(2,0)--(0,0);
\draw[shift={(7,0)},blue] (0,0)--(1,2)--(2,0)--(0,0);
\filldraw[shift={(7,0)},red] (0,0) circle (1pt);
\draw[shift={(7,0)},thick,green] (0,0).. controls (1,0) and (1,0.9)..(1.3,1).. controls (1.5,1) and (0.7,0.4)..(0.5,0.2).. controls (0.3,0.1) and (1,0.4)..(1.2,0.2).. controls (1.4,0) and (1.2,1.4)..(1,1.5);
\end{tikzpicture}
\caption{In green, from left to right, a $0$-simplex in $\Hol_{\I}(\Delta^{\J})$, an exit path in $\HolINP(\RealNP{\Delta^{\J}})$ and an exit-path in $\HolIP(\RealP{\Delta^{\J}})$, with $P=\{0<1\}$, $\I=[0<1]$, and $\J=[0\leq 1\leq 1]$}
\label{Fig:Holinks}
\end{figure}

While \cref{theo:Intro_triangle,theo:Intro_Holink} might look very different, they are closely related. Indeed, in the model categories we study, weak-equivalences are defined in terms of strata and homotopy links, in the sense that a map is a weak-equivalence if and only if it induces weak-equivalences between all strata and homotopy links. This means that a comparison between different notions of homotopy links immediatly induces a comparison between the corresponding class of weak-equivalences. The converse is also true, albeit in a less straightforward way. In fact, we prove \cref{theo:Intro_triangle,theo:Intro_Holink} in parallel throughout the paper, going back and forth between the topological and the homotopical point of view.

The article is organized as follows.

\cref{Section:Prelim} contains a recollection of all the necessary notions from \cite{douSimp,douteauEnTop,douteau2021stratified}, as well as definitions for all notions of link and homotopy link appearing in this article.

In \cref{Section:FSAE}, we prove that $\Sing_P$ characterizes all weak-equivalences. We do so by first proving in \cref{Section:Ex_P_FSAE} that the natural map $K\to \Ex_P(K)$ is a strong anodyne extension. In particular, this completes the proof that $\Exi_P$ is a fibrant replacement functor for stratified simplicial sets, see \cref{cor:Exi_P_fibrant_replacement}. The proof relies on the notion of strong anodyne extensions, introduced by Moss in \cite{MossSae} and already applied to stratified simplicial sets in \cite{douSimp}.

In \cref{Section:Real_Char_WE} we prove that the functor $\RealP{-}\colon \sS_P\to\Top_P$ characterizes weak-equivalences between arbitrary objects. We do so by showing that, for a stratified simplicial set $K$, $\RealP{\Link{\I}(K)}$, $\Real{K}_b$, $\HolINP(\RealNP{K})$ and $\HolIP(\RealP{K})$ are all weakly-equivalent. The key technical part, which consists mostly of point-set topology, is the proof that $\HolINP(\RealNP{K})$ and $\HolIP(\RealP{K})$ are weakly equivalent.

In \cref{Section:Hammock}, we use the results of \cref{Section:FSAE,Section:Real_Char_WE}  to show that the adjunctions $\RealStrat{-}\dashv \SingStrat$ and $\RealP{-}\dashv \Sing_P$ descend to equivalences between the homotopy categories of stratified simplicial sets and stratified spaces.
We then use this result to show that the homotopy theory of triangulable conically stratified objects embeds fully faithfully in the homotopy category of stratified spaces  (\cref{Cor:Conic_embed}). Finally, in  \cref{Section:Simp_Approx_Standalone}, we deduce a stratified simplicial approximation theorem from the comparison between the homotopy categories.

In \cref{Section:LastHolink}, we prove that for a stratified simplicial set $K$, the holinks $\Real{\Hol_{\I}(K)}$, $\Real{\Hol_{\I}(K^{\fib})}$ and $\HolIP(\RealP{K})$ are all weakly-equivalent. We do so by investigating a well-behaved classes of stratified spaces, vertically stratified CW-complexes and simplicial sets, in \cref{Section:Vertical} and proving a series of approximation theorems (\cref{prop:simplicial_vertical_approx,prop:vertical_approx}). Along the way, we show that the class of vertically stratified CW-complex, with their vertical maps and homotopies, model the homotopy category of stratified spaces (see \cref{Cor:CW_Model}). Finally, \cref{Section:Last_Approximation} gives the proof of \cref{prop:another_approximation} which is the key technical argument in comparing the homotopy links. 

In \cref{section:appendix}, we give an example of a particular stratum preserving map which obstructs the transport of the model structure from $\sS_P$ to $\Top_P$, and constrains the kind of model structures that one can hope to obtain on $\Top_P$. We also discuss how this example translates for other work on the subject such as \cite{nand2019simplicial} and \cite{haine2018homotopy}.

\cref{section:appendixB} expands on \cref{Section:Vertical}, making the relationship between vertical objects and diagrams more precise, and giving an alternate higher level proof of \cref{Cor:CW_Model}.

\section{Preliminaries}
\label{Section:Prelim}
We begin by recalling the framework of stratified homotopy theory used in \cite{douSimp,douteauEnTop, douteau2021stratified} as well as several of the central results of the theory.
These preliminaries are intended to be rather exhaustive, making the remainder of the paper generally accessible to the reader with general knowledge of model categories and abstract homotopy theory. While introducing the necessary language and notation, these preliminaries also serve to put the results discussed in \cref{sec:introduction} into rigorous context. The reader already familiar with the framework used in \cite{douSimp,douteauEnTop, douteau2021stratified} can safely skip most of those preliminaries, reading only \cref{subsec:gen_links} for the definitions of generalized links and homotopy links and \cref{subsec:failure_quillen,subsec:strat_sd_ex,subsec:recover_quil} for a precise recollection of the needed results.

First, we introduce the categories of stratified objects we are studying (\cref{subsec:poset,subsec:strat_spaces,subsec:stratified_simplicial_sets}). We equip these objects with notions of generalized homotopy links and introduce the basic relationships between the latter (\cref{Section:Classical_Links,subsec:gen_links,subsec:holink_and_diag}). These generalized homotopy links entail notions of weak equivalence on categories of stratified objects in question, which are integrated into model structures in \cref{subsec:mod_of_diag,subsec:mod_of_strat_space,subsec:mod_of_strat_ss}. Finally, the homotopical properties of the functors connecting the resulting model categories are discussed in \cref{subsec:failure_quillen,subsec:strat_sd_ex,subsec:recover_quil}.

\subsection{Partially ordered sets}\label{subsec:poset}
We begin by recalling a few general construction for partially ordered sets, which will serve as the indexing sets for stratifications.
\begin{definition}
Recall that a partially ordered set (or poset) is the data of a set $P$, equipped with an irreflexive, transitive and asymmetric relation, that is generically denoted $"{<}"$. We will also consider the weak relation, $"{\leq}"$, defined in the usual way via $p\leq q$ if $p<q$ or $p=q$. In this paper, a poset map is a map $\alpha\colon P\to Q$ such that if $p\leq p'$ in $P$, then $\alpha(p)\leq\alpha(p')$ in $Q$. The category of posets and poset maps is denoted $\Poset$.
\end{definition}

\begin{definition}
Given a poset, $P$, define the order topology on $P$ as follows. A subset $A\subset P$ is closed if and only if it is a downset, i.e. it fulfills the condition:
\begin{equation*}
    p\in A, q\leq p\Rightarrow q\in A.
\end{equation*}
\end{definition}
\begin{remark}
A basis of closed sets for this topology is given by the sets $A_p$, $p\in P$, defined as follows:
\begin{equation*}
    A_p=\{q\in P\mid q\leq p\}.
\end{equation*}
\end{remark}
\begin{remark}
Note that given a map of sets $\alpha\colon P\to Q$ between posets, the map $\alpha$ is a map of posets if and only if it is a continuous map between the posets equipped with their order topologies. This means that the order topology gives a fully-faithful functor $\Poset\hookrightarrow \Top$.
\end{remark}

\begin{definition}\label{def:Flags_DeltaP}
Let $P$ be a poset. 
\begin{itemize}
    \item A \define{flag} of $P$, $\J$, is a finite sequence in $P$, $\J=[p_0\leq\dots\leq p_n]$.
    \item A \define{regular flag} of $P$, $\I$, is a finite sequence in $P$ with no repeated entries $\I=[q_0<\dots<q_k]$. We will usually reserve the letter $\I$ for regular flags.
    \item Given a flag $\J=[p_0\leq \dots\leq p_n]$, we denote its underlying regular flag (obtained by deleting repeated entries) by $\{p_0\leq\dots\leq p_n\}$.
    \end{itemize}
A map of flags $f\colon [p_0\leq \dots\leq p_n]\to [q_0\leq\dots \leq q_m]$ is a non decreasing map $f\colon \{0,\dots,n\}\to\{0,\dots,m\}$ such that $q_{f(i)}=p_i$, for all $0\leq i\leq n$. Let $\Delta(P)$ be the category of flags and maps of flags and $R(P)\subset\Delta(P)$ be the full subcategory of regular flags.   
\end{definition}

\begin{definition}
\label{def:Nerve_Poset}
The \define{nerve of a poset} $P$ is the simplicial set $N(P)$ whose simplices are the flags of $P$. The $k$-th face operation is given by omitting the $k$-th entry in a flag while the $k$-th degeneracy is given by repeating the $k$-th entry. For $\J=[p_0\leq\dots\leq p_n]$, we will write $\Delta^{\J}$ for the associated $n$-simplex of $N(P)$, $\Delta^n\to N(P)$, seen as a simplicial set. 
\end{definition}

\begin{remark}
Note that, as was the case for spaces, a map of sets $\alpha\colon P\to Q$ between posets is a map of posets if and only if it extends to a simplicial map $N(\alpha)\colon N(P)\to N(Q)$. In other words, there is a fully-faithful embedding $N\colon\Poset\hookrightarrow\sS$.
\end{remark}

\begin{remark}
It is also classical to see a poset as a (small) category where there are no non-identity endomorphisms, and where there is at most one map between two objects. Starting from a set-theoretic poset $P$ one gets such a category by taking $P$ to be the set of objects, and having an arrow from $p\to q$ whenever $p\leq q$. Note that from this point of view, the nerve of the poset is nothing more than the nerve of the corresponding category. Similarly, the category of regular flags $R(P)$ is the subdivision of $P$, seen as a category.
\end{remark}

\begin{definition}\label{def:Varphi_P}
Let $P$ be a poset. Define $\varphi_P\colon \Real{N(P)}\to P$ as follows. For $\I=[p_0<\dots < p_n]$ a regular flag, using the identification \[\Real{\Delta^{\I}}\cong\{(t_0,\dots,t_n)\mid 0\leq t_i\leq 1,\ 0\leq i\leq n,\ \sum_it_i=1\}\subset \R^{n+1},\] we set
\begin{equation*}
    \varphi_P(t_0,\dots,t_n)=p_m, \ m=\max\{i\mid t_i\not=0\}.
\end{equation*}
\end{definition}

\subsection{Stratified spaces}\label{subsec:strat_spaces}
Before we define stratified spaces, a technical remark about the nature of the topological spaces considered is in order.
\begin{remark}
\label{rem:Delta_generated}
In this paper, $\Top$ stands for the category of $\Delta$-generated spaces (see \cite{duggerDelta} and \cite{Convenient}) and all continuous maps between them. This is to ensure that the category is locally presentable, which is needed to get the model structure described in \cref{theo:CMF_TopP} (see also \cref{rem:TopP_Transported}). It shouldn't be of much concern however, since all the usual examples of stratified spaces, as well as posets themselves, are $\Delta$-generated spaces. The implications of this choice of category for mapping spaces are discussed in \cref{rem:Compact_Open_Topology_Holinks}.
\end{remark}

In this paper, we consider the most general notion of stratified spaces among those commonly used. 

\begin{definition}\label{def:Stratified_Spaces}
A \define{stratified space} is the data of:
\begin{itemize}
    \item a topological space $X$,
    \item a poset $P$,
    \item a continuous map $\varphi_X\colon X\to P$, called the \define{stratification}.
\end{itemize}
By abuse of notation we will often refer to the above data just by the space $X$.
The \define{strata} of $X$ are the subspaces $\varphi_X^{-1}(p)=X^p\subset X$ for $p\in P$. A \define{stratified map} between two stratified spaces $X\to P$ and $Y\to Q$ is a commutative square of continuous maps
\begin{equation*}
    \begin{tikzcd}
    X
    \arrow{r}{f}
    \arrow[swap]{d}{\varphi_X}
    &Y
    \arrow{d}{\varphi_Y}
    \\
    P
    \arrow{r}{\bar{f}}
    &Q
    \end{tikzcd}
\end{equation*}
We will often refer to such a square just by the map $f$.
The category of stratified spaces and stratified maps is denoted $\Strat$.

Given a poset $P$, a \define{stratum preserving} map between two spaces stratified over $P$, $X$ and $Y$, is a continuous map $f\colon X\to Y$ such that the following triangle commutes:
\begin{equation*}
    \begin{tikzcd}
    X
    \arrow{rr}{f}
    \arrow[swap]{dr}{\varphi_X}
    && Y
    \arrow{dl}{\varphi_Y}
    \\
    &P \spaceperiod
    \end{tikzcd}
\end{equation*}
The category of spaces stratified over $P$ and stratum preserving maps is denoted $\Top_P$.
\end{definition}

\begin{remark}
\label{rem:Strat_Fibers_Over_Poset}
Consider the functor $\Strat\to\Poset$ sending $X\to P$ to $P$ and a stratified map $f$ to $\bar{f}$. This functor is a bifibration (see \cite[Section 2.1]{CagneMellies}), and the fiber over a poset $P$ is the subcategory $\Top_P$. This situation is pictured in the following diagram:
\begin{equation*}
    \begin{tikzcd}
    \Top_P
    \arrow[hookrightarrow]{r}
    \arrow{d}
    &\Strat
    \arrow{d}
    \\
    \{P,\Id_P\}
    \arrow[hookrightarrow]{r}
    &\Poset\spaceperiod
    \end{tikzcd}
\end{equation*}
\end{remark}

\begin{recollection}\label{rec:enriched_structures}
The category $\Top_P$ admits the structure of a simplicial category. The tensoring of $\Top_P$ for $X \in \Top_P$ and $S \in \sS$ is given by $X \times \Real{S}$ with the stratification induced by first projecting to the first component. By abuse of notation, this stratified space will be denoted $X \times S$. Consequently, for $X,Y \in \Top_P$ the simplicial hom is defined via
    \[ \Map(X,Y)_n = \Top_P(X \times \Delta^n, Y).\]
Furthermore, $\Top_P$ is also a tensored and cotensored category over $\Top$, again with the tensoring induced by taking the product and stratifying via projection to the stratified component. The mapping space
    \[ \mathcal C^0_P(X,Y) \subset \mathcal C^0(X,Y) \]
is obtained by considering the topology on $\Top_P(X,Y)$ induced by the inclusion into $\mathcal C^0(X,Y)$.
By construction, we obtain natural isomorphisms
    \[ \Sing \big ( \mathcal C^0_P( X,Y) \big ) \cong \Map(X,Y)\]
relating the simplicial and the topological enrichment of $\Top_P$.

Note that the category $\Strat$ also admits simplicial and topological enrichment, defined in a similar way, as well as internal hom-sets (see \cite[section 6.2]{nand2019simplicial}).
\end{recollection}

\begin{remark}\label{rem:Compact_Open_Topology_Holinks}

There is some ambiguity on the topology that should be assigned to the mapping space $\mathcal{C}^0_P(X,Y)$. In this work, the category $\Top$ stands for the category of $\Delta$-generated space, (see \cite{Convenient,duggerDelta}), and so $\mathcal{C}^0_P(X,Y)$ should be equipped with the $\Delta$-ification of the compact open topology. On the other hand, note that $\Sing(\mathcal{C}^0_P(X,Y))$ gives the same simplicial set whether one uses the compact open topology or its $\Delta$-ification, and in particular, the map $\colon\mathcal{C}^{0,\Delta}_P(X,Y)\to \mathcal{C}^{0,\text{c.o.}}_P(X,Y)$ given by the change in topology is a weak equivalence. Since we are only interested in the (weak) homotopy type of those objects, we will always consider $\mathcal{C}^0_P(X,Y)$ as equipped with the compact open topology. 

Furthermore, if $X$ is some compact stratified space and $K$ is a locally finite stratified simplicial set, then $\mathcal C^0_P(X,\RealP{K})$ will be metrizable. In particular, $\HolIP(\RealP{K})=\mathcal C^0_P(\RealP{\Delta^{\I}},\RealP{K})$, will be metrizable if $K$ is a locally finite stratified simplicial set (see \cref{def:Generalized_Holinks}). This observation will be usefull in the proof of \cref{theo:Strong_Holinks_are_Holinks}.
\end{remark}

The simplicial structure immediatly induces a notion of homotopy.

\begin{definition}\label{def:strat_homotopies_spaces}
Let $f,g\colon X\to Y$ be two maps in $\Strat$. A \define{stratified homotopy} between $f$ and $g$ is the data of a stratified map $H\colon X\times \Delta^1\to Y$ such that the restriction to $X\times \{0\}$ and $X\times \{1\}$ are equal to $f$ and $g$ respectively. We denote this relation by $f \simeq_{\Strat} g$.
A \define{stratified homotopy equivalence} is a stratified map $f\colon X\to Y$ such that there exists a stratified map $g\colon Y\to X$, such that $g\circ f \simeq_{\Strat} \Id_X$ and $f\circ g \simeq_{\Strat} \Id_Y$. For $X,Y\in \Top_P$, we will write $[X,Y]_P$ for the
set of stratified homotopy classes of stratum preserving maps between $X$ and $Y$.
\end{definition}

\begin{remark}
One can rephrase the above definition as follows: given two stratified spaces $X\to P$ and $Y\to Q$ and two maps between them, $f$ and $g$, a stratified homotopy between $f$ and $g$ corresponds to a commutative square in $\Top$:
\begin{equation*}
    \begin{tikzcd}
    X\times[0,1]
    \arrow[swap]{d}{\varphi_X\circ\pr_X}
    \arrow{r}{H}
    &Y
    \arrow{d}{\varphi_Y}
    \\
    P
    \arrow{r}{\bar{H}}
    &Q \spaceperiod
    \end{tikzcd}
\end{equation*}
There are several things to note here. 
\begin{itemize}
    \item If two maps are stratified homotopic, the underlying maps of spaces are homotopic, since $H$ is also a homotopy in the usual sense.
    \item Since the homotopy is constant at the level of posets, one must have $\bar{H}=\bar{f}=\bar{g}$. In particular, any stratified homotopy between stratum preserving maps is given by a stratum preserving homotopy, $H$. For this reason we will not make a distinction between stratum-preserving and stratified homotopies, and just use "stratified homotopy" as a generic term.
    \item Since the homotopy is constant at the poset level, any stratified homotopy equivalence must be over an isomorphism of posets.
\end{itemize}
\end{remark}

We will also make use of a stronger notion of stratified spaces, for which, in addition to a decomposition into strata, the stratification also encodes information about the neighborhoods of strata.

\begin{definition}
A \define{strongly stratified space} is the data of
\begin{itemize}
    \item a space $X$,
    \item a poset $P$,
    \item a continuous map $\varphi_X\colon X\to \Real{N(P)}$ called the \define{(strong) stratification}.
\end{itemize}
\define{Stratum-preserving maps}  between strongly stratified spaces are defined analogously to \cref{def:Stratified_Spaces} as commutative triangles
\begin{equation*}
    \begin{tikzcd}
    X
    \arrow{rr}{f}
    \arrow[swap]{dr}{\varphi_X}
    &&Y
    \arrow{dl}{\varphi_Y}
    \\
    & \Real{N(P)}
    &\spaceperiod
    \end{tikzcd}
\end{equation*}
This leads to the category $\Top_{N(P)}$, of spaces strongly stratified over $P$ and stratum preserving maps. By abuse of language, we will call the objects of $\Top_{N(P)}$ spaces stratified over $N(P)$.
\end{definition}

\begin{recollection}\label{rec:circ_varphiP}
The map $\varphi_P\colon \Real{N(P)}\to P$, of \cref{def:Varphi_P} induces a functor:
\begin{align*}
    \varphi_P\circ -\colon \Top_{N(P)}&\to \Top_P\\
    (X,\varphi_X\colon X\to \Real{N(P)})&\mapsto (X,\varphi_P\circ\varphi_X\colon X\to P).
\end{align*}
This functor admits a right-adjoint, $-\times_P\Real{N(P)}\colon \Top_P\to\Top_{N(P)}$, defined on objects as the following pull-back:
\begin{equation*}
    \begin{tikzcd}
    Y\times_P\Real{N(P)}
    \arrow{r}
    \arrow{d}
    &Y
    \arrow{d}{\varphi_Y}
    \\
    \Real{N(P)}
    \arrow{r}{\varphi_P}
    &P\spacecomma
    \end{tikzcd}
\end{equation*}
where the strong stratification on $Y\times_P\Real{N(P)}$ is given by the projection on the second factor.
Note that $\Top_{N(P)}$ also admits a simplicial and topological enrichment, as in \cref{rec:enriched_structures}, which analogously to the stratified setting, induces a notion of \define{strongly stratified homotopies}. These enrichment structures are compatible with the adjunction $\varphi_P\circ-\dashv -\times_P\Real{N(P)}$. In particular, this adjoint pair preserves the respective notions of stratified homotopies.
\end{recollection}

\subsection{Stratified simplicial sets}\label{subsec:stratified_simplicial_sets}

We will also consider a simplicial version of stratified spaces, the stratified simplicial sets.

\begin{definition}
A \define{stratified simplicial set} is the data of:
\begin{itemize}
    \item a simplicial set $K$,
    \item a poset $P$,
    \item a simplicial map $\varphi_K\colon K\to N(P)$, called the \define{stratification}.
\end{itemize}
By abuse of notation, we will often refer to the above data just by the simplicial set $K$. A \define{stratified map} between two stratified simplicial sets $K\to N(P)$, $L\to N(Q)$ is a commutative square of simplicial maps.
\begin{equation*}
    \begin{tikzcd}
    K
    \arrow{r}{f}
    \arrow[swap]{d}{\varphi_K}
    & L
    \arrow{d}{\varphi_L}
    \\
    N(P)
    \arrow{r}{N(\bar{f})}
    &N(Q)
    \end{tikzcd}
\end{equation*}
The category of stratified simplicial sets and stratified maps is denoted $s\Strat$.

Given a poset $P$, a \define{stratum preserving map} between two simplicial sets stratified over $P$, $K$ and $L$, is a simplicial map $f\colon K\to L$, such that the following triangle commutes:
\begin{equation*}
    \begin{tikzcd}
    K
    \arrow{rr}{f}
    \arrow[swap]{dr}{\varphi_K}
    && L
    \arrow{dl}{\varphi_L}
    \\
    &N(P)\spaceperiod
    \end{tikzcd}
\end{equation*}
The category of simplicial sets stratified over $P$ is denoted $\sS_P$.
\end{definition}

\begin{remark}\label{rem:sSetP_Presheaf_category}
The category $\sS_P$ can alternatively be defined as a presheaf category. Indeed, there is an equivalence of categories $\sS_P\cong \Fun(\Delta(P)^{\op},\Set)$ (see \cref{def:Flags_DeltaP}). Note that under this identification, a flag $\J=[p_0\leq\dots\leq p_n]$ is sent to the stratified simplicial set $\varphi_{\J}\colon\Delta^n\to N(P)$, satisfying $\varphi_{\J}(k)=p_k$, where $k$ stands for the $k$-th vertex of $\Delta^n$. We will denote this stratified simplicial set $\Delta^{\J}$, and call all such objects stratified simplices. See \cite[Proposition 1.3]{douSimp}. Note that this is consistent with the convention of \cref{def:Nerve_Poset}.
\end{remark}

\begin{recollection}
The category $\sS_P$ admits the structure of a simplicial category. The tensoring between $K\in \sS_P$ and $S\in \sS$ is given by the product $K\times S$ with the stratification given by the composition $K\times S\to K\to N(P)$. For $K,L\in \sS_P$ the simplicial hom is defined via
\begin{equation*}
    \Map(K,L)_n=\sS_P(K\times\Delta^n,L).
\end{equation*}
\end{recollection}

Just as for stratified spaces, the simplicial structure induces a notion of homotopy equivalences.

\begin{definition}\label{def:Strat_Homotopies_simplicial}
Let $f,g\colon K\to L$ be two maps in $s\Strat$. An \define{elementary stratified homotopy} between $f$ and $g$ is the data of a stratified map $H\colon K\times\Delta^1\to L$, whose restrictions to $K\times \{0\}$ and $K\times \{1\}$ give $f$ and $g$ respectively. The maps $f$ and $g$ are said to be \define{stratified homotopic} if there exists a finite sequence of maps $f_i$, $0\leq i\leq n$ such that $f_0=f$, $f_n=g$, and for all $0\leq i\leq n-1$, $f_i$ and $f_{i+1}$ are related by an elementary stratified homotopy. A \define{stratified homotopy equivalence} is a stratified map $f\colon K\to L$ such that there exist a stratified map $g\colon L\to K$ such that $g\circ f$ and $f\circ g$ are respectively stratified homotopic to $\Id_K$ and $\Id_L$. For $K,L\in \sS_P$, we will write $[K,L]_P$ for the set of stratum preserving maps between $K$ and $L$ up to stratified homotopy.
\end{definition}

\begin{recollection}
\label{rec:Stratified_Real_Strat}
The categories of stratified simplicial sets and stratified spaces are connected through a $\Real{-}\dashv \Sing$ style adjunction, just as in the unstratified context. It can be described explicitely as follows. Given some fixed poset $P$, we define a realization style functor
\begin{align*}
    \RealNP{-}\colon\sS_P&\to\Top_{N(P)}\\
    \left(\varphi_K\colon K\to N(P)\right)&\mapsto \left(\Real{\varphi_K}\colon \Real{K}\to \Real{N(P)}\right).
\end{align*}
Composing with the functor $\varphi_P\circ-\colon \TopNP\to\Top_P$ (see \cref{rec:circ_varphiP}) gives a functor $\RealP{-}\colon \sS_P\to\Top_P$. Both functors admit right adjoints, $\Sing_P\colon\Top_P\to\sS_P$ and $\Sing_{N(P)}\colon \TopNP\to\sS_P$, defined as follows. Given a stratified space $\varphi_X\colon X\to P$, its \define{stratified singular simplicial set}, $\Sing_P(X)$, is given by the pullback
\begin{equation*}
    \begin{tikzcd}
    \Sing_P(X)
    \arrow{r}
    \arrow{d}
    &\Sing(X)
    \arrow{d}{\Sing(\varphi_X)}
    \\
    N(P)
    \arrow[hookrightarrow]{r}
    &\Sing(P)\spacecomma
    \end{tikzcd}
\end{equation*}
where the bottom map is the adjoint map to $\varphi_{P}$ from \cref{def:Varphi_P} under $\Real{-} \dashv \Sing$.
Equivalently, as a presheaf on $\Delta(P)$, $\Sing_P(X)$ can be constructed via
\begin{equation*}
    \Sing_P(X)_{\J}=\Top_P(\RealP{\Delta^{\J}},X).
\end{equation*}
Note that the collection of adjunctions $\RealP{-}\dashv \Sing_P$ extend to an adjunction \[\RealStrat{-}\colon\sStrat\leftrightarrow\Strat\colon \SingStrat.\] 
The definition of $\Sing_{N(P)}$ is entirely analogous.
\end{recollection}

Note also that the adjunctions $\RealP{-}\dashv \Sing_P$ and $\RealStrat{-}\dashv\SingStrat$ are compatible with the simplicial structures (see \cite[Proposition 4.9]{douSimp}).

\begin{proposition}
The adjunctions $\RealP{-}\dashv \Sing_P$ and $\RealStrat{-}\dashv\SingStrat$ are simplicial, and preserve stratified homotopies (see \cref{def:strat_homotopies_spaces,def:Strat_Homotopies_simplicial}).
\end{proposition}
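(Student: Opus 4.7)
The plan is to reduce the proposition to the single fact that $\RealP{-}$ preserves the simplicial tensoring of $\sS_P$ over $\sS$, from which both the simplicial enrichment isomorphism and the preservation of homotopies follow formally. First I verify the tensoring compatibility: for $K \in \sS_P$ with stratification $\varphi_K$ and $S \in \sS$, the tensor $K \times S$ in $\sS_P$ carries the stratification $K \times S \to K \xrightarrow{\varphi_K} N(P)$. The classical natural homeomorphism $\Real{K \times S} \cong \Real{K} \times \Real{S}$ (valid in the cartesian closed category of $\Delta$-generated spaces, see \cref{rem:Delta_generated}) turns the resulting stratification on $\RealP{K \times S}$ into the composite $\Real{K} \times \Real{S} \to \Real{K} \to \Real{N(P)} \xrightarrow{\varphi_P} P$, which is precisely the stratification on $\RealP{K} \times \Real{S}$ in $\Top_P$. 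This yields a natural isomorphism $\RealP{K \times S} \cong \RealP{K} \times \Real{S}$ in $\Top_P$.

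Given this tensoring isomorphism, the simplicial adjunction follows from a short computation:
\[\Map(\RealP{K},X)_n = \Top_P(\RealP{K} \times \Real{\Delta^n}, X) \cong \Top_P(\RealP{K \times \Delta^n}, X) \cong \sS_P(K \times \Delta^n, \Sing_P(X)) = \Map(K, \Sing_P(X))_n,\]
where the middle bijection applies the set-level adjunction of \cref{rec:Stratified_Real_Strat}. Naturality of this chain in $n$ upgrades it to an isomorphism of simplicial sets, making the adjunction simplicial.

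The preservation of stratified homotopies then follows from simplicial functoriality. Since both $\RealP{-}$ and $\Sing_P$ act on the simplicial hom-spaces (and in particular on their $1$-simplices), they send elementary stratified homotopies to elementary stratified homotopies. Concretely, $\RealP{-}$ transports $H \colon K \times \Delta^1 \to L$ to $\RealP{K} \times \Real{\Delta^1} \to \RealP{L}$ via the tensoring isomorphism, and $\Sing_P$ acts on an elementary homotopy $X \times \Real{\Delta^1} \to Y$ through the induced map $\Map(X,Y) \to \Map(\Sing_P(X), \Sing_P(Y))$ coming from the simplicial enrichment. The equivalence relation generated by elementary homotopies is preserved in both directions, settling the fiberwise case.

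Finally, the global statement for $\RealStrat{-} \dashv \SingStrat$ reduces to the fiberwise one. Since $\sStrat \to \Poset$ and $\Strat \to \Poset$ are bifibrations (see \cref{rem:Strat_Fibers_Over_Poset}) with fibers the $\sS_P$ and $\Top_P$, and since the global simplicial enrichments act only on the space factor while fixing the poset component, the global adjunction restricts on each fiber to $\RealP{-} \dashv \Sing_P$; assembling the fiberwise simplicial isomorphisms (which are natural in the poset in an evident way) yields the global result. The only obstacle throughout is careful bookkeeping of the nested stratifications through $N(P)$ and $P$, which becomes routine once the tensor-preservation identity is in hand.
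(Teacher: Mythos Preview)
The paper does not actually supply a proof of this proposition; it only cites \cite[Proposition 4.9]{douSimp}. Your argument is correct and is the standard one: verifying that $\RealP{-}$ preserves simplicial tensors, deducing the simplicial adjunction isomorphism from this together with the set-level adjunction, and then reading off preservation of homotopies from the action on $1$-simplices of the mapping spaces. This is almost certainly what the cited reference does as well.

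One small point worth tightening: when you say $\Sing_P$ ``acts on the simplicial hom-spaces'' via ``the simplicial enrichment,'' you are using that the right adjoint in a simplicial adjunction is automatically a simplicial functor (equivalently, that it preserves cotensors). This is standard, but it would not hurt to name it explicitly, since your derivation of the simplicial adjunction isomorphism only directly exhibits the enrichment on the hom-sets, not the functoriality of $\Sing_P$ on mapping spaces.
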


\subsection{Classical links and homotopy links}\label{Section:Classical_Links}
Recall that pseudo-manifolds are particular kinds of stratified spaces whose strata are manifolds satisfying gluing conditions. Those conditions can be expressed through the use of some smooth structures, as in \cite{Whitney} or \cite{Thom}. Or they can be expressed in a purely topological fashion by asking that all points should have neighborhood homeomorphic to cones. We recall the more recent and more general definition of \define{conically stratified spaces}, which will be sufficient for our purpose. Note that pseudo-manifolds are examples of conically stratified spaces.

\begin{definition}[{\cite[Definition A.5.5]{HigherAlgebra}}]
Let $P$ be a poset, and define $c(P)=\{*\}\coprod P$, with $*<p$ for all $p\in P$. Let $\varphi_L\colon L\to P$ be a stratified space. Its (stratified) \define{cone} is the stratified space
\begin{align*}
    c(L)=L\times [0,1)/L\times\{0\}&\to c(P)\\
    (l,t)&\mapsto \left\{\begin{array}{cc}
    \varphi_L(l) & \text{, if $t>0$}  \\
    *     & \text{, if $t=0$.} 
    \end{array}\right.
\end{align*}
equipped with the teardrop topology, (see \cref{rem:TeardropTop}).
A stratified space $X\to P$ is \define{conically stratified} if, for every $x\in X$, in some stratum $X^p$, there exists\begin{itemize}
    \item a stratified space $L\to P_{>p}=\{q\in P\mid q>p\}$, the \define{(local) link at }$x$,
    \item an open neighborhood $x\in U\subset X$,
    \item a space $Z$,
    \item and a stratified homeomorphism $Z\times c(L)\cong U$, over the poset identification $c(P_{>p})\cong P_{\geq p}\subset P$.
\end{itemize} 
\end{definition}
\begin{remark}\label{rem:TeardropTop}
In the above definition, one considers the teardrop topology on the cone $c(L)$ (see \cite[Definition A.5.3]{HigherAlgebra}). Note that for a compact (stratified) space, $L$, it coincides with the quotient topology. Since pseudo-manifolds are always assumed to have compact links, this subtlety does not come into play when studying those objects. However, when studying more general examples of conically stratified spaces, this distinction is crucial. In fact, we will use throughout a result of Lurie \cite[Theorem A.6.4]{HigherAlgebra} (see \cref{theo:Conical_Fibrant}) whose proof relies on the properties of the teardrop topology.
\end{remark}
An even more general notion, more suited for the study of stratified homotopies, was introduced by Quinn in \cite{quinn1988homotopically}, that of homotopically stratified sets. For those objects, instead of considering local links, one considers pairwise homotopy links, which provide a homotopy theoretic global approach to the former.
\begin{definition}[{\cite[Definition 2.1]{quinn1988homotopically}}]
\label{def:Quinn_homotopy_links}
Let $X\to P$ be a stratified space and $p<q\in P$. The \define{homotopy link} of the $p$-stratum in the $q$-stratum is the topological space
\begin{equation*}
    \hol_{p<q}(X)=\{\gamma\colon [0,1]\to X\mid \gamma(0)\in X^p,\ \gamma(t)\in X^q,\  \forall t>0\},
\end{equation*}
whose topology is induced by the inclusion $\hol_{p<q}(X)\subset C^0([0,1],X)$.
\end{definition}

Quinn then defined (see \cite[Definition 3.1]{quinn1988homotopically}) \define{homotopically stratified sets} as those (metric) stratified spaces satisfying:
\begin{itemize}
    \item evaluation at $0$, $\ev_0\colon\hol_{p<q}(X)\to X^p$ is a fibration in $\Top$, for all $p<q\in P$,
    \item inclusions $X^p\hookrightarrow X^p\cup X^q$ are tame, for all $p<q\in P$.
\end{itemize}

\begin{remark}
\label{rem:conical_pman_homotopically_stratified}
In this paper, we will not be overly concerned with the distinctions between those classes of spaces. As we will see in \cref{theo:Conical_Fibrant}, pseudo-manifolds, conically stratified spaces, and homotopically stratified sets share a common homotopical property. It is this property - and not their geometrical definitions directly - that is leveraged in the proof of \cref{theo:intro_Con_embeds_HoStrat}. We only state results about conically stratified spaces - since it is both a large class, and reasonably easy to define - but \cref{theo:intro_Con_embeds_HoStrat} also holds for pseudo-manifolds or homotopically stratified sets.
\end{remark}

Homotopy links are known to characterize the stratified homotopy equivalences between homotopically stratified sets, as was shown in \cite{miller2013}.

\begin{theorem}[{\cite[Theorem 6.3]{miller2013}}]
\label{theo:Miller_Theorem} A stratified map $f\colon X\to Y\in \Top_P$, between two homotopically stratified sets is a stratified homotopy equivalence if and only if, the induced maps of spaces
\begin{itemize}
    \item $X^p\to Y^p$,
    \item $\hol_{p<q}(X)\to \hol_{p<q}(Y)$,
\end{itemize}
are homotopy equivalences, for all $p \in P$ and $q \in P$ with $p <q$.
\end{theorem}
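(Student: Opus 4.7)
The plan is to prove the two implications separately. The forward direction is essentially formal: a stratified homotopy equivalence $f \colon X \to Y$ restricts to ordinary homotopy equivalences $X^p \to Y^p$ because a stratified homotopy is, in particular, a homotopy constant on strata, and it induces a homotopy equivalence $\hol_{p<q}(X) \to \hol_{p<q}(Y)$ because $\hol_{p<q}(-)$ is a functor to $\Top$ that preserves stratified homotopies (the path space construction on which it is defined is enriched over $\Top$ and the stratum-preserving conditions defining it are closed under the cylinder $- \times [0,1]$ in $\Top_P$).

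For the reverse direction, the plan is to proceed by induction on the height of the finite subposet $Q \subset P$ containing the images of the strata involved, equivalently by induction up a filtration $X^{\leq Q_0} \subset X^{\leq Q_1} \subset \cdots \subset X$ where $Q_i$ are the elements of $Q$ of depth at most $i$. The base case, where $Q$ has height zero, is simply a disjoint union of non-stratified homotopy equivalences of strata. For the inductive step, assuming a stratum-preserving homotopy inverse $g_{<p}$ and stratified homotopies to the identities have been constructed on the closed substratification $X^{<p} := \varphi_X^{-1}(P_{<p})$, I would extend across the stratum $X^p$ by exploiting the tameness hypothesis, which yields a neighborhood $U$ of $X^{<p}$ in $X^{\leq p}$ together with a nearly-strict deformation retraction to $X^{<p}$ that is strata-preserving away from $X^{<p}$. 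Identifying $U \setminus X^{<p}$ with a mapping cylinder-like object whose "fiber direction" is parameterized by $\hol_{q<p}(X)$ via evaluation at positive time, one obtains a pushout-style decomposition of $X^{\leq p}$ into $X^{<p}$, the holink mapping cylinder, and $X^p$ glued along the evaluation-at-$1$ map.

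The key technical step is to use the fibration hypothesis $\ev_0 \colon \hol_{q<p}(X) \to X^q$ to lift the inductively constructed inverse $g_{<p}$ and its homotopies from $X^{<p}$ to $\hol_{q<p}(X)$ in a compatible way, using that $\hol_{q<p}(f)$ is assumed to be a homotopy equivalence. This produces a stratum-preserving map on the tubular neighborhood $U$, and using the assumed equivalence $X^p \to Y^p$ together with the gluing in the pushout, one extends the inverse to all of $X^{\leq p}$. The stratified homotopies witnessing $g \circ f \simeq_P \mathrm{Id}$ and $f \circ g \simeq_P \mathrm{Id}$ are then built up in the same inductive fashion, again relying on the fibration $\ev_0$ to solve the lifting problems that arise when extending a partial homotopy across a new stratum.

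The main obstacle will be the bookkeeping of compatibility data across the induction: at each stage, the inverse and the homotopies on $X^{<p}$ must be extended not merely to continuous maps on $X^{\leq p}$ but in a way that respects the collar structure provided by tameness and the evaluation fibration simultaneously, so that the next inductive step can still be performed. In particular, one must be careful that the deformation retraction witnessing tameness is compatible (up to controlled stratified homotopy) with the parameterization of nearby points by holink elements, which is exactly the delicate point where Quinn's conditions of homotopical stratification are used in full strength; the rest of the argument is then a formal mapping cylinder / cofibration-fibration gluing argument in $\Top_P$.
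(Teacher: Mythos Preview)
The paper does not give its own proof of this statement: \cref{theo:Miller_Theorem} is quoted as a result from \cite{miller2013} and used only as background and motivation. The paper later remarks (end of \cref{subsec:failure_quillen}) that an independent proof can be recovered from the machinery of \cite{douSimp}, but no argument is supplied here. So there is nothing in this paper to compare your proposal against.

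That said, your sketch is a reasonable outline of the strategy in Miller's original paper: the forward direction is indeed formal, and the reverse direction proceeds by induction up the strata, using tameness to obtain mapping-cylinder neighborhoods and the fibration condition on $\ev_0$ to lift homotopy inverses and homotopies. Your identification of the main obstacle---controlling compatibility of the inductively constructed data with the collar structure---is accurate; in Miller's argument this is handled via his ``homotopy link realization'' construction, which is somewhat more delicate than the pushout/gluing you describe. As written your proposal is a plan rather than a proof: the step where you ``identify $U\setminus X^{<p}$ with a mapping cylinder-like object'' and then glue requires substantial work that you have not spelled out.
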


It is also possible to get a similar statement, while only asking the maps between strata and homotopy links to be \define{weak}-equivalences. In this case, one has to ask that $X$ and $Y$ are realizations of stratified simplicial sets, (see \cite[Theorem 5,4]{douSimp}). In any case, these results suggest that homotopy links are a good starting point to build a general homotopy theory of stratified spaces. On the other hand, consider the following example.

\begin{example}\label{ex:links_not_enough}
Consider the stratified simplex $\Delta^{\J}$ for the (regular) flag $\J = [ p_0 < p_1 < p_2]$ in $P$ as well as its boundary $\partial \Delta^{\J} \subset \Delta^{\J}$ with the induced stratification (see \cref{fig:Simplex_And_Boundary}). The inclusion
    \[ i\colon\RealP{\partial \Delta^{\J}} \hookrightarrow \RealP{\Delta^{\J}}\]
    induces homotopy equivalences on all strata and pairwise homotopy links, since for both spaces all of these are contractible. However, $i$ is clearly not a stratified homotopy equivalence since if we forget the stratification the underlying map of topological spaces is not even a weak equivalence. Note that this is not in contradiction to \cref{theo:Miller_Theorem} as $\RealP{\partial \Delta^{\J}}$ is not a homopically stratified set. Indeed, the natural map $\HolIP(\RealP{\partial \Delta^{\J}}) \to (\RealP{\partial \Delta^{\J}})_{p_1}$, for $\I = \{ p_1 < p_2 \}$, is not a fibration as it has nonempty source but is not surjective.
    If we want the stratified homotopy type to be finer than the homotopy type of the underlying space, $\partial\Delta^{\J}$ and $\Delta^{\J}$ need to have a distinct stratified homotopy type, which can not be defined through strata and homotopy links alone. \\
    On the other hand, note that the homotopy links of \cref{def:Quinn_homotopy_links} can be equivalently defined as $\HolIP(X)=\mathcal C^0_P(\RealP{\Delta^{\I}},X)$, for $\I=\{p<q\}$ a regular flag of $P$. Note that this definition readily extends to \define{arbitrary} regular flags $\I=\{p_0<\dots<p_n\}$. In particular, for the example at hand, one checks that the inclusion $\HolIP(\RealP{\partial\Delta^{\J}})\to \HolIP(\RealP{\Delta^{\J}})$ is not a weak equivalence for $\I=\J$. Indeed, the domain is empty, as there exists no stratified maps $\RealP{\Delta^{\J}}\to\RealP{\partial\Delta^{\J}}$, while the codomain contains the identity map. This illustrates the necessity to consider \define{generalized homotopy links}. Indeed, at least when restricting to stratified spaces that admit (stratified) triangulations, stratified maps inducing weak equivalences between all strata and generalized homotopy links are weak equivalence of the underlying spaces (see \cref{ex:counter_ex_forgetful,rem:forget_preserves_we_triangulable}). 
\end{example}
\begin{figure}[h]
\begin{tikzpicture}
\filldraw[black, opacity=0.1](0,0)--(1,2)--(2,0)--(0,0);
\draw (0,0)--(1,2)--(2,0)--(0,0);
\filldraw[red] (0,0) circle (1pt);
\filldraw[blue](1,2) circle (1pt);
\filldraw[green]  (2,0) circle (1pt) ;

\draw[shift={(5,0)}] (0,0)--(1,2)--(2,0)--(0,0);
\filldraw[shift={(5,0)},red] (0,0) circle (1pt);
\filldraw[shift={(5,0)},blue](1,2) circle (1pt);
\filldraw[shift={(5,0)},green]  (2,0) circle (1pt) ;

\end{tikzpicture}
\caption{The simplex $\Delta^{\J}$ and its boundary $\partial\Delta^{\J}$, with $\J=[p_0<p_1<p_2]$. The first has a non-empty $\J$-holink, while $\Hol_{\J}(\partial\Delta^{\J})=\emptyset$. For all other flags $\I\not =\J$, $\Delta^{\J}$ and $\partial\Delta^{\J}$ have equivalent $\I$-holinks.}
\label{fig:Simplex_And_Boundary}
\end{figure}
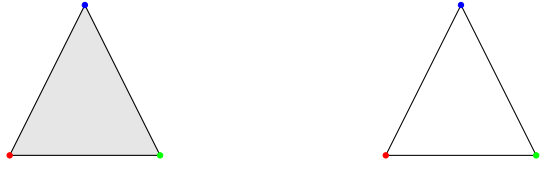

\subsection{Generalized links and homotopy links}\label{subsec:gen_links}

We now turn to the more general notion of homotopy links, that was suggested in \cref{ex:links_not_enough}. Note that we will simply call those \define{homotopy links}, instead of generalized homotopy links, and refer to the homotopy links of Quinn as \define{classical homotopy links}. Since the latter is just a particular case of the former, as pointed out in \cref{ex:links_not_enough}, this should not cause too much confusion.

\begin{definition}
\label{def:Generalized_Holinks}
Let $K\in \sS_P$, $X\in \Top_{N(P)}$, $Y\in \Top_P$ and $\I=[p_0<\dots<p_n]$ a regular flag. We define $\I$-th \define{homotopy links}, in each of the three categories, as follows:
\begin{itemize}
    \item $\Hol_{\I}(K)=\Map(\Delta^{\I},K)\in\sS$,
    \item $\HolINP(X)=\C^0_{N(P)}(\RealNP{\Delta^{\I}},X)\in\Top$,
    \item $\HolIP(Y)=\C^0_P(\RealP{\Delta^{\I}},Y)\in\Top$.
\end{itemize}
When there is a need to distinguish between them, the first two will be respectively called the \define{simplicial homotopy link} and the \define{strong homotopy link}, while the last one will always just be called the homotopy link.
\end{definition}

\begin{remark}
There is a simple mnemonic underlying the choice of font for $\Hol_{\I}$, $\HolINP$ and $\HolIP$. Vertices of $\Hol_{\I}(K)$ are given by simplicial maps from a stratified simplex into $K$. Meanwhile, elements of $\HolINP(X)$ are given by strongly stratum preserving maps from (the realization) of a stratified simplex into a space $X$ and elements of $\HolIP(Y)$ by stratum preserving ones. Hence, the degree of rigidity required from these maps decreases in the order of listing, see \cref{Fig:Holinks}. This is reflected by the font used for the "Ho" part of the respective notation.
\end{remark}

\begin{remark}\label{Rem:Comparing_Holink_Intro}
Let $X \in \Top_P$ be a $P$-stratified space and $\I$ a regular flag. Then, by \cref{rec:enriched_structures} and \cref{rec:circ_varphiP} we have natural isomorphisms:
    \[ \Hol_{\I}\big (\Sing_P(X) \big ) \cong \Sing \big ( \HolIP(X) \big) \cong \Sing \big( \HolINP(X \times_{P} \Real{N(P)}) \big).\]
\end{remark}

Before considering a converse statement with $K\in \sS_P$, we first introduce a generalization of the classical notion of links of sub-complex of a simplicial complex.

\begin{definition}
\label{def:Simp_Link}
Let $ K \in \sS_P$ be a stratified simplicial set, and $\I$ a regular flag. \define{The $\I$-th simplicial link of $K$}, $\Link{\I}{(K)}$, is defined via the following pullback diagram:
\begin{center}
    \begin{tikzcd}
        \Link{\I}{(K)} \arrow[r, hook] \arrow[d] & \sd{(K)} \arrow[d, "\sd(\varphi_K)"] \\
        \Delta^{0} \arrow[r, "i_{\I}"] & \sd(N(P)) \spacecomma 
    \end{tikzcd}
\end{center}
where $\sd$ is the usual barycentric subdivision, and $i_{\I}$ is the map sending the point to the unique vertex in $\sd(N(P))$ corresponding to the regular flag $\I$. This construction induces a functor
\begin{equation*}
    \Link{\I}{ }:\sS_{P} \to \sS.
\end{equation*}
\end{definition}
\begin{remark}
In the case when $K \in \sS_P$ comes from a simplicial complex and $P = \I = \{p_0 < p_1\}$, the simplicial set $\Link{\I}{(K)}$ is the simplicial link of the $p_0$ stratum in $K$. In other words, it is the complex obtained by taking the boundary of the simplicial complex given by simplices which contain a vertex in the $p_0$ stratum. As already noted in \cite{quinn1988homotopically}, from this it follows that $\Real{\Link{\I}{(K)}}$ and $\HolIP (\RealP{K})$ are weakly equivalent. One of the key result in this paper (\cref{theo:Intro_Holink}), is that this generalizes to arbitrary regular flags and stratified simplicial sets, and to all the notions of homotopy links considered here. 
\end{remark}

\begin{definition}
\label{def:Geom_Link}
Let $\varphi_K\colon K\to N(P)$ be a stratified simplicial set, $\I$ a regular flag and $b\in \Real{\Delta^{\I}}$ be the barycenter. Let $\Real{K}_b\subset \Real{K}$ be the subspace given by $\Real{\varphi_K}^{-1}(b)$. In other words, $\Real{K}_b$ is the space defined via the following pullback:
\begin{equation*}
    \begin{tikzcd}
             \Real{K}_b
             \arrow[hookrightarrow]{r}
             \arrow{d}
             & \Real{K}
             \arrow{d}{\Real{\varphi_K}}
             \\
             \{b\}
             \arrow[hookrightarrow]{r}
             &\Real{N(P)} \spaceperiod
    \end{tikzcd}
\end{equation*}
\end{definition}

\begin{remark}\label{rem:fibers_are_link}

 One can also think of $\Real{K}_b$ as a \define{geometric link} (see, \cite[Theorem 2.10]{MilnorSingPoint}). Consider the case where $P=\{0<1\}=\I$, with the identification $\Real{N(P)}\simeq [0,1]$, and $K\in \sS_P$ is a triangulation of some pseudo-manifold with an isolated singularity. Then, $\RealNP{\varphi_X}^{-1}([0,1[)$ is a neighborhood of the singular point, and must be isomorphic to some cone, $c(L)$, for some compact manifold $L$. The subspace $\Real{K}_b=\RealNP{\varphi_X}^{-1}(\frac{1}{2})$ is then homeomorphic to the subspace $L\times\{\frac{1}{2}\}$, or in other words, to the link.  See \cref{Fig:Link_And_Fiber} for a side by side comparison between $\Link{\I}(K)$ and $\Real{K}_b$.
 \end{remark}

Let $K\in\sS_P$ be a stratified simplicial set, $\I$ a regular flag and $b\in\Real{\Delta^\I}$ the barycenter. We will be interested in the following maps:
\begin{equation}\label{eq:Diagram_holinks_link_prelim}
    \begin{tikzcd}
             \Real{\Hol_{\I}(K)}
             \arrow{r}{1}
             &\HolINP(\RealNP{K})
             \arrow{r}{2}
             \arrow{d}{3}
             &\HolIP(\RealP{K})
             \\
             &\Real{K}_b
             \\
             &\Real{\Link{\I}(K)}
             \arrow[swap]{u}{*}
    \end{tikzcd}
\end{equation}
The map labeled $*$ is a non-natural homeomorphism and will be defined at the begining of \cref{Section:Real_Char_WE}, \cref{prop:Links_and_Links}. On the other hand the other maps are both easily defined, and seen to be natural. Using the structure described so far, the first map is equivalently given as a map $\Map(\Delta^{\I},X)=\Hol_{\I}(X)\to\Sing(\HolINP(\RealNP{X}))=\Map(\RealNP{\Delta^\I},\RealNP{X})$. It is the map sending a map $\Delta^{\I}\times\Delta^n\to X$ to its realization. The second map just corresponds to the inclusion $\mathcal C^0_{N(P)}(\RealNP{\Delta^\I},\RealNP{X})\hookrightarrow \mathcal C^0_{N(P)}(\RealP{\Delta^\I},\RealP{X})$. Finally, the third map is given by evaluating maps of the form $\RealNP{\Delta^\I}\to\RealNP{X}$ at $b\in \Real{\Delta^\I}$. \cref{Section:Real_Char_WE} is devoted to proving that the second and the third map in Diagram \eqref{eq:Diagram_holinks_link_prelim} are weak equivalences, while \cref{Section:LastHolink} handles the first map (or rather, equivalently, it deals with the composition of the first and the second).

\subsection{Homotopy links and diagrams}\label{subsec:holink_and_diag}

\begin{recollection}\label{rec:Diagram_Functors}
Let $K$ be a stratified simplicial set, and $\I\subset \I'$ two regular flags. The inclusion $\Delta^{\I}\to\Delta^{\I'}$ induces a map $\Hol_{\I'}(K)\to\Hol_{\I}(K)$. In fact, the data of the holinks can be synthesized as the following diagram
\begin{align*}
    R(P)^{\op}&\to\sS\\
    \I &\mapsto \Hol_\I(K),
\end{align*}
where $R(P)$ is the category of regular $P$-flags (see \cref{def:Flags_DeltaP}). We write $\Diag_P=\Fun(R(P)^{\op},\sS)$ for the category of such diagrams. Note that $\Diag_P$ can equivalently be described as the category of presheaves over $R(P)\times\Delta$, i.e. $\Diag_P\cong \Fun((R(P)\times\Delta)^{\op},\Set)$, though we will mostly consider objects of $\Diag_P$ as diagrams of simplicial sets.

We have a functor
\begin{align*}
    D_P\colon \sS_P&\to \Diag_P\\
    K&\mapsto \big(\I\mapsto \Hol_{\I}(K)\big)
\end{align*}
The functor admits a left adjoint, $C_P\colon \Diag_P\to \sS_P$. Since $\Diag_P$ is a presheaf category over $R(P)\times \Delta$, it is enough to define $C_P$ on pairs $(\I,n)\in R(P)\times \Delta$ and then take the left Kan extension along the Yoneda embedding $R(P)\times\Delta\hookrightarrow\Diag_P$. On $R(P)\times\Delta$, one defines $C_P(\I,n)=\Delta^\I\times\Delta^n$. A more explicit definition can be found in \cite[Definition 2.5]{douteauEnTop}. Similarily, there are functors $D^{\Top}_P\colon \Top_P\to\Diag_P$, as well as $D^{\Top_{N(P)}}_P\colon \Top_{N(P)}\to\Diag_P$ defined as follows
\begin{align*}
    D^{\Top}_P\colon \Top_P&\to\Diag_P\\
    Y&\mapsto \big(\I\mapsto\Sing(\HolIP(Y))\big)\\
    D^{\Top_{N(P)}}_P\colon \Top_{N(P)}&\to\Diag_P\\
    X&\mapsto \big(\I\mapsto\Sing(\HolINP(X))\big)
\end{align*}
$D_P^{\Top}$ and $D^{\Top_{N(P)}}_P$ both admit left adjoints, $C_P^{\Top}\colon\Diag_P\to\Top_P$, and $C_P^{\Top_{N(P)}}\colon\Diag_P\to\Top_{N(P)}$. Note that the six functors defined above satisfy several relations, which can be summed up by the following diagram of adjunctions:

\begin{equation}   \label{diag:factorized_triangle}
\begin{tikzcd}
         && \sS_P
         \arrow[lldd, "D_P"] 
         \arrow[rrdd, "\RealP{-}",  bend left = 60] 
         \arrow[rd, "\RealNP{-}",  shift left = 4pt]
         \\ 
         &&& \TopNP 
         \arrow[rd, shift left = 4pt, "\varphi_P \circ -"]
         \arrow[lu, "\Sing_{N(P)}"] 
         \arrow[llld] 
         \\
         \Diag_P 
         \arrow[rrru, shift left = 4 pt] 
         \arrow[rruu, "C_P", shift left = 4pt ] 
         \arrow[rrrr,"C_P^{\Top}"]
         &&&&
         \Top_P
         \arrow[lluu, "\Sing_P"{xshift = 10 pt, yshift = -5pt},  shift left = 4 pt, bend right =  60 ] \arrow[llll, shift left = 4pt,"D_P^{\Top}"] 
         \arrow[lu, "-\times_{P}\Real{N(P)}"]
    \end{tikzcd}
\end{equation}
In the above diagram, all pairs of parallel maps are adjoint pairs. Furthermore if one restrict to left adjoints, then one gets a commutative diagram (up to natural isomorphisms), and similarily for the right adjoints. Given the compatibility between all those functors, we will mostly omit the superscript $\Top$ and $\Top_{N(P)}$.
\end{recollection}

The values of the functors $D_P$ on stratified objects compile all the information about their (generalized) homotopy links. This suggests that diagrams form a reasonable basis to build stratified homotopy types from. 

\subsection{The model category of diagrams}\label{subsec:mod_of_diag}

Recall that any category of functors into a model category admits a projective model structure. Note that throughout the paper, we consider the category of simplicial sets with the Kan-Quillen model structure.

\begin{theorem}[{\cite[Theorem 11.6.1]{hirschhornModel}}]
\label{theo:CMF_DiagP}
There exists a projective model structure on $\Diag_P=\Fun(R(P)^{\op},\sS)$, such that:
\begin{itemize}
    \item fibrations are maps $f\colon F\to G$, such that $f(\I)\colon F(\I)\to G(\I)$ is a Kan fibration for all $\I\in R(P)$,
    \item weak equivalences are maps $f\colon F\to G$, such that $f(\I)\colon F(\I)\to G(\I)$ is a weak equivalence of simplicial sets, for all $\I\in R(P)$.
\end{itemize}
\end{theorem}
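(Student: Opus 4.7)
The statement is exactly the conclusion of Hirschhorn's general existence theorem for projective model structures on diagram categories valued in a cofibrantly generated model category, so my plan is to deduce it as a direct application of that general machinery. I would first recall that $\sS$ with the Kan--Quillen model structure is cofibrantly generated by the boundary inclusions $I = \{\partial \Delta^n \hookrightarrow \Delta^n\}$ and the horn inclusions $J = \{\Lambda^n_k \hookrightarrow \Delta^n\}$, all of which have finite, hence small, domains.

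Next, I would produce candidate generating (trivial) cofibrations for $\Diag_P$. For each regular flag $\I \in R(P)$, let $F_{\I}$ denote the representable presheaf $\Hom_{R(P)}(-, \I)$, viewed as an object of $\Diag_P$ that is discrete in the simplicial direction. By Yoneda, the external tensor $F_{\I} \times -\colon \sS \to \Diag_P$ is left adjoint to evaluation at $\I$, so a map $f\colon F \to G$ in $\Diag_P$ has the right lifting property against $F_{\I} \times (\partial \Delta^n \hookrightarrow \Delta^n)$ (respectively $F_{\I} \times (\Lambda^n_k \hookrightarrow \Delta^n)$) if and only if $f(\I)$ has the right lifting property against the corresponding map in $\sS$. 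Taking the generating cofibrations and trivial cofibrations of $\Diag_P$ to be the families indexed by $\I \in R(P)$ of such external products, this identifies fibrations and trivial fibrations as precisely those maps that are levelwise Kan fibrations and levelwise trivial Kan fibrations, matching the description in the statement.

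Finally, I would invoke Kan's recognition theorem. The two-out-of-three and retract axioms for weak equivalences are inherited levelwise from $\sS$. Factorizations are produced by the small object argument applied to the two generating sets; their domains remain small in $\Diag_P$ since a representable tensored with a finite simplicial set is small. The one step requiring genuine verification --- and the closest thing to an obstacle --- is that any transfinite composition of pushouts of generating trivial cofibrations is a weak equivalence, i.e.\ a levelwise trivial cofibration. This reduces, one flag at a time, to the analogous fact in $\sS$: since colimits in $\Diag_P$ are computed pointwise, evaluation at any $\I$ sends such a transfinite composition to a transfinite composition of pushouts of horn inclusions in $\sS$, which is a trivial cofibration there. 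With this verification in hand, Kan's theorem produces the desired cofibrantly generated model structure on $\Diag_P$.
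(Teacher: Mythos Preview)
Your proposal is correct and follows exactly the standard argument underlying Hirschhorn's theorem; the paper itself offers no independent proof but simply cites \cite[Theorem 11.6.1]{hirschhornModel}, so you have in effect supplied the details behind that reference. Your notation $F_{\I}\times(-)$ for the left adjoint to evaluation agrees with what the paper denotes $(-)^{\I}$ in the proposition immediately following the statement.
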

This model structure is cofibrantly generated, and we need the following definition to make explicit the sets of generating (trivial) cofibrations.

\begin{definition}
Let $S$ be a simplicial set and $\I$ a regular flag. Let $S^{\I}\in \Diag_P$ be the diagram defined as follows:
\begin{equation*}
    S^{\I}(\I')=\left\{\begin{array}{cl}
         S& \text{ if $\I'\subset \I$,} \\
         \emptyset&  \text{ else.}
    \end{array}\right.
\end{equation*}
\end{definition}

\begin{proposition}
The set of generating cofibrations for the model structure of  \cref{theo:CMF_DiagP} is given by:
\begin{equation*}
    \{(\partial\Delta^n)^{\I}\to(\Delta^n)^{\I}\mid n\geq 0,\ \I\in R(P)\}.
\end{equation*}
The set of generating trivial cofibrations is given by:
\begin{equation*}
    \{(\Lambda^n_k)^{\I}\to(\Delta^n)^{\I}\mid n\geq 1, \ 0\leq k\leq n,\ \I\in R(P)\}.
\end{equation*}
Where $\Lambda^n_k$ is the $k$-th horn on $\Delta^n$, i.e. $\Lambda^n_k=\cup_{i\not=k}d_i(\Delta^n)\subset \Delta^n$.
\end{proposition}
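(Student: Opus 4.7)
The plan is to derive this from the adjunction identifying $(-)^{\I}$ as the left adjoint to evaluation at $\I$, and then to translate lifting properties across this adjunction.

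First, I would establish that for each regular flag $\I$, the functor $(-)^{\I} : \sS \to \Diag_P$ is left adjoint to the evaluation functor $\ev_{\I} : \Diag_P \to \sS$, i.e., that there is a natural bijection
\[ \Diag_P(S^{\I}, G) \cong \sS(S, G(\I)). \]
This is immediate from the definition of $S^{\I}$: a natural transformation $\eta : S^{\I} \to G$ is completely determined by its $\I$-component $\eta_{\I} : S \to G(\I)$, since for $\I' \subsetneq \I$ the component $\eta_{\I'}$ is forced by commutativity with the structure map $G(\I) \to G(\I')$ applied to $\eta_{\I}$, and for $\I' \not\subset \I$ the domain is empty so there is nothing to define.

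Next, I would invoke this adjunction to translate lifting properties componentwise. A morphism $f : F \to G$ in $\Diag_P$ has the right lifting property against $(\partial\Delta^n)^{\I} \to (\Delta^n)^{\I}$ if and only if $f(\I) : F(\I) \to G(\I)$ has the right lifting property against $\partial\Delta^n \to \Delta^n$ in $\sS$, by the standard correspondence of lifting problems under an adjunction. Ranging over all $\I \in R(P)$ and all $n \geq 0$, we conclude that $f$ has the RLP against the first proposed set iff every $f(\I)$ is a trivial Kan fibration, which by \cref{theo:CMF_DiagP} is exactly the condition that $f$ be a trivial fibration in the projective model structure. Repeating the argument with the horn inclusions $\Lambda^n_k \hookrightarrow \Delta^n$ characterizes the second proposed set as detecting the projective fibrations.

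Finally, one checks smallness of the domains: since $(-)^{\I}$ preserves colimits as a left adjoint, and since $\partial\Delta^n$ and $\Lambda^n_k$ are small in $\sS$, their images $(\partial\Delta^n)^{\I}$ and $(\Lambda^n_k)^{\I}$ are small in $\Diag_P$. Consequently, the small object argument applies and produces the required factorizations, so the two sets are genuine sets of generators. The main conceptual content here is the identification of $(-)^{\I}$ with the left adjoint to $\ev_{\I}$; the remainder is the standard recipe for transporting generating sets across an adjunction, and is also implicit in the proof of Hirschhorn's Theorem 11.6.1 already cited for the existence of the projective model structure.
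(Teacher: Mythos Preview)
Your proof is correct and is precisely the standard argument; the paper itself does not supply a proof for this proposition, stating it as an immediate consequence of Hirschhorn's Theorem 11.6.1 (already cited for \cref{theo:CMF_DiagP}), whose proof proceeds exactly along the lines you describe. Your identification of $(-)^{\I}$ as left adjoint to $\ev_{\I}$ and the ensuing transfer of lifting properties is exactly what underlies Hirschhorn's construction of the projective model structure, so there is nothing to add.
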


It is difficult to give an explicit description of the cofibrant objects in a projective model structure in general, but in the case of $\Diag_P$ there exists a simple characterization. Since cofibrant diagrams are related to the vertical objects we will use in \cref{Section:Vertical} (see \cref{rem:Vertical_Are_Diagrams}), we give such a description here.

\begin{proposition}
\label{prop:Cofibrant_Diagrams}
Assume that $P$ is of finite length, and let $F\in \Diag_P$ be a diagram. Then $F$ is cofibrant if and only if the following two conditions are satisfied:
\begin{enumerate}
    \item \label{item:Cofibrant_Diagrams_Mono}  For all regular flags $\I\subset \I'$, the map $F(\I')\to F(\I)$ is a monomorphism,
    \item \label{item:Cofibrant_Diagrams_square} If $\I_1,\I_2$ are two regular flags such that $\I_0=\I_1\cap\I_2\not=\emptyset$, then either
    \begin{itemize}
        \item $\I_3=\I_1\cup\I_2$ is a flag, and $F(\I_1)\cap F(\I_2)=F(\I_3)\subset F(\I_0)$,
        \item or $\I_1\cup\I_2$ is not a flag, and $F(\I_1)\cap F(\I_2)=\emptyset$.
    \end{itemize}
\end{enumerate}
\end{proposition}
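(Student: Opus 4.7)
The plan is to prove both implications by working directly with the generating cofibrations $(\partial \Delta^n)^{\I} \to (\Delta^n)^{\I}$ of the projective model structure. For the forward direction, I will verify that conditions (1) and (2) are preserved by the operations that build cofibrant objects from $\emptyset$: pushouts along coproducts of generating cofibrations, transfinite compositions, and retracts. For the backward direction, I will present $F$ as an explicit cell complex via a skeletal filtration in the diagram category. The key combinatorial notion is the \emph{support flag} of a non-degenerate simplex, whose well-definedness relies on condition (2) and the finite length of $P$.

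For the forward implication, the empty diagram trivially satisfies both conditions. Suppose $F$ satisfies them and let $F'$ be obtained from $F$ via a cell attachment along an attaching map $\partial \Delta^n \to F(\I)$. Pointwise, $F'(\I') = F(\I') \cup_{\partial \Delta^n} \Delta^n$ when $\I' \subset \I$ (with attaching map induced from $F$ by restriction), and $F'(\I') = F(\I')$ otherwise. Condition (1) for $F'$ follows from the fact that a monomorphism of simplicial sets sends non-degenerate simplices to non-degenerate simplices, so the new $n$-simplex remains distinct from the old material in every $F'(\I')$. For condition (2), a case analysis on whether $\I_1, \I_2$ are contained in $\I$ suffices; the crucial observation is that if both $\I_1, \I_2 \subset \I$ then $\I_1 \cup \I_2 \subset \I$ is automatically a subset of a regular flag and hence itself a regular flag, so the ``empty intersection'' subcase of (2) never arises in this case. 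Stability under transfinite compositions follows from the compatibility of monomorphisms and binary intersections with filtered colimits in $\sS$, and stability under retracts is a direct diagram chase using that split monomorphisms are monic.

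For the backward implication, suppose $F$ satisfies (1) and (2) and let $\sigma \in F(\I)$ be non-degenerate. Let $\mathcal{S}_\sigma$ denote the set of regular flags $\I' \supset \I$ for which $\sigma$ lies in the image of the monomorphism $F(\I') \to F(\I)$. If $\I_1, \I_2 \in \mathcal{S}_\sigma$, then $\sigma \in F(\I_1) \cap F(\I_2) \subset F(\I_1 \cap \I_2)$ makes this intersection non-empty, so by condition (2) the union $\I_1 \cup \I_2$ must be a regular flag and $\sigma \in F(\I_1 \cup \I_2)$, so $\I_1 \cup \I_2 \in \mathcal{S}_\sigma$. Since $P$ has finite length, $\mathcal{S}_\sigma$ admits a unique maximum $\I_\sigma$, the \emph{support} of $\sigma$. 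Since the restrictions in $F$ are monic, they preserve non-degeneracy, so $\sigma$ corresponds to a unique non-degenerate $\tilde \sigma \in F(\I_\sigma)$. Using this, define a skeletal filtration by $F^{(-1)} = \emptyset$ and $F^{(n)}(\I) = \sk_n F(\I)$ for $n \geq 0$, which forms a subdiagram of $F$ since simplicial maps preserve dimension. Each inclusion $F^{(n-1)} \hookrightarrow F^{(n)}$ then fits into a pushout square
\begin{equation*}
    \begin{tikzcd}
         \coprod_{\tilde\sigma} (\partial \Delta^n)^{\I_{\tilde \sigma}} \arrow[r] \arrow[d] & F^{(n-1)} \arrow[d] \\
         \coprod_{\tilde\sigma} (\Delta^n)^{\I_{\tilde \sigma}} \arrow[r] & F^{(n)},
    \end{tikzcd}
\end{equation*}
indexed by the non-degenerate $n$-simplices $\tilde \sigma$ which occur as support representatives, with attaching maps provided by the boundary of $\tilde \sigma$. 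The sequential colimit $F = \colim_n F^{(n)}$ then exhibits $F$ as a relative cell complex, hence as a cofibrant object.

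I expect the main technical obstacle to be the preservation of condition (2) under cell attachment, where several positional cases for $\I_1, \I_2$ with respect to the flag of attachment must be treated, and the definition of the support flag $\I_\sigma$ in the converse direction, which hinges on translating condition (2) into the statement that the family $\mathcal{S}_\sigma$ is closed under pairwise unions. Once the support assignment is set up, the resulting skeletal decomposition assembles directly from the standard cellular structure of simplicial sets at each flag.
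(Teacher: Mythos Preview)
Your proposal is correct and close in spirit to the paper's proof, though with a more elementary treatment of the backward implication. For the forward direction, both you and the paper verify that conditions (1) and (2) are stable under pushouts of generating cofibrations, transfinite compositions, and retracts; the paper is sketchier here and appeals to a general fact about projectively cofibrant diagrams over posets for (1). The key combinatorial ingredient in both arguments is the support flag $\I_\sigma$ of a non-degenerate simplex, whose existence follows from closure of the relevant family of flags under unions (condition (2)) together with the finite-length hypothesis on $P$. The difference lies in the packaging: the paper invokes Dugger's characterization of projectively cofibrant diagrams of simplicial sets \cite[Corollary 9.4]{UniversalHomotopyTheories} and verifies that each levelwise functor $F_n$ splits as a coproduct of representables (each represented by some $\I_\sigma$), whereas you build a skeletal cell decomposition of $F$ directly. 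Your route is self-contained; the paper's is shorter given the cited result. The underlying combinatorics is the same.
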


\begin{proof}
We give a brief sketch of the proof. For the direct implication, note that a projectively cofibrant diagram must satisfy Condition \eqref{item:Cofibrant_Diagrams_Mono} whenever the indexing category is a poset. For Condition \eqref{item:Cofibrant_Diagrams_square}, one easily checks that it is satisfied by the domains and codomains of generating cofibrations. Then, it is a matter of checking that this property is preserved by taking disjoint unions, pushouts and retracts.

For the reverse implication, consider the following characterization of projectively cofibrant diagrams of simplicial sets \cite[Corollary 9.4]{UniversalHomotopyTheories}. A diagram $F\colon R(P)^{\op}\to \sS$ is projectively cofibrant if and only if the following two conditions are satisfied:
\begin{enumerate}[label=(\alph*)]
    \item \label{item:Dugger_b} For all $n\geq 0$, the functor of $n$-simplicies $F_n\colon R(P)^{\op}\to\Set$ splits into two sub-functors $F_n=F_n^{\text{degen}}\coprod F_n^{\text{non-degen}}$ where the former reaches exactly the degenerate $n$-simplices of $F(\I)$, for all $\I$.
    \item \label{item:Dugger_a} For all $n\geq 0$, $F_n$ can be further decomposed into a coproduct of representables.
\end{enumerate}
We will first prove that $\eqref{item:Cofibrant_Diagrams_Mono}\Rightarrow\ref{item:Dugger_b}$. Let $\sigma\in F(\I)_n$ for some $\I\in R(P)$. Define the set $F_{\sigma}\subset\coprod_{\I} F(\I)_n$ as the smallest subset such that
\begin{itemize}
    \item $\sigma\in F_{\sigma}$,
    \item $\forall\ \I,\I'\in R(P)$, if $\tau\in F(\I)\cap F_{\sigma}$ and $\I'\subset \I$, then the image of $\tau$ in $F(\I')$ is in $F_{\sigma}$
    \item $\forall\ \I,\I'\in R(P)$, if $\tau\in F(\I)$ and $\I'\subset \I$, and the image of $\tau$ in $F(\I')$ is in $F_{\sigma}$, then $\tau$ is in $F_{\sigma}$.
\end{itemize}
First note that by construction, if $\sigma\in F(\I)$ and $\tau\in F(\I')$, then either $F_{\sigma}\cap F_{\tau}=\emptyset$ or $F_{\sigma}=F_{\tau}$. Next, by \eqref{item:Cofibrant_Diagrams_Mono}, if $\sigma$ is non-degenerate, then so are all the simplices in $F_{\sigma}$, since cofibrations send non-degenerate simplices to non-degenerate simplices, and similarly, if $\sigma$ is degenerate all simplices in $F_{\sigma}$ are degenerate. Now, note that $F_{\sigma}$ defines a subfunctor of $F_n$ by setting $F_\sigma(\I)=F_\sigma\cap F(\I)$.
To prove that $\eqref{item:Cofibrant_Diagrams_Mono} + \eqref{item:Cofibrant_Diagrams_square}\Rightarrow \ref{item:Dugger_a}$, it is enough to show that the $F_\sigma$ defined above are representable. Let $Q_{\sigma}=\{(\tau,\I)\mid \tau\in F_{\sigma}(\I)\}$, and equip it with the order relation $(\tau_1,\I_1)<(\tau_2,\I_2)$ if $\I_1\subset \I_2$ and the image of $\tau_2$ in $F(\I_1)$ is $\tau_1$. Then, by construction of $F_{\sigma}$, the poset $Q_{\sigma}$ is connected. 
In particular, if $(\tau,\I)$ and $(\tau',\I)$ are two elements of $Q_{\sigma}$, then they must be related by a zigzag. Note that by \eqref{item:Cofibrant_Diagrams_square}, any zigzag of the form $(\tau_1,\I_1)>(\tau_0,\I_0)<(\tau_2,\I_2)$ can be replaced by a zigzag $(\tau_1,\I_1)<(\tau_3,\I_3)>(\tau_2,\I_2)$. Then, by repeated application of \eqref{item:Cofibrant_Diagrams_square}, one can assume that there exists $(\tau'',\I')$, such that, $(\tau,\I)<(\tau'',\I')>(\tau',\I)$, which implies that $\tau=\tau'$. This has two consequences. First, $Q_{\sigma}$ is a subposet of $R(P)^{\op}$, and second, $F_\sigma(\I)$ is either empty or is a point. To show that $F_{\sigma}$ is reprensentable, it remains to be showed that $Q_{\sigma}$ admits a maximal element. Once again, by repeated application of \eqref{item:Cofibrant_Diagrams_square}, any two elements $(\tau_1,\I_1),(\tau_2,\I_2)$ must admit an upper bound $(\tau_3,\I_3)\geq(\tau_1,\I_1),(\tau_2,\I_2)$, but now, $\I_3$ is a regular flag of length greater or equal than the length of $\I_1$ and $\I_2$, with possible equality only if $\I_1=\I_2$. Since $P$ is assumed to be of finite length, the length of regular flags is bounded, and thus $Q_\sigma$ admits a maximal element, $\I_\sigma$. The functor $F_\sigma\colon R(P)^{\op}\to \Set$ is then represented by $\I_\sigma$.
\end{proof}

\subsection{Model categories for stratified spaces}\label{subsec:mod_of_strat_space}
We can now describe the model category of stratified spaces as introduced in \cite{douteauEnTop}.

\begin{theorem}[{\cite[Theorem 2.15]{douteauEnTop}}]
\label{theo:CMF_TopP}
There exists a model structure on $\Top_P$, where a map $f\colon X\to Y$ is
\begin{itemize}
    \item a fibration if the induced maps $\HolIP(X)\to\HolIP(Y)$ are fibrations for all $\I\in R(P)$,
    \item a weak equivalence if the induced maps $\HolIP(X)\to\HolIP(Y)$ are weak equivalences for all $\I\in R(P)$.
\end{itemize}
This model structure is cofibrantly generated and the set of generating cofibrations is given by
\begin{equation*}
    \{\RealP{\Delta^{\I}}\times S^{n-1}\hookrightarrow \RealP{\Delta^{\I}}\times B^{n}\mid n\geq 0,\ \I\in R(P)\},
\end{equation*}
while the set of generating trivial cofibrations is given by
    \begin{equation*}
    \{\RealP{\Delta^{\I}}\times B^{n}\times\{0\}\hookrightarrow \RealP{\Delta^{\I}}\times B^{n}\times [0,1]\mid n\geq 0,\ \I\in R(P)\},        
    \end{equation*}
where $B^n$ and $S^{n-1}$ are respectively the $n$-dimensional ball and the $(n-1)$-dimensional sphere bounding it.
\end{theorem}

\begin{remark}\label{rem:StratSpacesAreFibrant}
The model category $\Top_P$ has the notable property that all of its objects are fibrant, which is a property also enjoyed by the classical model structure on $\Top$. To see this, note that for all $\I\in R(P)$, $\HolIP(P)=*$, in particular, for all $X\in \Top_P$, the map $\HolIP(X)\to \HolIP(P)=*$ is a fibration in $\Top$, which implies that the map $X\to P$ is a fibration in $\Top_P$.
\end{remark}
\begin{example}\label{ex:counter_ex_forgetful}
For arbitrary stratified spaces in $\Top_P$ it is not the case that the forgetful functor $\TopP \to \Top$ preserves weak equivalences. Indeed, let $E$ be the Hawaiian Earring (see for ex. \cite{Hawaii}) and $f: E'=\bigvee_{n \in \mathbb N}S^1 \to E$ the bijection obtained by decomposing $H$ into its circles, and gluing these along the basepoint. $f$ is far from being a weak-equivalence of topological spaces. Indeed, \cite[Thm. 3.1]{Hawaii} shows that it does not even induce an isomorphism on singular homology in degree $1$. Nevertheless, if we consider $E$ and $E'$ as stratified over $P = \{0 <1 \}$, by sending the singular point to $0$, then $f$ induces a weak equivalence of stratified spaces. Indeed, it even induces a homeomorphism on strata and homotopy links. 
This should not be excessively surprising. The forgetful functor $\Top_P\to \Top$ is left Quillen, which means it is only expected to preserve weak equivalences between \textbf{cofibrant} objects. However, not every object in $\Top_P$, in particular not the Hawaiian Earring, is cofibrant.
\end{example}
\begin{remark}\label{rem:forget_preserves_we_triangulable}
Despite \cref{ex:counter_ex_forgetful}, it holds true that the forgetful functor $\Top_P \to \Top$ preserves weak equivalences for much larger class than that of cofibrant objects. In fact, it preserves weak equivalences between realizations of stratified simplicial sets. This is a consequence of \cref{theo:Equivalence_Simplicial_Homotopy_Category}, together with the fact that $\sS_P \to \sS$ preserves weak equivalences, by a direct application of Ken Browns Lemma (\cite[Cor. 7.7.2]{hirschhornModel}.
\end{remark}
\begin{remark}\label{rem:TopP_Transported}
The model structure defined above was originally by transporting the model structure from $\Diag_P$ along the adjunction $C_P\colon \Diag_P\leftrightarrow\Top_P\colon D_P$ (see \cref{rec:Diagram_Functors}). This uses the transport theorem \cite[Corollary 3.3.4]{NecessaryAndSufficientConditionInduced}, and this is were the necessity to restrict to $\Delta$-generated spaces arose, since the theorem requires $\Top_P$ to be locally presentable (see \cref{rem:Delta_generated}). Using the alternative transfer theorem \cite[A.1]{stephan2013equivariant} instead, one can prove the existence of an identically defined model structure on the category of $P$-stratified spaces defined using \define{all} topological spaces.
Denote the larger model category constructed in this fashion by $\mathcal T_P$. Then $\mathcal T_P$ has identically defined acyclic cofibrations and fibrations as $\Top_P$. In particular, the adjunction 
\begin{align*}
    \mathcal T_P \leftrightarrow \Top_P
\end{align*}
given by inclusion and $\Delta$-generation is a Quillen-adjunction. It follows immediately from the universal property of $\Delta$-generation and the definition of homotopy links that both unit and counit of this adjunction are weak equivalences. In particular, the two model categories are Quillen-equivalent. Hence, from the perspective of homotopy theory, the restriction to $\Delta$-generated spaces can be neglected. At the same time, $\TopP$ enjoys the additional benefit of being a combinatorial model category enriched over $\Top$, (and over $\sS$, through the functor $\Sing$) making it generally preferable to work with. Even more, one easily checks that with its simplicial structure described in \cref{rec:enriched_structures}, $\Top_P$ is a simplicial model category. 
\end{remark}
There is a similar model structure for $\TopNP$, also obtained by transporting along the adjunction with $\Diag_P$.
\begin{theorem}[{\cite[Theorem 2.8]{douteauEnTop}}]
\label{theo:CMF_TopNP}
There exists a model structure on $\TopNP$, where a map $f\colon X\to Y$ is
\begin{itemize}
    \item a fibration if the induced maps $\HolINP(X)\to\HolINP(Y)$ are fibrations for all $\I\in R(P)$,
    \item a weak equivalence if the induced maps $\HolINP(X)\to\HolINP(Y)$ are weak equivalences for all $\I\in R(P)$.
\end{itemize}
This model structure is cofibrantly generated and the set of generating cofibrations is given by
\begin{equation*}
    \{\RealNP{\Delta^{\I}}\times S^{n-1}\hookrightarrow \RealNP{\Delta^{\I}}\times B^{n}\mid n\geq 0,\ \I\in R(P)\},
\end{equation*}
while the set of generating trivial cofibrations is given by
    \begin{equation*}
    \{\RealNP{\Delta^{\I}}\times B^{n}\times\{0\}\hookrightarrow \RealNP{\Delta^{\I}}\times B^{n}\times [0,1]\mid n\geq 0,\ \I\in R(P)\},        
    \end{equation*}
where $B^n$ and $S^{n-1}$ are respectively the $n$-dimensional ball and the $(n-1)$-dimensional sphere bounding it.
\end{theorem}

The model categories from \cref{theo:CMF_TopP,theo:CMF_TopNP} are Quillen-equivalent, through the adjunction $\varphi_P\circ-\dashv -\times_P\Real{N(P)}$ described in \cref{rec:circ_varphiP}, see \cite[Theorem 2.15]{douteauEnTop} together with the correction in \cite[Theorem 3.15]{douteau2021stratified} for a proof.
\begin{theorem}
The adjunction $
    \varphi_P\circ-\colon \TopNP\leftrightarrow\Top_P\colon -\times_P\Real{N(P)} $
is a Quillen-equivalence.
\end{theorem}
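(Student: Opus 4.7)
My plan is to verify the adjunction is Quillen and then use a strong reflection property of the right adjoint to reduce the Quillen equivalence condition to a check on the unit. For the Quillen adjunction, since by \cref{rec:Stratified_Real_Strat} we have $\RealP{-} = (\varphi_P \circ -) \circ \RealNP{-}$, the left adjoint $\varphi_P \circ -$ maps the generating cofibrations $\RealNP{\Delta^\I} \times S^{n-1} \hookrightarrow \RealNP{\Delta^\I} \times B^n$ of \cref{theo:CMF_TopNP} to the generating cofibrations $\RealP{\Delta^\I} \times S^{n-1} \hookrightarrow \RealP{\Delta^\I} \times B^n$ of \cref{theo:CMF_TopP}, and similarly for generating trivial cofibrations, so $\varphi_P \circ -$ is left Quillen.

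The key observation towards the equivalence is that the right adjoint $-\times_P \Real{N(P)}$ preserves \emph{and reflects} every weak equivalence, owing to the natural isomorphism $\HolINP(Y \times_P \Real{N(P)}) \cong \HolIP(Y)$ from \cref{Rem:Comparing_Holink_Intro}, together with the characterization of weak equivalences in both model categories via their homotopy link functors. Writing the adjoint of $f \colon \varphi_P \circ X \to Y$ as $\widetilde{f} = (-\times_P \Real{N(P)})(f) \circ \eta_X$ and applying 2-out-of-3, the Quillen equivalence criterion then reduces to showing that the unit $\eta_X \colon X \to (\varphi_P \circ X) \times_P \Real{N(P)}$ is a weak equivalence in $\TopNP$ for every cofibrant $X$. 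Unpacking through the isomorphism above, this amounts to verifying that for every such $X$ and every $\I \in R(P)$, the canonical inclusion
\[
j_\I \colon \HolINP(X) \hookrightarrow \HolIP(\varphi_P \circ X)
\]
of strongly stratum-preserving maps into stratum-preserving ones is a weak equivalence of spaces.

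For the generating cofibrant objects $\RealNP{\Delta^{\I'}} \times B^n$, both sides can be analyzed directly: each deformation retracts onto the finite discrete set of order-preserving injections $\I \hookrightarrow \I'$, so $j_\I$ is an equivalence. For general cofibrant $X$ I would proceed by transfinite induction along a cellular presentation. The hard part will be this inductive step: $\HolINP$ and $\HolIP$ are mapping-space constructions and hence right adjoints, so they do not preserve pushouts on the nose. One must leverage the compactness of $\RealP{\Delta^\I}$ and the closed-inclusion nature of the attaching maps to show that the analogous failures of exactness on the two sides of $j_\I$ cancel out, so that $j_\I$ remains a weak equivalence after each cell attachment. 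This delicate point, controlling the behaviour of the strongly-versus-weakly stratum preserving distinction under attaching cells, is presumably what necessitated the correction in \cite[Theorem 3.15]{douteau2021stratified}.
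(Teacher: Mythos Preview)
The paper does not give its own proof of this statement; it cites \cite[Theorem 2.15]{douteauEnTop} together with the correction in \cite[Theorem 3.15]{douteau2021stratified}. From the surrounding material, however, a clean argument is immediately available and is almost certainly the intended one: by the theorem stated just after (\cite[Theorem 2.12]{douteauEnTop}), both $C_P\colon \Diag_P\to\TopNP$ and $C_P^{\Top}\colon \Diag_P\to\Top_P$ are left Quillen equivalences, and by the commutative triangle \eqref{diag:factorized_triangle} one has $C_P^{\Top}=(\varphi_P\circ-)\circ C_P$. Your check that $\varphi_P\circ-$ is left Quillen is correct, so 2-out-of-3 for Quillen equivalences finishes the proof. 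Your observation that the right adjoint reflects all weak equivalences (via \cref{Rem:Comparing_Holink_Intro}) is also correct, though in this approach it is not needed.

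Your direct route, reducing to the claim that $j_\I\colon \HolINP(X)\hookrightarrow\HolIP(\varphi_P\circ X)$ is a weak equivalence for every cofibrant $X\in\TopNP$, is sound as a reduction, but you leave the inductive step entirely unproved, and that step is the whole content. The sentence ``the analogous failures of exactness on the two sides of $j_\I$ cancel out'' is not an argument; mapping-space functors of this kind do not interact with cell attachments in any way that makes such a cancellation automatic. Tellingly, the paper \emph{does} prove exactly this comparison in the special case $X=\RealNP{K}$ for a stratified simplicial set $K$ (\cref{theo:Strong_Holinks_are_Holinks}), and the proof occupies most of \cref{Section:Real_Char_WE}: it proceeds by direct point-set topology (reduction to locally finite $K$ and to $P=\I$, auxiliary subspaces $\RealP{K}^{\redu,l}$, partitions of unity on the holink), with no trace of a cell induction. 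This strongly suggests that your proposed inductive step does not admit a soft proof. Your guess about the nature of the correction in \cite{douteau2021stratified} is accordingly off the mark: the difficulty lies in comparing the two holinks for a single fixed space, not in bookkeeping along a skeletal filtration. (A small slip in your base case: for regular flags $\I,\I'$ there is at most \emph{one} order-preserving injection $\I\hookrightarrow\I'$, so both sides are either a point or empty, not a general finite set.)
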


The model categories $\TopNP$ and $\Top_P$ are also Quillen-equivalent to $\Diag_P$.

\begin{theorem}[{\cite[Theorem 2.12]{douteauEnTop}}]
The adjunctions $C_P\colon \Diag_P\leftrightarrow\TopNP\colon D_P$ and $C_P\colon \Diag_P\leftrightarrow\Top_P\colon D_P$ are Quillen-equivalences.
\end{theorem}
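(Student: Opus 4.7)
The overall plan exploits the transport construction of the model structures on $\Top_P$ and $\TopNP$: since these are obtained by left transport from the projective model structure on $\Diag_P$ along the adjunctions $(C_P, D_P)$ (see \cref{theo:CMF_TopP,theo:CMF_TopNP}), both adjunctions are already Quillen by construction. More importantly, the very definition of the weak equivalences on $\Top_P$ and $\TopNP$ makes $D_P$ preserve and reflect weak equivalences between \emph{arbitrary} objects, not merely fibrant ones. Under this stronger property it is standard that the Quillen adjunction upgrades to a Quillen equivalence as soon as the unit $\eta_F\colon F \to D_P C_P F$ is a weak equivalence in $\Diag_P$ for every projectively cofibrant $F$: given fibrant $X$ and $g\colon C_P F \to X$, two-out-of-three applied to $\eta_F$ and $D_P(g)$ shows that $g$ is a weak equivalence if and only if its adjoint is.

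To verify this unit condition I would argue by cell induction. The class of cofibrant $F$ for which $\eta_F$ is a pointwise weak equivalence is closed under retracts, transfinite composition along cofibrations, and pushouts along generating cofibrations; the first is two-out-of-three, the second follows from the fact that $D_P^{\Top}$ and $D_P^{\Top_{N(P)}}$ are defined by mapping out of the compact spaces $\RealP{\Delta^\I}$ and $\RealNP{\Delta^\I}$ and hence commute with filtered colimits along cofibrations, and the third from left properness of $\Diag_P$ together with the fact that $C_P$ preserves colimits. It thus suffices to verify $\eta_F$ on the generating cells $F = (\Delta^n)^{\I}$, for which $C_P F = \RealP{\Delta^\I}\times\Delta^n$ (respectively $\RealNP{\Delta^\I}\times\Delta^n$).

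Evaluating at a regular flag $\I_0$, the unit becomes a map from $(\Delta^n)^{\I}(\I_0)$ (which is $\Delta^n$ if $\I_0\subseteq \I$ and empty otherwise) to $\Sing$ of the $\I_0$-th homotopy link of $C_P F$. Emptiness when $\I_0\not\subseteq\I$ is immediate, since the stratification of $C_P F$ takes values in the subflags of $\I$. When $\I_0\subseteq\I$, the link splits as a product: the free mapping space $\mathcal{C}^0(\Real{\Delta^{\I_0}}, \Real{\Delta^n})$ (which contracts to $\Real{\Delta^n}$ via the constant-map section) times the stratum-preserving mapping space from $\Real{\Delta^{\I_0}}$ to $\Real{\Delta^\I}$; the problem thus reduces to contractibility of the latter space.

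The main obstacle is this last contractibility statement, which I expect to handle differently in the two cases. For $\TopNP$ it is essentially automatic: the strong stratification condition, together with the injectivity of $\varphi_{\Delta^\I}\colon \Real{\Delta^\I}\to\Real{N(P)}$, forces any admissible map to equal the canonical affine inclusion $\iota\colon\Real{\Delta^{\I_0}}\hookrightarrow\Real{\Delta^\I}$ induced by $\I_0\subseteq\I$, so that the space is a point. For $\Top_P$, I would construct an explicit straight-line deformation $H_s = (1-s)f + s\iota$ from an arbitrary stratum-preserving $f$ to $\iota$ and check that each $H_s$ remains stratum-preserving. This follows from the characterization of the poset stratum of a point in $\Real{\Delta^\I}$ as the maximum index of its non-zero barycentric coordinates: since $\iota(x)$ and $f(x)$ lie in the same stratum for each $x\in\Real{\Delta^{\I_0}}$, the maximum non-zero index is preserved under convex combinations, so $H_s(x)$ stays in that stratum throughout the homotopy.
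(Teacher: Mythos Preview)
The paper does not itself prove this statement; it is recalled from \cite{douteauEnTop}. Your overall strategy is the standard one for transported model structures, and your verification on the generating cells $(\Delta^n)^{\I}$ is correct---in particular, the straight-line homotopy does show that $\mathcal{C}^0_P(\RealP{\Delta^{\I_0}},\RealP{\Delta^{\I}})$ is contractible when $\I_0\subseteq\I$.

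The gap is in the pushout step of the cell induction. You claim closure under pushouts along generating cofibrations follows ``from left properness of $\Diag_P$ together with the fact that $C_P$ preserves colimits.'' These facts ensure that the original square in $\Diag_P$ and its image under $C_P^{\Top}$ are homotopy pushouts, but they do \emph{not} imply that $D_P^{\Top}C_P^{\Top}$ of the square is a homotopy pushout in $\Diag_P$, and that is what the cube argument actually requires: the gluing lemma only controls the induced map between the two pushouts, whereas $D_P^{\Top}C_P^{\Top}(F')$ is merely the target of a comparison map out of the pushout of the $D_P^{\Top}C_P^{\Top}$-span. Since $D_P^{\Top}$ is built from mapping-space functors $\mathcal{C}^0_P(\RealP{\Delta^{\I_0}},-)$, and such functors do not preserve homotopy pushouts in general, there is no formal reason this comparison map is a weak equivalence.

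Closing the gap requires real input about how homotopy links interact with cell attachments for spaces of the form $C_P^{\Top}(F)$. One clean route---carried out in this paper in the proof of \cref{cor:Real_CP_reflects_WE}---is to replace the homotopy link by the geometric fiber over the barycenter $b\in\Real{\Delta^{\I_0}}$: taking fibers \emph{does} commute with these pushouts, and one checks directly from the definition of $C_P$ that $\Real{C_P(F)}_b\cong\Real{F(\I_0)}$ for every $F$. Together with the equivalence between fibers and homotopy links (\cref{lem:Holinks_are_Links} for $\TopNP$, and additionally \cref{theo:Strong_Holinks_are_Holinks} for $\Top_P$), this yields the unit condition for \emph{all} $F$ at once, bypassing the cell induction entirely.
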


Leveraging the structure described in \cref{rem:Strat_Fibers_Over_Poset}, one can construct a model structure on $\Strat$ through the theory of Quillen-bifibrations, see \cite{CagneMellies}. Note that any isomorphism of posets $\alpha\colon P\to Q$, induces an equivalence of categories $\Diag_Q\to\Diag_P$. In particular, given a map in $\Strat$ 
\begin{equation}
    \begin{tikzcd}
             X
             \arrow{r}{f}
             \arrow[swap]{d}{\varphi_X}
             &Y
             \arrow{d}{\varphi_Y}
             \\
             P
             \arrow{r}{\bar{f}}
             &Q\spacecomma
    \end{tikzcd}
\end{equation}
where $\bar{f}$ is an isomorphism, one can consider the diagram associated to $Y$ as a diagram in $\Diag_P$, and consider $f$ as inducing a map in $\Diag_P$. From this, it follows that a weak equivalence induces a map of homotopy links.
\begin{theorem}[{\cite[Theorem 3.6]{douteauEnTop}}]
There exist a model structure on $\Strat$ where a map $f\colon (X\to P)\to (Y\to Q)$ is a weak equivalence if and only if $\bar{f}\colon P\to Q$ is an isomorphism, and the maps induced by $f$
\begin{equation*}
    \HolIP(X)\to\mathcal{H}\!\textit{o}\mathrm{Link}_{\bar{f}(\I)}(Y),
\end{equation*}
are weak equivalences for all $\I\in R(P)$.\\
This model category is cofibrantly generated with set of generating cofibrations is given by
\begin{equation*}
    \{\Real{\Delta^{\I}}_{\N}\times S^{n-1}\hookrightarrow \Real{\Delta^{\I}}_{\N}\times B^{n}\mid n\geq 0,\ \I\in R(\N)\},
\end{equation*}
and the set of generating trivial cofibrations is given by
    \begin{equation*}
    \{\Real{\Delta^{\I}}_{\N}\times B^{n}\times\{0\}\hookrightarrow \Real{\Delta^{\I}}_{\N}\times B^{n}\times [0,1]\mid n\geq 0,\ \I\in R(\N)\}.       
    \end{equation*}

\end{theorem}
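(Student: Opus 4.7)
The plan is to construct this model structure by gluing the fiberwise model structures on $\Top_P$ from \cref{theo:CMF_TopP} across the Grothendieck bifibration $\Strat \to \Poset$ described in \cref{rem:Strat_Fibers_Over_Poset}, using the Quillen bifibration machinery of \cite{CagneMellies}. The base $\Poset$ should be equipped with the trivial model structure whose weak equivalences are precisely the isomorphisms, and in which every map is both a cofibration and a fibration.

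The first main step is to verify the axioms of a Quillen bifibration. Since the only weak equivalences in $\Poset$ are isomorphisms, the compatibility conditions reduce almost entirely to the following statement: for every isomorphism of posets $\alpha \colon P \to Q$, the induced equivalence of categories $\Top_P \to \Top_Q$ is an isomorphism of model categories. This is immediate from the definition of the $\Top_P$-model structure, since an isomorphism of posets induces a bijection on regular flags, and the homotopy links $\HolIP$ transform naturally under this bijection, so all three distinguished classes of maps are preserved.

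Once the Quillen bifibration structure is in place, the characterization of weak equivalences follows: a map $f \colon (X \to P) \to (Y \to Q)$ is a weak equivalence if and only if $\bar{f}$ is an isomorphism (so the morphism lies over a base-weak-equivalence) and the transported fiberwise map is a weak equivalence in $\Top_P$---which by \cref{theo:CMF_TopP} means it induces weak equivalences on all $\HolIP$ for $\I \in R(P)$. For the identification of the generating sets in the universal form indexed by $R(\N)$, I would use the key observation that any regular flag $\I = [p_0 < \cdots < p_n]$ in an arbitrary poset $P$ is uniquely the image of $[0 < \cdots < n] \in R(\N)$ under a poset map. Thus, when attaching a cell $\Real{\Delta^{\I}}_{\N} \times S^{n-1} \hookrightarrow \Real{\Delta^{\I}}_{\N} \times B^n$ to an object $X \to P$ along a stratified map, the attaching square covers a canonical poset map $[0 < \cdots < n] \hookrightarrow \N \to P$ that selects the desired flag in $P$. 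The single family indexed by $R(\N)$ therefore accounts for all fiberwise generating cofibrations across all bases, without needing a family indexed by $P$.

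The main obstacle I anticipate lies in verifying the compatibility axioms of the Quillen bifibration between reindexing and the fiberwise model structures, in particular that pushouts along generating (trivial) cofibrations remain within the class of maps over base-isomorphisms. Because the base model structure is trivial and weak equivalences in $\Poset$ are just isomorphisms, these conditions essentially collapse to the previously verified functoriality of the $\Top_P$-model structures under poset isomorphisms, together with the fact that cocartesian lifts of such isomorphisms in $\Strat$ are tautological relabelings of the stratification. Finally, the smallness hypotheses required by the small object argument hold by virtue of $\Top$ being locally presentable (see \cref{rem:Delta_generated}), hence so are each $\Top_P$ and the total category $\Strat$.
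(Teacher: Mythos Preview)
The paper does not actually prove this theorem; it is quoted from \cite{douteauEnTop} with only the one-line indication preceding it that the construction proceeds ``through the theory of Quillen-bifibrations, see \cite{CagneMellies}.'' Your proposal follows exactly this route, so at the level of overall strategy you are aligned with the intended argument.

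There is one oversimplification worth flagging. You claim that because the base model structure on $\Poset$ has isomorphisms as weak equivalences, the Cagne--Melli\`es compatibility conditions ``reduce almost entirely'' to checking that isomorphisms of posets induce isomorphisms of fiberwise model categories. This is not quite right: the bifibration axioms require that for \emph{every} morphism $\alpha\colon P\to Q$ in the base (not only the weak equivalences), the change-of-base adjunction $\alpha_!\dashv\alpha^*$ between $\Top_P$ and $\Top_Q$ be a Quillen adjunction. Verifying that $\alpha_!$ preserves (trivial) cofibrations is not automatic, since pushing a generating cofibration $\RealP{\Delta^{\I}}\times S^{n-1}\hookrightarrow\RealP{\Delta^{\I}}\times B^n$ along a non-injective $\alpha$ produces a stratified simplex over a possibly non-regular flag in $Q$; one must still argue that this remains a cofibration in $\Top_Q$. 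This is handled in \cite{douteauEnTop}, but it is a genuine verification and not a triviality that collapses to the isomorphism case. Your identification of the universal generating sets via flags in $\N$ is correct.
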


\subsection{The model category of stratified simplicial sets}\label{subsec:mod_of_strat_ss}

Before defining a model structure for stratified simplicial sets, we need to define a particular set of stratified horn inclusions. 

\begin{definition}
A  \define{stratified horn inclusion} is an inclusion $\Lambda^{\J}_k\to \Delta^{\J}\in \sS_P$, where $\J$ is a flag and $\Lambda^{\J}_k=\cup_{i\not= k}d_i(\Delta^{\J})$, i.e. it is the union of all proper faces of $\Delta^{\J}$ other than its $k$-th face.
An \define{admissible horn inclusion} is a stratified horn inclusion $\Lambda^{\J}_k\to \Delta^{\J}$ such that any of the following equivalent condition is satisfied:
\begin{itemize}
    \item The map $\Lambda^{\J}_k\to \Delta^{\J}$ is a stratified homotopy equivalence (see \cref{def:Strat_Homotopies_simplicial}).
    \item There exists a flag $\J'$ such that $\Delta^{\J}$ is either the $k$-th or $(k+1)$-th degeneracy of $\Delta^{\J'}$.
    \item $\J=[p_0\leq\dots\leq p_n]$ and $p_k=p_{k+1}$ or $p_k=p_{k-1}$.
\end{itemize}
\end{definition}
For a proof that the above conditions are equivalent, see \cite[Proposition 1.13]{douSimp}.

\begin{theorem}[{\cite[Theorem 2.14]{douSimp}}]
\label{theo:CMF_sSetP}
There exists a cofibrantly generated model structure on $\sS_P$ where the generating set of cofibrations is given by
\begin{equation*}
    \{\partial\Delta^{\J}\to\Delta^{\J}\mid \Delta^{\J}\in \Delta(P)\},
\end{equation*}
and the generating set of trivial cofibrations is given by
\begin{equation*}
    \{\iota^{\J}_k\colon\Lambda^{\J}_k\to\Delta^{\J}\mid \text{$\iota^{\J}_k$ is an admissible horn inclusion} \}.
\end{equation*}
weak equivalences in this category are the maps $f\colon K\to L$ such that for all $\I\in R(P)$, the induced maps
\begin{equation}
    \Hol_{\I}(K^{\fib})\to\Hol_{\I}(L^{\fib})
\end{equation}
are weak equivalences, where $(-)^{\fib}$ means passing to a fibrant replacement.
\end{theorem}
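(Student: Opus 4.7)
The plan is to apply Kan's recognition theorem for cofibrantly generated model categories, taking as generating cofibrations $I = \{\partial \Delta^{\J} \hookrightarrow \Delta^{\J}\}$, as generating trivial cofibrations $J = \{\Lambda^{\J}_k \hookrightarrow \Delta^{\J} \text{ admissible}\}$, and as class of weak equivalences $W$ the one described in the statement. By \cref{rem:sSetP_Presheaf_category}, $\sS_P \cong \Fun(\Delta(P)^{\op}, \Set)$ is a presheaf category, hence complete, cocomplete, and locally finitely presentable; in particular every object is small relative to every inclusion of presheaves, so the smallness hypotheses of the recognition theorem are automatic.

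Before addressing $W$, I would apply the small object argument to $J$ to obtain a functorial replacement $K \to K^{\fib}$, the fibrant objects being those with the right lifting property against all admissible horn inclusions. With this in hand, $W$ is manifestly closed under retracts and satisfies two-out-of-three, since both properties hold flagwise in the Kan-Quillen model structure on $\sS$ and are preserved by $\Hol_{\I}(-) = \Map(\Delta^{\I},-)$ and by the fibrant replacement. To see that every map in $J$-cell is a weak equivalence, observe that each admissible horn inclusion is a stratum-preserving strong deformation retract by the characterization recalled just before the theorem. Consequently, after applying any functor preserving stratified homotopies --- in particular $\Hol_{\I}(-)$ and the fibrant replacement --- an admissible horn inclusion becomes a simplicial homotopy equivalence, hence a weak equivalence in $\sS$. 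Since $\Hol_{\I}(-)$ commutes with the pushouts and transfinite compositions arising in the small object argument, this conclusion propagates to all of $J$-cell.

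The main obstacle, and the heart of the argument, is the intersection identity $I\text{-inj} = W \cap J\text{-inj}$. The inclusion $I\text{-inj} \subseteq W$ is comparatively accessible: an $I$-injective lifts against every boundary inclusion $\partial \Delta^{\J} \hookrightarrow \Delta^{\J}$, which translates, via adjunction, into trivial Kan fibrations on each $\Hol_{\I}(-)$, and this survives fibrant replacement. The reverse inclusion --- that any $J$-injective lying in $W$ is actually $I$-injective --- is considerably more delicate, since $J$-injectivity only provides liftings against admissible horns, which must be upgraded to liftings against arbitrary boundary inclusions. I would handle this by constructing an explicit combinatorial model for the fibrant replacement, namely a stratified variant of Kan's $\mathrm{Ex}^{\infty}$ construction (cf.\ \cref{Section:Ex_P_FSAE}), combined with a stratified simplicial approximation argument allowing one to reduce liftings against $\partial \Delta^{\J} \hookrightarrow \Delta^{\J}$ to iterated liftings against admissible horns after sufficiently many subdivisions. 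Once this step is in place, Kan's recognition theorem yields the desired cofibrantly generated model structure.
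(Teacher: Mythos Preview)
The theorem is a recollection from \cite{douSimp}; the present paper does not give a proof but does indicate the method in the remark immediately following: the model structure is obtained via Cisinski's theory \cite{CisinskiPrefaisceaux}, by specifying the cylinder $-\times\Delta^1$ and the (minimal) class of anodyne extensions. Cisinski's machinery then produces the model structure with monomorphisms as cofibrations and abstractly characterizes the weak equivalences; the holink description is derived afterwards. Your route via Kan's recognition theorem is genuinely different, and in principle viable, but two points need correction.

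First, the claim that ``$\Hol_{\I}(-)$ commutes with the pushouts and transfinite compositions arising in the small object argument'' is false: $\Hol_{\I}(-)=\Map(\Delta^{\I},-)$ is a right adjoint and does not preserve pushouts. The correct way to obtain $J\text{-cell}\subseteq W$ is to observe that an admissible horn inclusion is the inclusion of a \emph{stratified strong deformation retract} (not merely a stratified homotopy equivalence), that this property is stable under pushout and transfinite composition, and that therefore every $J$-cell map is a stratified homotopy equivalence; one then applies the functorial fibrant replacement and $\Hol_{\I}$, both of which preserve stratified homotopies.

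Second, and more seriously, the sketch for $W\cap J\text{-inj}\subseteq I\text{-inj}$ is not a proof. Reducing a lifting problem against $\partial\Delta^{\J}\hookrightarrow\Delta^{\J}$ to iterated liftings against admissible horns ``after sufficiently many subdivisions'' does not obviously work: subdividing the source introduces new boundary simplices which are themselves not admissible horns, and there is no evident way to stay within $J$-cof. This is precisely the step that Cisinski's theory absorbs into its abstract existence theorem, and it is why \cite{douSimp} proceeds that way rather than through Kan's recognition principle. If you insist on the recognition-theorem route, you would need to supply a genuine argument here --- for instance, by first proving that between fibrant objects weak equivalences are stratified homotopy equivalences, and then using the retract argument --- but as written this step is a gap.
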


\begin{remark}
As is usual for model category of presheaves, all objects of $\sS_P$ are cofibrant. In fact, the class of cofibrations is exactly the class of monomorphisms in $\sS_P$. Indeed, the set of generating cofibrations is the set of boundary inclusions of (stratified) simplices, which can easily be seen to generate all monomorphisms.
\end{remark}

\begin{remark}\label{rem:Haine_Structure_is_localization}
Note that stratified homotopy equivalences in the sense of \cref{def:Strat_Homotopies_simplicial} are weak equivalences in the model category described above. Furthermore, this model structure is minimal among model structures satisfying this property and having monomorphisms as cofibrations, i.e. it has the smallest possible class of weak equivalences. This comes from the fact that the model structure of \cref{theo:CMF_sSetP} is built using Cisinski's theory \cite{CisinskiPrefaisceaux}. By \cite[Théorème 1.3.22]{CisinskiPrefaisceaux}, such a model category is specified by the data of a cylinder (\cref{def:Strat_Homotopies_simplicial}) and a class of anodyne extension. In the case of $\sS_P$, since admissible horn inclusions are stratified homotopy equivalences, the class of anodyne extensions is the one generated by the empty set, see \cite[1.3.12, Proposition 1.3.13]{CisinskiPrefaisceaux}. This implies in particular that if a model structure on $\sS_P$ is such that
\begin{itemize}
    \item the cofibrations are monomorphisms,
    \item stratified homotopy equivalences are weak equivalences,
\end{itemize}
then it is a right Bousfield localization of the structure of \cref{theo:CMF_sSetP}. One particular example of interest is the model structure $\sS_P^{\text{Joyal-Kan}}$ defined by Haine in \cite{haine2018homotopy}. Haine starts from the Joyal model structure, and then localizes the structure along the inclusions of stratified simplicial sets into their cylinders. In particular, $\sS_P^{\text{Joyal-Kan}}$ is a right Bousfield localization of $\sS_P$.
\end{remark}

\begin{remark}
Note that, contrary to the case of stratified spaces (\cref{theo:CMF_TopP}), the weak equivalences for stratified simplicial sets are not \define{defined} as the maps inducing weak equivalences between homotopy links, but only as those maps satisfying this property \define{after a suitable fibrant replacement}. In particular, the model structure on $\sS_P$ is not transported from the category of diagrams $\Diag_P$. On the other hand, consider the model structure described by Henriques in \cite{Henriques}, in which weak equivalences are defined as those maps inducing weak equivalences between all simplicial homotopy links. The two model structures are known to be the same for very general reasons (for example, consider the previous remark), but it is not immediately clear why the classes of weak equivalences coincide. We investigate this in \cref{Section:LastHolink}, see also \cref{rem:Henriques}.
\end{remark}
Similarly to the case of stratified spaces, there is a model structure on all stratified simplicial sets.
\begin{theorem}
There exists a model structure on $s\Strat$ where a map $f\colon (K\to N(P))\to (L\to N(Q))$ is a weak equivalence if and only if $\bar{f}\colon P\to Q$ is an isomrophism and the maps induced by $f$
\begin{equation*}
    \Hol_{\I}(K^{\fib})\to\Hol_{\bar{f}(\I)}(L^{\fib})
\end{equation*}
are weak equivalences for all $\I\in R(P)$, where $(-)^{\fib}$ is a fibrant replacement in $\sS_P$.

The model category is cofibrantly generated, and the set of generating cofibrations is given by
\begin{equation*}
    \{\partial\Delta^{\I}\hookrightarrow \Delta^{\I}\mid \I\in R(\N)\},
\end{equation*}
and the set of generating trivial cofibrations is given by
\begin{equation*}
    \{\iota_k^{\I}\colon \Lambda_k^{\I}\hookrightarrow \Delta^{\I}\mid \text{$\iota^{\I}_k$ is admissible in $\sS_{\N}$}\}.
\end{equation*}
\end{theorem}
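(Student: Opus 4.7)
The plan is to construct the model structure on $s\Strat$ by applying the theory of Quillen bifibrations of Cagne--Melliès \cite{CagneMellies}, in direct analogy with the construction of the model structure on $\Strat$ given in \cite[Theorem 3.6]{douteauEnTop}. The forgetful functor $s\Strat \to \Poset$, sending $(K \to N(P))$ to $P$, is a bifibration whose fiber over $P$ is precisely $\sS_P$: the cocartesian lift of a poset map $\alpha \colon P \to Q$ at $(K, \varphi_K)$ is $(K, N(\alpha) \circ \varphi_K)$, and the cartesian lift at $(L, \varphi_L)$ is the pullback $L \times_{N(Q)} N(P)$ with its induced stratification. We equip $\Poset$ with the trivial model structure (weak equivalences are the isomorphisms, every map is both a fibration and a cofibration), and each fiber $\sS_P$ with the model structure of \cref{theo:CMF_sSetP}.

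The key compatibility to verify is that for any poset map $\alpha \colon P \to Q$, the adjoint pair $\alpha_! \dashv \alpha^*$ between the fibers is Quillen. The functor $\alpha_!$ acts as the identity on underlying simplicial sets (only post-composing the stratification with $N(\alpha)$), so it preserves monomorphisms, hence cofibrations. An admissible horn inclusion $\Lambda^{\J}_k \hookrightarrow \Delta^{\J}$ for a flag $\J = [p_0 \leq \cdots \leq p_n]$ in $P$ is sent to the analogous inclusion stratified via $\alpha(\J) = [\alpha(p_0) \leq \cdots \leq \alpha(p_n)]$ in $Q$; since admissibility depends only on an equality among consecutive entries $p_k = p_{k \pm 1}$ (which is preserved by $\alpha$), admissible horns are sent to admissible horns, so $\alpha_!$ is left Quillen. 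Since the base has only isomorphisms as weak equivalences, the further hypotheses of \cite{CagneMellies} reduce to checking that reindexing along poset isomorphisms gives equivalences of model categories, which is immediate from functoriality of $\sS_{(-)}$.

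With the Cagne--Melliès transport theorem in hand, the weak equivalence characterization unfolds directly: a map $f$ is a weak equivalence in $s\Strat$ iff $\bar{f}$ is an isomorphism in $\Poset$ and, after identifying $L$ as an object of $\sS_P$ via $\bar{f}^{-1}$, the underlying map is a weak equivalence in the fiber $\sS_P$, which by \cref{theo:CMF_sSetP} is exactly the stated condition on simplicial homotopy links after fibrant replacement. For the generating sets, any stratified simplex $\Delta^{\J}$ with $\J$ a flag in an arbitrary poset is isomorphic in $s\Strat$ to one whose underlying poset is a finite subset of $\N$ (via any order-preserving injection of the image of $\J$ into $\N$), so indexing over flags in $\N$ suffices. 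The main technical subtlety is confirming that this global set generates the class of (trivial) cofibrations, which follows because every fiber $\sS_P$ is cofibrantly generated and its generators arise via cocartesian transport from the $\N$-indexed ones; the verification is the simplicial analogue of the one performed for $\Strat$ in \cite{douteauEnTop} and proceeds essentially verbatim.
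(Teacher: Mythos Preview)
Your approach is exactly the one the paper intends: it states the result in the preliminaries without proof, immediately after recalling that the analogous model structure on $\Strat$ is obtained via the Cagne--Melli\`es bifibration machinery, so the simplicial case is meant to be proved verbatim. Your verification that $\alpha_!$ preserves cofibrations and admissible horn inclusions is correct, and the reduction of the remaining Cagne--Melli\`es hypotheses under the trivial model structure on $\Poset$ is the right observation.

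One imprecision is worth flagging. You write that ``any stratified simplex $\Delta^{\J}$ with $\J$ a flag in an arbitrary poset is isomorphic in $s\Strat$ to one whose underlying poset is a finite subset of $\N$.'' This is not literally true: the object $(\Delta^{\J} \to N(P))$ carries $P$ as part of its data, and is not isomorphic in $s\Strat$ to anything stratified over $\N$ unless $P \cong \N$. The correct statement, and the one that actually makes the global generating set work, is that the fiberwise generator $\partial\Delta^{\J} \to \Delta^{\J}$ in $\sS_P$ is the cocartesian pushforward $\alpha_!(\partial\Delta^{\I} \to \Delta^{\I})$ of a generator over $\N$, where $\I = [0 < 1 < \cdots < n] \in R(\N)$ and $\alpha$ is any poset map sending $i \mapsto p_i$. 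Lifting problems in $s\Strat$ against the $\N$-indexed generators then unfold, via the bifibration, into lifting problems in each fiber against all of its generators. This is exactly what you mean by ``arise via cocartesian transport,'' so the argument is fine once the isomorphism claim is replaced by the pushforward claim.
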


\subsection{Failure to be a Quillen-equivalence}\label{subsec:failure_quillen}

Now that we have two homotopy theories, one for stratified simplicial sets, and one for stratified spaces, we want to show that they coincide through the adjunction $\RealStrat{-}\colon s\Strat\leftrightarrow\Strat\colon \SingStrat$. In the language of model categories, this would mean showing that the adjunction is a Quillen-equivalence. However, this adjunction already fails to be a Quillen-adjunction, since, the right adjoint $\SingStrat$ does not preserve fibrancy, as we show in the following recollection.
\begin{recollection}
\label{rec:SingP_Not_Quillen}
Let $X\to P$ be a stratified space. For $X$ to be fibrant, one has to check that for all regular flags $\I\in R(P)$, $D_P(X)(\I)$ is a Kan-complex. But note that by definition, those are equal to $\Sing(\HolIP(X))$, and so, are Kan-complexes, which means that any stratified space is fibrant as already observed in \cref{rem:StratSpacesAreFibrant}. On the other hand, for $\Sing_P(X)$ to be fibrant means that in any diagram of the form
\begin{equation*}
\begin{tikzcd}
    \Lambda^{\J}_k
    \arrow{r}
    \arrow{d}
    &\Sing_P(X)
    \arrow{d}
    \\
    \Delta^{\J}
    \arrow{r}
    \arrow[dashed]{ur}
    &N(P)\spacecomma
\end{tikzcd}
\end{equation*}
where $\Lambda^{\J}_k$ is admissible, there exists a lift. Using the adjunction $\RealP{-}\dashv\Sing_P$, and the fact that $\Sing_P(P)=N(P)$, this is equivalent to asking for a lift in diagrams of the form
\begin{equation*}
\begin{tikzcd}
    \RealP{\Lambda^{\J}_k}
    \arrow{r}
    \arrow{d}
    &X
    \arrow{d}
    \\
    \RealP{\Delta^{\J}}
    \arrow{r}
    \arrow[dashed]{ur}
    &P\spaceperiod
\end{tikzcd}
\end{equation*}
Now consider the poset $P=\{0<1\}$, and the stratified space $X=\RealP{\Lambda^{\J}_1}$ with $\J=[0\leq 0\leq 1]$. Then, $\Lambda^{\J}_1$ is admissible, and so for $\Sing_P(X)$ to be fibrant, there must be a lift in the following diagram
\begin{equation*}
\begin{tikzcd}
    \RealP{\Lambda^{\J}_1}
    \arrow{r}{\Id}
    \arrow{d}
    &\RealP{\Lambda^{\J}_1}
    \arrow{d}
    \\
    \RealP{\Delta^{\J}}
    \arrow{r}
    \arrow[dashed]{ur}
    &P\spacecomma
\end{tikzcd}
\end{equation*}
but there exist no stratified section $\RealP{\Delta^{\J}}\to\RealP{\Lambda^{\J}_1}$, as illustrated in \cref{Fig:Non_Fibrancy}, which means that $\Sing_P(\RealP{\Lambda^{\J}_1})$ is not fibrant. In particular, the functor $\Sing_P$ does not preserve fibrations.
\end{recollection}

\begin{figure}[h]
\begin{tikzpicture}
\draw[blue, thick,opacity=0.3](0,2)--(2,2);
\draw[red, thick](0,0)--(0,2.028);

\draw[->] (2,1)--(3.6,1);
\filldraw[blue,blue,opacity=0.3](4,2)--(6,2)--(4,0)--(4,2);
\draw[blue,thick](4,0)--(6,2);
\draw[red,thick](4,0)--(4,2.01);

\draw[->,gray, dashed] (4.9,1.1)--(4.1,1.9);
\draw[->,gray, dashed] (4.4,0.6)--(4.1,0.9);
\draw[->,gray, dashed] (5.4,1.6)--(5.1,1.9);
\end{tikzpicture}
\caption{The inclusion $\RealP{\Lambda^{\J}_1}\hookrightarrow\RealP{\Delta^{\J}}$, with $\J=[0\leq 0\leq 1]$, admits no stratified section, since any section would send some part ot the interior of $\Real{\Delta^{2}}$ to the singular stratum. }
\label{Fig:Non_Fibrancy}
\end{figure}
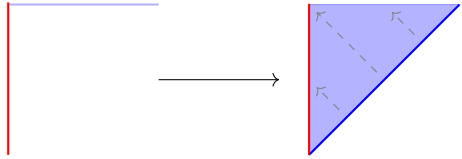

On the other hand, stratified spaces that are in one of the three classes mentioned in \cref{Section:Classical_Links}, pseudo-manifolds, conically stratified spaces and homotopically stratified sets, have better behaved $\Sing_P$.

\begin{theorem}[{\cite[Theorem A.6.4]{HigherAlgebra}\cite[Proposition 8.1.2.6]{nand2019simplicial}}]
\label{theo:Conical_Fibrant}
Let $X\to P$ be a stratified space. If it is either a conically stratified space or if $P$ is finite and $X$ is a homotopically stratified set, then the simplicial set underlying $\Sing_P(X)$ is a quasi-category.
\end{theorem}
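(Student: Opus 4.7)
The plan is to verify directly that inner horns $\Lambda^n_k \hookrightarrow \Delta^n$ (with $0 < k < n$) have the right lifting property against the canonical map $\Sing_P(X) \to \ast$. Via the adjunction $\RealP{-}\dashv \Sing_P$ and the definition of $\Sing_P(X)$ as the pullback along $N(P)\hookrightarrow \Sing(P)$, this is equivalent to showing that for every flag $\J = [p_0\le \dots\le p_n]$ and every $0 < k < n$, every stratum preserving continuous map $\sigma\colon \RealP{\Lambda^{\J}_k} \to X$ extends to $\RealP{\Delta^{\J}}$.

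First I would dispose of the case where $\J$ has a repeated entry, i.e. where $\Lambda^{\J}_k\hookrightarrow \Delta^{\J}$ is admissible in $\sS_P$. In this situation $\Delta^{\J}$ is a degeneracy of a lower-dimensional stratified simplex, and collapsing the degenerate direction yields an explicit stratum preserving retraction $r\colon \RealP{\Delta^{\J}} \to \RealP{\Lambda^{\J}_k}$; the extension is then $\sigma \circ r$. The essential case is therefore that of a regular flag $\J = [p_0 < p_1 < \dots < p_n]$.

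For the conical case I would proceed by induction on the number of strata of $X$ (or equivalently the length of the longest chain in $P$). Let $x_0 = \sigma(v_0) \in X^{p_0}$ be the image of the $0$-th vertex, which lies in the shallowest stratum met by $\sigma$. Use conicality to choose an open neighborhood $U\ni x_0$ stratum preservingly homeomorphic to $Z\times c(L)$, where $L$ is stratified over $P_{>p_0}$ and has strictly fewer strata than $X$. The cone coordinate provides a canonical stratum preserving homotopy pushing the image of the missing $k$-th face of $\sigma$ away from $Z\times\{*\}\subset X^{p_0}$, reducing the extension problem on that face to a lifting problem lying over $P_{>p_0}$; this lifts by the inductive hypothesis applied to $Z\times(c(L)\setminus \{*\})\simeq Z\times L\times (0,1)$. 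Interpolating along the cone parameter between the original horn data and the inductive extension, and gluing outside $U$ with a suitable auxiliary partition-of-unity-type cutoff that respects the stratification, produces the required continuous extension on all of $\RealP{\Delta^{\J}}$. For the case of homotopically stratified sets with $P$ finite, the same strategy works with the local cone parameter replaced by path lifting along the Quinn-fibration $\ev_0\colon \hol_{p<q}(X)\to X^p$, whose fibrancy (together with tameness of stratum inclusions) plays the role of the cone coordinate.

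The main obstacle is combinatorial rather than conceptual. The cone decomposition is only defined in a neighborhood of a single point of $X^{p_0}$, while the horn $\Lambda^{\J}_k$ may meet $X^{p_0}$ in a potentially complicated compact set, so one must arrange the cone-based homotopy to extend continuously to all of $\RealP{\Lambda^{\J}_k}$ and to agree with the inductively obtained extension on the overlap. Setting up the induction so that after the cone collapse the remaining problem genuinely lives over a poset of strictly smaller rank (and, in the homotopically stratified case, checking that the Quinn evaluation fibration lifts coherently across successive strata) is the technical heart of the argument; once this stratified gluing is in place the actual extension is routine.
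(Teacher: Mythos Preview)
The paper does not give its own proof of this statement; it is quoted verbatim as a black-box citation of \cite[Theorem A.6.4]{HigherAlgebra} and \cite[Proposition 8.1.2.6]{nand2019simplicial} and is only \emph{used}, never reproved, in the subsequent arguments. There is therefore nothing in the paper to compare your sketch against.

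On the sketch itself: your inductive strategy for the conical case (push the problem into a conical chart at the image of the $0$-th vertex, use the cone parameter to homotope, and invoke the inductive hypothesis on the link) is indeed the skeleton of Lurie's argument, and your closing paragraph correctly identifies where the real work lies. One point to correct: ``$\J$ has a repeated entry'' is \emph{not} the same as ``$\Lambda^{\J}_k\hookrightarrow\Delta^{\J}$ is admissible.'' Admissibility requires the repetition to occur adjacent to the missing face, i.e.\ $p_{k-1}=p_k$ or $p_k=p_{k+1}$; for instance $\J=[p_0,p_0,p_1,p_2]$ with $k=2$ has a repeated entry but the horn is not admissible, and your ``collapse the degenerate direction'' map is then not a retraction onto $\Lambda^{\J}_k$. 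This case still needs the full argument. For the homotopically stratified case your outline is too schematic to evaluate: replacing the cone coordinate by lifts along $\ev_0\colon\hol_{p<q}(X)\to X^p$ is the right instinct, but making this coherent across all strata at once (and using tameness to control what happens near the lower strata) is exactly the content of Nand-Lal's proof and is not a one-line remark.
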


In those cases, this implies that $\Sing_P(X)$ is a fibrant object in $\sS_P$, see \cite[Proposition 4.12]{douSimp}. Note that while \cref{rec:SingP_Not_Quillen} implies that the adjunction $\RealP{-}\dashv\Sing_P$ is not a Quillen-adjunction, we can still use it to deduce results about objects for which $\Sing_P$ \define{is} fibrant, such as those mentionned above. In fact it is possible to recover an independent proof of Miller's Theorem  (\cref{theo:Miller_Theorem}), this way (see \cite[Section 5]{douSimp}).
Relatedly, \cref{theo:intro_Con_embeds_HoStrat} (\cref{Cor:Conic_embed}) states that those objects do behave like fibrant objects in the homotopy category $\Ho\Strat$.

\subsection{The stratified subdivision and its adjoint}\label{subsec:strat_sd_ex}

As we have seen in the previous section, the model structure on $\Top_P$ and $\sS_P$ cannot be compared directly through the adjunction $\RealP{-}\dashv\Sing_P$. To achieve a comparison, one needs to work with a suitably defined stratified subdivision. Note that this stratified subdivision was already used to characterize the model structure of \cref{theo:CMF_sSetP} in \cite{douSimp}, and that, through its adjoint, it allows for the definition of a fibrant replacement functor in $\sS_P$, see \cref{Section:Ex_P_FSAE,cor:Exi_P_fibrant_replacement}.
\begin{remark}
In this subsection and through the remainder of this paper, we will often abuse notation by identifying an $n$-dimensional stratified simplex $\Delta^{\J}$ with its underlying simplex $\Delta^{n}$. This allows for notation such as $\Delta^{\J}_0$ to refer to the set of vertices of $\Delta^n$.
\end{remark}
\begin{recollection}
Denote by $\sd \colon \sS \to \sS$ the classical barycentric subdivision \cite[Section 2]{KanSubdivision}.
Given a stratified simplex $\Delta^{\J}$, $\J= [p_0 \leq \dots \leq p_n]$ its stratified subdivision is the subcomplex $\sd_P(\Delta^{\J}) \subset \sd(\Delta^{n}) \times \Delta^{\J}$, given by such simplices $[( \sigma_0, q_0), \dots ,( \sigma_k, q_k)] $
fulfilling $\{q_0,\dots, q_k\} \subset \{ p_i \mid i \in \sigma_0 \}$. It is stratified via projection to the second component. This construction left Kan extends to a functor
  \[ \sd_P\colon \sS_P \to \sS_P .\]
  For a stratified simplicial set $K\in\sS_P$, we can also describe the stratified simplicial set $\sd_P(K)$ more explicitely. Note that we have an inclusion $\sd_P(N(P))\subset\sd(N(P))\times N(P)$, composing it with the projection on the first factor gives a map $\sd_P(N(P))\to \sd(N(P))$. The stratified simplicial set $\sd_P(K)$ can then be seen as the following pullback:
  \begin{equation*}
      \begin{tikzcd}
               \sd_P(K)
               \arrow{r}
               \arrow{d}
               &\sd(K)
               \arrow{d}{\sd(\varphi_K)}
               \\
               \sd_P(N(P))
               \arrow{r}
               &\sd(N(P))\spacecomma
      \end{tikzcd}
  \end{equation*}
 and the stratification is given by the composition $\sd_P(K)\to\sd_P(N(P))\to N(P)$.

The functor $\sd_P$ naturally comes with a natural transformation,
$\lv_P\colon \sd_P \to 1_{\sS_P}$, the \define{stratified last vertex map}.
We define it on stratified simplices by giving its value on vertices: 
\begin{align*}
  \lv_P\colon\sd_P(\Delta^{\J})_0 &\to \Delta^{\J}_0 \\
    ([q_0 \leq \dots \leq q_k], q) &\mapsto \max \{ i \mid q_i = q \}.
\end{align*}
Then, since the above is a map between simplicial complexes, its value on vertices uniquely defines a simplicial map $\lv_P\colon \sd_P(\Delta^{\J})\to\Delta^{\J}$.

The functor $\sd_P$ admits a right adjoint, defined as 
    \begin{align*}
    \Ex_P: \sS_P &\to \sS_P \\
    K &\mapsto \sS_P(\sd_P(\Delta^{-}), K)
    \end{align*}
    where again, by \cref{rem:sSetP_Presheaf_category}, we consider $\sS_P$ as a category of presheaves over $\Delta(P)$. The adjoints of the maps $\lv_P\colon \sd_P(K)\to K$ give natural maps $ \iota_K\colon K \hookrightarrow \Ex_P(K)$, which assemble into a natural transformation $\iota \colon \Id_{\sS_P}\to\Ex_P$. 
    Finally, we denote by $\sd^n_P$, $\Ex^n_P$, $\lv^n_P, \iota^n$ the obvious functors obtained by iteration and by $\Ex^{\infty}_P$ the colimit of the diagram \[1_{\sS_P} \hookrightarrow \Ex_P \hookrightarrow Ex^2_P \hookrightarrow \dots .\]
    Given $K \in \sS_P$ the stratified simplicial set $\Ex^{\infty}_P(K)$ is fibrant \cite[Lemma 2.10]{douSimp}. It is the content of \cref{cor:Exi_P_fibrant_replacement} that $X \hookrightarrow \Ex^{\infty}_P(X)$ in fact defines a fibrant replacement of $K$.
\end{recollection}

\begin{figure}[h]
\begin{tikzpicture}
\draw[black](-1,-1) -- (1,-1);
\draw[black](-1,-1) -- (0,1);
\draw[black](0,1) -- (1,-1);
\filldraw[blue](0,1) circle (2pt);
\filldraw[red] (1,-1) circle (2pt);
\filldraw[red] (-1,-1) circle (2pt);

\draw[black,cm ={1,0,0,1,(4cm,0cm)}](-1,-1) -- (1,-1);
\draw[black,cm ={1,0,0,1,(4cm,0cm)}](-1,-1) -- (0,1);
\draw[black,cm ={1,0,0,1,(4cm,0cm)}](0,1) -- (1,-1);
\draw[black,cm ={1,0,0,1,(4cm,0cm)}](-0.5,0) -- (0,-0.5)--(0.5,0);
\draw[black,cm ={1,0,0,1,(4cm,0cm)}](0,-1)--(0,-0.5)--(0,1);
\draw[black,cm ={1,0,0,1,(4cm,0cm)}](1,-1)--(0,-0.5)--(-1,-1);
\filldraw[blue,cm ={1,0,0,1,(4cm,0cm)}](0,1) circle (2pt);
\filldraw[red,cm ={1,0,0,1,(4cm,0cm)}] (1,-1) circle (2pt);
\filldraw[red,cm ={1,0,0,1,(4cm,0cm)}] (-1,-1) circle (2pt);
\filldraw[blue,cm ={1,0,0,1,(4cm,0cm)}] (-0.5,0) circle (2pt);
\filldraw[blue,cm ={1,0,0,1,(4cm,0cm)}] (0.5,0) circle (2pt);
\filldraw[blue,cm ={1,0,0,1,(4cm,0cm)}] (0,-0.5) circle (2pt);
\filldraw[red,cm ={1,0,0,1,(4cm,0cm)}] (0,-1) circle (2pt);

\draw[black,cm ={1,0,0,1,(8cm,0cm)}](-1,-1) -- (1,-1);
\draw[black,cm ={1,0,0,1,(8cm,0cm)}](-1,-1) -- (0,1);
\draw[black,cm ={1,0,0,1,(8cm,0cm)}](0,1) -- (1,-1);
\draw[black,cm ={1,0,0,1,(8cm,0cm)}](0,-1)--(0,-0.5)--(0,0.5)--(0,1);
\draw[black,cm ={1,0,0,1,(8cm,0cm)}](0.3,0.4)--(0,0.3)--(-0.3,0.4);
\draw[black,cm ={1,0,0,1,(8cm,0cm)}](0.7,-0.4)--(0,-0.3)--(-0.7,-0.4);
\draw[black,cm ={1,0,0,1,(8cm,0cm)}](-1,-1)--(0,-0.3)--(1,-1);
\draw[black,cm ={1,0,0,1,(8cm,0cm)}](0.7,-0.4)--(0,0.3)--(-0.7,-0.4);
\filldraw[blue,cm ={1,0,0,1,(8cm,0cm)}](0,1) circle (2pt);
\filldraw[red,cm ={1,0,0,1,(8cm,0cm)}] (1,-1) circle (2pt);
\filldraw[red,cm ={1,0,0,1,(8cm,0cm)}] (-1,-1) circle (2pt);
\filldraw[red,cm ={1,0,0,1,(8cm,0cm)}] (0,-1) circle (2pt);
\filldraw[red,cm ={1,0,0,1,(8cm,0cm)}](0,-0.3) circle (2pt);
\filldraw[red,cm ={1,0,0,1,(8cm,0cm)}](0.7,-0.4) circle (2pt);
\filldraw[red,cm ={1,0,0,1,(8cm,0cm)}](-0.7,-0.4) circle (2pt);
\filldraw[blue,cm ={1,0,0,1,(8cm,0cm)}](0.3,0.4) circle (2pt);
\filldraw[blue,cm ={1,0,0,1,(8cm,0cm)}](-0.3,0.4) circle (2pt);
\filldraw[blue,cm ={1,0,0,1,(8cm,0cm)}](0,0.3) circle (2pt);

\draw[black,cm ={1,0,0,1,(12cm,0cm)}](-1,-1) -- (1,-1);
\draw[black,cm ={1,0,0,1,(12cm,0cm)}](-1,-1) -- (0,1);
\draw[black,cm ={1,0,0,1,(12cm,0cm)}](0,1) -- (1,-1);
\draw[black,cm ={1,0,0,1,(12cm,0cm)}](-0.66,-0.33)--(-0.33,-0.5) -- (0.33,-0.5)--(0.66,-0.33);
\draw[black,cm ={1,0,0,1,(12cm,0cm)}](-0.33,0.33)--(0,0.17) -- (0.33,0.33);
\draw[black,cm ={1,0,0,1,(12cm,0cm)}](0.33,-1)--(0.33,-0.5) -- (0,0.17);
\draw[black,cm ={1,0,0,1,(12cm,0cm)}](-0.33,-1)--(-0.33,-0.5) -- (0,0.17);
\draw[black,cm ={1,0,0,1,(12cm,0cm)}](-1,-1)--(-0.33,-0.5);
\draw[black,cm ={1,0,0,1,(12cm,0cm)}](1,-1)--(0.33,-0.5);
\draw[black,cm ={1,0,0,1,(12cm,0cm)}](0.66,-0.33)--(0,0.17);
\draw[black,cm ={1,0,0,1,(12cm,0cm)}](-0.66,-0.33)--(0,0.17);
\draw[black,cm ={1,0,0,1,(12cm,0cm)}](-0.33,-1)--(0.33,-0.5);
\draw[black,cm ={1,0,0,1,(12cm,0cm)}](0,1)--(0,0.17);
\filldraw[red,cm ={1,0,0,1,(12cm,0cm)}] (-1,-1) circle (2pt);
\filldraw[red,cm ={1,0,0,1,(12cm,0cm)}] (-0.33,-1) circle (2pt);
\filldraw[red,cm ={1,0,0,1,(12cm,0cm)}] (-0.33,-0.5) circle (2pt);
\filldraw[red,cm ={1,0,0,1,(12cm,0cm)}] (-0.66,-0.33) circle (2pt);
\filldraw[blue,cm ={1,0,0,1,(12cm,0cm)}](1,-1) circle (2pt);
\filldraw[blue,cm ={1,0,0,1,(12cm,0cm)}](0.33,-1) circle (2pt);
\filldraw[blue,cm ={1,0,0,1,(12cm,0cm)}](0.66,-0.33) circle (2pt);
\filldraw[blue,cm ={1,0,0,1,(12cm,0cm)}](0.33,-0.5) circle (2pt);
\filldraw[green,cm ={1,0,0,1,(12cm,0cm)}](0,1) circle (2pt);
\filldraw[green,cm ={1,0,0,1,(12cm,0cm)}](-0.33,0.33) circle (2pt);
\filldraw[green,cm ={1,0,0,1,(12cm,0cm)}](0.33,0.33) circle (2pt);
\filldraw[green,cm ={1,0,0,1,(12cm,0cm)}](0,0.17) circle (2pt);
\end{tikzpicture}
\caption{From left to right, the simplex $\Delta^{[0\leq 0\leq 1]}$, its "naive" subdivision, its stratified subdivision $\sd_P(\Delta^{[0\leq 0\leq 1]})$, and the stratified subdivision of $\Delta^{[0\leq 1\leq 2]}$}
\label{FigureSubdivisions}
\end{figure}
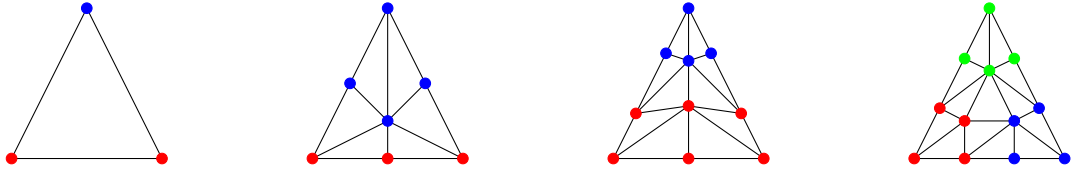

We will also use some properties of the subdivision functor.

\begin{proposition}
\label{prop:LvP_WeakEquivalence}
For all stratified simplicial sets $K\in \sS_P$, the last vertex map $\lv_P\colon \sd_P(K)\to K$ is a weak equivalence. In particular,
if $f\colon K\to L$ is a map in $\sS_P$, then $f$ is a weak equivalence if and only if $\sd_P(f)$ is a weak equivalence.
\end{proposition}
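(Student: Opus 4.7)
The plan is to first establish the weak-equivalence property of $\lv_P$ on stratified simplices, then bootstrap to arbitrary stratified simplicial sets using cocontinuity of $\sd_P$, and finally deduce the characterization via naturality.

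\textbf{Step 1 (Simplices).} For every flag $\J \in \Delta(P)$, I would show that $\lv_P\colon \sd_P(\Delta^\J) \to \Delta^\J$ is a weak-equivalence. By \cref{theo:CMF_sSetP}, this reduces to showing that for every regular flag $\I \in R(P)$, the induced map $\Hol_\I(\sd_P(\Delta^\J)^{\fib}) \to \Hol_\I((\Delta^\J)^{\fib})$ is a weak-equivalence of simplicial sets. The approach would be to exploit the explicit description of $\sd_P(\Delta^\J)$ as a subcomplex of $\sd(\Delta^n) \times \Delta^\J$, and to leverage the classical fact that the non-stratified last vertex map $\lv\colon \sd(\Delta^n) \to \Delta^n$ is a simplicial homotopy equivalence. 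One would then verify flag-by-flag and stratum-by-stratum that the relevant holinks (after fibrant replacement) map by weak-equivalences, reducing the combinatorics via an analysis of which tuples of vertices in $\sd_P(\Delta^\J)$ carry a fixed flag structure.

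\textbf{Step 2 (General $K$).} Since $\sd_P$ is a left adjoint (to $\Ex_P$), it preserves all colimits. Writing an arbitrary $K \in \sS_P$ as a transfinite composition of pushouts along the generating cofibrations $\partial \Delta^\J \hookrightarrow \Delta^\J$, one obtains a compatible cellular filtration of $\sd_P(K)$. Using that all objects in $\sS_P$ are cofibrant and that cofibrations are monomorphisms (so $\sS_P$ is left proper), the gluing lemma combined with the inductive hypothesis from Step~1 shows that $\lv_P\colon \sd_P(K) \to K$ is a weak-equivalence.

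\textbf{Step 3 (Characterization).} For the second statement, naturality of $\lv_P$ gives the commutative square
\begin{equation*}
\begin{tikzcd}
\sd_P(K) \arrow[r, "\sd_P(f)"] \arrow[d, "\lv_P"'] & \sd_P(L) \arrow[d, "\lv_P"] \\
K \arrow[r, "f"] & L\spaceperiod
\end{tikzcd}
\end{equation*}
By Steps 1--2 the two vertical maps are weak-equivalences, so two-out-of-three yields that $f$ is a weak-equivalence if and only if $\sd_P(f)$ is.

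The main obstacle will be Step 1. The subtlety is that $\lv_P$ is typically not a stratified homotopy equivalence on simplices: for instance, in $\sd_P(\Delta^{[p_0 < p_1]})$ there is no stratified edge directly joining $(\{0\},p_0)$ to $(\{1\},p_1)$, so one cannot construct a strict simplicial section of $\lv_P$. The passage to the fibrant replacement (or equivalently, filling by admissible horns) is therefore essential, and verifying that the obstruction vanishes after applying $\Ex_P^\infty$, uniformly in $\I$ and $\J$, is the technical core of the proof.
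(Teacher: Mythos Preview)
Your overall architecture is sound and matches the paper's own outline exactly: the paper proves the second statement via the same commutative square and two-out-of-three (your Step~3), and for the first statement it explicitly remarks that ``one can also prove this directly by checking that it holds on representables, then using the cube lemma in an inductive argument'' --- precisely your Steps~1--2.

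The difference is in how Step~1 is discharged. The paper does not attempt the direct holink computation you sketch; instead it invokes \cite[Lemma~A.3]{douSimp}, a general result from Cisinski's theory of model structures on presheaf categories, which guarantees that a natural transformation such as $\lv_P$ is a weak equivalence once certain formal properties of the pair $(\sd_P,\lv_P)$ are verified (these were checked in the proof of \cite[Theorem~2.14]{douSimp}). This is substantially more efficient than your proposed route: computing $\Hol_\I$ after fibrant replacement on each $\sd_P(\Delta^\J)$ would require either an explicit model for $\Ex_P^\infty(\sd_P(\Delta^\J))$ or some other workaround, and you correctly flag this as the unresolved technical core. In short, your plan is correct but leaves the hardest step open; the paper closes it by appeal to abstract machinery rather than direct computation.

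One minor point on Step~2: the gluing argument also needs stability of weak equivalences under transfinite compositions of cofibrations, not just finite pushouts; this holds here since $\sS_P$ is combinatorial, but it is worth stating.
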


\begin{proof}
Consider the following commutative diagram
\begin{equation*}
    \begin{tikzcd}
             \sd_P(K)
             \arrow{r}{\sd_P(f)}
             \arrow[swap]{d}{\lv_P}
             &\sd_P(L)
             \arrow{d}{\lv_P}
             \\
             K
             \arrow{r}{f}
             &L\spaceperiod
    \end{tikzcd}
\end{equation*}
The first part of the proposition asserts that the vertical morphisms are weak equivalences. Thus, by the two out of three law, the first assertion of the proposition implies the second assertion. For the first assertion, consider \cite[Lemma A.3]{douSimp}, which applies to $\sd_P$ and $\lv_P$ as shown in the proof of \cite[Theorem 2.14]{douSimp}. One can also prove this directly by checking that it holds on representables, then using the cube lemma in an inductive argument.
\end{proof}

\subsection{Recovering a Quillen-equivalence}\label{subsec:recover_quil}
Composing the adjoint pairs $\sd_P\dashv\Ex_P$ and $\RealP{-}\dashv\Sing_P$, one gets a Quillen-equivalence.

\begin{theorem}[{\cite[Theorem 1.15]{douteau2021stratified}}]
\label{theo:QE_Through_sdP}
The adjoint pair
\begin{equation}\label{QuillenEquivalenceThroughSdP}
    \RealP{\sd_P(-)}\colon \sS_P\leftrightarrow\Top_P\colon\Ex_P\Sing_P
\end{equation}
is a Quillen-equivalence.
\end{theorem}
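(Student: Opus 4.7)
The plan is to proceed in two stages: first verifying that $(\RealP{\sd_P(-)}, \Ex_P \Sing_P)$ is a Quillen adjunction via the explicit generating sets from \cref{theo:CMF_sSetP,theo:CMF_TopP}, then promoting this to a Quillen equivalence by showing the derived unit is a weak equivalence.

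For the Quillen adjunction, one checks that $\RealP{\sd_P(-)}$ sends generating (trivial) cofibrations of $\sS_P$ to (trivial) cofibrations of $\Top_P$. The crucial structural property of the stratified subdivision is that every non-degenerate simplex of $\sd_P(\Delta^\J)$ corresponds to a chain whose second coordinates form a strictly increasing sequence of poset elements, hence is stratified by a \emph{regular} flag. Consequently $\RealP{\sd_P(\partial\Delta^\J) \hookrightarrow \sd_P(\Delta^\J)}$ assembles naturally as a relative CW-complex from the generating cells $\RealP{\Delta^\I} \times S^{n-1} \hookrightarrow \RealP{\Delta^\I} \times B^n$ of $\Top_P$, yielding a cofibration. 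This is precisely the fix for the pathology illustrated in \cref{rec:SingP_Not_Quillen}, where $\RealP{\Delta^\J}$ for a non-regular flag $\J$ fails to be cellular in $\Top_P$. For admissible horn inclusions $\iota^\J_k \colon \Lambda^\J_k \hookrightarrow \Delta^\J$, admissibility provides an explicit stratified deformation retraction of $\Delta^\J$ onto $\Lambda^\J_k$. Lifting this retraction combinatorially through the subdivision and realizing gives a stratified deformation retract $\RealP{\sd_P(\iota^\J_k)}$ in $\Top_P$, which is a stratified homotopy equivalence, hence a weak equivalence (stratified homotopies realize to stratified homotopies, which induce homotopy equivalences on all holinks).

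For the Quillen equivalence, every object of $\Top_P$ is fibrant by \cref{rec:SingP_Not_Quillen}, so it suffices to verify that the derived unit $\eta_K \colon K \to \Ex_P \Sing_P \RealP{\sd_P(K)}$ is a weak equivalence for every cofibrant $K$. This unit decomposes along the composite adjunction as
\[K \xrightarrow{u_K} \Ex_P \sd_P(K) \xrightarrow{\Ex_P(\eta'_{\sd_P(K)})} \Ex_P \Sing_P \RealP{\sd_P(K)},\]
where $u_K$ is the unit of $\sd_P \dashv \Ex_P$ and $\eta'$ the unit of $\RealP{-} \dashv \Sing_P$. The first map $u_K$ is a weak equivalence: the triangle identity forces $\sd_P(u_K)$ to be a one-sided inverse of a map built from $\lv_P$, which is a weak equivalence by \cref{prop:LvP_WeakEquivalence}, and the reflection property in the same proposition then upgrades this to $u_K$ itself. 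For the second factor, one analyzes $\eta'_{\sd_P(K)} \colon \sd_P(K) \to \Sing_P \RealP{\sd_P(K)}$ at the level of generalized homotopy links: by the Quillen equivalence $C_P \dashv D_P \colon \Diag_P \leftrightarrow \Top_P$ recalled in \cref{rec:Diagram_Functors}, weak equivalences in $\Top_P$ are detected by $D_P$, and the regular-flag cellular structure of $\RealP{\sd_P(K)}$ enables a direct combinatorial comparison between $\Sing(\HolIP(\RealP{\sd_P(K)}))$ and $\Hol_\I(\sd_P(K))$ on the diagram side.

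The main obstacle is the trivial cofibration verification: producing a stratified deformation retract on $\sd_P(\Delta^\J)$ matching an admissible collapse. Since we cannot yet appeal to a general result that $\RealP{-}$ preserves weak equivalences (this being a central result of the present paper), we must proceed by a direct combinatorial construction in the subdivision, where stratification-respecting paths must be chosen explicitly so that the retraction remains stratum-preserving on every intermediate simplex. A secondary subtlety is that weak equivalences in $\sS_P$ are defined only after fibrant replacement (\cref{theo:CMF_sSetP}), so comparing holink diagrams of $\sd_P(K)$ and $\Sing_P \RealP{\sd_P(K)}$ requires care to interact correctly with $\Ex_P$ and, implicitly, with $\Ex^\infty_P$.
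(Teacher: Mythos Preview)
The paper does not prove this theorem: it is imported verbatim from \cite[Theorem 1.15]{douteau2021stratified} in the preliminaries (\cref{subsec:recover_quil}) and used as a black box throughout. There is therefore no proof in the paper to compare your proposal against.

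Evaluating your sketch on its own terms, the overall two-stage strategy is the standard one, and your observation that non-degenerate simplices of $\sd_P(\Delta^\J)$ are stratified over regular flags is exactly the point that makes the cofibration check go through. However, two steps are not sufficiently justified and would require real work:

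\emph{Trivial cofibrations.} Your claim that the stratified deformation retraction of $\Delta^\J$ onto $\Lambda^\J_k$ can be ``lifted combinatorially through the subdivision'' does not follow formally: $\sd_P$ is a left adjoint and does not commute with the cylinder $-\times\Delta^1$, so applying $\sd_P$ to a stratified homotopy does not yield a stratified homotopy on $\sd_P(\Delta^\J)$. Nor are $\RealP{\sd_P(\Delta^\J)}$ and $\RealP{\Delta^\J}$ stratum-preserving homeomorphic in general (already for $\J=[0<1]$ the $0$-strata have different topological type), so one cannot simply transport the retraction. A workable route is to exploit that $\sd_P(\Delta^\J)$ lies in the image of $C_P$ from a cofibrant diagram (cf.\ \cref{rem:Vertical_Are_Diagrams}) and use the Quillen equivalence $C_P^{\Top}\dashv D_P^{\Top}$ to reduce to a combinatorial statement in $\Diag_P$; this is close in spirit to \cref{lem:Link_preserve_trivial_cofibrations}, but you have not indicated any such argument.

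\emph{Derived unit.} The phrase ``direct combinatorial comparison between $\Sing(\HolIP(\RealP{\sd_P(K)}))$ and $\Hol_\I(\sd_P(K))$'' hides exactly the difficulty you flag at the end. For arbitrary stratified simplicial sets this comparison is \cref{theo:simHolink_v_Top_Hol}, whose proof in the present paper ultimately relies on \cref{theo:QE_Through_sdP}. You would need an independent argument specific to subdivisions --- again, most naturally via the identification of $\sd_P(K)$ with $C_P$ of a cofibrant diagram and the known Quillen equivalence $C_P^{\Top}\dashv D_P^{\Top}$ --- and then still relate that diagram back to $K$ in $\sS_P$, where weak equivalences are only defined after fibrant replacement. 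Neither step is supplied. (A minor additional point: your argument for $u_K$ conflates the unit $K\to\Ex_P\sd_P(K)$ with the map $\iota_K\colon K\to\Ex_P(K)$ adjoint to $\lv_P$; these are different, and the argument as written requires something like \cref{PropositionExPFibrantReplacement} to close the gap.)
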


In particular, this means that the homotopy theories of spaces and simplicial sets stratified over $P$ coincide. 

\begin{remark}
The Quillen-equivalence obtained above is only partially satisfactory for several reasons.
\begin{itemize}
    \item First, the adjunction \eqref{QuillenEquivalenceThroughSdP} is not very well suited for the study of conically stratified objects, since even reasonable PL-pseudomanifolds are not cofibrant objects of $\Top_P$. In particular, they are not in the image of $\RealP{\sd_P(-)}$. Thus, it makes it difficult to relate the homotopy theory of those classical objects with that of stratified spaces. On the other hand, by working with the $\RealP{-}\dashv\Sing_P$ adjunction, we are able to show that the classical homotopy theory of conically stratified spaces embeds fully faithfully in that of stratified spaces (\cref{Cor:Conic_embed}). The proof relies on the fact that triangulated conically stratified spaces are in the image of $\RealP{-}$ and have fibrant $\Sing_P$. This illustrates the usefulness of working with the more natural adjunction $\RealP{-}\dashv\Sing_P$.
    \item The functors $\sd_P$ and $\Ex_P$ are not at all compatible with the Quillen-adjunctions  $\sS_P\leftrightarrow\sS_Q$, induced by maps of posets $\alpha\colon P\to Q$. In particular, one cannot recover a global adjunction $s\Strat\leftrightarrow\Strat$ by gluing the adjunctions \eqref{QuillenEquivalenceThroughSdP} for all $P$. This means that in order to compare the homotopy theory of all stratified spaces and stratified simplicial sets, then one needs to work directly with the adjunction $\RealStrat{-}\dashv\SingStrat$.
    \item Finally, the stratified setting is very similar to the classical setting. In the latter, the $\Real{-}\dashv\Sing$ adjunction is already a Quillen-equivalence, with no need for subdivision. Given the fact that the homotopy theory associated to $s\Strat$ and $\Strat$ can actually be related in a meaningful way (\cref{theo:Equivalence_Simplicial_Homotopy_Category}), through the $\RealStrat{-}\dashv\SingStrat$ adjunction, the appearance of $\sd_P$ and $\Ex_P$ in \eqref{QuillenEquivalenceThroughSdP} can appear as artificial.
\end{itemize}
\end{remark}

With that said, \cref{theo:QE_Through_sdP} can be seen as the first step to obtain a comparison of the topological and simplicial setting through the $\RealP{-}\dashv\Sing_P$ adjunction. Since the latter adjunction is not a Quillen-adjunction, we will need to work with categories with weak equivalences instead of the full model structures, see \cref{Section:Hammock}. In particular, we will need to prove that the functors $\RealP{-}$ and $\Sing_P$ characterize all weak equivalences (\cref{Cor:Realization_Preserve_Weak_Equivalences,theo:SingP_Characterize_WeakEquivalences}). By that, we mean that those functor preserve and reflect weak equivalences. By \cref{theo:QE_Through_sdP}, this is already known for the functors $\RealP{\sd_P(-)}$ and $\Ex_P\Sing_P$, since they are part of a Quillen-equivalence and all objects of $\sS_P$ are cofibrant and all objects of $\Top_P$ are fibrant. Thus, it suffices to show that for any $K\in \sS_P$, $\RealP{\sd_P(K)}\to\RealP{K}$ is a weak equivalence, (see \cref{Section:Real_Char_WE}, and the proof of \cref{Cor:Realization_Preserve_Weak_Equivalences}), and that for any $X\in \Top_P$, the map $\Sing_P(X)\to \Ex_P(\Sing_P(X))$ is a weak equivalence (\cref{PropositionExPFibrantReplacement}).

\section{$\Ex_P$ and $\Sing_P$ characterize weak equivalences}	
In the category of simplicial sets, an explicit and particularly convenient fibrant replacement functor is given by the Kan fibrant replacement $S \hookrightarrow \Exi(S)$ (see \cite{KanSubdivision,goerss2012simplicial}). The inclusion map is a trivial cofibration $S \hookrightarrow \Exi(S)$ which can be obtained (through transfinite composition) by glueing on simplices along horns (see \cite{MossSae}). In this section, we show that the same can be said for $\Exi_P$, in particular that it gives a fibrant replacement functor in the category $\sS_P$. From this, we then also obtain that the functor $\Sing_P: \Top_P \to \sS_P$ characterizes weak equivalences.
\label{Section:FSAE}
    \subsection{Fibrant replacement a la Kan, in $\sS_P$}
    \label{Section:Ex_P_FSAE}
    The main content of this subsection is proving the following result.
	\begin{proposition}\label{PropositionExPFibrantReplacement}
		Let ${K}\in \sS_P$ be a stratified simplicial set, then the map ${K}\to \Ex_P(K)$ is a trivial cofibration.
	\end{proposition}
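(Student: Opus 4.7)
The plan is to strengthen the claim and show that $K\hookrightarrow \Ex_P(K)$ is in fact a \emph{strong anodyne extension} in the sense of Moss \cite{MossSae}, generated by admissible horn inclusions. Since the trivial cofibrations in $\sS_P$ are the retracts of transfinite compositions of pushouts of admissible horn inclusions (\cref{theo:CMF_sSetP}), such an SAE is automatically a trivial cofibration, so this strengthening will suffice.

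The SAE structure amounts to a regular pairing on the nondegenerate simplices of $\Ex_P(K)$ that are not already in $K$, together with a verification that each paired pair $\sigma\leftrightarrow \sigma\cdot d_k$ corresponds to an admissible horn inclusion. I would construct such a pairing \emph{naturally} in $K$, so that it suffices to describe it on representables $K=\Delta^{\J}$ and then extend by naturality. By the adjunction $\sd_P\dashv \Ex_P$ and the explicit description of $\sd_P$ from the preliminaries, the nondegenerate simplices of $\Ex_P(\Delta^{\J})$ of shape $\Delta^{\J'}$ correspond to stratum-preserving simplicial maps $\sd_P(\Delta^{\J'})\to \Delta^{\J}$ that are nondegenerate in a suitable sense. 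On this combinatorial set I would define a pairing that, after forgetting the stratification, reduces to the pairing already used by Moss in the proof that $\Delta^n\hookrightarrow \Ex(\Delta^n)$ is an SAE; regularity, properness and acyclicity of the pairing would then be inherited from the unstratified case.

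The hard part, and the only point where the stratified theory genuinely intervenes, will be verifying that every matched horn is \emph{admissible}, i.e.\ that the corresponding flag $\J'=[p_0\leq\dots\leq p_n]$ satisfies $p_k=p_{k-1}$ or $p_k=p_{k+1}$ at the pairing index $k$. I expect this to follow from the explicit description of $\sd_P(\Delta^{\J'})$ as the subcomplex of $\sd(\Delta^{n'})\times \Delta^{\J'}$ spanned by chains $[(\sigma_0,q_0),\dots,(\sigma_m,q_m)]$ with $\{q_0,\dots,q_m\}\subset\{p_i\mid i\in\sigma_0\}$: compatibility with the stratification forces any nondegenerate simplex of $\Ex_P(\Delta^{\J})$ outside of $\Delta^{\J}$ to arise from a codomain flag with a repeated entry at exactly the index where Moss's pairing places the filling horn. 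Once this admissibility check is in place, the pairing exhibits $K\hookrightarrow\Ex_P(K)$ as an SAE of admissible horns, completing the proof.
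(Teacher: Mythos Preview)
Your overall strategy---show that $K\hookrightarrow\Ex_P(K)$ is a strong anodyne extension via a Moss-style pairing, with admissibility as the stratified check---matches the paper. But two of your key assertions are where the actual work lies, and as stated they would not go through.

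First, you write that the pairing ``after forgetting the stratification, reduces to the pairing already used by Moss.'' This is not the case: simplices of $\Ex_P(K)$ are maps out of $\sd_P(\Delta^{\J'})$, not $\sd(\Delta^{n})$, and Moss's retraction maps $r^k_n\colon\sd(\Delta^{n+1})\to\sd(\Delta^{n})$ do \emph{not} restrict to maps $\sd_P(\Delta^{\J^k})\to\sd_P(\Delta^{\J})$. The paper handles this by factoring $K\hookrightarrow\Ex_P(K)$ through an intermediate object $\Exn(K)$ (built from the ordinary subdivision with its ``naive'' stratification). On $K\hookrightarrow\Exn(K)$ Moss's original $r^k_n$ do restrict and give an admissible pairing. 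On $\Exn(K)\hookrightarrow\Ex_P(K)$ one needs \emph{new} maps $\tilde r^k_n,\tilde j^k_n$, obtained by conjugating Moss's maps with the order-reversing automorphism $D_n$ of $\sd(\Delta^n)$; only these conjugated maps respect $\sd_P$. One then reproves all of Moss's identities (nine equations) for the conjugated maps, constructs an auxiliary subobject $\widehat J\subset\Exn(K)$, shows $\widehat J\hookrightarrow\Ex_P(K)$ is an SAE, and finally restricts the pairing to $\Exn(K)$. None of this is visible in your sketch, and it is the technical heart of the argument.

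Second, your admissibility argument is backwards. You claim stratification forces a nondegenerate simplex outside $K$ to have a repeated entry at the pairing index. It does not: regular flags can and do index simplices of $\Ex_P(K)\setminus K$. Rather, admissibility is \emph{built into} the pairing: by construction $T(\sigma)=\sigma\circ r^k$ is a simplex over the flag $\J^k$ obtained from $\J$ by repeating $p_k$, so the relevant horn $\Lambda^{\J^k}_k\hookrightarrow\Delta^{\J^k}$ is admissible by definition of $\J^k$. This is automatic once the $r^k$ are correctly defined as maps $\sd_P(\Delta^{\J^k})\to\sd_P(\Delta^{\J})$, which is exactly the point you have not addressed.
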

	Together with the results from \cite{douSimp}, we will then obtain as an immediate consequence:
	\begin{corollary}\label{cor:Exi_P_fibrant_replacement}
	The functor $\Ex_P^{\infty}\colon \sS_P\to\sS_P$ is a fibrant replacement functor.
	\end{corollary}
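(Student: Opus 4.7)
The plan is to derive this corollary essentially as a formal consequence of \cref{PropositionExPFibrantReplacement}, using standard closure properties of trivial cofibrations together with the fibrancy of $\Ex^{\infty}_P(K)$ that was already recorded in the preliminaries (see \cite[Lemma 2.10]{douSimp}). So there are really two things to verify for any $K \in \sS_P$: that the map $\iota^{\infty}\colon K \to \Ex^{\infty}_P(K)$ is a trivial cofibration, and that $\Ex^{\infty}_P(K)$ is fibrant.

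First I would apply \cref{PropositionExPFibrantReplacement} to $\Ex^n_P(K)$ for each $n \geq 0$. This yields that every map in the sequence
\[ K \hookrightarrow \Ex_P(K) \hookrightarrow \Ex^2_P(K) \hookrightarrow \cdots \]
is a trivial cofibration in $\sS_P$. Since $\sS_P$ is a cofibrantly generated model category (\cref{theo:CMF_sSetP}), the class of trivial cofibrations is closed under transfinite composition. Hence the canonical map $\iota^{\infty}\colon K \to \colim_n \Ex^n_P(K) = \Ex^{\infty}_P(K)$ is itself a trivial cofibration, and in particular a weak equivalence.

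Second, the fibrancy of $\Ex^{\infty}_P(K)$ was already asserted in the recollection preceding the proposition and proved in \cite[Lemma 2.10]{douSimp}; concretely, any admissible horn $\Lambda^{\J}_k \to \Delta^{\J}$ is compact as a simplicial set, so any map $\Lambda^{\J}_k \to \Ex^{\infty}_P(K)$ factors through some finite stage $\Ex^n_P(K)$, where a lift to $\Delta^{\J} \to \Ex^{n+1}_P(K) \subset \Ex^{\infty}_P(K)$ can be constructed by adjunction from $\sd_P$. I would simply cite this result rather than redo it.

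Combining the two steps, $K \hookrightarrow \Ex^{\infty}_P(K)$ is a trivial cofibration with fibrant codomain, which is exactly the statement that $\Ex^{\infty}_P$ is a fibrant replacement functor. No serious obstacle is expected; the entire argument is a short formal consequence of \cref{PropositionExPFibrantReplacement} and the fact that $\Ex^{\infty}_P$ preserves fibrancy for the reasons recalled above.
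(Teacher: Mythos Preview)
Your proposal is correct and matches the paper's approach exactly: the paper presents this corollary as an immediate consequence of \cref{PropositionExPFibrantReplacement} together with the fibrancy result \cite[Lemma 2.10]{douSimp}, and you have simply spelled out the (standard) details of that deduction. One minor wording slip: in your final sentence you say ``$\Ex^{\infty}_P$ preserves fibrancy,'' but what you actually use (and correctly argue above) is that $\Ex^{\infty}_P(K)$ is fibrant for every $K$.
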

	
	We prove \cref{PropositionExPFibrantReplacement} by adapting a proof of the equivalent statement in the non-stratified case from \cite{MossSae}. 
	This relies on the notion of a stratified strong anodyne extension, already used in \cite{douSimp}.
	Some intermediary results of \cite{MossSae} immediately extend to the stratified case and their proofs will be omitted. We refer the reader to the proof of \cite[Theorem 22]{MossSae} for more details.
	Nevertheless, a significant amount of technical preparations is required to replicate the necessary arguments in the stratified framework. 
	\begin{definition}
		A \define{(stratified) strong anodyne extension, (S)SAE} for short, is a morphism ${A} \hookrightarrow {B}$ in $\sS_P$, that is given by a transfinite composition of cobase changes of admissible horn inclusions.
	\end{definition}
	For the sake of brevity, we are just going to omit the \textit{stratified} and just refer to SAEs from here on out.
	\begin{remark}
		By definition, all SAEs are trivial cofibrations. In particular, an SAE ${A} \to {B}$ is a monomorphism, and can be safely identified with the inclusion of a (stratified) sub-simplicial set.
	\end{remark}
	Strong anodyne extensions enjoy an entirely combinatorial characterization, given as follows. Given a stratified simplicial set ${K}$, let $K_{\nd}$ be its set of non-degenerate simplices.
	\begin{definition}
		Let $m\colon {A} \hookrightarrow {B}$ be a cofibration in $\sS_P$. 
		\begin{enumerate}
			\item A \define{pairing} on $m$ is a partition of $B_{n.d.}\setminus A_{n.d.}$ into two sets $B_I$ and $B_{II}$ together with a bijection $T:B_{II} \to B_{I}$. The elements of $B_I$ and $B_{II}$ are referred to as \define{type I} and \define{type II} simplices respectively.
		\end{enumerate}
		Now let $T\colon B_{II} \to B_{I}$ be a pairing on $m$.
		\begin{enumerate}
			\setcounter{enumi}{1}
			\item $T$ is called \define{proper} if for each $\sigma  \in B_{II}$, $\sigma$ is a codimension one face of $T(\sigma) $ in a unique way.
			\item $T$ is called \define{admissible}, if in addition, for any type II simplex, $\sigma$, such that $T(\sigma)\colon \Delta^{\J}\to B$, and $\sigma=d_k(T(\sigma))$,  $\Lambda^{\J}_k\to\Delta^{\J}$ is an admissible horn inclusion.
			\item Given a pairing $T$ on $m$, the \define{ancestral relation} is the transitive binary relation on $B_{II}$ generated by $\sigma\prec_T \tau$ if $\sigma\not=\tau$ and $\sigma$ is a face of $T(\tau)$
			\item $T$ is called \define{regular} if the ancestral relation $\prec_T$ is well founded. 
		\end{enumerate}
	\end{definition}
	\begin{proposition}\label{PropFSAEChar}
		A cofibration ${A} \hookrightarrow {B}$ in $\sS_P$ is a SAE if and only if it admits an admissible proper regular pairing.
	\end{proposition}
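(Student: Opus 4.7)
The plan is to adapt the characterization from the non-stratified setting due to Moss \cite{MossSae} to our stratified framework. Because $\sS_P$ is a presheaf category (\cref{rem:sSetP_Presheaf_category}), the combinatorics of non-degenerate simplices, cobase changes of monomorphisms, and transfinite compositions operate exactly as in $\sS$, so the skeleton of Moss's argument applies almost verbatim; the one genuinely new ingredient is to track the admissibility condition on the horn inclusions used throughout.

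For the forward direction, suppose $A \hookrightarrow B$ is realized as a transfinite composition $A = B_0 \hookrightarrow \cdots \hookrightarrow B_\lambda = B$ in which each step $B_\alpha \hookrightarrow B_{\alpha+1}$ is a cobase change of an admissible horn inclusion $\iota_{k_\alpha}^{\J_\alpha}\colon \Lambda_{k_\alpha}^{\J_\alpha} \hookrightarrow \Delta^{\J_\alpha}$. A direct computation in the presheaf category shows that the only non-degenerate simplices present in $B_{\alpha+1}$ but absent from $B_\alpha$ are the images of $\Delta^{\J_\alpha}$ and of its $k_\alpha$-th face $d_{k_\alpha}\Delta^{\J_\alpha}$, the latter being a codimension-one face of the former in a unique way. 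Declaring these respectively type I and type II, and setting $T(d_{k_\alpha}\Delta^{\J_\alpha}) = \Delta^{\J_\alpha}$, produces the required pairing. Properness is then automatic, admissibility of the pairing follows from admissibility of each $\iota_{k_\alpha}^{\J_\alpha}$, and regularity follows because a type II simplex introduced at stage $\alpha$ can only have type II ancestors from strictly earlier stages, so $\prec_T$ refines the well-founded ordinal ordering on $\lambda$.

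For the reverse direction, given an admissible proper regular pairing $T\colon B_{II} \to B_I$, the well-foundedness of $\prec_T$ lets me define an ordinal-valued rank by $\mathrm{rank}(\sigma) = \sup\{\mathrm{rank}(\sigma') + 1 \mid \sigma' \prec_T \sigma\}$, and I then choose any well-ordering on $B_{II}$ refining the partial order by rank. I filter $B$ by the nested family of stratified simplicial subsets $B_\beta$ obtained by adjoining, at stage $\beta$, the pair $(\sigma_\beta, T(\sigma_\beta))$ corresponding to the $\beta$th type II simplex; clearly $B_0 = A$, limit stages are handled by colimits, and $B = \bigcup_\beta B_\beta$. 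The key step is to show that each successor inclusion $B_\beta \hookrightarrow B_{\beta+1}$ is the cobase change of an admissible horn inclusion $\Lambda^{\J}_k \hookrightarrow \Delta^{\J}$ along the attaching map which sends $\Lambda^{\J}_k$ to the union of faces of $T(\sigma_\beta)$ other than $\sigma_\beta$. For this one must verify that every such face already lies in $B_\beta$: properness of $T$ ensures $\sigma_\beta$ is the unique codimension-one face of $T(\sigma_\beta)$ marked by the pairing, while the non-degenerate simplex underlying any other face of $T(\sigma_\beta)$ is either in $A$, or a type I simplex $T(\sigma')$ with $\sigma' \prec_T \sigma_\beta$, or a type II simplex $\sigma' \neq \sigma_\beta$ with $\sigma' \prec_T \sigma_\beta$; in any case it has strictly smaller rank and is therefore already in $B_\beta$.

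The main obstacle is this successor analysis, which demands careful face-by-face bookkeeping using both properness and the well-founded rank function; the admissibility of the horn inclusion $\Lambda^{\J}_k \hookrightarrow \Delta^{\J}$ arising at stage $\beta$ is then delivered \emph{exactly} by the admissibility hypothesis on the pairing, and this is the unique place where the stratified refinement (as opposed to Moss's classical argument) really enters. Everything else reduces to presheaf-theoretic formalism that is insensitive to the presence of the stratification $\varphi\colon B \to N(P)$.
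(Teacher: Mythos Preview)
Your proposal is correct and follows exactly the approach the paper indicates: the paper's own proof simply states that Moss's argument \cite[Proposition 12]{MossSae} generalizes directly, with the admissibility hypothesis ensuring that only admissible horns are filled, and what you have written is a faithful expansion of that generalization. Your identification of the one genuinely stratified step---that admissibility of the pairing is precisely what forces each horn appearing in the reverse-direction filtration to be admissible---matches the paper's one-line remark.
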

	\begin{proof}
	    The proof of \cite[Proposition 12]{MossSae} directly generalizes. The extra admissibility hypothesis guarantees that only admissible horns are being filled.
	\end{proof}
	Now, since SAEs are trivial cofibrations, we can prove \cref{PropositionExPFibrantReplacement} by exhibiting a suitable pairing. We will achieve this by decomposing the map ${K}\to\Ex_P(K)$ into two maps, and exhibiting a presentation for each of the factors.
	
	\begin{definition}
	Let ${K}\in \sS_P$ be a stratified simplicial set. Its naive subdivision is $\sdn(K)=(\sd(K),\varphi_K\circ\lv)$. This defines a functor
	\begin{equation*}
	    \sdn\colon\sS_P\to\sS_P
	\end{equation*}
	which admits a right adjoint, $\Exn\colon \sS_P\to\sS_P$. The later can also be described as a simplicial subset $\Exn(K)\subset\Ex(K)$
	\begin{equation*}
	    \Exn(K)=\{\sigma\colon \sd(\Delta^{\J})\to K\ |\ \text{$\sigma$ is stratum preserving}\}
	\end{equation*}
	Now, let $\J$ be some flag $[p_0\leq\dots\leq p_n]$, define the following map on vertices:
	\begin{align*}
	    t_{\J}\colon \sd_P(\Delta^{\J})&\to \sdn(\Delta^{\J})\\
	    (q_0\leq\dots\leq q_m,r)&\mapsto (q_0\leq\dots\leq q_l)
	\end{align*}
	    where $q_l=r$ and $q_{l+1}>r$. This extends to a map of stratified simplicial sets, $t_{\J}\colon \sd_P(\Delta^{\J})\to \sdn(\Delta^{\J})$, and further into a natural transformation $t\colon \sd_P\to\sdn$. 
	\end{definition}
	
Note that the natural transformation $t\colon \sd_P\to\sdn$ fits into a commutative diagram
	    \begin{equation}\label{diagSdSplit}
	        \begin{tikzcd}
	        \sd_P\arrow{rr}{\lv_P}
	        \arrow[swap]{dr}{t}
	        && 1_{\sS_P}
	        \\
	        &\sdn
	        \arrow[swap]{ur}{\lv}
	        \end{tikzcd}
	    \end{equation}
	which, by adjunction, gives the following diagram
	\begin{equation}\label{diagExSplit}
			\begin{tikzcd}
				1_{\sS_P}	\arrow[rd ]\arrow[rr] & & \Ex_P  \\
				& \Exn \arrow[ru]& , 
			\end{tikzcd}
		\end{equation}
	where the horizontal arrow is the map of interest. We will show that the maps to, and from $\Exn$ are SAEs. In particular, this will imply that their composition is an SAE, which in turn implies \cref{PropositionExPFibrantReplacement}. We start by showing that they are cofibrations.
	\begin{lemma}\label{lem:Candidate_FSAE_Are_Cofibrations}
		All natural transformations in Diagram \eqref{diagExSplit} are cofibrations.
	\end{lemma}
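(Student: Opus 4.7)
The plan is to reduce each of the three natural transformations in \eqref{diagExSplit} to a monomorphism check. By \cref{theo:CMF_sSetP} the generating cofibrations of $\sS_P$ are the boundary inclusions $\partial\Delta^{\J} \hookrightarrow \Delta^{\J}$, and since $\sS_P \cong \Fun(\Delta(P)^{\op},\Set)$ is a presheaf topos (\cref{rem:sSetP_Presheaf_category}), cofibrations coincide with monomorphisms. By the factorization in \eqref{diagExSplit} it therefore suffices to handle the two factor maps $K \to \Exn(K)$ and $\Exn(K) \to \Ex_P(K)$; the composition $K \to \Ex_P(K)$ is then automatic.

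For $K \to \Exn(K)$: on underlying simplicial sets this map factors through the inclusion $\Exn(K) \hookrightarrow \Ex(K)$ and agrees with the classical Kan inclusion $K \hookrightarrow \Ex(K)$. The latter is a well-known monomorphism -- an $n$-simplex $x\colon \Delta^n \to K$ is recovered from $x \circ \lv$ by restriction to the distinguished non-degenerate simplex $[\{0\} \subsetneq \{0,1\} \subsetneq \dots \subsetneq \{0,\ldots,n\}]$ of $\sd(\Delta^n)$, on which $\lv$ acts as $\mathrm{id}_{\Delta^n}$ -- so $K \to \Exn(K)$ is injective at every level $\J$.

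For $\Exn(K) \to \Ex_P(K)$: at level $\J$ this is pre-composition with $t_{\J}\colon \sd_P(\Delta^{\J}) \to \sdn(\Delta^{\J})$, so injectivity is equivalent to $t_{\J}$ being an epimorphism in $\sS_P$. For each simplex $[\sigma_0 \subseteq \dots \subseteq \sigma_k]$ of $\sdn(\Delta^{\J}) = \sd(\Delta^{\J})$, I would exhibit an explicit lift in $\sd_P(\Delta^{\J}) \subset \sd(\Delta^n) \times \Delta^{\J}$ by choosing a sequence of values $q_0 \leq \dots \leq q_k$ in $\{p_i \mid i \in \sigma_0\}$ so that the defining formula for $t_{\J}$ returns $\sigma_j$ from the vertex $(\sigma_j, q_j)$. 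The main technical obstacle is precisely this combinatorial lift, where each $q_j$ must simultaneously respect the defining condition of $\sd_P$ and match the last-occurrence rule underlying $t_{\J}$; it can be handled by induction on $k$, exploiting that enlarging $\sigma_0$ to a face containing the necessary indices is always compatible with the nesting $\sigma_0 \subseteq \sigma_1 \subseteq \dots \subseteq \sigma_k$. Combining the two, $K \to \Ex_P(K)$ is a monomorphism as a composition, completing the argument.
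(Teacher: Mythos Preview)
Your reduction is exactly the one the paper uses: cofibrations in $\sS_P$ are monomorphisms, and a map of the form $K \to \Exn(K)$ (resp.\ $\Exn(K) \to \Ex_P(K)$) is a monomorphism for every $K$ precisely when $\lv\colon \sdn(\Delta^{\J}) \to \Delta^{\J}$ (resp.\ $t_{\J}\colon \sd_P(\Delta^{\J}) \to \sdn(\Delta^{\J})$) is an epimorphism for every flag $\J$. Your argument for $\lv$ is fine and essentially the paper's ``well known''.

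The gap is in your handling of $t_{\J}$. The lift you first propose, $[(\sigma_0,q_0),\dots,(\sigma_k,q_k)]$ with $q_j$ determined by the last-occurrence rule, forces $q_j=p_{\max\sigma_j}$, and then the $\sd_P$-condition $\{q_0,\dots,q_k\}\subset\{p_i\mid i\in\sigma_0\}$ fails in general. Your fix---enlarging only $\sigma_0$---cannot work either: the nesting $\tau_0\subseteq\sigma_1$ bounds how far $\tau_0$ can grow. Concretely, for $\J=[p_0<p_1<p_2]$ and the chain $\sigma_0=\{0\}\subset\sigma_1=\{0,1\}\subset\sigma_2=\{0,1,2\}$ one has $q_2=p_2$, but any $\tau_0\subseteq\sigma_1=\{0,1\}$ carries only the labels $p_0,p_1$, so $q_2\notin\{p_i\mid i\in\tau_0\}$ no matter what. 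One must enlarge \emph{all} the $\sigma_j$ simultaneously. The paper does this directly: with $q_j=p_{\max\sigma_j}$ and $j_q=\min\{i\mid q_i=q\}$ it sets
\[
\tilde\sigma=\bigcup_{q}\{\,i\in\sigma_{j_q}\mid p_i=q\,\}
\]
and takes the preimage $[(\sigma_0\cup\tilde\sigma,q_0),\dots,(\sigma_k\cup\tilde\sigma,q_k)]$. Adding the same $\tilde\sigma$ everywhere preserves the chain, makes $\tau_0=\sigma_0\cup\tilde\sigma$ contain a witness for each $q_j$, and one checks that every vertex of $\tilde\sigma\setminus\sigma_j$ has label strictly above $q_j$, so truncation at $q_j$ still returns $\sigma_j$.
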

	\begin{proof}
		It suffices to show, that all of the natural transformations in \eqref{diagSdSplit} are epimorphisms of simplicial sets when evaluated at $\Delta^{\mathcal J}$. For $\lv$ this is well known. Thus, by closedness of epimorphisms under composition, it remains to show that $t_{\mathcal J}$ is an epimorphism, for each flag $\mathcal J = [p_0 \leq\dots\leq p_n]$. Label $e_0,\dots,e_n$ the vertices of $\Delta^{\J}$, and let $\sigma=(\sigma_0,\dots,\sigma_k)$ be a simplex in $\sdn(\Delta^{\J})$. For $0\leq i\leq k$, let 
		$q_i := \max \{ p_j \mid e_j \in \sigma_i \}$.
		Note that $q_i$ specifies the stratum to which the vertex $\sigma_i$ of $\sdn (\Delta^{\J})$ belongs. For $q \in P$, denote $j_q=\min\{i\ | \ q_i=q\}$. Next, define  $\widetilde{\sigma}\subset \Delta^{\J}$ as $\widetilde{\sigma}=\{e_i\ | \ e_i\in \sigma_{j_{p_i}}\}$. In other words, $\widetilde{\sigma}$ is the simplex given by such vertices $e$, which lie in the smallest $\sigma_i$ which, considered as a simplex of $\sdn(\Delta^{\J})$, has the same stratum as $e$. Note, that $\widetilde{\sigma}$ is built such that for each $q_i$ it contain some vertex of stratum $q_i$. In particular, 
		\[\sigma'=((\sigma_0\cup \widetilde{\sigma},q_0),\dots,(\sigma_k\cup\widetilde{\sigma},q_k))
		\]
		defines a simplex of $\sd_P(\Delta^{\J})$. An elementary computation shows that $t(\sigma')=\sigma$. 
		\end{proof}

		\begin{proposition}\label{prop:KtoExNSAE}
		Let ${K}\in \sS_P$ be a stratified simplicial set, the map ${K}\hookrightarrow\Exn(K)$ is an SAE.
		\end{proposition}
		
		\begin{proof}
		    By \cref{lem:Candidate_FSAE_Are_Cofibrations} we already know that it is a cofibration, and so we need to exhibit a proper regular admissible pairing. Consider the (non-stratified) composition 
		    \begin{equation*}
		        {K}\to\Exn(K)\to\Ex(K).
		    \end{equation*}
		    We know of a proper regular pairing for the composition from \cite[Theorem 22]{MossSae}. By \cref{LemmaRestrictFSAE}, it is enough to show that this pairing correctly restricts to $\Exn(K)$, and that the restricted pairing is admissible. Note that in \cite{MossSae}, the pairing $T$ is defined by pre-composing with certain maps $r^k_n\colon \sd(\Delta^{n+1})\to\sd(\Delta^{n})$, for $0\leq k\leq n$ defined on vertices as follows (see \cite[Definition 25]{MossSae}, the pairing is defined immediatly afterwards). First, for vertices of $\sd(\Delta^{n+1})$ of the form $\{i\}$, one has :
		    \begin{equation*}
		        r^k_n(\{i\})=\left\{\begin{array}{cl}
		        \{i\}&\text{ if $0\leq i\leq k$}\\
		        \{0,\dots,k\} &\text{ if $i=k+1$}\\
		        \{i-1\} &\text{ if $i>k+1$}
		        \end{array}
		        \right.
		    \end{equation*}
		    And, then for an arbitrary vertex $\sigma\in\sd(\Delta^{n+1})$, one has
		    \begin{equation*}
		        r^k_n(\sigma)=\bigcup\limits_{i\in\sigma}r^k_n(\{i\})
		    \end{equation*}
		    Now let $\J=[p_0\leq\dots\leq p_n]$ be some flag, and $\J^k$ the flag obtained by repeating $p_k$. Then, note that $r^k_n$ gives a stratum preserving map
		    \begin{equation*}
		        r^k_n\colon\sdn(\Delta^{\J^k})\to\sdn(\Delta^{\J})
		    \end{equation*}
		    Now, if $f\colon \sd(\Delta^n)\to K$ is some simplex of type II which happens to be in $\Exn(K)$, then there is some flag $\J$ such that $f\colon \sdn(\Delta^{\J})\to K$ is a stratum preserving map. Furthermore, one has $T(f)=f\circ r^k_n$ for some $0\leq k\leq n$, but then, $f\circ r^k_n\colon \sdn(\Delta^{\J^k})\to {K}$ is also a stratum preserving map. Which implies that $T$ restricts to a proper and regular pairing to the inclusion ${K}\to\Exn(K)$. Finally, the pairing is admissible since the horn inclusion $\Lambda^{\J^k}_k\to\Delta^{\J^k}$ is admissible, by definition of $\J^k$.
		\end{proof}
	\begin{lemma}\label{LemmaRestrictFSAE}
		Suppose one is given two cofibrations of simplicial sets $m_1: B_{0} \hookrightarrow B_1$, $m_2: B_1 \hookrightarrow B_2$. Then, a proper  regular pairing $T$ on $m_2 \circ m_1$  restricts to proper  regular pairings on $m_1$ and on $m_2$ if and only if we have $T(B_{II} \cap B_{1,n.d.}) \subset B_{1,n.d.}$. This also holds for stratified simplicial sets and admissible pairings.
	\end{lemma}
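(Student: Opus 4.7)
The forward implication is immediate from the definition: if $T$ restricts to a pairing on $m_1$, then in particular the restriction sends type II simplices in $B_1$ to type I simplices in $B_1$, which is exactly the stated inclusion.

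For the reverse implication, assume $T(B_{II} \cap B_{1,n.d.}) \subset B_{1,n.d.}$. The crucial observation is that the reverse inclusion $T^{-1}(B_I \cap B_{1,n.d.}) \subset B_{II} \cap B_{1,n.d.}$ then follows automatically by properness: if $\tau \in B_I \cap B_{1,n.d.}$ and $\sigma = T^{-1}(\tau)$, then $\sigma$ is a codimension-one face of $\tau \in B_1$, and since $B_1$ is a simplicial subset of $B_2$ it is closed under faces, giving $\sigma \in B_{1,n.d.}$. Combined with the hypothesis, this shows $T$ restricts to a bijection $T_1 \colon B_{II} \cap B_{1,n.d.} \to B_I \cap B_{1,n.d.}$. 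Since $(B_I \cup B_{II}) \cap B_{1,n.d.}$ is exactly $B_{1,n.d.} \setminus B_{0,n.d.}$, the map $T_1$ defines a pairing on $m_1$. The complementary bijection $T_2 \colon B_{II} \setminus B_{1,n.d.} \to B_I \setminus B_{1,n.d.}$ then automatically provides a pairing on $m_2$.

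It remains to verify that $T_1$ and $T_2$ inherit properness, regularity, and (in the stratified setting) admissibility from $T$. Properness is inherited directly, since the codimension-one face structure is preserved under restriction. For regularity, the ancestral relation $\prec_{T_i}$ on each piece is a subrelation of $\prec_T$, and any subrelation of a well-founded relation is well-founded. For admissibility, the condition that $\Lambda^{\J}_k \hookrightarrow \Delta^{\J}$ is an admissible horn inclusion depends only on the pair $(\J,k)$ determined by $T(\sigma)$ and $\sigma = d_k T(\sigma)$, and is therefore preserved unchanged under restriction.

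The main content of the argument is really the single observation that properness of $T$, combined with the fact that $B_1 \hookrightarrow B_2$ is a simplicial subset inclusion, forces the reverse inclusion for free; no other step presents any obstacle.
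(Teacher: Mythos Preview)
Your proof is correct and follows essentially the same approach as the paper's: both arguments hinge on the observation that properness forces $T^{-1}(\tau)$ to be a face of $\tau$, so closure of $B_1$ under faces yields the reverse inclusion for free, after which properness, regularity, and admissibility are inherited under restriction. Your write-up is slightly more explicit (you address the forward implication and spell out that $\prec_{T_i}$ is a subrelation of $\prec_T$), but the content is the same.
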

		\begin{proof}
		Note first that
		properness and admissibility are clearly sustained under restriction. As any subset of a well founded set is well founded, the same holds for regularity. Hence, the only thing to verify is that $T$ restricts to a bijection. The condition on $T$ guarantees that the restriction of $T$ to $B_{II} \cap B_{1,n.d.} \to B_{I} \cap B_{1,n.d.}$  is well defined. Injectivity is automatic, and surjectivity follows from the fact that for $\sigma\in B_{I}$, $T^{-1}\sigma$ is always a face of $\sigma$. In particular, if $\sigma\in B_{1,I}$, then $T^{-1}(\sigma)\subset\sigma\subset B_1$. Finally, the restriction of $T$ on $B_{1,\nd}$ gives a proper regular pairing. This also implies that the restriction of $T$ on $B_2\setminus B_1$ is a proper regular pairing.
	\end{proof}
	We are left with showing, that $\Exn(K) \hookrightarrow \Ex_P (K)$ is an SAE for $ K \in \sS_P$. We will do so in two steps. We will first construct some sub-object of $\Exn(K)$, $\widehat{J}$, such that $\widehat{J}\hookrightarrow \Ex_P(K)$ is an SAE, following Moss's proof in \cite{MossSae}, and then use \cref{LemmaRestrictFSAE} to show that it restricts to the desired SAE. To describe $\widehat{J}$ and the pairing, we need to introduce some maps on subdivisions.
	
	\begin{definition}
	Let $n\geq 0$, and $0\leq k\leq n$. Define maps $\widetilde{j}^k_n\colon \sd(\Delta^n)\to\sd(\Delta^n)$ and $\widetilde{r}^k_n\colon \sd(\Delta^{n+1})\to\sd(\Delta^n)$ as follows. On vertices of the form $\{i\}$, they are defined as 
	\begin{equation*}
	    \widetilde{j}^k_n(\{i\})=\left\{\begin{array}{cl}
	    \{i,\dots,n\}&\text{ if $i<k$}\\
	    \{i\} &\text{ if $i\geq k$}
	    \end{array}\right.\text{ and } \widetilde{r}^{k}_n(\{i\})=\left\{\begin{array}{cl}
	    \{i\} &\text{ if $i<k$}\\
	    \{k,\dots,n\} &\text{ if $i=k$}\\
	    \{i-1\} &\text{ if $i>k$}
	    \end{array}\right.
	\end{equation*}
	On vertices $\sigma$, they are defined as 
	\begin{equation*}
	    \widetilde{j}^k_n(\sigma)=\bigcup\limits_{i\in\sigma}\widetilde{j}^k_n(\{i\})\text{ and }
	    \widetilde{r}^k_n(\sigma)=\bigcup\limits_{i\in\sigma}\widetilde{r}^k_n(\{i\})
	\end{equation*}
	And then, they are extended linearly to maps of simplicial sets. Then, given a flag $\J= [p_0\leq\dots\leq p_n]$, the product maps
	\begin{align*}
	    \widetilde{j}^k_n\times\Id\colon\sd(\Delta^n)\times N(P)&\to\sd(\Delta^n)\times N(P)\\
	    \widetilde{r}^k_n\times\Id\colon\sd(\Delta^{n+1})\times N(P)&\to\sd(\Delta^n)\times N(P)
	\end{align*}
	Restrict to stratum preserving maps
		\begin{align*}
	    j^k_n\colon\sd_P(\Delta^{\J})&\to\sd_P(\Delta^{\J});\\
	    r^k_n\colon\sd_P(\Delta^{\J^k})&\to\sd_P(\Delta^{\J}),
	\end{align*}
	where $\J^k$ is the flag obtained from $\J$ by repeating $p_k$.
	
	For ${K}$ some fixed stratified simplicial set, $n\geq 0$, and $0\leq k\leq n$, let $J^k_n$ be the subset of $\Ex_P(K)_n$ defined as follows
	\begin{equation*}
	    J^k_n=\{\sigma\colon \sd_P(\Delta^{\J})\to{K}\ |\ \sigma\circ j^k_n=\sigma\}
	\end{equation*}
	\end{definition}
	
	The maps $j^k_n$ and $r^k_n$ satisfy a lot of relations, somewhat similar to the simplicial relations.
	
	\begin{proposition}\label{PropTonsOfEq}
		The morphisms $ j^k,  r^k$ fulfill the following equations (for any fixed flag $\mathcal J = [p_0\leq\dots\leq p_n]$ as the target of all the compositions)
			\begin{align}
				\label{PropTonsOfEq4}\tag{1'} r^k \circ \sd_{P}(d^k) &= 1_{\Delta^{\mathcal J}} &{0 \leq k \leq n}\\ \label{PropTonsOfEq5}\tag{2'}
				 j^k \circ  r^k \circ \sd_{P}(d^i) \circ  j^{k+1} &=  j^k \circ  r^k \circ \sd_{P}(d^i) &{0 \leq k < i \leq n} \\\label{PropTonsOfEq6}\tag{3'}
				 r^k \circ \sd_P(d^i) &= \sd_P(d^i) \circ  r^{k-1} &{0 \leq i < k \leq n}
			\end{align}
			\begin{align}\label{PropTonsOfEq7}\tag{4'}
				 r^k \circ  j^{h+1} &=  j ^{h} \circ  r^k &{0 \leq h \leq k \leq n} \\
				\label{PropTonsOfEq8}\tag{5'} j^{k} \circ \sd_P(d^i) \circ  j^{k-1} &=  j^{k} \circ \sd_{P}(d^i)	&{1\leq k\leq n,\text{ } 0\leq i \leq n} 
				\\
				\label{PropTonsOfEq9}\tag{6'}
				 j^h \circ  r^k &=  j^h \circ \sd_P(s^k) &{1\leq k < h \leq n}
			\end{align}
			\begin{align}
				\label{PropTonsOfEq10}\tag{7'}
				 j^k \circ  r^k \circ  r^{k+1} &=  j^k \circ  r^k \circ \sd_P(s^k) &{0\leq k \leq n}
				\\
				\label{PropTonsOfEq11}\tag{8'}
				\sd_P(s^h) \circ  j^k \circ r^k &=  j^k \circ  r^k \circ \sd_P(s^{h+1}) &{0\leq k \leq h \leq n} \\ 
				\sd_P(s^h) \circ  j^{k+1} \circ r^{k+1} &=  j^k \circ r^k \circ \sd_P(s^{h}) &{0\leq h \leq k \leq n}\tag{9'}
			\end{align}
	\end{proposition}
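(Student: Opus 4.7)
The plan is to reduce all nine identities to purely combinatorial statements about the maps on vertices of the subdivisions $\sd_P(\Delta^{\mathcal J})$, and then verify each by direct case analysis. Two structural observations make this feasible. First, every map appearing in the statement --- $j^k$, $r^k$, $\sd_P(d^i)$, $\sd_P(s^k)$ --- is the restriction of a product map $\widetilde{f} \times \mathrm{Id}_{\Delta^{\mathcal J}}$ on $\sd(\Delta^{n}) \times \Delta^{\mathcal J}$, with the second component controlled only by tracking the filtration index of each vertex. Hence each equation reduces to the corresponding (unstratified) identity between the maps $\widetilde{j}^k_n$, $\widetilde{r}^k_n$, $\sd(d^i)$, $\sd(s^k)$ on $\sd(\Delta^n)$, together with the elementary verification that both sides map into a common stratified subdivision. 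Second, each of these unstratified maps is simplicial and sends a general vertex $\sigma \subset \{0,\ldots,n\}$ to the union of its images on the singleton vertices $\{i\} \subset \sigma$. Consequently, equality of compositions on all vertices of the form $\{i\}$ automatically upgrades to equality on all vertices, and from there to equality of maps of simplicial sets.

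From this reduction, the proof of each of (1')--(9') is carried out by splitting into cases according to the position of the test index $i$ relative to $k$ (and, where applicable, $h$). For instance, (1') $r^k \circ \sd_P(d^k) = 1$ reduces to the check that $\widetilde{r}^k_n(d^k(\{i\})) = \{i\}$ for $0 \leq i \leq n$: if $i < k$ then $d^k(\{i\}) = \{i\}$ and $\widetilde{r}^k_n(\{i\}) = \{i\}$; if $i \geq k$ then $d^k(\{i\}) = \{i+1\}$ and $\widetilde{r}^k_n(\{i+1\}) = \{i\}$. The remaining identities are handled analogously: one enumerates the inputs $\{i\}$, applies the explicit formulas for $\widetilde{j}^k_n$, $\widetilde{r}^k_n$, $\sd(d^i)$, $\sd(s^k)$ in each range, and observes that both sides produce the same subset of $\{0,\ldots,n\}$.

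The one subtlety beyond bookkeeping is ensuring that each composition is well defined as a stratified map with the claimed source and target flag, so that the restriction to $\sd_P$ actually makes sense; this is automatic from the very definition of $j^k$ and $r^k$ (which are built so as to preserve the filtration index) and from the fact that $\J^k$ (the flag obtained from $\J$ by repeating $p_k$) is precisely the flag needed as the source for $r^k$. The only real obstacle is thus notational: tracking the effect of each map in each case, without conflating $n$-indexed and $(n+1)$-indexed vertex sets. Once the case analysis for a single representative equation such as (4') $r^k \circ j^{h+1} = j^h \circ r^k$ is written out (splitting according to whether $i < h$, $h \leq i \leq k$, $i = k+1$, or $i > k+1$), the remaining identities follow by entirely parallel arguments, which we omit.
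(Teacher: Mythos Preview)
Your approach is correct and self-contained, but it differs from the paper's. The paper does not verify the identities directly; instead it observes that the unstratified maps $\widetilde{j}^k_n$ and $\widetilde{r}^k_n$ are obtained from Moss's maps $j^k_n$, $r^k_n$ in \cite{MossSae} by conjugating with the order-reversing automorphism $D_n\colon \sd(\Delta^n)\to\sd(\Delta^n)$, $\{i\}\mapsto\{n-i\}$. Equations $(1')$--$(9')$ then follow formally by conjugating Moss's equations $(1)$--$(9)$ from \cite[Lemma~26]{MossSae}, with the index shifts $k\mapsto n-k$ explaining the primes in the numbering.

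Your two reductions---passing to the unstratified $\sd(\Delta^n)$ factor, and then to singleton vertices via the join-preserving property---are both valid and exactly the kind of argument one would use to prove Moss's original identities from scratch. The trade-off is that the paper's route is a one-line transport of a known result, while yours requires nine separate (if routine) case analyses; on the other hand, your argument is independent of \cite{MossSae} and makes the verification fully explicit. Either is acceptable, though if you are writing this up you should at least mention that the identities are the $D_n$-conjugates of Moss's, since that is the conceptual reason they hold and it saves the reader from checking all nine.
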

	\begin{proof}
	The maps $j^k_n$ and $r^k_n$ are obtained from the corresponding maps in \cite{MossSae} by conjugating them with the automorphisms $D_n\colon\sd(\Delta^n)\to\sd(\Delta^n)$ sending $\{i\}$ to $\{n-i\}$. 
	Equation $(1')$ through $(9')$ are then obtained by conjugating Moss' equations $(1)$ through $(9)$ \cite[Lemma 26]{MossSae}.
	\end{proof}
	
	\begin{lemma}\label{lem:JHat_SubSimplicialSet}
	The subset $\widehat{J}\subset\Ex_P(K)$, defined as
	\begin{equation*}
	  \widehat{J}_n=J^n_n 
	\end{equation*}
	is a simplicial subset. Furthermore, $\widehat{J}\subset\Exn(K)$.
	\end{lemma}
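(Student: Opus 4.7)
The plan is to verify two claims: that $\widehat{J}$ is closed under the face and degeneracy operators of $\Ex_P(K)$ (making it a simplicial subset), and that every simplex of $\widehat{J}$ lies in the image of the inclusion $\Exn(K) \hookrightarrow \Ex_P(K)$.

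For closure under faces, I would specialise equation \eqref{PropTonsOfEq8} of \cref{PropTonsOfEq} to $k = n$, obtaining $j^n_n \circ \sd_P(d^i) \circ j^{n-1}_{n-1} = j^n_n \circ \sd_P(d^i)$. Precomposing with $\sigma \in \widehat{J}_n = J^n_n$ and using $\sigma \circ j^n_n = \sigma$ yields $d_i\sigma \circ j^{n-1}_{n-1} = d_i\sigma$, so $d_i\sigma \in \widehat{J}_{n-1}$. For degeneracies, the analogous argument requires the auxiliary commutation $\sd_P(s^h) \circ j^{n+1}_{n+1} = j^n_n \circ \sd_P(s^h)$, which does not appear directly in \cref{PropTonsOfEq}. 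I would verify it by a direct computation on vertices: using that $\widetilde{j}^m_m(\sigma) = \{\min \sigma, \dots, m\}$, both sides send a vertex $(\sigma, q)$ of $\sd_P(\Delta^{s_h\J})$ to $(\{s^h(\min \sigma), s^h(\min \sigma) + 1, \dots, n\}, q)$. Precomposing with $\sigma \in \widehat{J}_n$ then gives $s_h\sigma \circ j^{n+1}_{n+1} = s_h\sigma$, so $s_h\sigma \in \widehat{J}_{n+1}$.

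For the second claim, the plan is to construct a simplicial map $s_{\J} \colon \sdn(\Delta^{\J}) \to \sd_P(\Delta^{\J})$ satisfying $j^n_n = s_{\J} \circ t_{\J}$. On a vertex $v \subset \{0, \dots, n\}$ of $\sdn(\Delta^{\J}) = \sd(\Delta^n)$, set $s_{\J}(v) := (\{\min v, \min v + 1, \dots, n\}, p_{\max v})$. A chain $v_0 \subsetneq \dots \subsetneq v_k$ in $\sd(\Delta^n)$ has non-increasing minima $\min v_i$ and non-decreasing maxima $\max v_i$, so the images form a chain in $\sd_P(\Delta^{\J})$, with the intersection condition holding since each $\max v_i \geq \min v_0$. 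A vertex-level comparison then yields $s_{\J} \circ t_{\J} = j^n_n$: for $(\sigma, q) \in \sd_P(\Delta^{\J})_0$, the flag $t_{\J}((\sigma, q))$ has minimum $\min \sigma$ and its last entry has filtration $q$, so $s_{\J}(t_{\J}((\sigma, q))) = (\{\min \sigma, \dots, n\}, q) = j^n_n((\sigma, q))$. For $\sigma \in \widehat{J}_n$, the composite $\tilde{\sigma} := \sigma \circ s_{\J}$ thus satisfies $\tilde{\sigma} \circ t_{\J} = \sigma \circ j^n_n = \sigma$, realising $\sigma$ as the image of $\tilde{\sigma} \in \Exn(K)_{\J}$.

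The hard part will be the combinatorial verification of the auxiliary commutation $\sd_P(s^h) \circ j^{n+1}_{n+1} = j^n_n \circ \sd_P(s^h)$, together with the propagation of the vertex-level identity $s_{\J} \circ t_{\J} = j^n_n$ to all simplices. Both reduce to careful tracking of the explicit subcomplex description of $\sd_P(\Delta^{\J}) \subset \sd(\Delta^n) \times \Delta^{\J}$ and its defining intersection condition on first vertices.
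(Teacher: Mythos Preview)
Your proposal is correct and follows essentially the same approach as the paper. Your factorization map $s_{\J}$ coincides with the paper's map $f$ (since $\widetilde{j}^n_n(\sigma) = \{\min\sigma,\dots,n\}$ and $\max\{p_i : i \in \sigma\} = p_{\max\sigma}$), and your closure argument for faces via \eqref{PropTonsOfEq8} is identical. The one minor variation is that for degeneracies you establish the stronger naturality square $\sd_P(s^h)\circ j^{n+1}_{n+1} = j^n_n\circ \sd_P(s^h)$, whereas the paper only records the weaker identity $j^n_n\circ \sd_P(s^k)\circ j^{n+1}_{n+1} = j^n_n\circ \sd_P(s^k)$; both follow from the same vertex-level computation and either suffices. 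The ``hard parts'' you flag are not genuinely hard: since all the simplicial sets involved are (nerves of) ordered simplicial complexes, the vertex-level identities automatically propagate to all simplices once you have verified, as you do, that $s_{\J}$ carries chains to chains satisfying the defining condition of $\sd_P(\Delta^{\J})$.
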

	
	\begin{proof}
	    For the first part, it suffices to show that if $\sigma\colon\sd_P(\Delta^{\J})\to{K}$ is in $J^n_n$, then for all $0\leq k\leq n$, we have both
	    \begin{equation*}
	        \sigma\circ\sd(d^k)\in J^{n-1}_{n-1} \text{ and } \sigma\circ \sd (s^k)\in J^{n+1}_{n+1}.
	    \end{equation*}
	    The former comes from equality \eqref{PropTonsOfEq8}, while the later follows from the following equality:
	    \begin{equation}\label{eq:Extra_Equation_j_k}
	        \tag{10'} j^{n}_n\circ \sd_P(s^k)\circ j^{n+1}_{n+1}=j^n_n\circ \sd_P(s^k)
	    \end{equation}
	    Given the definitions of the $j^k$, it is enough to check that \cref{eq:Extra_Equation_j_k} holds on vertices of $\sd(\Delta^{n+1})$ of the form $\{i\}$, for the $\widetilde{j}$ and $\sd(s^k)$. But evaluating the transformed equation on $\{i\}$ gives on both sides either $\{i,\dots,n\}$, if $k\geq i$, or $\{i-1,\dots,n\}$ if $k<i$, which concludes the first part of the proof.
	    
	    For the second part, Let $\J=[p_0\leq\dots\leq p_n]$, and consider the factorization problem
	    \begin{equation*}
	        \begin{tikzcd}
	        \sd_P(\Delta^{\J})\arrow{rr}{j^n}
	        \arrow{dr}{t_{\J}}
	        &&\sd_P(\Delta^{\J})
	        \\
	        &\sdn(\Delta^{\J})
	        \arrow[dashed]{ur}{f}
	        \end{tikzcd}
	    \end{equation*}
	    One checks that the map $f$, defined on vertices as $f(\sigma)=(\widetilde{j}^n_n(\sigma),\max\{p_i\ |\ i\in \sigma\})$, makes the diagram commute. But then, any simplex $\sigma\in \widehat{J}$ must satisfy $\sigma=\sigma\circ j^n=\sigma\circ f\circ t_{\J}$. In particular, such a simplex is in $\Exn(K)$, which concludes the proof.
	\end{proof}
	
	\begin{lemma}\label{lem:JHat_FSAE}
	The inclusion $\widehat{J}\to\Ex_P(K)$ is an SAE.
	\end{lemma}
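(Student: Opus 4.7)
The plan is to follow the strategy of \cite{MossSae} and exhibit an explicit admissible, proper, regular pairing on the cofibration $\widehat{J} \hookrightarrow \Ex_P(K)$, and then conclude by \cref{PropFSAEChar}. The whole point of setting up the stratified maps $j^k_n$ and $r^k_n$ and verifying the long list of identities in \cref{PropTonsOfEq} (which are the stratified analogues of Moss's identities $(1)$--$(9)$) is precisely so that Moss's combinatorial bookkeeping transports verbatim. Concretely, $\widehat{J}$ has been defined so that its simplices are exactly those $\sigma\colon \sd_P(\Delta^{\J})\to K$ stable under $j^n_n$, so that the complement $\Ex_P(K)_{\nd}\setminus \widehat{J}_{\nd}$ is controlled by failures of such factorizations.

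The classification into types goes as follows. To each non-degenerate $\sigma\colon \sd_P(\Delta^{\J})\to K$ of dimension $n$ lying in $\Ex_P(K)_n\setminus \widehat{J}_n$, associate the ``pivot'' $k(\sigma)$, i.e.\ the maximal $k\leq n$ for which $\sigma\circ j^k_n\neq \sigma$. Declare $\sigma$ to be of type I if it factors as $\sigma=\tau\circ r^k_{n-1}$ for some $\tau$ of dimension $n-1$ (forced to be unique by \eqref{PropTonsOfEq4}), and of type II otherwise. For a type II simplex $\sigma$ with pivot $k$, define $T(\sigma)=\sigma\circ r^k_n\colon \sd_P(\Delta^{\J^k})\to K$, where $\J^k$ is obtained from $\J$ by repeating $p_k$. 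By equation \eqref{PropTonsOfEq4} we have $T(\sigma)\circ \sd_P(d^k)=\sigma$, so $\sigma$ is a codimension one face of $T(\sigma)$; the remaining identities \eqref{PropTonsOfEq5}--\eqref{PropTonsOfEq11} and \eqref{eq:Extra_Equation_j_k} together with \cref{lem:JHat_SubSimplicialSet} imply that $T$ defines a bijection type II $\to$ type I and that the codimension one face relation witnessing it is the unique one, giving properness.

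Admissibility is essentially built in: the target flag $\J^k$ of $T(\sigma)$ repeats $p_k$, so the horn inclusion $\Lambda^{\J^k}_k\hookrightarrow \Delta^{\J^k}$ is admissible in the sense of \cref{Section:Prelim}. Regularity of the ancestral relation $\prec_T$ is checked by the same invariant as in Moss: if $\sigma\prec_T\tau$ then either $k(\sigma)<k(\tau)$ or the pivots coincide and $\dim(\sigma)<\dim(\tau)$; well-foundedness of the lexicographic order on $(k(-),\dim(-))$ then gives well-foundedness of $\prec_T$.

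The main obstacle, and the reason \cref{PropTonsOfEq} is laid out so carefully, is establishing that the stratified maps $j^k_n,r^k_n$ do satisfy exactly the relations needed to run Moss's argument; once these are on the table, the classification, the bijectivity of $T$, and the well-foundedness proof go through essentially unchanged. A minor verification that also has to be made is that the pairing actually concerns simplices in the complement of $\widehat{J}$, which boils down to observing that a simplex $\sigma$ of pivot $k$ cannot already satisfy $\sigma\circ j^n_n=\sigma$, as this would force $\sigma\circ j^k_n=\sigma$ by the $j$-identity \eqref{PropTonsOfEq8}, contradicting the definition of the pivot.
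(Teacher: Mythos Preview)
Your overall approach---Moss's pairing transported via the identities of \cref{PropTonsOfEq}, with admissibility coming for free from the shape of the target flag $\J^k$---is exactly what the paper does; its own proof is little more than a pointer to Moss's Lemmas~27--29 together with the admissibility observation.

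Two concrete details in your write-up are off, however. First, the pivot is misdefined. Since $J^n_n\subset\cdots\subset J^0_n=\Ex_P(K)_n$, the set of indices $k$ with $\sigma\circ j^k_n\neq\sigma$ is upward-closed; for any $\sigma\notin\widehat J_n$ your ``maximal such $k$'' is therefore always $n$, and the construction collapses. The pivot must be the maximal $k$ with $\sigma\circ j^k_n=\sigma$, i.e.\ the unique $k$ with $\sigma\in J^k_n\setminus J^{k+1}_n$; one then sets $T(\sigma)=\sigma\circ r^k$. Second, with the correct pivot the regularity invariant runs the other way: for $\sigma\prec_T\tau$ of the same dimension $n$, equation \eqref{PropTonsOfEq7} gives $T(\tau)\in J^{k(\tau)+1}_{n+1}$, and then \eqref{PropTonsOfEq8} forces every $n$-dimensional face of $T(\tau)$ into $J^{k(\tau)}_n$, so $k(\sigma)\geq k(\tau)$, with equality ruled out by the uniqueness clause in properness. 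The decreasing quantity is therefore $n-k(\sigma)$, and one has $k(\sigma)>k(\tau)$, not $k(\sigma)<k(\tau)$. (Your final paragraph also appeals to \eqref{PropTonsOfEq8} for an identity purely among the $j^k$'s; that relation, $j^k\circ j^l=j^k$ for $k\geq l$, is true but is not one of the listed equations---it comes straight from the definition.)
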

	
	\begin{proof}
	    Moss' proof of \cite[Theorem 22]{MossSae} directly translates into a proof that the inclusion of \cref{lem:JHat_FSAE} is an SAE. Note that due to the conjugation with $D_n$, the inclusion relations between the $J^k_n$ are reversed from those in \cite{MossSae}. Additionally, the key difference is that in our setting, $\widehat{J}$ does not coincide with ${K}$. Nevertheless, \cite[Lemmas 27, 28 and 29]{MossSae}  generalize in our setting since they are direct consequences of the equalities of \cite[Lemma 26]{MossSae}, which also hold in our context in the form of \cref{PropTonsOfEq}.
	    
	    To see that the pairing from \cite{MossSae} is admissible, let $\J=[p_0\leq\dots\leq p_n]$ be some flag, and $\sigma\colon \sd_P(\Delta^{\J})\to{K}$ a type $II$ simplex. Then, there is some $k\geq 0$ such that $T(\sigma)=\sigma\circ r^k$, and $\sigma=d_k T(\sigma)$. In particular, $T(\sigma)$ is of the form $\sd_P(\Delta^{\J^k})\to {K}$, with $\J^k$ obtained from $\J$ by repeating $p_k$. This means that $\Lambda^{\J^k}_k\to\Delta^{\J^k}$ is an admissible horn inclusion, and so the pairing is admissible.
	\end{proof}
	
	\begin{proposition}\label{prop:ExNtoExSAE}
	Let ${K}\in \sS_P$ be a stratified simplicial set, the map $\Exn(K)\to\Ex_P(K)$ is an SAE.
	\end{proposition}
	
	\begin{proof}
	    By \cref{LemmaRestrictFSAE}, it is enough to show that the pairing given in the proof of \cref{lem:JHat_FSAE} correctly restricts to $\Exn(K)$. Let $\J= [p_0\leq\dots\leq p_n]$ be a flag, and $\sigma\colon \sdn(\Delta^{\J})\to{K}$ a type II simplex in $\Exn(K)$. Its image under the pairing $T$ is the simplex of $\Ex_P(K)$, $\sigma\circ t_{\J}\circ r^k$, for some $0\leq k\leq n$, and we need to find some $\tau\in \Exn(K)$ such that $\sigma\circ t_{\J}\circ r^k=\tau\circ t_{\J^k}$, where $\J^k$ is obtained from $\J$ by repeating $p_k$. Consider the following diagram:
	    \begin{equation*}
	        \begin{tikzcd}
	        \sd_P(\Delta^{\J^k})
	        \arrow{r}{r^k}
	        \arrow[swap]{d}{t_{\J^k}}
	        &\sd_P(\Delta^{\J})
	        \arrow{d}{t_{\J}}
	        \\
	        \sdn(\Delta^{\J^k})
	        \arrow[dashed]{r}{g}
	        &\sdn(\Delta^{\J})
	        \end{tikzcd}
	    \end{equation*}
	    It suffices to find a stratum preserving map $g\colon \sdn(\Delta^{\J^k})\to\sdn(\Delta^{\J})$ making the diagram commute, since then $\tau=\sigma\circ g$ would satisfy $\sigma\circ t_{\J}\circ r^k=\tau\circ t_{\J^k}$. We define $g$ on vertices as follows. 
	    \begin{equation*}
	        g(\mu)=\{i\in\widetilde{r^k_n}(\mu)\ |\ p_i\leq \max\{p_j\ |\ j\in \mu\}\}
	    \end{equation*}
	    for $\mu$ a vertex of $\sd(\Delta^{n+1})$. An elementary computation now gives commutativity.
	\end{proof}
	\cref{prop:KtoExNSAE} and \cref{prop:ExNtoExSAE} together complete the proof of \cref{PropositionExPFibrantReplacement}. Indeed, we have proven that the map $K\to \Exn(K)\to\Ex_P(X)$ is the composition of two SAE, hence it is a SAE.

	\subsection{$\Sing_P$ characterizes weak equivalences}
	
	\cref{PropositionExPFibrantReplacement} has the following immediate consequence.

	\begin{theorem}\label{theo:SingP_Characterize_WeakEquivalences}
		Let $f\colon {X}\to{Y}$ be a map in $\Top_P$. It is a weak equivalence if and only if $\Sing_P(f)\colon \Sing_P(X)\to\Sing_P(Y)$ is a weak equivalence in $\sS_P$. The analgous result holds for $\Sing_{N(P)}$ and maps in $\Top_{N(P)}$
	\end{theorem}
	
	\begin{proof}
	By \cite{douteau2021stratified} we know that $\Ex_P\Sing_P$ is the right functor of a Quillen-equivalence. In particular, if $f\colon {X}\to{Y}$ is a map in $\Top_P$, since all objects of $\Top_P$ are fibrant, it is a weak equivalence if and only if $\Ex_P\Sing_P(f)\colon \Ex_P\Sing_P(X)\to\Ex_P\Sing_P(Y)$ is a weak equivalence in $\sS_P$. Now consider the following commutative diagram:
		\begin{equation*}
			\begin{tikzcd}
				\Sing_P(X)
				\arrow{r}
				\arrow{d}
				&\Sing_P(Y)
				\arrow{d}
				\\
				\Ex_P\Sing_P(X)
				\arrow{r}
				&\Ex_P\Sing_P(Y)
			\end{tikzcd}
		\end{equation*}
		By \cref{PropositionExPFibrantReplacement}, the vertical maps are trivial cofibrations and in particular weak equivalences. By two out of three, this implies that $\Sing_P(f)$ is a weak equivalence if and only if $\Ex_P\Sing_P(f)$ is a weak equivalence if and only if $f$ is a weak equivalence. The proof for $\Sing_{N(P)}$ is identical.
	\end{proof}

\section{Realizations characterize weak-equivalences}
\label{Section:Real_Char_WE}	
In this section, we prove that a stratum preserving simplicial map $f:K \to L$ in $\sS_P$ is a weak equivalence if and only if $\RealP{f}$ is a weak equivalence in $\Top_P$. In light of the results in \cite{douteau2021stratified} which establishes that $\RealP{\sd_P{(-)}}$ is the left functor in a Quillen equivalence, and those in \cite{douSimp} which imply that $\sd_P(K)\to K$ is a weak-equivalence in $\sS_P$, the main part is to show that $\RealP{-}$ does preserve weak equivalences.
We show this result by obtaining all but the first of the weak equivalences in Diagram \eqref{eq:Diagram_holinks_link_prelim}.
\subsection{Comparing the simplicial links and the geometrical links}
We start with a comparison between the simplicial link, $\Real{\Link{\I}(-)}$, and the geometrical link, $\Real{-}_b$, see \cref{def:Simp_Link,def:Geom_Link}.

\begin{proposition}
\label{prop:Links_and_Links}
    Let $K\in \sS_P$ be a stratified simplicial set and $A \subset K$ a stratified simplicial subset. 
    Let $\I$ be a regular flag in $P$, and $b \in \Real{\Delta^{\mathcal I}}$ the barycenter. Then, there is a commutative diagram in $\Top$
    \begin{center}
        \begin{tikzcd}
        \Real{\Link{\I}(A)} \arrow[r, hook] \arrow[d, "\sim"]& \Real{\Link{\I}(K)} \arrow[d, "\sim"] \\
        \Real{A}_{b} \arrow[r, hook] & \Real{K}_b 
        \end{tikzcd}
    \end{center}
    where the vertical maps are homeomorphisms.
\end{proposition}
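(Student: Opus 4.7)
The plan is to construct the vertical homeomorphism explicitly by reducing to the case of a stratified simplex $\Delta^\J$, verifying compatibility with face inclusions, and then gluing. Throughout I fix the regular flag $\I = [p_0 < \dots < p_n]$.

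First, fix a stratified simplex $\Delta^\J$ with $\J = [p_{\iota(0)} \leq \dots \leq p_{\iota(k)}]$. If the underlying regular flag of $\J$ does not contain $\I$, then no non-degenerate simplex of $\sd(\Delta^\J)$ maps to the vertex $\I$ of $\sd(N(P))$ and $b$ is not in the image of $\Real{\varphi_\J}$, so both $\Real{\Link{\I}(\Delta^\J)}$ and $\Real{\Delta^\J}_b$ are empty. Otherwise, writing $a_p = |\varphi_\J^{-1}(p)| \geq 1$ for each $p \in \I$, one obtains natural identifications
\begin{equation*}
    \Link{\I}(\Delta^{\J}) \cong \prod_{p \in \I} \sd(\Delta^{a_p - 1}), \qquad \Real{\Delta^{\J}}_b \cong \prod_{p \in \I} \Real{\Delta^{a_p - 1}}.
\end{equation*}
The first comes from the observation that $\Link{\I}(\Delta^\J)$ is the nerve of the poset of subsets $S \subseteq \{0, \dots, k\}$ with $\varphi_\J(S) = \I$ as a set, which decomposes as a product of the posets of non-empty subsets of each block $\varphi_\J^{-1}(p)$, combined with the fact that the nerve commutes with products of posets. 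The second comes from grouping the barycentric coordinates of a point of $\Real{\Delta^\J}$ by $\varphi_\J$-value and observing that the fiber condition over $b$ forces each $p$-block sum to equal $1/(n+1)$. Since all factors are finite simplicial sets, realization commutes with these products, and applying the classical barycentric homeomorphism $\Real{\sd(\Delta^{m})} \cong \Real{\Delta^{m}}$ on each factor yields a homeomorphism $\Real{\Link{\I}(\Delta^\J)} \cong \Real{\Delta^\J}_b$.

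These simplex-wise homeomorphisms are compatible with face inclusions $\Delta^{\J'} \hookrightarrow \Delta^\J$ by a direct combinatorial check (either the face keeps the underlying flag containing $\I$, in which case some $a_p$ drops to the cardinality of a subset and the product decomposition restricts accordingly, or it loses a member of $\I$, in which case both sides become empty). Writing any $K \in \sS_P$ as the colimit of its non-degenerate stratified simplices, the pieces glue to a homeomorphism $\Real{\Link{\I}(K)} \cong \Real{K}_b$. Here one uses that $\Real{\Link{\I}(-)}$ preserves colimits — since $\sd$, pullback in $\sS/N(P)$, and realization are all colimit-preserving — and that $\Real{(-)}_b$ preserves these colimits along inclusions as well, which can be verified by equipping $\Real{K}_b$ with the CW structure whose open cells are the intersections $\Real{K}_b \cap \mathring{\sigma}$ for non-degenerate simplices $\sigma$ of $K$ with $\varphi_K(\mathrm{Vert}(\sigma)) = \I$. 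Compatibility with an inclusion $A \hookrightarrow K$ is then automatic, as the homeomorphism is defined simplex-by-simplex.

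The main technical subtlety lies in the colimit preservation of $\Real{(-)}_b$: since $\Top$ is not locally cartesian closed, fibers over a point do not preserve arbitrary colimits, and one has to rely on the explicit CW structure on $\Real{K}_b$ indicated above to carry out the gluing step rigorously. The non-naturality of the resulting homeomorphism (with respect to general stratified maps, though not inclusions) is inherited from the non-naturality of the classical barycentric homeomorphism $\Real{\sd(\Delta^{m})} \cong \Real{\Delta^{m}}$ on each factor.
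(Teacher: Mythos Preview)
Your proof is correct but follows a different route from the paper's. The paper invokes a result of Waldhausen--Jahren--Rognes, packaged as \cref{lem:FPmagic}, which furnishes for any $K \in \sS_P$ a homeomorphism $h_K\colon \Real{\sd(K)} \to \Real{K}$ lying over the standard barycentric homeomorphism $h_P\colon \Real{\sd(N(P))} \to \Real{N(P)}$, and restricting correctly to subcomplexes. Since realization preserves pullbacks, $\Real{\Link{\I}(K)}$ and $\Real{K}_b$ are the fibers of $\Real{\sd(\varphi_K)}$ and $\Real{\varphi_K}$ over $\I$ and $b = h_P(\I)$ respectively; the pair $(h_K,h_P)$ then identifies the two pullback squares and hence their fibers. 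The statement for $A \subset K$ follows immediately from the restriction property of $h_K$.

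By contrast, you work directly on the fibers: you exploit the explicit product decompositions $\Link{\I}(\Delta^\J) \cong \prod_{p \in \I} \sd(\Delta^{a_p-1})$ and $\Real{\Delta^\J}_b \cong \prod_{p \in \I} \Real{\Delta^{a_p-1}}$, apply the classical barycentric homeomorphism factor by factor, check compatibility with face inclusions, and glue. This is self-contained (no external citation needed) and makes the structure of the link transparent. The price is that you must argue by hand that $\Real{(-)}_b$ preserves the relevant colimit, a point you correctly flag as the main subtlety; the paper sidesteps this entirely by working with the global homeomorphism $\Real{\sd(K)} \cong \Real{K}$ \emph{before} passing to fibers, so no colimit preservation for fibers is ever invoked. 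In effect, your argument reproves the fragment of the Waldhausen--Jahren--Rognes theorem that the paper simply cites.
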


The proof of the proposition relies on the following lemma, which is a direct consequence of \cite[Theorem 2.3.2]{waldhausen2000spaces}.

\begin{lemma}\label{lem:FPmagic}
Let $h_P\colon \Real{\sd(N(P))}\to \Real{N(P)}$ be the usual homeomorphism between a simplicial complex and its barycentric subdivision.
Let $\fil K \in \sS_P$ be a stratified simplicial set. There exists a homeomorphism $h_K\colon \Real{\sd(K)}\to \Real{K} $, which induces an isomorphism in $\Top_{N(P)}$ :
\begin{equation*}
    h_K\colon (\Real{\sd(K)},h_P\circ\Real{\sd(\varphi_K)})\to\RealNP{\fil{K}} 
\end{equation*}
Furthermore, the homeomorphism $h_K$ restricts to homeomorphisms on all non-degenerate simplices. In particular, if $\fil{A}\subset\fil{K}$ is a stratified simplicial subset, then the restriction of $h_K$ induces an isomorphism in $\Top_{N(P)}$
\begin{equation*}
(h_K)_{|A}\colon(\Real{\sd(A)},h_P\circ\Real{\sd(\varphi_A)}) \to \RealNP{\fil A} 
\end{equation*}
\end{lemma}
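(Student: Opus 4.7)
The plan is to deduce the lemma from Waldhausen's Theorem 2.3.2, which supplies, for every simplicial set $K$, a homeomorphism $h_K\colon \Real{\sd(K)}\to\Real{K}$, and which assembles these into a natural isomorphism $h\colon \Real{\sd(-)}\Rightarrow \Real{-}$ of functors $\sS\to\Top$. Once this natural transformation is in hand, all three of the asserted properties follow by specializing naturality to suitable simplicial maps. The only substantive content is therefore Waldhausen's theorem, which we treat as a black box; the work is bookkeeping.

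For the first assertion, I would specialize naturality to the stratification $\varphi_K\colon K\to N(P)$. This produces a commutative square
\begin{equation*}
    \begin{tikzcd}
    \Real{\sd(K)} \arrow[r,"h_K"] \arrow[d,"\Real{\sd(\varphi_K)}"'] & \Real{K} \arrow[d,"\Real{\varphi_K}"] \\
    \Real{\sd(N(P))} \arrow[r,"h_{N(P)}"'] & \Real{N(P)}\spaceperiod
    \end{tikzcd}
\end{equation*}
Observing that $h_{N(P)}=h_P$ (both are, by construction, the classical PL-homeomorphism identifying a simplicial complex with its barycentric subdivision), the square exhibits $h_K$ as a stratum-preserving homeomorphism from $(\Real{\sd(K)},h_P\circ\Real{\sd(\varphi_K)})$ to $\RealNP{K}$, i.e.\ as an isomorphism in $\TopNP$.

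For the restriction property on non-degenerate simplices, I would apply naturality at $\sigma\colon \Delta^n\to K$ non-degenerate. The resulting identity $h_K\circ\Real{\sd(\sigma)}=\Real{\sigma}\circ h_{\Delta^n}$ shows that the image $\Real{\sd(\sigma)}(\Real{\sd(\Delta^n)})\subset\Real{\sd(K)}$ is mapped by $h_K$ onto the closed cell $\Real{\sigma}(\Real{\Delta^n})\subset\Real{K}$; since $h_K$ is globally a homeomorphism and $h_{\Delta^n}$ is the classical PL-homeomorphism (in particular a bijection on each face), the restriction is itself a homeomorphism.

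Finally, for the subcomplex statement I would apply naturality at the inclusion $A\hookrightarrow K$. Since this inclusion is a monomorphism of simplicial sets, $\Real{\sd(A)}\hookrightarrow\Real{\sd(K)}$ is a closed subspace inclusion, and naturality exhibits $h_A$ as the restriction of $h_K$ along this inclusion. Combining with the first paragraph applied to $A$ (and with $\varphi_A=\varphi_K|_A$), one obtains the required isomorphism $(\Real{\sd(A)},h_P\circ\Real{\sd(\varphi_A)})\to\RealNP{A}$ in $\TopNP$. The main obstacle, as noted, is not in the present deduction but in producing the natural homeomorphism $h$; this is the content of Waldhausen's theorem and we do not reprove it here.
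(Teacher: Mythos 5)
There is a genuine gap: your entire deduction rests on the claim that Waldhausen's theorem assembles the homeomorphisms $h_K\colon\Real{\sd(K)}\to\Real{K}$ into a natural isomorphism $\Real{\sd(-)}\Rightarrow\Real{-}$ of functors on all of $\sS$. No such natural transformation exists, and this is not what \cite[Theorem 2.3.2]{waldhausen2000spaces} asserts. Already the standard barycentric homeomorphism fails naturality for non-injective maps: for the degeneracy $s\colon\Delta^2\to\Delta^1$, the map $\sd(s)$ sends the barycenter vertex of $\sd(\Delta^2)$ to the barycenter vertex of $\sd(\Delta^1)$, while $\Real{s}$ sends the barycenter $(\tfrac13,\tfrac13,\tfrac13)$ to $(\tfrac23,\tfrac13)\neq(\tfrac12,\tfrac12)$, so the naturality square fails for the canonical choice, and it is a known fact that no choice of homeomorphisms can repair this on all of $\sS$. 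This is precisely why the paper describes the resulting comparison map (the map labeled $*$ in the diagram of homotopy links) as a \emph{non-natural} homeomorphism. What Waldhausen's theorem actually provides is weaker and more specific: given a map $\varphi_K\colon K\to N(P)$ whose codomain is \emph{non-singular}, there exists a homeomorphism $h_K$ (depending on $\varphi_K$, not natural in $K$) making the square with the canonical $h_P$ commute. That restricted statement is exactly how the paper obtains the first assertion.

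Consequently your derivations of the second and third assertions do not go through as written: naturality at a non-degenerate simplex $\sigma\colon\Delta^n\to K$ and at the inclusion $A\hookrightarrow K$ is not available, and moreover there is no a priori canonical $h_A$ with which the restriction of $h_K$ could be compared --- the content of the lemma is precisely that the $h_K$ furnished by Waldhausen's theorem restricts correctly. In the paper this is not deduced from naturality at all; it is the content of \cite[Proposition 2.3.23]{waldhausen2000spaces}, which states that $h_K$ can be taken to restrict to homeomorphisms on all non-degenerate simplices, and the subcomplex statement for $\fil{A}\subset\fil{K}$ then follows from that (together with $\varphi_A=\varphi_K|_A$). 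If you want to keep your structure, you must replace the ``natural isomorphism'' black box by the two actual inputs: the relative existence statement over a non-singular codomain (Theorem 2.3.2) and the simplexwise restriction property (Proposition 2.3.23).
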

\begin{proof}
Consider the simplicial map $\varphi_K\colon K\to N(P)$. Its codomain is a non-singular simplicial set, and so by \cite[Theorem 2.3.2]{waldhausen2000spaces}, there exist a homeomorphism $h_K$ such that the following diagram commutes:
\begin{equation*}
    \begin{tikzcd}
    \Real{\sd(K)}
    \arrow{r}{h_K}
    \arrow[swap]{d}{\Real{\sd(\varphi_K)}}
    &\Real{K}
    \arrow{d}{\Real{\varphi_K}}
    \\
    \Real{\sd(N(P))}
    \arrow{r}{h_P}
    &\Real{N(P)}
    \end{tikzcd}
\end{equation*}
The commutativity of the diagram implies that $h_K$ is an isomorphism in $\Top_{N(P)}$, which is the first part of the lemma. The second part of the lemma is the content of \cite[Proposition 2.3.23]{waldhausen2000spaces}.
\end{proof}

\begin{proof}[Proof of \cref{prop:Links_and_Links}]
Let $A\subset K$ be an inclusion of stratified simplicial sets. Since the realization functor $\Real{-}\colon \sS\to \Top $ preserve pullbacks, $\Real{\Link{\I}(K)}$ is given by the following pullback:
\begin{equation*}
    \begin{tikzcd}
    \Real{\Link{\I}(K)}
    \arrow[hookrightarrow]{r}
    \arrow{d}
    &\Real{\sd(K)}
    \arrow{d}{\Real{\sd(\varphi_K)}}
    \\
    \{*\}
    \arrow[hookrightarrow]{r}{\Real{i_{\I}}}
    &\Real{\sd(N(P))}
    \end{tikzcd}
\end{equation*}
On the other hand, $\Real{K}_b$ is defined as the following pullback:
\begin{equation*}
    \begin{tikzcd}
    \Real{K}_b
    \arrow[hookrightarrow]{r}
    \arrow{d}
    &\Real{K}
    \arrow{d}{\Real{\varphi_K}}
    \\
    \{b\}
    \arrow[hookrightarrow]{r}
    &\Real{N(P)}
    \end{tikzcd}
\end{equation*}
By \cref{lem:FPmagic}, $h_K$ and $h_P$ give an isomorphism between the arrows on the right hand side of both squares. On the other hand, $h_P(\Real{i_{\I}}(*))=b$, since $b$ is the barycenter of $\Real{\Delta^{\I}}$, which means that there is an isomorphism between the pullback squares, producing the isomorphism $\Real{\Link{\I}(K)}\to\Real{K}_b$. Since $h_K$ correctly restricts to simplicial subsets, by \cref{lem:FPmagic}, we obtain the left side of the square in \cref{prop:Links_and_Links} as well as its commutativity.
\end{proof}

\subsection{From links to homotopy links}\label{subsec:links_to_homotopy_links}
We begin by showing that, up to homotopy, the geometric link given by $\Real{K}_b$ agrees with $\HolINP(\RealNP{K})$.
\begin{lemma}\label{lem:Holinks_are_Links}
Let $K$ be a stratified simplicial set, $\I$ a regular flag, and $y=(y_0,\dots,y_n)$ a point in the interior of $\Real{\Delta^{\I}}$. The inclusion $\{y\}\hookrightarrow \RealNP{\Delta^{\I}}$ induces a weak-equivalence
\begin{equation}\label{Eq:Holink_to_Link}
r_y\colon\HolINP(\RealNP{K})\to \Real{K}_y.
\end{equation}
\end{lemma}	
The proof will take the remainder of the subsection. However, we begin with an immediate corollary.
\begin{corollary}\label{prop:Links_are_Holinks}
Let $K$ and $L$ be two stratified simplicial sets, and $f\colon \RealNP{K}\to\RealNP{L}$ any map in $\Top_{N(P)}$. Then $f$ is a weak-equivalence if and only if $f$ induces weak-equivalences $\Real{K}_b\to\Real{L}_b$
for all barycenters of simplices in $\Real{N(P)}$. 
\end{corollary}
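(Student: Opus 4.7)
The plan is to deduce this corollary as a direct two-out-of-three consequence of the preceding Lemma~\ref{lem:Holinks_are_Links}, combined with the definition of weak equivalences in the model structure on $\TopNP$ (Theorem~\ref{theo:CMF_TopNP}). By that model structure, $f\colon \RealNP{K}\to\RealNP{L}$ is a weak equivalence if and only if for every regular flag $\I\in R(P)$ the induced map $\HolINP(f)\colon \HolINP(\RealNP{K}) \to \HolINP(\RealNP{L})$ is a weak equivalence of spaces.

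First I would record the naturality statement of the construction $r_y$ in Lemma~\ref{lem:Holinks_are_Links}: for any morphism $g\colon X \to Y$ in $\TopNP$ and any point $y$ in the interior of $\RealNP{\Delta^{\I}}$, the obvious square
\begin{equation*}
\begin{tikzcd}
\HolINP(X) \arrow[r, "\HolINP(g)"] \arrow[d, "r_y"'] & \HolINP(Y) \arrow[d, "r_y"] \\
X_y \arrow[r, "g_y"] & Y_y
\end{tikzcd}
\end{equation*}
commutes, where $g_y$ denotes the restriction of $g$ to the fibers over $y$ and $r_y$ is evaluation at $y$ (i.e.\ precomposition with the inclusion $\{y\}\hookrightarrow \RealNP{\Delta^{\I}}$). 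This is immediate from the definitions.

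Now, for each regular flag $\I\in R(P)$ let $b_{\I}\in \Real{\Delta^{\I}}$ be the barycenter; it lies in the interior of $\Real{\Delta^{\I}}$. Applying the above square to $g = f$ and $y = b_{\I}$ yields a commutative diagram
\begin{equation*}
\begin{tikzcd}
\HolINP(\RealNP{K}) \arrow[r, "\HolINP(f)"] \arrow[d, "r_{b_{\I}}"', "\sim"] & \HolINP(\RealNP{L}) \arrow[d, "r_{b_{\I}}", "\sim"'] \\
\Real{K}_{b_{\I}} \arrow[r, "f_{b_{\I}}"] & \Real{L}_{b_{\I}}\spacecomma
\end{tikzcd}
\end{equation*}
in which the vertical maps are weak equivalences by Lemma~\ref{lem:Holinks_are_Links}. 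By two-out-of-three, $\HolINP(f)$ is a weak equivalence if and only if $f_{b_{\I}}$ is a weak equivalence. Ranging over all $\I\in R(P)$ and using the characterization of weak equivalences in $\TopNP$ recalled above, this yields the equivalence claimed in the corollary: $f$ is a weak equivalence if and only if $f_b\colon \Real{K}_b \to \Real{L}_b$ is a weak equivalence for all barycenters $b$ of (non-degenerate) simplices in $\Real{N(P)}$.

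There is no substantive obstacle here once Lemma~\ref{lem:Holinks_are_Links} is in hand — the only point that requires care is matching up the "barycenters of simplices of $\Real{N(P)}$" in the corollary with the parametrization by regular flags $\I\in R(P)$ used in the definition of the weak equivalences of $\TopNP$, which is immediate since the non-degenerate simplices of $N(P)$ are exactly the regular flags.
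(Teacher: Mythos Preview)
Your proof is correct and follows essentially the same approach as the paper: invoke the definition of weak equivalences in $\TopNP$ in terms of the $\HolINP$, and then use Lemma~\ref{lem:Holinks_are_Links} (together with naturality of the evaluation map) and two-out-of-three. The paper's version is terser, leaving the naturality square and the identification of barycenters with regular flags implicit, but the argument is the same.
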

\begin{proof}
By definition of the model structure on $\Top_{N(P)}$, $f:X \to Y$ is a weak-equivalence if and only if $f$ induces weak-equivalences $\HolINP(X)\to \HolINP(Y)$, for all regular flags, $\I$. Thus, it is enough to show that the evaluation at $b$ map
\begin{equation*}
\HolINP(\RealNP{K})\to\Real{K}_b
\end{equation*} 
is a weak equivalence.
This is the content of \cref{lem:Holinks_are_Links}.
\end{proof}

We will actually prove that the map \eqref{Eq:Holink_to_Link} is a  homotopy equivalence. To define an inverse, we need a way to define a map $\RealNP{\Delta^{\I}}\to \RealNP{K}$ from the data of a point in $\Real{K}_b$. This is the purpose of the next lemma.
\begin{lemma}
Let $K$ be a stratified simplicial set, and $\I$ a regular flag. Let $\Real{K}_{\Int(\Real{\Delta^{\I}})}$ be the  pre-image of $\Int(\Real{\Delta^{\I}})$ under $\RealNP{\varphi_K}$ and consider $ \Real{K}_{\Int(\Real{\Delta^{\I}})}\times \RealNP{\Delta^{\I}}$ as a strongly stratified space via the projection on the second factor. 
Then there exists a map in $\Top_{N(P)}$ 
\begin{equation*}
\rho\colon\Real{K}_{\Int(\Real{\Delta^{\I}})}\times \RealNP{\Delta^{\I}}\to \RealNP{K},
\end{equation*}
such that the restriction of $\rho$ to the fiber-product $\Real{K}_{\Int(\Real{\Delta^{\I}})}\times_{\Real{\varphi_K}} \RealNP{\Delta^{\I}}$ coincides with the projection on the first factor, and extends in this way to $\Real{K}\times_{\Real{\varphi_K}} \RealNP{\Delta^{\I}}$.
\end{lemma}
\begin{proof}
Any point in $\Real{K}_{\Int(\Real{\Delta^{\I}})}$ can be described uniquely as a pair $(\sigma,(t_0,\dots,t_m))$, where $\sigma\colon \Delta^{m}\to K$ is a non-degenerate simplex such that $\varphi_K(\Delta^m)$ is some degeneracy of $\Delta^{\I}$, and $(t_0,\dots,t_m)$ is a point in the standard $m$-simplex (that is, a $(m+1)$-tuple satisfying $0\leq t_i\leq 1$, for all $i$ and $\sum t_i=1$). We can relabel the $(m+1)$-tuple $(t_0,\dots,t_m)$ as $(t^0_0,\dots,t^0_{k_0},t^1_0,\dots,t^n_{k_n})$ so that the coordinates of $\Real{\varphi_K}(\sigma,(t^0_0,\dots,t^0_{k_0},t^1_0,\dots,t^n_{k_n}))$ are $(\sum t^0_i, \sum t^1_i,\dots,\sum t^n_i)$. The condition that $\Real{\varphi_K}(\sigma,(t^0_0,\dots,t^0_{k_0},t^1_0,\dots,t^n_{k_n}))\in \Int\Real{\Delta^{\I}}$ thus guarantees that $\sum t^k_i>0$, for all $0\leq k\leq n$. Now, given a point in $\RealNP{\Delta^{\I}}$, $(q_0,\dots,q_n)$, let $\rho$ be the continuous map:
\begin{align*}
\rho\colon\Real{K}_{\Int(\Real{\Delta^{\I}})}\times \RealNP{\Delta^{\I}}&\to \RealNP{K}\\
((\sigma,(t_0,\dots,t_m)),(q_0,\dots,q_n))&=(\sigma,(\frac{q_0}{\sum_i t^0_i}t^0_0,\dots,\frac{q_0}{\sum_i t^0_i}t^0_{k_0},\frac{q_1}{\sum_i t^1_i}t^1_0,\dots,\frac{q_n}{\sum_i t^n_i}t^n_{k_n}))
\end{align*}
For a point in the fibered product, one must have $\sum_it^l_i=q_l$ for all $0\leq l\leq n$, and $\rho$ extends on it as:
\begin{align*}
\Real{K}\times_{\Real{\varphi_K}}\Real{\Delta^{\I}}&\to \Real{K}\\
((\sigma,(t^0_0,\dots,t^0_{k_0},t^1_0,\dots,t^n_{k_n})),(q_0,\dots,q_n))&\mapsto(\sigma,(t^0_0,\dots,t^0_{k_0},t^1_0,\dots,t^n_{k_n}))
\end{align*}
\end{proof}	
\begin{proof}[proof of \cref{lem:Holinks_are_Links}]
Let $y=(y_0,\dots,y_n)$ be a point in $\Int(\Real{\Delta^{\I}})$. We show that the evaluation at $y$ \[r_y\colon\HolINP(\RealNP{K})=\mathcal C^0_{N(P)}(\RealNP{\Delta^{\I}},\RealNP{K})\to \Real{K}_y\]
 is a homotopy equivalence. We define a section $\Real{K}_y\to \HolINP(K)$ as follows
\begin{align*}
c_y\colon \Real{K}_y &\to \HolINP(K)\\
x&\mapsto  \left\{\begin{array}{ccl}
\RealNP{\Delta^{\I}}&\to &\RealNP{K}\\
(q_0,\dots,q_n)&\mapsto &\rho(x,(q_0,\dots,q_n))
\end{array}\right.
\end{align*}
By the previous lemma, one has $r_y\circ c_y=\Id_{\Real{K}_y}$. To construct the homotopy in the other direction, we will use the following (non-stratified) identification:
\begin{align*}
\Real{\partial\Delta^{\I}}\times [0,1] /\Real{\partial\Delta^{\I}}\times \{0\}&\simeq \Real{\Delta^{\I}}\\
((a_0,\dots,a_n),s)&\mapsto (sa_0+(1-s)y_0,\dots,sa_n+(1-s)y_n)
\end{align*}
We now define the homotopy from $c_y\circ r_y$ to the identity.
\begin{align*}
H\colon \HolINP(\RealNP{K})\times [0,1]&\to \HolINP(\RealNP{K})\\
(f,t)&\mapsto \left\{\begin{array}{ccl}
\RealNP{\Delta^{\I}}&\to &\RealNP{K}\\
(\bar{a},s)&\mapsto &\rho (f(\bar{a},st),(\bar{a},s))
\end{array}\right.
\end{align*}
Note that $H$ is well defined, since when $t\not =1$, $f(\bar{a},st)$ is in $\Real{K}_{\Int(\Real{\Delta^{\I}})}$, for all $(\bar{a},s)\in \Real{\Delta^{\I}}$, and for $t=1$, either $s\not =1$, and $f(\bar{a},s)\in \Real{K}_{\Int(\Real{\Delta^{\I})}}$, or $s=1$ and $(f(\bar{a},1),(\bar{a},1))\in \Real{K}\times_{\Real{\varphi_K}}\Real{\Delta^{\I}}$, and $\rho$ is defined as the projection on the first factor.
\end{proof}
 \subsection{ $\RealNP{-}$ preserves weak equivalences}
Next, we use \cref{prop:Links_are_Holinks} to derive the following result.
\begin{proposition}\label{prop:RealNP_preserves_we}
The functor
  \begin{equation*}
      \RealNP{-}: \sS_P \to \TopNP
  \end{equation*}
  preserves weak equivalences.
  \end{proposition}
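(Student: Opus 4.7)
The plan is a chain of reductions converting the topological statement into a combinatorial one about the simplicial link, followed by a Ken Brown-style argument.

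First, by \cref{prop:Links_are_Holinks} applied to $\RealNP{f}$, it suffices to show that for every regular flag $\I \in R(P)$ with corresponding barycenter $b \in \Real{\Delta^{\I}}$, the induced map $\Real{f}_b \colon \Real{K}_b \to \Real{L}_b$ on geometric links is a weak equivalence in $\Top$. The natural (in $K$) homeomorphism $\Real{K}_b \cong \Real{\Link{\I}(K)}$ of \cref{prop:Links_and_Links} then reduces this further to showing that $\Link{\I}(f) \colon \Link{\I}(K) \to \Link{\I}(L)$ is a weak equivalence of simplicial sets, for every regular flag $\I$.

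The remaining task is to verify that the simplicial link functor $\Link{\I}\colon \sS_P \to \sS$ preserves weak equivalences. I would proceed via a Ken Brown-style argument, exploiting that every object of $\sS_P$ is cofibrant: it suffices to check that $\Link{\I}$ sends every trivial cofibration in $\sS_P$ to a weak equivalence in $\sS$. Using the cofibrant generation of trivial cofibrations in $\sS_P$ by admissible horn inclusions (\cref{theo:CMF_sSetP}), this reduces to two claims: (i) $\Link{\I}$ preserves monomorphisms, which follows from its definition as a base change of $\sd$, and both $\sd$ and pullbacks preserve monomorphisms; and (ii) $\Link{\I}$ sends each admissible horn inclusion to a weak equivalence. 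For (ii), note that admissible horn inclusions are stratified homotopy equivalences by definition, and the pullback description of $\Link{\I}$ ensures that the inclusions $K \times \{0\}, K \times \{1\} \hookrightarrow K \times \Delta^1$ in $\sS_P$ are mapped to inclusions $\Link{\I}(K) \hookrightarrow \Link{\I}(K \times \Delta^1)$ serving as a cylinder object for $\Link{\I}(K)$ in $\sS$. Thus stratified elementary homotopies become honest homotopies after applying $\Link{\I}$, and stratified homotopy equivalences are sent to homotopy equivalences. A saturation argument then propagates these to all trivial cofibrations.

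The principal obstacle I expect is the saturation step. Unlike a left Quillen functor, $\Link{\I}$ is not a priori a left adjoint, being a composite of $\sd$ with a base change. Therefore one must manually check that $\Link{\I}$ commutes with the monomorphic pushouts and transfinite compositions used to build general trivial cofibrations from the generating admissible horns. This is however tractable via the identification of $\Link{\I}(K)$ with the simplicial subset of $\sd(K)$ sitting over the vertex $i_\I$ in $\sd(N(P))$: preimages of a vertex in a presheaf category commute with the relevant monomorphic colimits.
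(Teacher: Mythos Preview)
Your overall strategy is close to the paper's, but there are two genuine gaps that matter, and the one step you flag as the main difficulty is in fact a non-issue.

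\textbf{Naturality of the link homeomorphism.} You invoke the homeomorphism $\Real{K}_b \cong \Real{\Link{\I}(K)}$ from \cref{prop:Links_and_Links} as ``natural (in $K$)'' to reduce to $\Link{\I}(f)$ for an arbitrary $f$. It is not natural: the paper says so explicitly (see the discussion of diagram \eqref{eq:Diagram_holinks_link_prelim}, where the map labelled $*$ is called a \emph{non-natural} homeomorphism), and \cref{prop:Links_and_Links} only supplies a commuting square for \emph{inclusions} $A \subset K$. The paper therefore applies Ken Brown's lemma to $\RealNP{-}$ \emph{first}, reducing to trivial cofibrations (hence monomorphisms), and only then passes through \cref{prop:Links_and_Links}. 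Your order of reductions must be swapped.

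\textbf{The cylinder claim.} The assertion that ``the pullback description of $\Link{\I}$ ensures that $\Link{\I}(K \times \Delta^1)$ serves as a cylinder object for $\Link{\I}(K)$'' is not justified as stated. Since $\sd$ does not commute with products, $\Link{\I}(K \times \Delta^1)$ is not $\Link{\I}(K)$ times anything obvious. One can salvage this by realizing: the inclusion $K \times \{0\} \hookrightarrow K \times \Delta^1$ is a monomorphism, so \cref{prop:Links_and_Links} applies and identifies $\Real{\Link{\I}(K)} \hookrightarrow \Real{\Link{\I}(K \times \Delta^1)}$ with $\Real{K}_b \hookrightarrow \Real{K}_b \times [0,1]$, whence both end-inclusions are weak equivalences and hence equal in $\Ho\sS$ (being sections of the same weak equivalence $\Link{\I}(\pi)$). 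But this requires exactly the tool you have not yet justified for non-injective maps, and is more work than you indicate. The paper avoids this entirely: in \cref{lem:Link_preserve_trivial_cofibrations} it shows by an explicit inductive combinatorial argument (via pairings) that $\Link{\I}$ sends admissible horn inclusions to strong anodyne extensions.

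\textbf{The saturation step is not an obstacle.} Your ``principal obstacle'' is misplaced: $\Link{\I}$ is the composite of $\sd$ (a left adjoint) with base change along a point in $\sd(N(P))$, and base change in a topos preserves all colimits. So $\Link{\I}$ preserves all colimits, as the paper asserts and uses without further comment.
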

  \begin{proof}
 Note first, that by Ken Brown's Lemma, it suffices to show, that $\RealNP{-}$ sends trivial cofibrations into weak equivalences. Now assume that $f\colon A\hookrightarrow K$ is a trivial cofibration. By  \cref{prop:Links_are_Holinks} it is enough to show that for any $b\in \Real{N(P)}$, the barycenter of some simplex $\Delta^{\I}$, the inclusion $\Real{A}_b\to\Real{K}_b$ is a weak-equivalence. By \cref{prop:Links_and_Links} this is equivalent to showing that the inclusion $\Real{\Link{\I}(A)}\hookrightarrow\Real{\Link{\I}(K)}$ is a weak-equivalence. But since the functor $\Real{-}$ preserves weak-equivalences, it is enough to show that $\Link{\I}(A)\hookrightarrow\Link{\I}(K)$ is a weak-equivalence, which follows from \cref{lem:Link_preserve_trivial_cofibrations}.
\end{proof}
\begin{lemma}\label{lem:Link_preserve_trivial_cofibrations}
 The functor $\Link{\I}{}: \sS_P \to \sS$ preserve trivial cofibrations.
\end{lemma}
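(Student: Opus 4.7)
The plan is to reduce to the generating trivial cofibrations of $\sS_P$, which by \cref{theo:CMF_sSetP} are the admissible horn inclusions $\iota\colon\Lambda^{\J}_k \hookrightarrow \Delta^{\J}$. Two structural features of $\Link{\I}$ will handle this reduction. First, $\Link{\I}$ preserves monomorphisms, since both $\sd$ and pullbacks in $\sS$ preserve monomorphisms. Second, $\Link{\I}$ preserves all colimits: under the identification $\sS_P \cong \sS_{/N(P)}$ (see \cref{rem:sSetP_Presheaf_category}), it factors as the subdivision $\sS_{/N(P)} \to \sS_{/\sd(N(P))}$, which preserves colimits because $\sd$ is a left adjoint, followed by the pullback $i_{\I}^*$ along the vertex $i_{\I}\colon \Delta^0 \to \sd(N(P))$, which preserves colimits because $\sS$ is a topos.

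Once these are established, it suffices to show that $\Link{\I}(\iota)$ is a weak equivalence for each admissible $\iota$. Indeed, on the generators $\Link{\I}(\iota)$ is then both a monomorphism and a weak equivalence, hence a trivial cofibration in $\sS$. Every trivial cofibration in $\sS_P$ is obtained from the admissible horn inclusions by pushouts, transfinite compositions, and retracts, and these operations are preserved by $\Link{\I}$ (which preserves colimits and, trivially, retracts) while also preserving trivial cofibrations of $\sS$.

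The key step is therefore verifying that $\Link{\I}(\iota)$ is a weak equivalence. For this, I invoke \cref{prop:Links_and_Links} to identify $\Real{\Link{\I}(-)}$ with the geometric link $\Real{-}_b$ at the barycenter $b$ of $\Real{\Delta^{\I}}$, and use that $\Real{-}\colon \sS \to \Top$ reflects weak equivalences. So it is enough to show that $\Real{\Lambda^{\J}_k}_b \hookrightarrow \Real{\Delta^{\J}}_b$ is a weak equivalence. Since $\iota$ is admissible, it is a stratified homotopy equivalence in $\sS_P$; applying $\RealNP{-}$, which is simplicial and therefore preserves stratified homotopies, yields a strongly stratified homotopy equivalence in $\TopNP$. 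Any strongly stratified homotopy $H\colon X\times [0,1]\to Y$ satisfies $\varphi_Y\circ H = \varphi_X\circ \pr_X$, so it restricts fiberwise over every point of $\Real{N(P)}$, and in particular the restriction of $\RealNP{\iota}$ over $b$ is an honest homotopy equivalence of topological spaces. The main subtlety is that the bare-subdivision definition of $\Link{\I}$ is not visibly compatible with the simplicial tensoring on $\sS_P$, which is why I route through the topological realization and exploit the fiberwise behavior of strongly stratified homotopies rather than attempting a direct simplicial argument.
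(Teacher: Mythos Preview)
Your proof is correct but takes a genuinely different route from the paper's.

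The paper argues purely combinatorially: it shows the stronger statement that $\Link{\I}$ sends stratified strong anodyne extensions to strong anodyne extensions in $\sS$. The argument proceeds by induction on the dimension of the admissible horn $\Lambda^{\J}_k \hookrightarrow \Delta^{\J}$, splitting into the case where $\J$ does not degenerate from $\I$ (where $\Link{\I}$ of the inclusion is an isomorphism) and the case where it does (where an explicit regular proper pairing is constructed using the cone structure of $\sd(\Delta^{\J})$ together with the inductive hypothesis).

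Your argument instead routes through topology: you use \cref{prop:Links_and_Links} to identify $\Real{\Link{\I}(-)}$ with the fiber $\Real{-}_b$, and then observe that an admissible horn inclusion is a stratified homotopy equivalence, hence $\RealNP{\iota}$ is a strongly stratified homotopy equivalence, and such homotopies restrict to each fiber over $\Real{N(P)}$. This is shorter and more conceptual, and it is not circular: \cref{prop:Links_and_Links} depends only on \cref{lem:FPmagic} and not on the present lemma. The price is that you only obtain ``monomorphism plus weak equivalence'' rather than an explicit SAE structure on $\Link{\I}(\iota)$; but since the only downstream use (in \cref{prop:RealNP_preserves_we}) requires merely that $\Link{\I}(A) \hookrightarrow \Link{\I}(K)$ be a weak equivalence, your version suffices for the paper's purposes.
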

\begin{proof}
It is sufficient to prove that $\Link{\I}$ sends strong anodyne extensions to strong anodyne extensions (see \cref{Section:FSAE}) because in both model structures, trivial cofibrations are given by retracts of strong anodyne extensions.
    The functor
    $\Link{\I}{}$ is constructed by first applying a right adjoint (subdivision) and then pulling back to a vertex of $\sd(N(P))$. In particular, it is given by the composition of two functors preserving colimits and therefore
    preserves all colimits itself. Thus, it suffices to show that $\Link{\I}{}$ sends admissible horn inclusions into (strong) anodyne extensions in $\sS$.
    Further, if we show the statement for admissible horn inclusions up to a certain dimension $n$, then it automatically follows that $\Link{\I}$ sends all stratified strong anodyne extensions $ A  \to B$ with $B$ of dimension lesser or equal to $n$ into strong anodyne extensions.
    
    We proceed via induction over $n$. 
    The case $n=0$ is trivial. So let $\J$ be a flag in $P$ of length $n+1$ and $k \leq n+1$ such that $\Lambda_k^{\J} \hookrightarrow \Delta^{\J}$ is an admissible horn inclusion. 
    We show that $\Link{\I}{(\Lambda^\J_k)} \hookrightarrow \Link{\I}{(\Delta^\J)}$ is a strong anodyne extension. 
    Let $\tau\colon \Delta^{\J}\to\Delta^{\J}$ be the maximal non-degenerate simplex of $\Delta^{\J}$ and $\tau_k\colon\Delta^{\J'} \hookrightarrow \Delta^{\J}$ be its $k$-th face.
    If $\J$ does not degenerate from $\I$, then by construction neither does $\J'$, since $\Lambda^{\J}_k$ is admissible. In particular, $\Link{\I}{(\Delta^{\J})}$ does not contain the vertices corresponding to $\tau$ and $\tau_k$.
    These are precisely the vertices of $\sd(\Delta^\J)$ that are missing in $\sd(\Lambda^{\mathcal J}_k)$. Hence, $\Link{\I}{(\Lambda^{\mathcal J}_k \hookrightarrow \Delta^\J)}$ is a bijection and there is nothing to show.
    
    Now, if $\J$ degenerates from $\I$, then so does $\J'$ and hence both $\tau_k$ and $\tau$ define vertices in $\Link{\I}{(\Delta^\J)}$.
    Let $D$ be the full subcomplex of $\sd{(\Delta^\J)}$ spanned by all vertices, but the one corresponding to $\tau_k$, and let $D_{\I}$ be its intersection with $\Link{\I}{(\Delta^{\J})}$.
    We obtain a pairing on the inclusion $A:=D_{\I} \hookrightarrow \Link{\I}{(\Delta^{\J})}=:B$ by taking $B_I$ to be the set of non-degenerate simplices in $B_{n.d.}\setminus A_{n.d.}$, of the form $[\sigma_0,..., \sigma_{m-1},\tau_k,\tau]$, and $B_{II}$ the set of simplices of the form $[\sigma_0,\dots,\sigma_{m-1},\tau_k]$, for $m\geq 0$. 
    Then, the map sending $[\sigma_0,..., \sigma_{m-1},\tau_k]$ to $[\sigma_0,..., \sigma_{m-1},\tau_k,\tau]$ is a proper regular pairing $B_{II} \to B_I$. Thus, $D_{\I} \hookrightarrow \Link{\I}{(\Delta^\J)}$ is a SAE. 
    Now, $D_{\I}$ is a cone on $\Link{\I}{(\Lambda^{\J}_k)}$ with conepoint $\tau$.
    Hence by a simplicial set version of \cite[Corollary p. 249]{whitehead1939simplicial} 
    it is enough to find a strong anodyne extension $\Delta^{0}\to \Link{\I}{(\Lambda^{\J}_k)}$
    to conclude that $\Link{\I}{(\Lambda_{k}^{\J})} \hookrightarrow D_{\I}$
    is also a strong anodyne extension. Note that since $\Lambda^{\J}_k$ is an admissible horn and $\J$ degenerates from $\I$, any stratum-preserving inclusion $\Delta^{\I}\hookrightarrow \Lambda^{\J}_k$
    is a strong anodyne extension. Passing to the links, we get a map $\Delta^{0}=\Link{\I}{(\Delta^{\I})}\hookrightarrow\Link{\I}{(\Lambda^{\J}_k)}$, which is a strong anodyne extension by the inductive hypothesis. 
    Finally, we have found two strong anodyne extensions $\Link{\I}(\Lambda^{\J}_k)\to D_{\I}$ and $D_{\I}\to \Link{\I}(\Delta^{\J})$, which compose to give the desired strong anodyne extension.
\end{proof}
\subsection{Comparing homotopy links in $\Top_{N(P)}$ and $\Top_P$}

In this section, we prove the following theorem.
\begin{theorem}\label{theo:Strong_Holinks_are_Holinks}
    Let $K$ be a stratified simplicial set, and $\I$ be some regular flag. The natural map
    \begin{equation*}
        \HolINP(\RealNP{K})\to \HolIP(\RealP{K})
    \end{equation*}
    is a weak-equivalence.
\end{theorem}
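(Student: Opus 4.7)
The plan is to construct an explicit homotopy inverse to the inclusion $\iota\colon \HolINP(\RealNP{K}) \hookrightarrow \HolIP(\RealP{K})$, using a straightening procedure generalizing the argument of \cref{lem:Holinks_are_Links}. A weakly stratum-preserving map $f\in \HolIP(\RealP{K})$ only satisfies $\varphi_P \circ \Real{\varphi_K}\circ f = \varphi_P \circ \Real{\varphi_{\Delta^{\I}}}$, while a strongly stratum-preserving map additionally satisfies $\Real{\varphi_K}\circ f = \Real{\varphi_{\Delta^{\I}}}$ pointwise. Thus the goal is to ``straighten'' an arbitrary $f$ into a strongly stratum-preserving map, continuously in $f$.

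The key geometric observation is that each $P$-stratum of $\Real{N(P)}$ is a convex subset of that space: writing points of $\Real{N(P)}$ in barycentric coordinates, the $p$-stratum consists of tuples $(t_q)$ with $t_p > 0$ and $t_q = 0$ for $q \not\leq p$, which is closed under convex combination. Consequently, for each $y \in \Real{\Delta^{\I}}$, there is a canonical straight-line path in $\Real{N(P)}$ from $\Real{\varphi_K}(f(y))$ to $\Real{\varphi_{\Delta^{\I}}}(y)$ that stays entirely inside the $\varphi_P(y)$-stratum. Lifting this path from $\Real{N(P)}$ to $\Real{K}$ via a parametrized version of the $\rho$-map of \cref{subsec:links_to_homotopy_links} produces a continuous straightening homotopy $f \simeq \tilde f$ with $\tilde f \in \HolINP(\RealNP K)$. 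Setting $r(f) = \tilde f$ yields the candidate inverse, and composing the straightening with the $\rho$-based homotopy of \cref{lem:Holinks_are_Links} gives $\iota\circ r\simeq \mathrm{id}_{\HolIP(\RealP K)}$; the equality $r\circ \iota = \mathrm{id}$ holds by construction since the straightening is the identity on already-strong maps.

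The principal technical obstacle is that the $\rho$-construction is defined only on simplices $\sigma$ of $K$ whose underlying regular flag $\{\J_\sigma\}$ contains $\I$, whereas a generic weakly stratum-preserving $f$ may meet simplices with smaller flags (e.g.\ if $f(y)$ lies in a simplex $\sigma$ with $\{\J_\sigma\} = \{p_n\}$ but $y$ is in the top stratum of $\Real{\Delta^{\I}}$). A preliminary cell-by-cell deformation is required to push $f$ off of these ``bad'' simplices into the subspace $\Real{\varphi_K}^{-1}(\Real{\Delta^{\I}}) \subseteq \Real K$ before the straight-line lift can be applied. Making this preliminary deformation continuous in $f$ is the main point-set topological step: it relies on the compactness of $\RealP{\Delta^{\I}}$ (so the image of any $f$ meets only finitely many cells of $\Real{K}$) together with the metrizability of $\mathcal{C}^0_P(\RealP{\Delta^{\I}}, \RealP{K})$ for locally finite $K$ noted in \cref{rem:Compact_Open_Topology_Holinks}. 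The passage to arbitrary $K$ is then achieved by a colimit argument over finite subcomplexes, using compactness of $\RealP{\Delta^{\I}}$ and $\RealNP{\Delta^{\I}}$ to ensure both sides of the comparison are compatible with such colimits.
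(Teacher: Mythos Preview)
Your broad strategy---a preliminary deformation to make the $\rho$-type straightening applicable, followed by the straightening itself---is exactly the paper's plan, and you correctly identify the tools (compactness of $\RealP{\Delta^{\I}}$, metrizability for locally finite $K$, passage to finite subcomplexes via colimit). However, the preliminary deformation step contains a genuine gap.

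You say the deformation should push $f$ into $\Real{\varphi_K}^{-1}(\Real{\Delta^{\I}})$. But this subspace still contains your own example of a bad simplex: a simplex $\sigma$ with $\{\J_\sigma\}=\{p_n\}$ maps under $\Real{\varphi_K}$ to the vertex $p_n\in\Real{\Delta^{\I}}$, so it lies in $\Real{\varphi_K}^{-1}(\Real{\Delta^{\I}})$, yet the straightening is still undefined there. Deforming into $\Real{\varphi_K}^{-1}(\Real{\Delta^{\I}})$ is precisely the easy reduction of \cref{lem:I_equals_P} (replacing $P$ by $\I$), and it does not touch the real obstruction. The correct target is the subspace
\[
\RealP{K}^{\redu}=\{(\sigma,\xi)\mid \xi_p=0\Rightarrow \xi_{p'}=0\text{ for all }p'\geq p\},
\]
which is exactly where the $\rho$-map of \cref{lem:Straightening_V2} is defined. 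What remains is to show that the inclusion $\HolIP(\RealP{K}^{\redu})\hookrightarrow\HolIP(\RealP{K})$ is a homotopy equivalence, and this is where the actual work lies: a naive ``cell-by-cell'' retraction does not suffice because the condition defining $\RealP{K}^{\redu}$ is not cellular, and a deformation towards it must be coordinated across the whole holink space. The paper handles this with a stratum-by-stratum filtration $\RealP{K}^{\redu}=\RealP{K}^{\redu,k}\subset\cdots\subset\RealP{K}^{\redu,0}=\RealP{K}$, together with explicit shrinking homotopies $S^l$ on $\Real{\Delta^{\I}}$ and a partition-of-unity argument producing a continuous choice of shrinking parameter $\alpha(f,-)$. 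Your sketch does not supply a substitute for this construction.

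A minor point: the equality $r\circ\iota=\mathrm{id}$ will not hold on the nose once the preliminary deformation is built in, since that deformation need not be the identity on already strongly stratum-preserving maps; one only gets a homotopy, as in the paper's \cref{lem:Holink_comparison_factors}.
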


Given that the proof of the above theorem is fairly long and somewhat involved, we give a brief summary here. First, we show \cref{lem:Holinks_On_Finite_Subsets,lem:I_equals_P}, allowing us to make two simplifying assumptions: that $\I=P$, and that $K$ is locally finite. Then, we show (in \cref{lem:Holink_comparison_factors}) that up to homotopy, the map $\HolINP(\RealNP{K})\to \HolIP(\RealP{K})$ factors through $\HolIP(\RealP{K}^{\redu})$, where $\RealP{K}^{\redu}\subset \RealP{K}$ is a subspace obtained by taking away some of the least singular simplices of $K$ (see \cref{def:K_redu,fig:DeltaI_Red}). We then prove that the map $\HolINP(\RealNP{K})\to \HolIP(\RealP{K}^{\redu})$ is an homotopy equivalence (\cref{lem:Holink_comparison_factors}). This simplifies the problem because now we need to compare two homotopy links in $\Top_P$,  $\HolIP(\RealP{K}^{\redu})$ and $\HolIP(\RealP{K})$. To compare these two, we will decompose the inclusion $\RealP{K}^{\redu}\subset \RealP{K}$ into $\RealP{K}^{\redu,k}\subset\dots\subset\RealP{K}^{\redu,0}= \RealP{K}$, see \cref{def:K_redu_l}, and prove that each of those induces a homotopy equivalence between homotopy links. Intuitively, homotopy inverses are obtained by sending a map $f\colon \RealP{\Delta^{\I}}\to \RealP{K}^{\redu,l}$ to some homotopic map $g\colon \RealP{\Delta^{\I}}\to \RealP{K}^{\redu,l+1}\subset\RealP{K}^{\redu,l}$, where the homotopy between $f$ and $g$ comes from precomposing $f$ by some homotopy $H\colon\RealP{\Delta^{\I}}\times [0,1]\to \RealP{\Delta^{\I}}$. This homotopy should stratifiedly homotope the identity on $\RealP{\Delta^{\I}}$ into a smaller space, which $f$ maps into $\RealP{K}^{\redu,l+1}$. As one might expect there exists no single homotopy $H$ that works for arbitrary $f\in \HolIP(\RealP{K}^{\redu,l})$. Instead, we first define some homotopy $S^l\colon \RealP{\Delta^{\I}}\times [0,1]\to \RealP{\Delta^{\I}}$ (\cref{def:Map_Sl}), as well as a parameter $\alpha\colon \HolIP(\RealP{K}^{\redu,l})\times [0,1]\to [0,1]$ (\cref{lem:Constructing_alpha_partition_unity}). For any given $f$, the homotopy we are looking for is then obtained by combining the homotopy $S^l$ and the parameter $\alpha$. Note that a partition of unity in $\HolIP(\RealP{K}^{\redu,l})$ appears in the construction of $\alpha$. Such a partition of unity exists thanks to the assumption that $K$ is locally finite, see \cref{rem:Compact_Open_Topology_Holinks}.

Before moving on to the proof, we will need the following simplifying assumptions:
\begin{enumerate}[label= As.(\arabic*)]
    \item \label{Item:Assumption1_Section4}$K$ is a locally finite simplicial set. This ensures that \[\HolIP(\RealP{K})=\C^0_P(\RealP{\Delta^{\I}},\RealP{K}) \]
    is metrizable (when equipped with the compact open topology, see \cref{rem:Compact_Open_Topology_Holinks}). \cref{lem:Holinks_On_Finite_Subsets} implies that we can assume that $K$ is locally finite without loss of generality.
    \item \label{Item:Assumption2_Section4}$P=\I$. \cref{lem:I_equals_P} implies that we can assume this is true without loss of generality.
\end{enumerate}

\begin{lemma}\label{lem:Holinks_On_Finite_Subsets}
Let $K$ be a stratified simplicial set, and let $A$ be the category of finite stratified simplicial subsets, $K^{\alpha}\subset K$, and inclusions. Then, for all $n\geq 0$ and all pointings, the following natural maps are isomorphisms:
\begin{align*}
\lim_{\substack{\to\\\alpha\in A}}\pi_n(\HolINP(\RealNP{K^{\alpha}}),*)&\to\pi_n(\HolINP(\RealNP{K}),*),\\
\lim_{\substack{\to\\\alpha\in A}}\pi_n(\HolIP(\RealP{K^{\alpha}}),*)&\to\pi_n(\HolIP(\RealP{K}),*).\\
\end{align*}
\end{lemma}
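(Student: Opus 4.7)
The plan is to deduce both statements from the standard fact that any compact subset of a CW complex meets only finitely many cells. The arguments for $\RealP{-}$ and $\RealNP{-}$ are identical, since in both cases $K^{\alpha} \subseteq K$ induces a (strong) stratified subspace inclusion, so I focus on the first. I first observe that the indexing category $A$ is filtered: the stratified simplicial subset generated by two finite stratified simplicial subsets is again finite, giving binary joins in $A$.

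The key geometric input is that $\Real{K}$ is a CW complex whose cells are indexed by the non-degenerate simplices of $K$. Any compact subset $C \subseteq \Real{K}$ therefore meets only finitely many of them, and the finite stratified simplicial subset $K^{\alpha} \in A$ they generate satisfies $C \subseteq \Real{K^{\alpha}}$. Since the stratification on $\Real{K^{\alpha}}$ is induced as a subspace of $\Real{K}$, any stratum-preserving map from a compact space into $\RealP{K}$ automatically factors as a stratum-preserving map through some $\RealP{K^{\alpha}}$, and the analogous statement holds in $\TopNP$.

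For surjectivity of the first colimit map, an element of $\pi_n(\HolIP(\RealP{K}),\ast)$ is represented by a based continuous map $S^n \to \HolIP(\RealP{K})$. Via the exponential adjunction
\[ \Top\bigl(S^n,\, \HolIP(\RealP{K})\bigr) \;\cong\; \Top_P\bigl(S^n \times \RealP{\Delta^{\I}},\, \RealP{K}\bigr), \]
this corresponds to a stratum-preserving map out of the compact space $S^n \times \RealP{\Delta^{\I}}$, which by the previous paragraph factors through $\RealP{K^{\alpha}}$ for some $\alpha \in A$. After enlarging $\alpha$ (using filteredness) so that the image of the basepoint $\ast \colon \RealP{\Delta^{\I}} \to \RealP{K}$ also factors through $\RealP{K^{\alpha}}$, re-adjunction produces the required preimage in $\pi_n(\HolIP(\RealP{K^{\alpha}}),\ast)$. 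Injectivity is entirely analogous: a based homotopy in $\HolIP(\RealP{K})$ between representatives arising from finite subcomplexes $\alpha$ and $\beta$ adjoins to a stratum-preserving map out of the compact space $S^n \times [0,1] \times \RealP{\Delta^{\I}}$, which together with the two representatives and the basepoint factors through a single $\RealP{K^{\gamma}}$ for some $\gamma \in A$ containing $\alpha$ and $\beta$, yielding equality in the filtered colimit.

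I do not anticipate a genuine obstacle. The only subtlety is the $\Delta$-ification of the compact-open topology on $\HolIP(\RealP{K})$, but as noted in \cref{rem:Compact_Open_Topology_Holinks} this does not affect the homotopy type, and the exponential adjunction with compact first factor holds in either topology. Replacing $\RealP{-}$ by $\RealNP{-}$ and $\Top_P$ by $\TopNP$ throughout gives the second isomorphism.
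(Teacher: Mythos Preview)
Your proof is correct and follows essentially the same approach as the paper's own proof: use the adjunction to identify elements of $\pi_n$ of the homotopy link with stratum-preserving maps out of the compact space $S^n\times\RealP{\Delta^{\I}}$ (resp.\ $S^n\times\RealNP{\Delta^{\I}}$), then invoke the fact that such a compact image lies in a finite subcomplex, and treat injectivity via the same argument applied to homotopies. The paper's version is considerably terser; your additional remarks on filteredness of $A$, handling of the basepoint, and the $\Delta$-ification issue are all reasonable elaborations but not new ideas.
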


\begin{proof}
    Through adjunction, one sees that an element in the homotopy group $\pi_n(\HolINP(\RealNP{K}),x)$ is nothing more than a map  $S^n\times\RealNP{\Delta^{\I}}\to \RealNP{K}$, in $\Top_{N(P)}$, sending $*\times\RealNP{\Delta^{\I}}$ to $x$. In particular, such a map only reaches a finite simplicial subset of $K$, which implies the surjectivity of the map under study. The same observation for homotopies implies the injectivity.
\end{proof}

\begin{lemma}\label{lem:I_equals_P}
Let $K$ be a stratified simplicial set. Let $\widetilde{K}$ be the simplicial subset defined as the following pullback:
\begin{equation*}
    \begin{tikzcd}
    \widetilde{K}
    \arrow{r}
    \arrow{d}
    &K
    \arrow{d}{\varphi_K}
    \\
    \Delta^{\I}
    \arrow{r}
    &N(P) \spaceperiod
    \end{tikzcd}
\end{equation*}
The inclusion $\widetilde{K}\to K$ induces weak-equivalences
\begin{align*}
    \HolINP(\RealNP{\widetilde{K}})&\to \HolINP(\RealNP{K}),\\
        \HolIP(\RealP{\widetilde{K}})&\to \HolIP(\RealP{K}).
\end{align*}
\end{lemma}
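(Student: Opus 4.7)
The lemma asserts two separate weak equivalences, one for the strong homotopy link and one for the ordinary homotopy link; my plan is to handle them separately, the first being essentially formal and the second requiring a concrete homotopical construction.

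For the $\HolINP$ assertion, I would first identify $\RealNP{\widetilde{K}}$ with the preimage $(\Real{\varphi_K})^{-1}(\Real{\Delta^{\I}})\subset \RealNP{K}$. This identification is direct: for $x\in \Real{K}$ expressed as a point in the interior of its unique non-degenerate ambient simplex $\sigma$, one has $x\in \Real{\widetilde{K}}$ iff $\sigma\in \widetilde{K}$ iff every label $\varphi_K(v)$ for $v$ a vertex of $\sigma$ lies in $\I$; meanwhile $\Real{\varphi_K}(x)\in \Real{\Delta^{\I}}$ iff the barycentric coordinates of $\Real{\varphi_K}(x)$ at vertices of $N(P)$ outside $\I$ vanish, which unravels to the same condition. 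Since any morphism $f\colon \RealNP{\Delta^{\I}}\to \RealNP{K}$ in $\TopNP$ commutes with the structure maps to $\Real{N(P)}$, and the structure map of $\RealNP{\Delta^{\I}}$ lands in $\Real{\Delta^{\I}}$, the map $f$ factors uniquely through this preimage. Combined with the fact that $\RealNP{\widetilde{K}}\hookrightarrow \RealNP{K}$ is a closed subspace inclusion (realization of a subcomplex), the induced map of strong holinks is a bijection and in fact a homeomorphism for the compact-open topologies.

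For the $\HolIP$ assertion, the analogous factorization fails: a stratum preserving map $\RealP{\Delta^{\I}}\to \RealP{K}$ over $P$ may pass through simplices $\Delta^{\J}$ with $\J$ using vertices outside $\I$, so long as each point of its image lies in a stratum indexed by some $p_i\in \I$. My plan is to construct an explicit stratum preserving strong deformation retract of the union of $\I$-indexed strata of $\RealP{K}$ onto $\RealP{\widetilde{K}}$. Concretely, for $y\in \RealP{K}$ in the interior of its non-degenerate ambient simplex $\sigma$ and lying in an $\I$-stratum, define $\pi(y)$ by setting to zero the barycentric coordinates of $y$ at vertices of $\sigma$ whose label lies outside $\I$ and renormalizing the rest. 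The condition that $y$ belongs to an $\I$-stratum forces the maximum of the labels of $\sigma$ to lie in $\I$, so the renormalization denominator is strictly positive, and the maximum $\I$-label of $\sigma$ equals this overall maximum, so $\pi$ is stratum preserving. Continuity of $\pi$ is verified by analyzing the behavior of coordinates as $y$ crosses from $\sigma$ into a larger simplex containing it as a face, using that the extra coordinate contributions vanish in the limit; this is where the main subtlety lies.

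The linear interpolation $H(y,s)=(1-s)y+s\pi(y)$, computed inside the ambient simplex of $y$, then furnishes a continuous stratum preserving homotopy from the identity to $\pi$ that is pointwise fixed on $\RealP{\widetilde{K}}$: for $s<1$ the active vertices of $H(y,s)$ coincide with those of $y$, hence the stratum is unchanged, and at $s=1$ the active vertices are exactly the $\I$-labelled active vertices, whose maximum label equals the stratum of $y$. Post-composition with $H$ produces a strong deformation retract of $\HolIP(\RealP{K})$ onto $\HolIP(\RealP{\widetilde{K}})$, exhibiting the inclusion of holinks as a homotopy equivalence and in particular a weak equivalence. The hard point throughout is the continuity check for $\pi$ at points where the ambient non-degenerate simplex jumps, which reduces to a direct local coordinate estimate.
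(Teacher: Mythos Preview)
Your proposal is correct and follows essentially the same route as the paper. For $\HolINP$ the paper also argues that the map is an isomorphism via the universal property of the pullback (realization preserves pullbacks), and for $\HolIP$ the paper introduces the same intermediate subspace $Z=\{x\in\RealP{K}\mid \varphi_P\circ\Real{\varphi_K}(x)\in\I\}$, defines the identical retraction by zeroing the coordinates at vertices labelled outside $\I$ and renormalizing, and uses the straight-line homotopy; the continuity issue you flag is handled in the paper by defining the retraction simplex-by-simplex and observing compatibility with faces and degeneracies.
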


\begin{proof}
    The first map is even an isomorphism. Indeed, Let $f\colon \RealNP{\Delta^{\I}}\to\RealNP{K}$ be a map in $\Top_{N(P)}$. Since geometric realization preserves pullbacks, such a map must factor through $\RealNP{\widetilde{K}}$. 
    
    For the second map, consider the following subspace of $\RealP{K}$:
    \begin{equation*}
        Z=\{x\in \RealP{K} \ |\ \varphi_P \circ \Real{\varphi_K}(x)\in \I\}
    \end{equation*}
    By construction, one has $ \HolIP(Z)\cong \HolIP(\RealP{K})$, so it is enough to show that the inclusion $\iota\colon\RealP{\widetilde{K}}\to Z$ is a stratified homotopy equivalence. Now, recall that the points of $\RealP{K}$ can be described by pairs $(\sigma\colon \Delta^{\J}\to K,\xi=(\xi_0,\dots,\xi_n))$, where $\J=[p_0\leq \dots\leq p_n]$ is some flag. With those notations, one can give an alternative description of $Z$ as
    \begin{equation*}
        Z=\{(\sigma,\xi)\in \RealP{K}\ |\ p_{m_\xi}\in \I\ \},
    \end{equation*}
    where $ m_\xi=\max\{i \ |\ \xi_i\not=0\}$.
    Given a point $(\sigma,\xi)$ in $Z$, let $\xi_{\I}=\sum_{p_i\in \I}\xi_i$, and let $\epsilon_i$ be $0$ if $p_i\not\in \I$ and $1$ if $p_i\in \I$. With this notation, we can define a stratified retract, $r\colon Z\to \widetilde{K}$, at the level of simplices :
    \begin{align*}
        r_{\sigma}\colon Z\cap \sigma(\Delta^{\J})&\to \Real{\widetilde{K}}\cap \sigma(\Delta^{\J})\\
        (\xi_0,\dots,\xi_n)&\mapsto \frac{1}{\xi_{\I}}(\epsilon_0\xi_0,\dots,\epsilon_n\xi_n)
    \end{align*}
    One checks easily that those maps are compatible with faces and degeneracies, and produce a global retract $r\colon Z\to \widetilde{K}$, satisfying $r\circ i=\Id_{\widetilde{K}}$. Furthermore, if $(\xi_0,\dots,\xi_n)\in Z\cap \sigma(\Delta^{\J})$, then $p_{m_{\xi}}\in \I$ and $\epsilon_{m_{\xi}}=1$. This implies that \[p_{m_{\xi}}=\varphi_P\circ\Real{\varphi_K}(\xi_0,\dots,\xi_n)=\varphi_P\circ\Real{\varphi_K}(r(\xi_0,\dots,\xi_n))=p_{m_{\xi}},\] 
    and so $r$ is a stratified map. Finally, the straight-line homotopies between $\iota_{\sigma}\circ r_{\sigma}$ and $\Id_{Z\cap\sigma(\Delta^{\J})}$ assemble to produce a stratified homotopy between $\iota\circ r$ and $\Id_{Z}$
\end{proof}

In the remainder of the subsection, we use the following notations: $\J=[p_0\leq\dots\leq p_n]$ is a flag, and $\I=[q_0<\dots<q_k]$ is the regular flag such that $\I=\{p_0\leq\dots\leq p_n\}$. In other words, $\Delta^{\J}$ is degenerated from $\Delta^{\I}$. Furthermore, if $p\in P$, let $\J_p=\{i\ |\ 0\leq i\leq n,\ p_i=p\}$. Then, for a point $(\xi_0,\dots,\xi_n)\in \Real{\Delta^{\J}}$, let $\xi_p=\sum_{i\in \J_p}\xi_i$. Note that by construction, the stratification $\Real{\Delta^{\J}}\to \Real{N(P)}$ is given by the map $(\xi_0,\dots,\xi_n)\mapsto (\xi_{q_0},\dots,\xi_{q_k})$.
\begin{definition}
\label{def:K_redu}
Let $K\in \sS_P$ be a stratified simplicial set. We define the subspace $\RealP{K}^{\redu}\subset \RealP{K}$ as follows:
\begin{equation*}
    \RealP{K}^{\redu}=\{(\sigma,\xi)\ |\ \xi_p=0\Rightarrow \xi_{p'}=0\ \forall p'\geq p\}
\end{equation*}
Equivalently, $\RealP{K}^{\redu}$ is defined as the following pullback
\begin{equation*}
    \begin{tikzcd}
    \RealP{K}^{\redu}
    \arrow[hookrightarrow]{r}
    \arrow{d}
    &\RealP{K}
    \arrow{d}{\Real{\varphi_K}}
    \\
    \RealP{\Delta^{\I}}^{\redu}
    \arrow[hookrightarrow]{r}
    &\RealP{N(P)}\spaceperiod
    \end{tikzcd}
\end{equation*}
\end{definition}

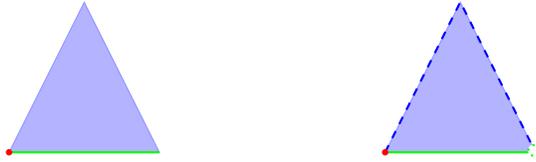
\begin{figure}[h]
\begin{tikzpicture}
\filldraw[blue, opacity=0.3](0,0)--(1,2)--(2,0)--(0,0);
\draw[green,thick] (0,0)--(2,0);
\filldraw[red] (0,0) circle (1pt);

\filldraw[shift={(5,0)},blue, opacity=0.3](0,0)--(1,2)--(2,0)--(0,0);
\draw[shift={(5,0)},blue,dashed,thick] (0,0)--(1,2)--(2,0);
\draw[shift={(5,0)},green,thick] (0,0)--(2,0);
\filldraw[shift={(5,0)},red] (0,0) circle (1pt);
\filldraw[shift={(5,0)},white] (1.975,0.025) circle (2pt);
\draw[shift={(5,0)},green, thick, dotted] (1.975,0.025) circle (2pt);

\end{tikzpicture}
\caption{The stratified space $\RealP{\Delta^{\I}}$ and the subspace $\RealP{\Delta^{\I}}^{\redu}$, with $\I=[p_0<p_1<p_2]$. The dashed blue lines and the green circle indicate the missing faces.}
\label{fig:DeltaI_Red}
\end{figure}

\begin{lemma}
\label{lem:Holink_comparison_factors}
Let $K$ be a stratified simplicial set. There exists a homotopy equivalence, $\lambda^{*}\colon \HolINP(\RealNP{K})\to \HolIP(\RealP{K}^{\redu})$ making the following diagram commute up to homotopy:
\begin{equation}\label{Eq:Holinks_commute_up_to_homotopy}
\begin{tikzcd}
\HolINP(\RealNP{K})
\arrow[dashed]{dr}{\lambda^{*}}
\arrow{rr}
&&\HolIP(\RealP{K})
\\
&\HolIP(\RealP{K}^{\redu})
\arrow[hookrightarrow]{ur}
\end{tikzcd}    
\end{equation}
\end{lemma}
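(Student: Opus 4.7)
The plan is to construct $\lambda^*$ as precomposition with an explicit $P$-stratified self-map $\lambda\colon \RealP{\Delta^{\I}} \to \RealP{\Delta^{\I}}$ whose image lies in $\RealP{\Delta^{\I}}^{\redu}$ and which is $P$-stratifiedly homotopic to the identity. Concretely, I would build $\lambda$ as a cascading mass-transfer: start from a one-step map that moves a small amount of $\xi_{i+1}$ into $\xi_i$ at a rate proportional to $\xi_{i+1}$ (so that mass only cascades downward whenever it is present and the maximal active index $m(x)$ is preserved), then iterate $k+1$ times, where $k+1$ is the length of $\I$. Iterating this many times guarantees that for every $x$ the support of $\lambda(x)$ cascades all the way down to the full downset $\{0,1,\ldots,m(x)\}$, so that $\lambda(x) \in \RealP{\Delta^{\I}}^{\redu}$. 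The straight-line homotopy $H(x,t) := (1-t)x + t\lambda(x)$ is $P$-stratified because both endpoints already have $\xi_{m(x)}$-coordinate positive and vanish above $m(x)$, so $m(H(x,t)) = m(x)$ throughout.

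With $\lambda$ in hand I set $\lambda^*(f) := f \circ \lambda$ for $f \in \HolINP(\RealNP{K})$. The key observation for well-definedness is that a strongly stratified $f\colon \RealNP{\Delta^{\I}} \to \RealNP{K}$ preserves every $\Real{N(P)}$-coordinate exactly, so $f$ sends $\RealP{\Delta^{\I}}^{\redu}$ into $\RealP{K}^{\redu}$; since $\lambda$ already lands in $\RealP{\Delta^{\I}}^{\redu}$, the composite $f \circ \lambda$ lands in $\RealP{K}^{\redu}$ and is $P$-stratified. Homotopy commutativity of \eqref{Eq:Holinks_commute_up_to_homotopy} is then immediate from the $P$-stratified homotopy $f \circ H\colon \RealP{\Delta^{\I}} \times [0,1] \to \RealP{K}$ between $f$ (as the natural map to $\HolIP(\RealP{K})$) and $f \circ \lambda$ (as the composition through $\HolIP(\RealP{K}^{\redu})$).

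The hard part, and the main obstacle, is showing that $\lambda^*$ is a homotopy equivalence. I plan to construct an explicit homotopy inverse $\mu\colon \HolIP(\RealP{K}^{\redu}) \to \HolINP(\RealNP{K})$ by \emph{straightening} a $P$-stratified map $g\colon \RealP{\Delta^{\I}} \to \RealP{K}^{\redu}$ into a strongly stratified one. Since $g$ is $P$-stratified and takes values in the reduced part, the $\Real{N(P)}$-support of $g(x)$ is forced to equal the downset $\{q_0,\ldots,q_{m(x)}\}$ at every $x$, so $g(x)$ sits in a simplex of $K$ whose flag contains the face $[q_0,\ldots,q_{m(x)}]$; within this simplex one can linearly re-parameterize the internal coordinates of $g(x)$ so that its $\Real{N(P)}$-coordinates become exactly $(\xi_0(x),\ldots,\xi_k(x))$, producing a strongly stratified $\mu(g)$. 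The compositions $\mu \circ \lambda^*$ and $\lambda^* \circ \mu$ will be homotopic to the respective identities via $H$ in one direction and via linear interpolation of the straightening in the other. The delicate point is continuity of $\mu$ as the ambient simplex of $K$ changes and as $m(x)$ jumps across stratum boundaries; this is where the simplifications to $\I = P$ (\cref{lem:I_equals_P}) and to locally finite $K$ (\cref{lem:Holinks_On_Finite_Subsets}) become essential, the former ensuring that every barycentric coordinate of $\Delta^{\I}$ corresponds to a genuine stratum of $P$ and the latter providing the metrizability needed to glue local constructions coherently.
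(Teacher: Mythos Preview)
Your approach is essentially the paper's, with the same two-step architecture: define $\lambda^*$ by precomposition with a $P$-stratified self-map of $\RealP{\Delta^{\I}}$ landing in the reduced part, then build an inverse by a coordinate-rescaling straightening. Two points of comparison are worth making. First, your iterated cascade for $\lambda$ is more elaborate than necessary: the paper takes $\lambda$ to be the single linear map sending the vertex $q_i$ to the barycenter of $\Delta^{[q_0<\dots<q_i]}$, which already lands in $\RealP{\Delta^{\I}}^{\redu}$ and is $P$-stratum preserving in one step. Second, your straightening $\mu$ is exactly the paper's $\rho_*$, induced by the map $\rho\colon \RealP{K}^{\redu}\times_P\Real{N(P)}\to\RealNP{K}$ of \cref{lem:Straightening_V2}, given simplexwise by $(\xi_i)\mapsto(\alpha_i\xi_i)$ with $\alpha_i=t_{p_i}/\xi_{p_i}$; the reduced hypothesis alone makes this globally continuous across simplices and stratum jumps, so neither local finiteness nor metrizability is needed here (that assumption only enters later, in \cref{lem:Constructing_alpha_partition_unity}). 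The two composite homotopies do require a little care beyond ``linear interpolation'': the paper packages them via an auxiliary stratified homotopy $R$ (also from \cref{lem:Straightening_V2}) and the fact that $\rho$ extends by the identity over the diagonal $\RealNP{K}\times_{\Real{N(P)}}\Real{N(P)}$.
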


\begin{proof}
    Let $\I=[q_0<\dots<q_k]$ be a regular flag, and consider the linear map $\lambda\colon \Real{\Delta^{\I}}\to\Real{\Delta^{\I}}$ sending the vertex $q_i$ to the barycenter of $\Delta^{[q_0<\dots<q_i]}\subset \Delta^{\I}$. We will show that the map 
    \begin{align*}
       \lambda^* \colon \HolINP(\RealNP{K})&\to \HolIP(\RealP{K}^{\redu})\\
       f & \mapsto \Big \{
       x \mapsto (f \circ \lambda)(x)\Big\} 
    \end{align*}
    has the desired properties. First note that the map $\lambda\colon \RealP{\Delta^{\I}}\to\RealP{\Delta^{\I}}$ is a stratum preserving map, which means that for any $f\in \HolINP(\RealNP{K})$, $f\circ\lambda\in \HolIP(\RealP{K})$. Furthermore, by construction, $\lambda(\Real{\Delta^{\I}})\subset \Real{\Delta^{\I}}^{\redu}$. In particular, $\lambda^*$ takes values in $\HolIP(\RealP{K}^{\redu})$, and it is well-defined. 
    
    Now, let $H\colon \RealP{\Delta^{\I}}\times[0,1]\to \RealP{\Delta^{\I}}$ be the straight-line homotopy between $\lambda$ and $\Id_{\Real{\Delta^{\I}}}$. Then the map
    \begin{align*}
        H^*\colon\HolINP(\RealNP{K})\times [0,1]&\to \HolIP(\RealP{K})\\
        (f,s)&\mapsto \Big\{\xi\mapsto f(H(\xi,s))\Big\}
    \end{align*}
    gives the homotopy between the two paths in Diagram \eqref{Eq:Holinks_commute_up_to_homotopy}.
    
    It remains to be shown that $\lambda^*$ is a homotopy equivalence. Consider the map $\rho\colon \RealP{K}^{\redu}\times_P\Real{N(P)}\to\RealNP{K}$ from \cref{lem:Straightening_V2}. Using the natural isomorphism $\HolIP(\RealP{K}^{\redu})\simeq \HolINP(\RealP{K}^{\redu}\times_P\Real{N(P)})$, $\rho$ induces a map
    \begin{equation*}
        \rho_*\colon \HolIP(\RealP{K}^{\redu})\cong\HolINP(\RealP{K}^{\redu}\times_P\Real{N(P)})\to \HolINP(\RealNP{K})
    \end{equation*}
We will show that $\rho_*$ is a homotopy inverse to $\lambda^*$. We first consider the composition $\rho_*\circ\lambda^*$. Consider the homotopy $H^*$ defined above. For $f\in \HolINP(\RealNP{K})$, $H^*(f,s)$ lands in $\HolIP(\RealP{K}^{\redu})\cong \HolINP(\RealP{K}^{\redu}\times_P\Real{N(P)})$ if $s<1$ and lands in $\HolINP(\RealNP{K})$ for $s=1$. Let $\widehat{\Real{K}}$ be the strongly stratified space defined as the following union:
\begin{equation*}
    \widehat{\Real{K}}=\RealP{K}^{\redu}\times_P\Real{N(P)}\cup \RealNP{K}\times_{\Real{N(P)}}\Real{N(P)}\subset \Real{K}\times_P\Real{N(P)}\subset\Real{K}\times\Real{N(P)}
\end{equation*}
where the stratification is given by projecting on the second factor. The homotopy $H^*$ factors through a map
\begin{equation*}
    H^*\colon\HolINP(\RealNP{K})\times [0,1]\to \HolINP(\widehat{\Real{K}}).
\end{equation*}
By \cref{lem:Straightening_V2}, $\rho$ extends to $\widehat{\Real{K}}$ as the identity on $\RealNP{K}$. In particular, the composition $\rho_*\circ H^*$ gives a homotopy between $\rho_*\circ \lambda^*$ and $\Id$.

For the other composition, $\lambda^*\circ\rho_*$, consider the homotopy in $\Top_P$, from \cref{lem:Straightening_V2}
\begin{equation*}
    R\colon \left(\RealP{K}^{\redu}\times_P\Real{N(P)}\right)\times [0,1]\to \RealP{K}.
\end{equation*}
Given a map in $\Top_{N(P)}$, $f\colon \RealNP{\Delta^{\I}}\to\RealP{K}^{\redu}\times_P\Real{N(P)}$, and $s\in [0,1]$, consider the composition
\begin{equation*}
    \begin{tikzcd}
    \RealP{\Delta^{\I}}
    \arrow{r}{H_s}
    &\RealP{\Delta^{\I}}
    \arrow{r}{f}
    &\RealP{K}^{\redu}\times_P\Real{N(P)}
    \arrow{r}{R_s}
    &\RealP{K}
    \end{tikzcd}
\end{equation*}
If $s>0$, $R_s$ takes value in $\RealP{K}^{\redu}$. On the other hand, if $s=0$, $R_0=\rho$, $H_0=\lambda$, and $f\circ\lambda$ takes value in $\RealP{K}^{\redu}\times_P\RealP{\Delta^{\I}}^{\redu}$. By \cref{Rem:Rho_Sends_red_to_red}, this implies that $R_0\circ f\circ H_0$ takes value in $\RealP{K}^{\redu}$. Now consider the following homotopy:
\begin{align*}
    G\colon\HolINP(\RealP{K}^{\redu}\times_P\Real{N(P)})\times[0,1]&\to \HolIP(\RealP{K}^{\redu})\\
    (f,s)&\mapsto R_s\circ f\circ H_s
\end{align*}
At $s=1$, $H_1=\Id$, and $R_1$ is the projection to $\RealP{K}^{\redu}$. In particular, $G_1$ is the natural homeomorphism $\HolINP(\RealP{K}^{\redu}\times_P\Real{N(P)})\simeq \HolIP(\RealP{K}^{\redu})$. At $s=0$, $G_0$ is $\lambda^*\circ \rho_*$, precomposed with the aforementioned natural homeomorphism. In particular, $\lambda^*$ is a homotopy equivalence.
\end{proof}

\begin{lemma}\label{lem:Straightening_V2}
There exists a map in $\Top_{N(P)}$ 
\begin{equation*}
    \rho\colon \RealP{K}^{\redu}\times_P\Real{N(P)}\to \RealNP{K},
\end{equation*}
which extends continuously to $\RealNP{K}\times_{\Real{N(P)}}\Real{N(P)}$ as $\Id_{\Real{K}}$, where the stratification on the domain is given by the projection on the second factor. 
Furthermore, there exists a homotopy in $\Top_P$
\begin{equation*}
    R\colon \left(\RealP{K}^{\redu}\times_P\Real{N(P)}\right)\times [0,1]\to \RealP{K}
\end{equation*}
between $\rho$ and the projection to $\RealP{K}^{\redu}$, such that for all $s>0$, $R_s$ takes value in $\RealP{K}^{\redu}$.
\end{lemma}
\begin{remark}\label{Rem:Rho_Sends_red_to_red}
Note that, since the map $\rho\colon \RealP{K}^{\redu}\times_P\Real{N(P)}\to \RealNP{K}$ from \cref{lem:Straightening_V2} is a strongly stratified map, its restriction to $\RealP{K}^{\redu}\times_P\RealP{\Delta^{\I}}^{\redu}$ must take value in $\RealP{K}^{\redu}=\Real{\varphi_K}^{-1}(\RealP{\Delta^{\I}}^{\redu})$.
\end{remark}

\begin{proof}[Proof of \cref{lem:Straightening_V2}]
    Recall that if $(\xi_0,\dots,\xi_n)\in \Real{\Delta^{\J}}$, with $\J=[p_0\leq\dots\leq p_n]$, and $p\in P$, then $\xi_p=\sum_{p_i=p}\xi_i$. Now, write $\I=[q_0<\dots<q_k]$. By \labelcref{Item:Assumption2_Section4}, $P=\I$, which means that for all $0\leq i\leq n$ there exists $0\leq j\leq k$ such that $p_i=q_j$. Furthermore, we will label the points in $\Real{N(P)}=\Real{\Delta^{\I}}$ as $(t_{q_0},\dots,t_{q_k})$. We will construct $\rho$ on simplices, let $\sigma\colon \Delta^{\J}\to K$ be a stratified simplex. Define $\rho_{\J}$ as :
    \begin{align*}
        \rho_{\J}\colon \RealP{\Delta^{\J}}^{\redu}\times_P\Real{N(P)}&\to \RealNP{\Delta^{\J}}\\
        \big ((\xi_0,\dots,\xi_n),(t_{q_0},\dots,t_{q_k}) \big )&\mapsto (\alpha_0\xi_0,\dots,\alpha_n\xi_n) 
    \end{align*}
    where $\alpha_i=\frac{t_{p_i}}{\xi_{p_i}}$, if $\xi_{p_i}\not =0$, and $\alpha_i=0$ if $\xi_{p_i}=0$. Now, if $\xi_q=0$ for some $q\in P$, since $(\xi_0,\dots,\xi_n)$ is in $\RealP{\sigma(\Delta^{\J})}^{\redu}$, this implies that $\xi_{q'}=0$, for $q'\geq q$. In turn, $\varphi_P(t_{q_0},\dots,t_{q_k})=\varphi_P\circ\Real{\varphi_K}(\xi_0,\dots,\xi_n)<q$. In particular, in this case, one must have $t_{q'}=0$ for $q'\geq q$, and $\sum \alpha_i\xi_i=\sum t_{q_j}=1$, which implies that $\rho_{\J}$ is well defined. If one sets $(\xi'_0,\dots,\xi'_n)=(\alpha_0\xi_0,\dots,\alpha_n\xi_n)$, one has $\xi'_q=t_q$ for all $q\in P$. In particular, $\rho_{\J}$ is a map in $\Top_{N(P)}$. Furthermore, if we restrict $\rho_{\J}$ to pairs $((\xi_0,\dots,\xi_n),(t_{q_0},\dots,t_{q_k}))$ such that $\Real{\varphi_{K}}(\xi_0,\dots,\xi_n)=(t_{q_0},\dots,t_{q_k})$, then, for all $0\leq i \leq n$, either $\xi_{p_i}\not = 0$ and $\alpha_i=1$, or $\xi_{p_i}=0$, and in both cases, $\alpha_i\xi_i=\xi_i$. Using this $\rho_{\J}$ can be extended to $\RealNP{\Delta^{\J}}\times_{\Real{N(P)}}\Real{N(P)}$ by the identity (or, more precisely, the projection on the first factor). Assembling all the $\rho_{\J}$ together gives the desired map.
    
    Now, to produce the homotopy $R$, consider the following straight-line homotopy defined on simplices, where the $\alpha_i$ are defined as above:
    \begin{align*}
        R_{\J}\colon \left(\RealP{\Delta^{\J}}^{\redu}\times_P\Real{N(P)}\right)\times [0,1] &\to \RealP{\Delta^{\J}}\\
          (  (\xi_0,\dots,\xi_n  ),  (t_{q_0},\dots,t_{q_k}  ),s  )&\mapsto 
        (  (s+  (1-s  )\alpha_0  )\xi_0,\dots,  (s+  (1-s  )\alpha_n  )\xi_n )
    \end{align*}
    To see that $R_{\J}$ is a stratum preserving map, notice that if $s=0$, $R_{\J}$ is just $\rho_{\J}$ which is a map in $\Top_{N(P)}$. 
    If $s>0$, $(s+(1-s)\alpha_i)\not=0$ for all $0\leq i\leq n$ and so $(s+(1-s)\alpha_i)\xi_i=0\Leftrightarrow\xi_i=0$ for all $0\leq i\leq n$. This implies two things. First, $\varphi_P\circ\Real{\varphi_K}(\xi_0,\dots,\xi_n)=\varphi_P\circ\Real{\varphi_K}((s+(1-s)\alpha_0)\xi_0,\dots,(s+(1-s)\alpha_n)\xi_n)$, and so $R_{\J}$ is a stratified homotopy, and second, $R_{\J}$ takes value in $\RealP{\Delta^{\J}}^{\redu}$ for all $s>0$.
    Finally, notice that for $s=1$, $R_{\J}$ is the projection on $\RealP{\Delta^{\J}}$, and so $R_{\J}$ is a homotopy between $\rho_{\J}$ and this projection. Assembling the $R_{\J}$ gives the desired homotopy.
\end{proof}

In order to prove \cref{theo:Strong_Holinks_are_Holinks}, it remains to be shown that the map induced by the inclusion $\HolIP(\RealP{K}^{\redu})\to\HolIP(\RealP{K})$ is a weak-equivalence. We will do so by further decomposition.

\begin{definition}
\label{def:K_redu_l}
Write $\I=[q_0<\dots<q_k]$, and let $0\leq l\leq k$. Define the following subspace of $\RealP{K}$:
\begin{equation*}
    \RealP{K}^{\redu,l}=\{(\sigma,\xi)\ |\ \xi_p=0\Rightarrow \xi_{p'}=0,\ \forall p'\geq p,\ \forall p<q_l\}.
\end{equation*}
We then have a sequence of inclusions
\begin{equation*}
    \RealP{K}^{\redu}=\RealP{K}^{\redu,k}\subset\dots\subset\RealP{K}^{\redu,0}=\RealP{K}.
\end{equation*}
\end{definition}

\cref{theo:Strong_Holinks_are_Holinks} then follows from the following lemma.

\begin{lemma}\label{lem:Last_Lemma_For_Strong_Holinks}
Let $0\leq l\leq k-1$, the inclusion $\RealP{K}^{\redu,l+1}\to \RealP{K}^{\redu,l}$ induces a homotopy equivalence
\begin{equation*}
  \HolIP(\RealP{K}^{\redu,l+1})\to  \HolIP(\RealP{K}^{\redu,l})
\end{equation*}
\end{lemma}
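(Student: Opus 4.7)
The plan is to construct a homotopy inverse to the inclusion via pre-composition with a family of stratum-preserving self-maps of the source, combined using a partition of unity on the mapping space. By \cref{lem:Holinks_On_Finite_Subsets} and \cref{lem:I_equals_P} I may assume $K$ is locally finite and $P = \I$; then $\HolIP(\RealP{K}^{\redu,l})$ is metrizable and hence paracompact by \cref{rem:Compact_Open_Topology_Holinks}.

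First, I would define a stratum-preserving homotopy $S^l\colon \RealP{\Delta^{\I}}\times [0,1]\to \RealP{\Delta^{\I}}$ starting at the identity, designed so that $S^l_s$ pushes points of $\RealP{\Delta^{\I}}$ off the locus $\{t_{q_l}=0\}$ while keeping the stratum of each point invariant. A natural choice in barycentric coordinates $(t_{q_0},\dots,t_{q_k})$ fixes $t_{q_j}$ for $j<l$, rescales $t_{q_j}$ to $(1-cs)t_{q_j}$ for $j>l$, and boosts $t_{q_l}$ by $cs\sum_{j>l}t_{q_j}$, for a small fixed constant $c\in(0,1)$; strict positivity of $1-cs$ then ensures stratum preservation.

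Next, I would prove: for each $f\in \HolIP(\RealP{K}^{\redu,l})$ there exists $s_f>0$ with $f\circ S^l_s \in \HolIP(\RealP{K}^{\redu,l+1})$ for all $s\in(0,s_f]$, and the set of $f$ sharing a common threshold is open in the compact-open topology. The structural fact driving the argument is that a point $y \in \RealP{K}^{\redu,l}$ with $\varphi_P(y) \geq q_l$ necessarily lies in an open cell of $K$ whose flag contains $\{q_0,\dots,q_{l-1}\}$; combined with stratum-preservation of $f$ and continuity, this forces the $q_l$-coordinate of $f(S^l_s(x))$ to become positive whenever $\varphi_P(x)\geq q_l$ and $s$ is sufficiently small. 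Compactness of $\RealP{\Delta^{\I}}$ upgrades the pointwise-in-$x$ smallness to a uniform threshold $s_f$, and openness of the condition in $f$ is standard in the compact-open topology.

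Finally, by paracompactness I would choose a locally finite open cover $(U_\beta)$ with uniform thresholds $s_\beta$ together with a subordinate partition of unity $(\psi_\beta)$, and set $\alpha(f):=\sum_\beta \psi_\beta(f)\,s_\beta$, a continuous positive function. Monotonicity of $S^l$ in $s$, immediate from the formula, gives $f\circ S^l_{\alpha(f)} \in \HolIP(\RealP{K}^{\redu,l+1})$ for every $f$. Define $r(f):=f\circ S^l_{\alpha(f)}$ and $H(f,s):=f\circ S^l_{s\cdot\alpha(f)}$; then $H$ is a continuous homotopy from $\Id$ to $\iota\circ r$, where $\iota$ denotes the inclusion, and since $S^l_s$ is a self-map of the source, the same $H$ restricted to $\HolIP(\RealP{K}^{\redu,l+1})$ stays within this subspace, yielding $r\circ \iota \simeq \Id$ as well. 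The main obstacle will be the third step above: the delicate combinatorial argument ensuring uniform positivity of the $q_l$-coordinate of $f(S^l_s(x))$, which relies crucially on the specific structure of $\RealP{K}^{\redu,l}$ (as opposed to the cruder $\RealP{K}$) and is the technical heart of the lemma.
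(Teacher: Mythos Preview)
Your overall architecture (deformation retraction of the mapping space via pre-composition with a self-homotopy of the source, using paracompactness for a partition of unity) is exactly the paper's strategy. However, the central claim of your second step is false, and this is not a matter of a delicate estimate but a genuine structural obstruction.

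You assert that for each $f$ there exists $s_f>0$ with $f\circ S^l_s\in\HolIP(\RealP{K}^{\redu,l+1})$ for all $s\in(0,s_f]$. The ``structural fact'' you invoke only says that the open cell containing $f(x)$ has $q_0,\dots,q_{l-1}$ in its flag; it says nothing about $q_l$. There is no reason the $q_l$-coordinate of $f(S^l_s(x))$ should be positive merely because the $q_l$-coordinate of $S^l_s(x)$ is: $f$ is only stratum-preserving, not coordinate-preserving. Concretely, take $\I=[q_0<q_1<q_2]$, $l=1$, let $K$ be two copies of $\Delta^{\I}$ glued along their $[q_0<q_2]$-face, and let $f$ be a stratum-preserving ``fold'' sending some interior line of $\RealP{\Delta^{\I}}$ onto the open glued edge (where $\xi_{q_1}=0$). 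Then for every small $s>0$ the image of $S^l_s$ still contains interior points mapped by $f$ to that edge, so $f\circ S^l_s\notin\HolIP(\RealP{K}^{\redu,2})$. Moreover, since your $S^l$ only rescales by $(1-cs)$ with fixed $c<1$, it never collapses far enough to escape this; no single scalar $\alpha(f)$ depending only on $f$ can work.

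The paper repairs both issues simultaneously. Its $S^l(\cdot,s)$ is parametrized so that at $s=0$ it \emph{fully} collapses the coordinates $t_{q_{l+1}},\dots,t_{q_k}$ to zero, landing in the $q_l$-stratum; the key geometric input (\cref{lem:Xredl_neighborhood}) is that $\RealP{K}^{\redu,l+1}$ is a neighborhood of the $q_l$-stratum, so $f\circ S^l(\cdot,s)$ lands in $\RealP{K}^{\redu,l+1}$ for $s$ near $0$. Crucially, $\RealP{K}^{\redu,l+1}$ is \emph{not} a neighborhood of the $<q_l$ strata, so one cannot use a single threshold: the paper's $\alpha$ (\cref{lem:Constructing_alpha_partition_unity}) depends on $f$ \emph{and} on the point $t$ through $\sum_{j>l}t_{q_j}$, shrinking to $0$ as $t$ approaches the lower strata. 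The resulting homotopy is $H(f,s)(t)=f\bigl(S^l(t,(1-s)+s\,\alpha(f,\textstyle\sum_{j>l}t_{q_j}))\bigr)$, which is the additional idea your sketch is missing.
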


In order to prove \cref{lem:Last_Lemma_For_Strong_Holinks}, we will need a few more technical results.
\begin{definition}
\label{def:Map_Sl}
Let $0\leq l\leq k-1$, define the map $S^l$ as
\begin{align*}
    S^l\colon \Real{\Delta^{\I}}\times [0,1]&\to \Real{\Delta^{\I}}\\
    ((t_{q_0},\dots,t_{q_k}),s)&\mapsto (t_{q_0},\dots,t_{q_{l-1}},t_{q_l}+(1-s)\sum_{j=l+1}^k t_{q_j},st_{q_{l+1}}
,\dots,st_{q_k}).\end{align*}
\end{definition}

\begin{lemma}\label{lem:Properties_of_S}
The map $S^l$ is stratum preserving outside of $s=0$, and it restricts to the projection on the first factor on the subspace 
\begin{equation*}
\Real{\Delta^{[q_0<\dots<q_l]}}=\{(t_{q_0},\dots,t_{q_k})\ |\ t_{q}=0\ \forall q>q_l\}.
\end{equation*}
\end{lemma}

\begin{proof}
    Let $t=(t_{q_0},\dots,t_{q_k})$, and $s\in [0,1]$, and write $t'=S^l(t,s)$. If $\varphi_P(t)=q\leq q_l$, then $t_{q'}=t'_{q'}=0$ for all $q'>q$, and $S^l(t,s)=t$ for all $s$, this addresses the second part of the lemma. If $\varphi_P(t)=q>q_l$, then for $s>0$, one has $t'_{q}=st_{q}>0$, and $t'_{q'}=0$ for all $q'>q$. In particular, $\varphi_P(t')=q$, and $S^l(-,s)$ is a stratum preserving map.
\end{proof}

\begin{lemma}\label{lem:Xredl_neighborhood}
For all $0\leq l\leq k-1$, $\RealP{K}^{\redu,l+1}$ is a neighborhood of the $q_l$-stratum of $\RealP{K}^{\redu,l}$
\end{lemma}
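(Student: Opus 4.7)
The plan is to unpack the two definitions and invoke continuity of the coordinate functions $\xi \mapsto \xi_p$. Under the simplifying assumption $\I = P = \{q_0 < \dots < q_k\}$ (which \cref{lem:I_equals_P} lets us impose without loss of generality), there are no poset elements strictly between $q_l$ and $q_{l+1}$, so the defining condition of $\RealP{K}^{\redu,l+1}$ differs from that of $\RealP{K}^{\redu,l}$ by exactly one additional requirement: whenever $\xi_{q_l} = 0$, one must also have $\xi_{p'} = 0$ for all $p' > q_l$.

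I would then take an arbitrary point $x = (\sigma, \xi)$ in the $q_l$-stratum of $\RealP{K}^{\redu,l}$. By construction of the stratification $\varphi_P \circ \Real{\varphi_K}$, membership in the $q_l$-stratum forces $\xi_{q_l} > 0$ (and $\xi_{p'} = 0$ for $p' > q_l$). The assignment $(\tau, \eta) \mapsto \eta_{q_l}$, defined simplexwise as a sum of barycentric coordinates, is compatible with face and degeneracy identifications and therefore continuous on $\RealP{K}$; hence the subset $U = \{(\tau, \eta) \in \RealP{K}^{\redu,l} : \eta_{q_l} > 0\}$ is an open neighborhood of $x$ in $\RealP{K}^{\redu,l}$.

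Finally, for every $(\tau, \eta) \in U$, the added constraint ``$\eta_{q_l} = 0 \Rightarrow \eta_{p'} = 0$ for $p' \geq q_l$'' is vacuously satisfied because $\eta_{q_l} > 0$. Combined with the constraints already inherited from membership in $\RealP{K}^{\redu,l}$, this gives $U \subset \RealP{K}^{\redu,l+1}$, which is exactly what we need. The argument is essentially elementary; I do not anticipate any significant obstacle beyond correctly identifying the single new implication distinguishing the two subsets and verifying continuity of the coordinate function, both of which are routine once the setup of \cref{def:K_redu_l} is parsed.
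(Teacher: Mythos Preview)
Your proposal is correct and follows essentially the same idea as the paper's proof: both identify that the only new constraint in passing from $\RealP{K}^{\redu,l}$ to $\RealP{K}^{\redu,l+1}$ is the implication with hypothesis $\xi_{q_l}=0$, and both make it vacuous on an open set where the $q_l$-coordinate is positive. The paper first reduces to $K=\Delta^{\I}$ via the pullback square along $\Real{\varphi_K}$ and then takes the open set where \emph{all} coordinates $t_{q_0},\dots,t_{q_l}$ are nonzero, whereas you work directly in $\RealP{K}$ and only impose $\eta_{q_l}>0$, inheriting the remaining constraints from membership in $\RealP{K}^{\redu,l}$; both variants are valid, and your continuity claim for $(\tau,\eta)\mapsto\eta_{q_l}$ is just the $q_l$-coordinate of $\Real{\varphi_K}$.
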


\begin{proof}
Note that the subspace $\RealP{K}^{\redu,l}$ can be defined as the following pullback square:
\begin{equation*}
    \begin{tikzcd}
    \RealP{K}^{\redu,l}
    \arrow[hookrightarrow]{r}
    \arrow{d}
    &\RealP{K}
    \arrow{d}{\RealP{\varphi_K}}
    \\
    \RealP{\Delta^{\I}}^{\redu,l}
    \arrow{r}
    &\RealP{\Delta^{\I}}
    \end{tikzcd}
\end{equation*}
Now since $\RealP{\varphi_K}$ is a continuous, stratum preserving map, it is enough to show that $\RealP{\Delta^{\I}}^{\redu,l+1}$ is a neighborhood of the $q_l$-stratum of $\RealP{\Delta^{\I}}^{\redu,l}$. Now, a point in the $q_l$-stratum of $\RealP{\Delta^{\I}}^{\redu,l}$ is of the form $(t_{q_0},\dots,t_{q_k})$, with $t_{q_l}\not=0$ and $t_q=0$ for all $q>q_l$. Furthermore, the defining condition of $\RealP{\Delta^{\I}}^{\redu,l}$ implies that $t_q\not =0$ for all $q<q_l$. This means that there exist a neighborhood of $t$, $U\subset \Real{\Delta^{\I}}$, such that for any $t'\in U$, $q\leq q_l\Rightarrow t'_{q}\not =0$. But this means that all points in $U$ satisfy the defining condition of $\RealP{\Delta^{\I}}^{\redu,l+1}$. In particular, $U\subset \RealP{\Delta^{\I}}^{\redu,l+1}$, and the latter is a neighborhood of the $q_l$-stratum of $\RealP{\Delta^{\I}}^{\redu,l}$.
\end{proof}

\begin{lemma}\label{lem:Constructing_alpha_partition_unity}
There exists a continuous map $\alpha\colon \HolIP(\RealP{K}^{\redu,l})\times [0,1]\to [0,1]$, such that
\begin{enumerate}
    \item $\alpha(f,s)=0\Rightarrow s=0$,
    \item $f(S^l(t,\alpha(f,\sum_{j=l+1}^kt_{q_j})))\in \RealP{K}^{\redu,l+1}$, for all $t=(t_{q_0},\dots,t_{q_k})\in \Real{\Delta^{\I}}$ and for all $f\in \HolIP(\RealP{K}^{\redu,l})$. 
\end{enumerate}
\end{lemma}

\begin{proof}
    Let $\epsilon >0$, and define $\Real{\Delta^{\I}}^{\epsilon,l}$ as the following subset of $\Real{\Delta^{\I}}$: 
    \begin{equation*}
        \Real{\Delta^{\I}}^{\epsilon,l}=\{t\in \Real{\Delta^{\I}}\mid \sum_{i=l}^{k}t_{q_i}\geq \epsilon\}
    \end{equation*}
    Now, pick some stratified map $f\colon \RealP{\Delta^{\I}}\to\RealP{K}^{\redu,l}$, and consider the intersection of the following nested family of compact subsets:
    \begin{equation*}
        \bigcap_{\alpha>0}f(S^l( \Real{\Delta^{\I}}^{\epsilon,l}\times [0,\alpha])) =f(S^l(\Real{\Delta^{\I}}^{\epsilon,l}\times\{0\}))
    \end{equation*}
   Note that if $t\in \Real{\Delta^{\I}}^{\epsilon,l}$, $S^l(t,0)$ is in the $q_l$-stratum of $\RealP{\Delta^{\I}}$. In particular, $f(S^l(\Real{\Delta^{\I}}^{\epsilon,l}\times\{0\}))$ is a compact subset of the $q_l$-stratum of $\RealP{K}^{\redu,l}$. Since, by \cref{lem:Xredl_neighborhood}, $\RealP{K}^{\redu,l+1}$ is a neighborhood of the $q_l$-stratum of $\RealP{K}^{\redu,l}$, there must exist $\alpha^{\epsilon}>0$ such that $f(S^l( \Real{\Delta^{\I}}^{\epsilon,l}\times [0,\alpha^{\epsilon}]))\subset \RealP{K}^{\redu,l+1}$. By the definition of the compact-open topology, this also holds for any $g$ in a neighborhood $U$ of $f$ in $\HolIP(\RealP{K}^{\redu,l})$. 
   Hence, we can cover $\HolIP(\RealP{K}^{\redu,l})$ by a family of opens $U_i$ such that there exists $\alpha^{\epsilon}_i>0$ satisfying $g\in U_i\Rightarrow g(S^l( \Real{\Delta^{\I}}^{\epsilon,l}\times [0,\alpha^{\epsilon}_i]))\subset \RealP{K}^{\redu,l+1}$. By \cref{lem:Holinks_On_Finite_Subsets}, we may assume that $K$ is locally finite. But this implies that $\HolIP(\RealP{K}^{\redu,l})$ is metrizable, and hence, paracompact (see also \cref{rem:Compact_Open_Topology_Holinks}). In particular, one can find a partition of unity $(\phi_i)$ subordinated to the open cover $(U_i)$. We can now define the following continuous map:
   \begin{align*}
       \alpha^{\epsilon}\colon \HolIP(\RealP{K}^{\redu,l})&\to [0,1]\\
       f&\mapsto \sum_i\phi_i(f)\alpha_i^{\epsilon}
       \end{align*}
       Note that by construction, for all $f\in \HolIP(\RealP{K}^{\redu,l})$, we have $\alpha^{\epsilon}(f)\leq \alpha_i^{\epsilon}$, for some $i$ such that $f\in U_i$. In particular,  we have 
       \begin{equation}\label{eq:f_S_alpha_Subset_Xlplusone}
       f(S^l( \Real{\Delta^{\I}}^{\epsilon,l}\times [0,\alpha^{\epsilon}(f)]))\subset \RealP{K}^{\redu,l+1}    
       \end{equation}
       
       Now, to construct $\alpha$, we will use the family of functions $\alpha^{\epsilon}$, for $\epsilon=\frac{1}{2^n}$, $n\geq 1$. Consider the covering of $(0,1]$ given by the family $I_n=(\frac{1}{2^{n+1}},\frac{1}{2^{n-2}})$, $n\geq 1$, and the family of closed intervals $J_n=\left[\frac{1}{2^{n}},\frac{1}{2^{n-1}}\right]$, $n\geq 1$. Pick a family of bump functions on $[0,1]$, $\psi_n$, $n\geq 1$ satisfying :
       \begin{itemize}
           \item $\psi_n(s)\in [0,1]$, for all $s\in (0,1]$,
           \item $\psi_n(s)=1$ if $s\in J_n$,
           \item $\psi_n(s)=0$ if $s\not\in I_n$,
       \end{itemize}
       and define the continuous map:
       \begin{align*}
           \alpha\colon \HolIP(\RealP{K}^{\redu,l})\times [0,1]&\to [0,1]\\
           (f,s)&\mapsto s\prod_{n\geq 1}\left(1-\psi_n(s)(1-\alpha^{\frac{1}{2^n}}(f))\right)
       \end{align*}
       Note that for $s\in [0,1]$, there is only a finite number of $n\geq 1$ such that $\psi_n(s)\not=0$, which means that the above product only has a finite amount of non-trivial terms. We need to check that it satisfies both conditions of \cref{lem:Constructing_alpha_partition_unity}. Let $f\in \HolIP(\RealP{K}^{\redu,l})$. The first part is clear, since $\alpha(f,s)=0$ implies that either $s=0$, or that the product is $0$, which is not possible, since it only has a finite number of non-trivial terms which are all non-zero. For the second part, first note that if $s\in J_n$
       \begin{equation}\label{eq:Alpha_Smaller_AlphaEpsilon}
           \alpha(f,s)\leq \alpha^{\frac{1}{2^n}}(f)
       \end{equation}
       Now, if $t\in \Real{\Delta^{\I}}$, then 
       \begin{itemize}
           \item 
       either $\sum_{j=l+1}^kt_{q_j}=0$, but then $t\in \Real{\Delta^{q_0<\dots<q_l}}$, and by \cref{lem:Properties_of_S}, in this case $S^l(t,u)=t$ for any $u\in [0,1]$. In particular, the expression in \cref{lem:Constructing_alpha_partition_unity} reduces to $f(t)$. Now, $f(t)$ is of the form $(\sigma,(\xi_0,\dots,\xi_n))$, but since $f$ is stratum preserving, it must satisfy $\xi_p=0$ for all $p>q_l$. Additionally, since $f$ takes value in $\RealP{K}^{\redu,l}$, we have $\xi_p=0\Rightarrow \xi_{p'}=0$ for $p'\geq p$ and $p<q_l$. Now, since we already now that $\xi_{p'}=0$ for $p'>q_l$, we trivially have the implication $\xi_{q_l}=0\Rightarrow \xi_{p'}=0$ for $p'\geq q_l$. In particular $f(t)\in \RealP{K}^{\redu,l+1}$.
       \item or $\sum_{j=l+1}^kt_{q_j}>0$. Then let $n$ be such that $\sum_{j=l+1}^kt_{q_j}\in J_n=\left[\frac{1}{2^n},\frac{1}{2^{n-1}}\right]$. By definition, $t\in \Real{\Delta^{\I}}^{\epsilon,l}$, for $\epsilon=\frac{1}{2^n}$. Combining equations \eqref{eq:Alpha_Smaller_AlphaEpsilon} and \eqref{eq:f_S_alpha_Subset_Xlplusone}, we get :
       \begin{equation*}
           f(S^l(t,\alpha(f,\sum_{j=l+1}^k t_{q_j})))\in f(S^l(\Real{\Delta^{\I}}^{\epsilon,l}\times [0,\alpha^{\epsilon}(f)]))\subset\RealP{K}^{\redu,l}
       \end{equation*}
       which concludes the proof.
       \end{itemize}
\end{proof}

\begin{proof}[Proof of \cref{lem:Last_Lemma_For_Strong_Holinks}]
Using the map $\alpha\colon \HolIP(\RealP{K}^{\redu,l})\times [0,1]\to [0,1]$ from \cref{lem:Constructing_alpha_partition_unity}, we define the following homotopy:
\begin{align*}
   H\colon \HolIP(\RealP{K}^{\redu,l})\times [0,1]&\to \HolIP(\RealP{K}^{\redu,l})\\
    (f,s)&\mapsto \left\{\begin{array}{cl}
         \RealP{\Delta^{\I}}&\to \RealP{K}^{\redu,l}  \\
         t=(t_{q_0},\dots,t_{q_k})&\mapsto f(S^l(t,(1-s)+s\alpha(f,\sum_{j={l+1}}^kt_{q_j}))) 
    \end{array}\right.
\end{align*}
We first check that $H(f,s)$ is a stratum preserving map. By \cref{lem:Properties_of_S}, $f(S^l(t,u))$ has the correct stratification, except maybe when $u=0$. In the latter case, one must have $s=1$ and $\alpha(f,\sum_{j={l+1}}^kt_{q_j})=0$, but by \cref{lem:Constructing_alpha_partition_unity}, this is only possible when $\sum_{j={l+1}}^kt_{q_j}=0$. This corresponds to $t\in \Real{\Delta^{[q_0<\dots<q_l]}}$, and by \cref{lem:Properties_of_S}, in this case one has $S^l(t,u)=t$ for all values of $u$. We conclude that $H(f,s)$ is indeed a stratum preserving map. Furthermore, its image lies in $\RealP{K}^{\redu,l}$, by construction, which means that $H$ is well-defined.
But now, $H_0$ is the identity map, since $S^l(t,1)=t$ for all $t\in \Real{\Delta^{\I}}$. By the second part of \cref{lem:Constructing_alpha_partition_unity}, for all $f\in \HolIP(\RealP{K}^{\redu,l})$, The image of $H(f,1)\colon \Delta^{\I}\to \RealP{K}^{\redu,l}$ lies in $\RealP{K}^{\redu,l+1}$.  This implies that $H_1$ lands in  $\HolIP(\RealP{K}^{\redu,l+1})$. On the other hand, if $f\in \HolIP(\RealP{K}^{\redu,l+1})$, then $H(f,s)$ lies in $\HolIP(\RealP{K}^{\redu,l+1})$ for all $s\in [0,1]$. This shows that the inclusion $\HolIP(\RealP{K}^{\redu,l+1})\hookrightarrow \HolIP(\RealP{K}^{\redu,l})$ defines a homotopy equivalence (with inverse induced by $H_1$), which concludes the proof.
\end{proof}

\subsection{Realizations characterize weak-equivalences}

We summarize the results of the previous subsections in the following theorem.

\begin{theorem}\label{theo:Link_Summary}
    Let $K$ be a stratifed simplicial set, $\I$ a regular flag and $b$ a point in the interior of $\Real{\Delta^{\I}}$. Then all maps in the following diagram are weak-equivalences:
    \begin{equation*}
    \begin{tikzcd}
   \HolINP(\RealNP{K})
   \arrow{dr}
    \arrow{r}
    &\HolIP(\RealP{K})
    \\
     \Real{\Link{\I}(K)}
    \arrow{r}
    &\Real{K}_b
    \end{tikzcd}
    \end{equation*}
\end{theorem}

\begin{corollary}\label{Cor:Realization_Preserve_Weak_Equivalences}
Let $f\colon K\to L$ be in $\sS_P$. The following assertions are equivalent:
\begin{itemize}
    \item $f$ is a weak-equivalence in $\sS_P$,
    \item $\RealNP{f}\colon \RealNP{K}\to\RealNP{L}$ is a weak-equivalence in $\Top_{N(P)}$,
    \item $\RealP{f}\colon \RealP{K}\to \RealP{L}$ is a weak-equivalence in $\Top_P$.
\end{itemize}
Furthermore, if $g\colon \RealNP{K}\to \RealNP{L}$ is a map in $\Top_{N(P)}$, then it is a weak-equivalence if and only if its image by the functor $\varphi_P\circ - \colon \Top_{N(P)}\to\Top_P$ is a weak-equivalence in $\Top_P$.
\end{corollary}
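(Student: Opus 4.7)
The plan is to leverage the Quillen equivalence $\RealP{\sd_P(-)} \dashv \Ex_P \Sing_P$ from Theorem \ref{theo:QE_Through_sdP} as a bridge between weak equivalences in $\sS_P$ and in $\Top_P$, and then use the naturality of the holink comparison in Theorem \ref{theo:Strong_Holinks_are_Holinks} to promote the result from $\RealP{\sd_P(-)}$ to $\RealP{-}$. The equivalence involving $\RealNP{-}$ then drops out of the same holink comparison.

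First I would observe that every object in $\sS_P$ is cofibrant (the cofibrations being monomorphisms) and that every object in $\Top_P$ is fibrant (the fibrancy condition against the point is automatic). Consequently the Quillen equivalence $\RealP{\sd_P(-)} \dashv \Ex_P \Sing_P$ gives directly that a stratified map $f\colon K \to L$ in $\sS_P$ is a weak equivalence if and only if $\RealP{\sd_P(f)}$ is a weak equivalence in $\Top_P$.

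To pass from $\RealP{\sd_P(f)}$ to $\RealP{f}$, I would consider the naturality square
\begin{equation*}
\begin{tikzcd}
\RealP{\sd_P(K)} \arrow[d, "\RealP{\lv_P}"'] \arrow[r, "\RealP{\sd_P(f)}"] & \RealP{\sd_P(L)} \arrow[d, "\RealP{\lv_P}"] \\
\RealP{K} \arrow[r, "\RealP{f}"] & \RealP{L}
\end{tikzcd}
\end{equation*}
and reduce via $2$-out-of-$3$ to the single claim that $\RealP{\lv_P}$ is a weak equivalence in $\Top_P$ for every $K \in \sS_P$. Since $\lv_P$ is a weak equivalence in $\sS_P$ by Proposition \ref{prop:LvP_WeakEquivalence}, Proposition \ref{prop:RealNP_preserves_we} gives that $\RealNP{\lv_P}$ is a weak equivalence in $\Top_{N(P)}$, that is, induces weak equivalences on all $\HolINP$. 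Plugging this into the naturally commuting squares relating $\HolINP \circ \RealNP{-}$ and $\HolIP \circ \RealP{-}$, whose horizontal arrows are weak equivalences by Theorem \ref{theo:Strong_Holinks_are_Holinks}, a second $2$-out-of-$3$ argument produces weak equivalences on all $\HolIP$ and hence the desired weak equivalence $\RealP{\lv_P}$.

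The remaining equivalence involving $\RealNP{f}$, together with the final ``furthermore'' statement, I would handle uniformly by applying Theorem \ref{theo:Strong_Holinks_are_Holinks} to any map $g\colon \RealNP{K} \to \RealNP{L}$ in $\Top_{N(P)}$: its naturality gives, for each regular flag $\I$, a commutative square
\begin{equation*}
\begin{tikzcd}
\HolINP(\RealNP{K}) \arrow[r, "\sim"] \arrow[d, "g_*"'] & \HolIP(\RealP{K}) \arrow[d, "(\varphi_P \circ g)_*"] \\
\HolINP(\RealNP{L}) \arrow[r, "\sim"] & \HolIP(\RealP{L})
\end{tikzcd}
\end{equation*}
whose horizontals are weak equivalences, so that $g$ is a weak equivalence iff $\varphi_P \circ g$ is; specializing to $g = \RealNP{f}$ ties this into the chain already established. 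There is no genuine obstacle at this stage: the full weight of the argument has already been spent on Theorem \ref{theo:Strong_Holinks_are_Holinks} and the Quillen equivalence of Theorem \ref{theo:QE_Through_sdP}, and the corollary reduces to a bookkeeping exercise with two applications of the $2$-out-of-$3$ property.
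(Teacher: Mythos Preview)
Your proof is correct and uses the same ingredients as the paper's: the Quillen equivalence $\RealP{\sd_P(-)} \dashv \Ex_P\Sing_P$, the fact that $\lv_P$ is a weak equivalence, Proposition~\ref{prop:RealNP_preserves_we}, and the holink comparison of Theorem~\ref{theo:Strong_Holinks_are_Holinks}. The only difference is organizational: the paper proves the ``furthermore'' statement first and then uses it as a black box (running the $\lv_P$-square in $\Top_{N(P)}$ rather than $\Top_P$), whereas you inline that argument when showing $\RealP{\lv_P}$ is a weak equivalence and then restate it at the end; both routes amount to the same two applications of $2$-out-of-$3$.
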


\begin{proof}
Let us first prove the second part. Let $g\colon \RealNP{K}\to\RealNP{L}$ be a map in $\Top_{N(P)}$. It is a weak-equivalence if and only if, for all regular flags $\I$, the maps induced by $g$, 
\begin{equation*}
    \HolINP(\RealNP{K})\to\HolINP(\RealNP{L})
\end{equation*}
are weak-equivalences. But, by \cref{theo:Strong_Holinks_are_Holinks}, this is equivalent to asking that the following maps are weak-equivalences:
\begin{equation*}
    \HolIP(\RealP{K})\to\HolIP(\RealP{L})
\end{equation*}
This, in turn, is equivalent to $(\varphi_P\circ-)(g)$ being a weak-equivalence in $\Top_P$.

    For the first part of the lemma, we know from \cite[Theorem 1.15]{douteau2021stratified}  that the functor $\RealP{\sd_P(-)}\colon \sS_P\to\Top_{P}$ is the left part of a Quillen-equivalence. In particular it characterizes weak-equivalences between cofibrant objects. Since all objects of $\sS_P$ are cofibrant, this implies that $f$ is a weak-equivalence if and only if $\RealP{\sd_P(f)}$ is a weak-equivalence. Now consider the following commutative diagram:
    \begin{equation*}
        \begin{tikzcd}[column sep=45]
        \RealNP{\sd_P(K)}
        \arrow{r}{\RealNP{\sd_P(f)}}
        \arrow[swap]{d}{\RealNP{\lv_P}}
        &\RealNP{\sd_P(L)}
        \arrow{d}{\RealNP{\lv_P}}
        \\
        \RealNP{K}
        \arrow{r}{\RealNP{f}}
        &\RealNP{L}
        \end{tikzcd}
    \end{equation*}
     By \cite[Lemma A.3]{douSimp}, we know that $\lv_P\colon\sd_P(K)\to K$ is a weak-equivalence for all $K\in\sS_P$, and by \cref{prop:RealNP_preserves_we}, we also know that $\RealNP{-}$ preserves weak-equivalences. This implies that the vertical arrows in the previous diagram are weak-equivalences. By two out of three, this means that $f$ is a weak-equivalence if and only if $\RealNP{f}$ is a weak-equivalence, which concludes the proof.
\end{proof}

\begin{corollary}\label{cor:Real_CP_reflects_WE}
The functor $\RealP{C_P(-)}\colon \Diag_P\to\Top_P$ characterizes all weak-equivalences.
\end{corollary}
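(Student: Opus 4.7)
The plan is to deduce this corollary from the previous one together with the Quillen equivalence $(C_P^{\Top}, D_P^{\Top}) \colon \Diag_P \leftrightarrow \Top_P$ stated after \cref{theo:CMF_TopNP}. The key identification is that $\RealP{C_P(-)}$ coincides with $C_P^{\Top}$ in diagram \eqref{diag:factorized_triangle}, whose right adjoint is $D_P^{\Top} = D_P \circ \Sing_P$, sending $X$ to the diagram $\I \mapsto \Sing(\HolIP(X))$.

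The first observation is that $D_P^{\Top}$ itself characterizes weak-equivalences. Indeed, since every object of $\Top_P$ is fibrant (because the $\HolIP$ are automatically Kan complexes), the right Quillen functor $D_P^{\Top}$ preserves all weak-equivalences; and by the very definition of the transported model structure on $\Top_P$ (\cref{rem:TopP_Transported}), $D_P^{\Top}$ reflects weak-equivalences as well. Consequently, for a map $f\colon F \to G$ in $\Diag_P$, the map $C_P^{\Top}(f)$ is a weak-equivalence in $\Top_P$ if and only if $D_P^{\Top}(C_P^{\Top}(f))$ is a pointwise weak-equivalence in $\Diag_P$. Combining this with the naturality square of the unit $\eta$ and the two-out-of-three property, the statement of the corollary reduces to showing that the unit $\eta_F\colon F \to D_P^{\Top}C_P^{\Top}(F)$ is a pointwise weak-equivalence for every $F \in \Diag_P$.

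For cofibrant $F$ this is a consequence of $(C_P^{\Top}, D_P^{\Top})$ being a Quillen equivalence, using once more that no fibrant replacement is needed in $\Top_P$. For the general case, the plan is to leverage \cref{theo:Link_Summary}, which yields a natural weak-equivalence
\[
D_P^{\Top}C_P^{\Top}(F)(\I) = \Sing\bigl(\HolIP(\RealP{C_P(F)})\bigr) \simeq \Sing\bigl(\Real{\Link{\I}{C_P(F)}}\bigr).
\]
Under this identification, one can then compare with $F(\I)$ by computing $\Link{\I}{C_P(F)}$ directly. On a representable $(\Delta^n)^{\J}$ one has $C_P((\Delta^n)^{\J}) = \Delta^{\J} \times \Delta^n$, and an explicit analysis of $\sd$ in $\sS_P$ gives that $\Link{\I}{\Delta^{\J} \times \Delta^n}$ is empty when $\I \not\subset \J$ and weakly equivalent to $\Delta^n$ otherwise; this matches $(\Delta^n)^{\J}(\I)$. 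Extension to arbitrary $F$ is then carried out by a colimit argument, using that $\sd$ and $C_P$ are left adjoints and that $\Link{\I}$ is defined as a pullback in $\sS$ (a topos, so stable under colimits).

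The main obstacle I anticipate is extending the pointwise weak-equivalence from representables to arbitrary diagrams, since weak-equivalences of simplicial sets are not in general preserved under arbitrary colimits. To handle this, one would likely express an arbitrary $F \in \Diag_P$ as a suitably nice colimit of representables --- e.g.\ via its canonical projective cell decomposition, or a Reedy-cofibrant simplicial resolution by coproducts of representables --- and transfer the comparison through it using the compatibility of both sides with the attaching pushouts of generating cofibrations of $\Diag_P$.
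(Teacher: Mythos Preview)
Your reduction is essentially the paper's: both arguments come down to showing that for every $F \in \Diag_P$ and every regular flag $\I$, the natural map $\Real{F(\I)} \to \HolIP(\RealP{C_P(F)})$ is a weak equivalence (equivalently, that the unit $\eta_F$ of $C_P^{\Top} \dashv D_P^{\Top}$ is a pointwise weak equivalence). The detour through the Quillen equivalence for cofibrant $F$ is harmless but unnecessary.

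The substantive difference is in the final computation. You route the comparison through the \emph{simplicial} link $\Link{\I}{C_P(F)}$, which forces you into the colimit-extension argument you flag as problematic. The paper instead routes through the \emph{geometric} link $\Real{C_P(F)}_b$ (the fiber over the barycenter $b \in \Real{\Delta^{\I}}$), and there the computation is immediate for \emph{arbitrary} $F$: writing $C_P(F)$ as the coend $\int^{\I'} \Delta^{\I'} \times F(\I')$, one has $\Real{C_P(F)} = \int^{\I'} \Real{\Delta^{\I'}} \times \Real{F(\I')}$, and since $b$ lies in the interior of $\Real{\Delta^{\I}}$, the coend identifications collapse the fiber to $\{b\} \times \Real{F(\I)} \cong \Real{F(\I)}$. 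Thus the bottom map in
\[
\begin{tikzcd}
 & & \HolIP(\RealP{C_P(F)}) \\
\Real{F(\I)} \arrow[rru, bend left] \arrow[r] \arrow[rrd, bend right] & \HolINP(\RealNP{C_P(F)}) \arrow[ru] \arrow[rd] & \\
 & & \Real{C_P(F)}_b
\end{tikzcd}
\]
is an isomorphism, and \cref{theo:Link_Summary} finishes the argument. The moral: among the several equivalent links in \cref{theo:Link_Summary}, $\Real{-}_b$ is the one that interacts cleanly with $C_P$, precisely because both are built from products rather than subdivisions. Choosing it removes the obstacle you anticipated.
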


\begin{proof}
A map in $\Diag_P$, $f\colon F\to G$ is a weak-equivalence if and only if, for all regular flags $\I$, $F(\I)\to G(\I)$ is a weak-equivalence. On the other hand, the map $\RealP{C_P(f)}\colon \RealP{C_P(F)}\to\RealP{C_P(G)}$ is a weak-equivalence in $\Top_P$ if and only if the map $\HolIP(\RealP{C_P(F)})\to\HolIP(\RealP{C_P(G)})$ is a weak-equivalence for all $\I$. In particular, it is enough to show that for any $F\in \Diag_P$, the natural map $F(\I)\to\Sing(\HolIP(\RealP{C_P(F)}))$ is a weak-equivalence for all regular flags $\I$. Consider the following commutative diagram, where $b$ is the barycenter of $\Real{\Delta^{\I}}$:
\begin{equation*}
    \begin{tikzcd}
    &&\HolIP(\RealP{C_P(F)})
    \\
    \Real{F(\I)}
    \arrow[bend left]{urr}
    \arrow[bend right]{drr}
    \arrow{r}
    &\HolINP(\RealNP{C_P(F)})
    \arrow{ur}
    \arrow{dr}
    \\
    &&\Real{C_P(F)}_b\spaceperiod
    \end{tikzcd}
\end{equation*}
We need to prove that the top map is a weak-equivalence, but by \cref{theo:Link_Summary}, it is enough to show that the bottom map is a weak-equivalence. Now, note that $\Real{C_P(F)}_b\cong\Real{F(\I)}\times\{b\}$, as can be computed directly from the definition of $C_P$. In particular, the bottom map is an isomorphism, which concludes the proof.
\end{proof}

\begin{corollary}\label{cor:CP_reflects_WE}
The functor $C_P\colon \Diag_P\to \sS_P$ characterizes all weak-equivalences.
\end{corollary}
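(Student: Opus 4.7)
The plan is to deduce this immediately by composing the two preceding results. Namely, I would factor $\RealP{C_P(-)} \colon \Diag_P \to \Top_P$ through $\sS_P$ as
\begin{equation*}
\Diag_P \xrightarrow{C_P} \sS_P \xrightarrow{\RealP{-}} \Top_P
\end{equation*}
and then use \cref{cor:Real_CP_reflects_WE} on the composite together with \cref{Cor:Realization_Preserve_Weak_Equivalences} on the second factor.

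More concretely, let $f \colon F \to G$ be a map in $\Diag_P$. By \cref{cor:Real_CP_reflects_WE}, $f$ is a weak-equivalence in $\Diag_P$ if and only if $\RealP{C_P(f)}$ is a weak-equivalence in $\Top_P$. On the other hand, by \cref{Cor:Realization_Preserve_Weak_Equivalences} applied to the map $C_P(f) \colon C_P(F) \to C_P(G)$ in $\sS_P$, the map $\RealP{C_P(f)}$ is a weak-equivalence in $\Top_P$ if and only if $C_P(f)$ is a weak-equivalence in $\sS_P$. Chaining these equivalences yields that $f$ is a weak-equivalence in $\Diag_P$ if and only if $C_P(f)$ is a weak-equivalence in $\sS_P$, which is precisely the claim.

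Since both invoked results are already available, there is no genuine obstacle here; the only subtlety is that both implications (preservation and reflection of weak-equivalences) have already been packaged into the word \emph{characterizes} in the statements of \cref{Cor:Realization_Preserve_Weak_Equivalences,cor:Real_CP_reflects_WE}, so the argument really is this brief two-line concatenation.
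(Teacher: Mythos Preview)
Your proof is correct and matches the paper's own argument essentially verbatim: the paper also chains \cref{cor:Real_CP_reflects_WE} with \cref{Cor:Realization_Preserve_Weak_Equivalences} to conclude.
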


\begin{proof}
Let $f\colon F\to G$ be a map in $\Diag_P$. By \cref{cor:Real_CP_reflects_WE}, $f$ is a weak equivalence if and only if $\RealP{C_P(f)}$ is a weak equivalence. But by \cref{Cor:Realization_Preserve_Weak_Equivalences}, the latter is true if and only if $C_P(f)$ is a weak-equivalence.
\end{proof}

\section{Equivalence of homotopy categories and applications}
\label{Section:Hammock}

The goal of this section is to show that the adjoint functors 
\begin{equation*}
    \RealStrat{-}\colon s\Strat\leftrightarrow\Strat\colon \SingStrat,
\end{equation*} as well as the fiberwise pairs $\RealP{-}\colon \sS_P\leftrightarrow\Top_P\colon\Sing_P$ descend to equivalences between the homotopy categories of stratified simplicial sets and stratified spaces.
We deduce from this that the naive homotopy theory of conically stratified spaces embeds fully faithfully in the homotopy theory of stratified spaces, as well as a simplicial approximation theorem.
\subsection{Equivalence between homotopy theories}
	In this subsection, we prove that the adjunctions $\RealP{-} \dashv \Sing_P$ and $\RealStrat{-} \dashv \SingStrat$ induce equivalences of homotopy categories. As neither is part of a Quillen adjunction, this is not to be understood in terms of derived functors in the sense of model categories. Instead, consider the homotopy categories as explicitly constructed by formally inverting the weak equivalences. All functors involved are shown to preserve weak equivalences and hence induce functors of the homotopy categories. The following theorem then states that these induced functors give equivalences of categories.
	
	\begin{theorem}\label{theo:Equivalence_Simplicial_Homotopy_Category}
	    The adjoint pairs $\RealP{-} \dashv \Sing_P$, and $\RealStrat{-} \dashv \SingStrat$ induce well defined equivalences between homotopy categories,
	    \begin{align*}
	    \RealP{-}\colon\Ho\sS_P&\leftrightarrow\Ho\Top_P\colon\Sing_P\\
	   \RealStrat{-}\colon\Ho s\Strat&\leftrightarrow\Ho\Strat\colon \SingStrat.
	    \end{align*}
	\end{theorem}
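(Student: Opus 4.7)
The plan is to verify first that both functors descend to the homotopy categories, and then to reduce the equivalence claim to showing that the unit $\eta_K\colon K \to \Sing_P\RealP{K}$ and counit $\epsilon_X\colon \RealP{\Sing_P X} \to X$ are weak equivalences for all $K$ and $X$. By \cref{Cor:Realization_Preserve_Weak_Equivalences} and \cref{theo:SingP_Characterize_WeakEquivalences}, $\RealP{-}$ and $\Sing_P$ both preserve and reflect weak equivalences, so they descend to functors between the homotopy categories, and the unit and counit of the original adjunction descend to natural transformations between the corresponding compositions and the identities. The descended functors form an adjoint equivalence precisely when the componentwise unit $\eta_K$ and counit $\epsilon_X$ are weak equivalences in the original categories.

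The main content of the proof is verifying that $\eta_K$ is always a weak equivalence, for which I would exploit the Quillen equivalence $\RealP{\sd_P(-)} \dashv \Ex_P\Sing_P$ of \cref{theo:QE_Through_sdP}. Since every object of $\sS_P$ is cofibrant and every object of $\Top_P$ is fibrant, the derived unit $\eta^{\text{der}}_K$ at $K$ coincides with the strict unit of the composite adjunction, factoring as $K \to \Ex_P\sd_P K \to \Ex_P\Sing_P\RealP{\sd_P K}$, and is a weak equivalence. Applying naturality of $\eta$ to $\lv_P\colon \sd_P K \to K$, together with the identity $\iota_K = \Ex_P(\lv_P) \circ \eta^{\sd}_K$ (with $\eta^{\sd}$ the unit of $\sd_P \dashv \Ex_P$) and naturality of $\iota$, a short diagram chase produces
\[ \iota_{\Sing_P\RealP{K}} \circ \eta_K \;=\; \Ex_P\Sing_P\RealP{\lv_P} \circ \eta^{\text{der}}_K. \]
Now $\lv_P$ and $\iota$ are natural weak equivalences by \cref{prop:LvP_WeakEquivalence} and \cref{PropositionExPFibrantReplacement}; both $\RealP{-}$ and $\Sing_P$ preserve weak equivalences; and $\Ex_P$ preserves weak equivalences by a two-out-of-three argument against the natural weak equivalence $\iota$. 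So the right-hand side of the display is a composition of weak equivalences, and $\iota_{\Sing_P\RealP{K}}$ on the left is also a weak equivalence. Two-out-of-three forces $\eta_K$ to be a weak equivalence.

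For the counit $\epsilon_X$, I would apply the triangle identity
\[ \Sing_P(\epsilon_X) \circ \eta_{\Sing_P X} = \Id_{\Sing_P X}. \]
The previous paragraph makes $\eta_{\Sing_P X}$ a weak equivalence, so two-out-of-three makes $\Sing_P(\epsilon_X)$ a weak equivalence; since $\Sing_P$ reflects weak equivalences, $\epsilon_X$ is a weak equivalence. This settles the first of the two equivalences. For the global case, weak equivalences in $\Strat$ and $s\Strat$ are by definition those maps whose underlying poset map is an isomorphism and which, after that identification, restrict to a weak equivalence in the fiber over $P$. The global adjunction $\RealStrat{-} \dashv \SingStrat$ restricts fiberwise to $\RealP{-} \dashv \Sing_P$, with matching units and counits, so the global unit and counit are weak equivalences componentwise by the fiberwise statement, and the second equivalence follows.

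The one substantive step is the naturality computation in the second paragraph. It is essentially an unpacking of the strict/derived unit followed by two applications of naturality — routine in isolation, but to be executed carefully, as it is exactly the compatibility between $\lv_P$, $\iota$, and the units of $\sd_P \dashv \Ex_P$ and $\RealP{-} \dashv \Sing_P$ that bridges the Quillen equivalence of \cref{theo:QE_Through_sdP} with the non-Quillen adjunction $\RealP{-} \dashv \Sing_P$ we actually care about.
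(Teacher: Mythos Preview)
Your argument is correct and follows essentially the same strategy as the paper: reduce to showing that the unit and counit of $\RealP{-}\dashv\Sing_P$ are weak equivalences, handle one of them directly by comparing with the Quillen equivalence $\RealP{\sd_P(-)}\dashv\Ex_P\Sing_P$ via the natural weak equivalences $\lv_P$ and $\iota$, and then obtain the other from a triangle identity together with reflection of weak equivalences. The only difference is a harmless dualization: the paper treats the counit directly (using the counit of $\sd_P\dashv\Ex_P$ and that $\RealP{-}$ reflects weak equivalences), whereas you treat the unit directly (using the unit of the composite adjunction and that $\Sing_P$ reflects weak equivalences); your displayed identity is indeed the correct unpacking of this comparison.
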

	We first prove that the functors of \cref{theo:Equivalence_Simplicial_Homotopy_Category} pass to the homotopy categories.
	\begin{lemma}\label{lem:Real_Sing_pass_homotopy_category}
	 The adjoint pairs $\RealP{-} \dashv \Sing_P$, and $\RealStrat{-} \dashv \SingStrat$ induce well defined functors at the level of homotopy categories.
	\end{lemma}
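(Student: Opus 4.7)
The plan is to verify that each of the four functors $\RealP{-}, \Sing_P, \RealStrat{-}, \SingStrat$ sends weak equivalences to weak equivalences. Once this is established, the universal property of the localization $C \to \Ho C$ immediately produces induced functors on homotopy categories. Note that the adjunction units and counits are natural transformations that likewise descend, so the induced functors on $\Ho$ automatically remain adjoint pairs.

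For the fiberwise pairs, the two required statements are already essentially in hand. First, the functor $\RealP{-}\colon \sS_P \to \Top_P$ sends weak equivalences to weak equivalences: this is the content of \cref{Cor:Realization_Preserve_Weak_Equivalences}. Second, the functor $\Sing_P\colon \Top_P \to \sS_P$ preserves weak equivalences because it detects them, by \cref{theo:SingP_Characterize_WeakEquivalences}: if $f\colon X \to Y$ is a weak equivalence in $\Top_P$, then by two-out-of-three applied to the composite with the Quillen equivalence $\Ex_P \Sing_P$, together with the fact that $K \to \Ex_P K$ is a trivial cofibration (\cref{PropositionExPFibrantReplacement}), the induced map $\Sing_P(f)$ is itself a weak equivalence.

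For the global versions $\RealStrat{-}$ and $\SingStrat$, the plan is to reduce to the fiberwise case. A weak equivalence $f\colon (X \to P) \to (Y \to Q)$ in $\Strat$ is, by definition, a map whose underlying poset map $\bar{f}\colon P \to Q$ is an isomorphism and which induces weak equivalences on all homotopy links. Using the isomorphism $\bar f$ we can transport $Y$ to a stratified space over $P$ and view $f$ as a stratum preserving map over $P$ which is a weak equivalence in $\Top_P$ (and conversely). Since $\RealStrat{-}$ and $\SingStrat$ restrict to the fiberwise functors $\RealP{-}$ and $\Sing_P$ and are natural in the poset variable (they are defined by gluing along poset isomorphisms), this reduction lets us conclude from the fiberwise statements that $\RealStrat{-}$ and $\SingStrat$ preserve weak equivalences. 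The same reduction works for the simplicial side, using the fact that a weak equivalence in $s\Strat$ is an isomorphism of underlying posets together with a fiberwise weak equivalence after fibrant replacement.

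Once the preservation of weak equivalences is established for all four functors, the induced functors on $\Ho\sS_P$, $\Ho\Top_P$, $\Ho s\Strat$, and $\Ho\Strat$ are obtained by the universal property of the localizations at the respective classes of weak equivalences. The adjunctions $\RealP{-} \dashv \Sing_P$ and $\RealStrat{-} \dashv \SingStrat$ then descend to adjunctions between the homotopy categories because the unit and counit are natural transformations between functors preserving weak equivalences. The main point requiring care, and the only place where the previously developed machinery is genuinely used, is ensuring that $\Sing_P$ preserves weak equivalences without being a right Quillen functor; this is precisely why \cref{PropositionExPFibrantReplacement} and the Quillen equivalence $\RealP{\sd_P(-)} \dashv \Ex_P \Sing_P$ are needed in tandem.
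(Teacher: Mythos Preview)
Your proof is correct and follows essentially the same approach as the paper: both argue that each functor preserves weak equivalences (using \cref{Cor:Realization_Preserve_Weak_Equivalences} for the realizations and \cref{theo:SingP_Characterize_WeakEquivalences} for the singular functors, then reducing the global case to the fiberwise one via the definition of weak equivalences in $\Strat$ and $s\Strat$), and then invoke the universal property of localization. Your additional remark that the adjunctions themselves descend is correct but goes slightly beyond what the lemma requires.
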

\begin{proof}
By \cref{Cor:Realization_Preserve_Weak_Equivalences}, the functor $\RealP{-}\colon \sS_P\to \Top_P$ preserves weak equivalences for all posets $P$. By construction of the model structures on $\Strat$ and $s\Strat$, this implies that $\RealStrat{-}$ also preserves all weak-equivalences. Since the homotopy categories are nothing more than the localization of the categories at the classes of weak equivalences,  this implies that $\RealP{-}$ and $\RealStrat{-}$ both induce functors between homotopy categories. By \cref{theo:SingP_Characterize_WeakEquivalences}, $\Sing_P\colon \Top_P\to\sS_P$ also preserve all weak-equivalences, and so the same is true for $\SingStrat\colon \Strat\to s\Strat$, which means that they also induce well defined functors at the level of homotopy categories.
\end{proof} 

To conclude the proof of \cref{theo:Equivalence_Simplicial_Homotopy_Category}, we need the following lemma.

\begin{lemma}\label{lem:Unit_Counit_RealP_SingP_WE}
 Let $X$ be a space stratified over $P$, then the co-unit of the adjunction $\RealP{-} \dashv \Sing_P$,
 \begin{equation*}
     \RealP{\Sing_P(X)}\to X
 \end{equation*}
 is a weak-equivalence in $\Top_P$.
 Let $K$ be a simplicial set stratified over $P$, then the unit of the adjunction $\RealP{-} \dashv \Sing_P$,
  \begin{equation*}
K\to \Sing_P(\RealP{K})
 \end{equation*}
 is a weak-equivalence in $\sS_P$.
 
 This also holds for the unit and co-unit of the adjunction $\RealStrat{-}\dashv \SingStrat$.
\end{lemma}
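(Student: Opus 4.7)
The plan is to leverage the Quillen equivalence $\RealP{\sd_P(-)} \dashv \Ex_P \Sing_P$ of \cref{theo:QE_Through_sdP}, together with the fact that the functors $\sd_P$ and $\Ex_P$ differ from the identity only by a weak equivalence, in order to transfer the weak equivalence of the Quillen counit to the counit, and then to the unit, of $\RealP{-} \dashv \Sing_P$. The first intermediate step is to show that the counit $\epsilon^{\sd_P \dashv \Ex_P}_K \colon \sd_P \Ex_P K \to K$ is a weak equivalence in $\sS_P$ for every $K$. Here the key identification is that by construction $\iota_K$ is the $\sd_P \dashv \Ex_P$-adjoint of $\lv_P$, so the triangle identities give the factorization $\lv_P = \epsilon^{\sd_P \dashv \Ex_P}_K \circ \sd_P(\iota_K)$. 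Since $\lv_P$ is a weak equivalence by \cref{prop:LvP_WeakEquivalence}, $\iota_K$ is a trivial cofibration by \cref{PropositionExPFibrantReplacement}, and $\sd_P$ preserves weak equivalences (again by \cref{prop:LvP_WeakEquivalence} applied to the naturality square of $\lv_P$), two-out-of-three yields that $\epsilon^{\sd_P \dashv \Ex_P}_K$ is a weak equivalence.

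Next, I would deduce that the counit $\epsilon_X \colon \RealP{\Sing_P X} \to X$ is a weak equivalence. Since every object of $\Top_P$ is fibrant, the Quillen equivalence \cref{theo:QE_Through_sdP} implies that its counit at $X$, namely $\RealP{\sd_P \Ex_P \Sing_P X} \to X$, is a weak equivalence. This composite counit decomposes as
\begin{equation*}
\RealP{\sd_P \Ex_P \Sing_P X} \xrightarrow{\RealP{\epsilon^{\sd_P \dashv \Ex_P}_{\Sing_P X}}} \RealP{\Sing_P X} \xrightarrow{\epsilon_X} X.
\end{equation*}
By the previous step together with \cref{Cor:Realization_Preserve_Weak_Equivalences}, the first map is a weak equivalence, and hence by two-out-of-three so is $\epsilon_X$.

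For the unit $\eta_K \colon K \to \Sing_P \RealP K$, I would use the triangle identity $\epsilon_{\RealP K} \circ \RealP{\eta_K} = \Id_{\RealP K}$. Since $\epsilon_{\RealP K}$ is a weak equivalence in $\Top_P$ by the previous step, two-out-of-three forces $\RealP{\eta_K}$ to be a weak equivalence, and then \cref{Cor:Realization_Preserve_Weak_Equivalences} implies that $\eta_K$ itself is a weak equivalence in $\sS_P$. The global statement for $\RealStrat{-} \dashv \SingStrat$ then follows immediately: by construction, both unit and counit at any object lie over an identity map of posets, and on the underlying stratum-preserving maps they reduce to the units and counits of $\RealP{-} \dashv \Sing_P$ just treated.

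The only nontrivial input is the identification $\lv_P = \epsilon^{\sd_P \dashv \Ex_P} \circ \sd_P(\iota)$ in the first step, but this is immediate from the adjointness of $\lv_P$ and $\iota$ that is built into the paper's construction of $\iota$. Once this formula is in place, the remainder of the argument is a short chain of two-out-of-three applications relying only on results already established in the paper, so I do not expect any further obstacles.
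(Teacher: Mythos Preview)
Your proposal is correct and follows essentially the same route as the paper: first use the factorization $\lv_P = \epsilon^{\sd_P \dashv \Ex_P}_K \circ \sd_P(\iota_K)$ with two-out-of-three to see that $\epsilon^{\sd_P \dashv \Ex_P}_K$ is a weak equivalence, then factor the Quillen counit of $\RealP{\sd_P(-)} \dashv \Ex_P\Sing_P$ through $\RealP{\Sing_P X} \to X$ and apply two-out-of-three again, and finally use the triangle identity $\epsilon_{\RealP K} \circ \RealP{\eta_K} = \Id$ together with \cref{Cor:Realization_Preserve_Weak_Equivalences} for the unit. The only cosmetic difference is that the paper names the map $K \to \Ex_P(K)$ as $\beta_K$ in this proof rather than $\iota_K$.
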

\begin{proof}
Consider first the adjunction $\sd_P\dashv \Ex_P$. For any simplicial set stratified over $P$, $K$, we have the commutative diagram
\begin{equation*}
    \begin{tikzcd}
    \sd_P(K)
    \arrow{rr}{\lv_P}
    \arrow[swap]{dr}{\sd_P(\iota_K)}
    &&K\spacecomma
    \\
    &\sd_P\Ex_P(K)
    \arrow[swap]{ur}{\epsilon_K}
    \end{tikzcd}
\end{equation*}
where the map $\epsilon_K$ is the co-unit. Now, by \cref{prop:LvP_WeakEquivalence}, $\lv_P$ is a weak-equivalence, and $\sd_P$ preserves weak-equivalences. By \cref{PropositionExPFibrantReplacement}, $\iota_K$ is a weak equivalence, which means that $\sd_P(\iota_K)$ is a weak-equivalence. By two out of three, this implies that $\epsilon_K\colon \sd_P\Ex_P(K)\to K$ is a weak-equivalence. 

Now let $X$ be a space stratified over $P$, and consider the commutative diagram:
\begin{equation*}
\begin{tikzcd}
\RealP{\sd_P\Ex_P\Sing_P (X)}
\arrow{rr}
\arrow[swap]{dr}{\RealP{\epsilon_{\Sing_P(X)}}}
&& X
\\
&\RealP{\Sing_P(X)}
\arrow{ur}
\end{tikzcd}    
\end{equation*}
By \cref{Cor:Realization_Preserve_Weak_Equivalences}, the functor $\RealP{-}$ preserves weak-equivalences, which means that the map $\RealP{\epsilon_{\Sing_P(X)}}$ is a weak-equivalence. Furthermore, the co-unit of the adjunction $\RealP{\sd_P(-)}\dashv \Ex_P\Sing_P$, is also a weak-equivalence, since the adjunction is a Quillen-equivalence \cite[Theorem 1.15]{douteau2021stratified}, and all objects in $\Top_P$ (resp. $\sS_P$) are fibrant (resp. cofibrant). This means that by two out of three the co-unit $\RealP{\Sing_P(X)}\to X$ is a weak-equivalence. 

Now, let $K$ be a simplicial set stratified over $P$, and consider the following composition:
\begin{equation*}
    \RealP{K}\to\RealP{\Sing_P(\RealP{K})}\to\RealP{K},
\end{equation*}
where the first map is the realization of the unit of the adjunction $\RealP{-}\dashv \Sing_P$, and the second map is the co-unit evaluated at $\RealP{K}$. The composition gives the identity, and we have proved that the second map is a weak-equivalence in $\Top_P$, which means that the map $\RealP{K}\to\RealP{\Sing_P(\RealP{K})}$ is a weak-equivalence in $\Top_P$, by two out of three. But since, by \cref{Cor:Realization_Preserve_Weak_Equivalences}, the realization functor characterizes weak-equivalences, this means that the unit $K\to \Sing_P(\RealP{K})$ is a weak-equivalence in $\sS_P$.

For the case of $\RealStrat{-}\dashv\SingStrat$, note that if $X$ is a space stratified over $P$, then $\RealStrat{\SingStrat(X)}=\RealP{\Sing_P(X)}$, by definition, which immediatly gives the proof.
\end{proof}	
	
\begin{proof}[Proof of \cref{theo:Equivalence_Simplicial_Homotopy_Category}]

Consider the natural transformations $\RealP{\Sing_P(-)}\to \Id_{\Top_P}$ and $\Id_{\sS_P}\to \Sing_P(\RealP{-})$. By \cref{lem:Unit_Counit_RealP_SingP_WE} they take value in weak-equivalences. Since the functors $\RealP{-}$ and $\Sing_P$ pass to the homotopy categories, so do the natural transformations $\RealP{\Sing_P(-)}\to \Id_{\Ho\Top_P}$ and $\Id_{\Ho\sS_P}\to \Sing_P(\RealP{-})$. Those now take value in isomorphisms, meaning that $(\RealP{-},\Sing_P)$ gives an equivalence between the homotopy categories. The same argument gives that $(\RealStrat{-},\SingStrat)$ induces an equivalence between the homotopy categories.
\end{proof}	
\begin{remark}
\label{rem:Equiv_Simplicial_Localization}
    \cref{theo:Equivalence_Simplicial_Homotopy_Category} is only stated in terms of homotopy categories because that is all that is needed for the applications of \cref{section:Embedding_classical_stratified_homotopy_theory,Section:Simp_Approx_Standalone}, but a much stronger version holds. Consider the simplicial localization defined by Dwyer and Kan in \cite{DwyerKanCalculating}, in terms of hammocks. \cref{lem:Real_Sing_pass_homotopy_category,lem:Unit_Counit_RealP_SingP_WE} give precisely the hypothesis needed to apply \cite[Corollary 3.6]{DwyerKanCalculating}. In particular, the functors $(\RealStrat{-},\SingStrat)$ and $(\RealP{-},\Sing_P)$ induce Dwyer-Kan equivalences between the simplicial localizations:     \begin{equation*}
        L^Hs\Strat\leftrightarrow L^H\Strat \ \text{ and } \ L^H\sS_P\leftrightarrow L^H\Top_P.
    \end{equation*}
 With the stronger result, one can say that the pairs of adjoint functors, $(\RealStrat{-},\SingStrat)$ and $(\RealP{-},\Sing_P)$ induce equivalences between the homotopy \textbf{theories} of stratified spaces and stratified simplicial sets. In particular, they induce equivalences between the underlying $\infty$-categories, which can be explicitly described as the Dwyer-Kan localizations.
\end{remark}

\subsection{Embedding the classical stratified homotopy category}
\label{section:Embedding_classical_stratified_homotopy_theory}
It follows from \cref{prop:Nonexistence_Appendix} that there exists no model structure on $\Top_P$ which is transported from $\sS_P$, along $\Sing_P\colon\Top_P \to \sS_P$. Nevertheless, it turns out that stratified spaces with fibrant $\Sing_P$ and realizations of stratified simplicial sets behave respectively much like fibrant and cofibrant objects of a model category.
\begin{recollection}\label{rec:ex_of_con}
Recall that particularly nice stratified spaces, such as pseudo manifolds or more generally homotopically and conically stratified spaces, have the right lifting property with respect to realizations of admissible horn inclusions (see \cref{theo:Conical_Fibrant}, and \cite[Theorem A.6.4]{HigherAlgebra}\cite[Proposition 8.1.2.6]{nand2019simplicial}). In other words, such spaces map to fibrant objects under $\Sing_P$.  
\end{recollection}

Recall that for $X,Y\in\Top_P$, $[X,Y]_P$ stands for the set of stratified homotopy classes of stratified maps between $X$ and $Y$ (\cref{def:strat_homotopies_spaces}). similarly, if $K,L\in\sS_P$, $[K,L]_P$ stands for the set of stratified homotopy classes of stratified simplicial maps between $K$ and $L$.
\begin{lemma}\label{lem:fibrants_are_like_simpl_fibrants}
Let $X\in\Top_P$ be a stratified space such that $\Sing_P(X) \in \sS_P$ is fibrant. Then, for any weak equivalence of stratified simplicial sets $f\colon K \to L$ in $\sS_P$, the induced map 
\[
[\RealP{L}, X]_P \to [\RealP{K}, X]_P
\]
is a bijection.
\end{lemma}
\begin{proof}
We have a commutative diagram 
\[
\begin{tikzcd}
  {[\RealP{L}, X]_P} \arrow[d, "\cong"] \arrow[r] & {[\RealP{K}, X]_P \arrow[d, "\cong"]}      \\
{[L, \Sing_P(X)]_P} \arrow[r, "f^*"] & {[K, \Sing_P(X)]_P} \spacecomma
\end{tikzcd}
\]
where the vertical maps are bijection, thanks to the fact that the adjunction $(\RealP{-},\Sing_P)$ is simplicial \cite[Proposition 4.9]{douSimp}.
Since $f\colon K \to L$ is a weak equivalence and $\Sing_P(X)$ is fibrant the lower horizontal is a bijection. Thus, by commutativity of the diagram, so is the upper horizontal, as was to be shown.
 \end{proof}
As an immediate corollary of this lemma we obtain:
\begin{theorem}\label{Theo:Fibrant_Cofibrant}
Let $K \in \sS_{P}$ and ${X} \in \Top_P$ such that $\Sing_P(X)$ is a fibrant object of $\sS_P$. Then, the natural map
\[ [\RealP{{K}}, {X}]_P \to \Ho \Top_P(\RealP{{K}}, {X}) \]
is a bijection. 
\end{theorem}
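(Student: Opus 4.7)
The proof plan is to decompose the natural map $[\RealP{K}, X]_P \to \Ho \Top_P(\RealP{K}, X)$ as a composition of three bijections. First, by \cref{theo:Equivalence_Simplicial_Homotopy_Category}, the adjunction $\RealP{-} \dashv \Sing_P$ descends to an equivalence of homotopy categories, so the adjunction hom-bijection $\Top_P(\RealP{K}, X) \cong \sS_P(K, \Sing_P(X))$ yields a natural bijection
\[ \Ho \Top_P(\RealP{K}, X) \cong \Ho \sS_P(K, \Sing_P(X)).\]
Second, the adjunction $\RealP{-} \dashv \Sing_P$ is simplicial and preserves stratified homotopies (as recorded in \cref{subsec:stratified_simplicial_sets}). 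Under this adjunction, a stratified homotopy $H \colon \RealP{K} \times \Real{\Delta^1} \to X$ corresponds to a stratum preserving map $K \times \Delta^1 \to \Sing_P(X)$, and conversely, giving a natural bijection
\[ [\RealP{K}, X]_P \cong [K, \Sing_P(X)]_P.\]

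The key remaining step is to show that the canonical map $[K, \Sing_P(X)]_P \to \Ho \sS_P(K, \Sing_P(X))$ is a bijection. By hypothesis $\Sing_P(X)$ is fibrant, and every object of $\sS_P$ is cofibrant since the cofibrations are the monomorphisms. The model structure on $\sS_P$ is constructed via Cisinski's theory using the cylinder functor $- \times \Delta^1$, so elementary stratified homotopies are precisely the left homotopies for this cylinder. Because $K$ is cofibrant and $\Sing_P(X)$ is fibrant, left homotopy is already an equivalence relation on $\sS_P(K, \Sing_P(X))$, making the transitive closure in the definition of $[-,-]_P$ redundant; by standard model-categorical arguments, the quotient of $\sS_P(K, \Sing_P(X))$ by this relation is precisely $\Ho \sS_P(K, \Sing_P(X))$.

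Composing the three bijections and tracing through the constructions shows that the resulting map agrees with the natural map in the statement. The principal technical point lies in the last identification, but this amounts to the standard fact that for bifibrant objects in a Cisinski model category, the hom-set in the homotopy category is computed as the quotient of hom-sets by cylinder homotopy. All other ingredients are direct consequences of results established earlier in the paper, notably the equivalence of homotopy categories and the simplicial, homotopy-preserving nature of the $\RealP{-} \dashv \Sing_P$ adjunction.
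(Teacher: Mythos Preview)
Your proposal is correct and follows essentially the same approach as the paper. The paper organizes the argument via a commutative square with one extra intermediate step on the right (first applying $\Sing_P$ to get $\Ho\sS_P(\Sing_P(\RealP{K}),\Sing_P(X))$, then precomposing with the unit $K \to \Sing_P(\RealP{K})$, which is a weak equivalence), whereas you collapse these two steps into a single adjunction bijection at the level of homotopy categories; since both functors preserve weak equivalences and the unit and counit are weak equivalences, the adjunction indeed descends, so this shortcut is justified.
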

\begin{proof}
Consider the realization of the last vertex map $\RealP{\sd_P(K)} \xrightarrow{\RealP{\lv_P}} \RealP{K}$. Since $\lv_P$ is a weak equivalence (\cref{prop:LvP_WeakEquivalence}) by \cref{Cor:Realization_Preserve_Weak_Equivalences} so is its realization. Furthermore, it follows from \cite[Theorem 1.15]{douteau2021stratified}, that $\RealP{\sd_P(K)}$ is a cofibrant object in $\Top_P$. Thus, it follows that $\RealP{\lv_P}\colon \RealP{\sd_P(K)} \to \RealP{K}$ defines a cofibrant replacement of $\RealP{K}$. We obtain a commutative diagram
\[
\begin{tikzcd}
 {[\RealP{{K}}, {X}]_P} \arrow[r] \arrow[d, "\cong"']& \Ho \Top_P(\RealP{{K}}, {X}) \arrow[d, "\cong"] \\
  {[\RealP{{\sd_P(K)}}, {X}]_P} \arrow[r, "\cong"]    &     \Ho \Top_P(\RealP{{\sd_P(K)}}, {X}]_P)
\end{tikzcd}
\]
Since, $\RealP{\lv_P}$ is a weak equivalence, the right vertical map is a bijection. 
By \cref{lem:fibrants_are_like_simpl_fibrants}, the same holds for the left vertical map. Since $\RealP{\sd_P(K)}$ is cofibrant, and all object of $\Top_P$ are fibrant, the bottom horizontal map is also a bijection. Hence, by commutativity of the diagram, the upper horizontal is a bijection, as was to be shown. 
\end{proof}
By \cref{rec:ex_of_con} we obtain the following immediate corollary of \cref{Theo:Fibrant_Cofibrant}.
\begin{corollary}\label{Cor:Conic_embed}
   Let $\mathrm{Con}_P\subset \Top_P$ the full subcategory of conically $P$-stratified which are triangulable (stratum preserving homeomorphic to the realization of a stratified simplicial set). Denote by $\mathrm{Con}_P/{\simeq_P}$ the category obtained by identifying stratum preserving homotopic maps. Then, the inclusion  
        \[ \mathrm{Con}_P \hookrightarrow \Top_P\]
   induces a fully faithful embedding
        \[ \mathrm{Con}_P/{\simeq_P} \hookrightarrow \Ho \Top_P. \]
\end{corollary}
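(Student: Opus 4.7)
The plan is to deduce the statement almost immediately from \cref{Theo:Fibrant_Cofibrant}, once we verify that the two defining features of triangulable conically stratified spaces match precisely the cofibrancy- and fibrancy-type hypotheses needed there.

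First I would check that the functor $\mathrm{Con}_P \to \Ho\Top_P$ factors through the quotient $\mathrm{Con}_P/{\simeq_P}$, i.e.\ that stratum preserving homotopic maps become equal in $\Ho\Top_P$. For any $X \in \Top_P$ the projection $X \times \Delta^1 \to X$ is a stratum preserving homotopy equivalence (with inverse $X \xrightarrow{\mathrm{id} \times \{0\}} X \times \Delta^1$), hence a weak equivalence by the definition of the model structure on $\Top_P$ in \cref{theo:CMF_TopP}; the two inclusions $i_0, i_1: X \to X \times \Delta^1$ are both sections of this map and therefore become equal in the homotopy category. Thus any stratified homotopy $H: X \times \Delta^1 \to Y$ witnesses an equality $[f] = [g]$ in $\Ho\Top_P(X,Y)$, and the functor $\mathrm{Con}_P/{\simeq_P} \to \Ho\Top_P$ is well defined.

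Next I would establish full faithfulness. Let $X, Y \in \mathrm{Con}_P$. Since $X$ is triangulable, we fix a stratum preserving homeomorphism $X \cong \RealP{K}$ for some $K \in \sS_P$; since $Y$ is conically stratified, \cref{theo:Conical_Fibrant} together with the subsequent remark shows that $\Sing_P(Y)$ is fibrant in $\sS_P$. These are exactly the hypotheses of \cref{Theo:Fibrant_Cofibrant}, which therefore yields that the natural map
\begin{equation*}
    [X,Y]_P \;=\; [\RealP{K}, Y]_P \;\longrightarrow\; \Ho\Top_P(\RealP{K}, Y) \;=\; \Ho\Top_P(X,Y)
\end{equation*}
is a bijection. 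This bijection is precisely the map induced by the functor $\mathrm{Con}_P/{\simeq_P} \hookrightarrow \Ho\Top_P$ on hom-sets, so the functor is fully faithful.

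The only genuine content beyond bookkeeping is the appeal to \cref{Theo:Fibrant_Cofibrant}, whose proof in turn relies on the equivalence of homotopy categories from \cref{theo:Equivalence_Simplicial_Homotopy_Category}; at the level of this corollary there is no real obstacle. The analogous statement for $\Strat$ (mentioned in \cref{rem:Embedding_Global}) would proceed identically, using the global adjunction $\RealStrat{-} \dashv \SingStrat$ in place of the fiberwise one.
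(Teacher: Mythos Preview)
Your proof is correct and matches the paper's approach exactly: the paper states \cref{Cor:Conic_embed} as an immediate corollary of \cref{Theo:Fibrant_Cofibrant} without giving a separate proof, and your argument spells out precisely the intended deduction (triangulability gives the $\RealP{K}$ form, conical stratification gives fibrancy of $\Sing_P$ via \cref{theo:Conical_Fibrant}). Your explicit check that stratified homotopies become equalities in $\Ho\Top_P$ is a useful bit of bookkeeping that the paper leaves implicit.
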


\begin{remark}
\label{rem:Embedding_Global}
    \cref{Theo:Fibrant_Cofibrant,Cor:Conic_embed} also hold over varying posets, i.e. in $\Strat$. Indeed, it follows from the equivalence between the homotopy categories of $\Strat$ and $\sStrat$ which is \cref{theo:Equivalence_Simplicial_Homotopy_Category}, and the fibrancy property of conically stratified spaces, which also holds in $s\Strat$ since fibrant objects of $s\Strat$ are characterized fiberwise.
\end{remark}
\begin{remark}\label{rem:simplicial_perspective}
    Note, that \cref{lem:fibrants_are_like_simpl_fibrants}, \cref{Theo:Fibrant_Cofibrant} and \cref{Cor:Conic_embed} admit a strengthening in terms of simplicial categories. More precisely, if for $X,Y\in \Top_P$ and $K,L\in\sS_P$ one replaces the set of homotopy classes of maps $[X,Y]_P$ and $[K,L]_P$, by the simplicial mapping spaces, and the sets $\Ho\Top_P(X,Y)$ and $\Ho\sS_P(K,L)$ by derived mapping spaces, the statements remain true with analogous proofs. In practice, this means that when working with spaces satisfying the hypothesis of \cref{Theo:Fibrant_Cofibrant} the classical mapping spaces has the correct homotopy type and there is no need to derive. This applies to triangulable conically stratified spaces as detailed in \cref{rem:infty_for_con}.
    \end{remark}

    \begin{remark}\label{rem:infty_for_con}
    The simplicial perspective described in \cref{rem:simplicial_perspective} can be used to strengthen \cref{Cor:Conic_embed} to a statement about infinity categories. Indeed, $\mathrm{Con}_P$, as a subcategory of $\Top_P$, inherits the structure of a simplicial category, while $\Top_P$ itself is a simplicial model category. This means that the inclusion of the full simplicial sub-category $\Con_P\hookrightarrow \Top_P$ descends to a functor
    \begin{equation*}
        \Con_P\hookrightarrow L^H_{\text{simp}}(\Top_P)
    \end{equation*}
    Where $L^H_{\text{simp}}(\Top_P)$ is the diagonal hammock localization of a simplicial model category, described in \cite[Prop. 4.8]{FunctionComplexesDwyerKan}. \cref{Cor:Conic_embed} then generalizes to the statement that the above map is a fully faithful embedding of simplicial categories. Combining this with the fact that $L^H_{\text{simp}}(\Top_P)\simeq L^H(\Top_P)$ (\cite[Prop. 4.8]{FunctionComplexesDwyerKan}), this means that the "naive" infinity category of conically $P$-stratified and triangulable stratified spaces embeds fully faithfully in that of $P$-stratified spaces. The analogous statement for $\Con$ and $\Strat$ also holds by the same argument. 
    \end{remark}
\subsection{A simplicial approximation theorem}
\label{Section:Simp_Approx_Standalone}
    Exposing $\Ex^{\infty}_P$  as a fibrant replacement functor in $\sS_P$ allows one to study the homotopy category of $\sS_P$ through actual maps in $\sS_P$ from some subdivision of the domain. Using \cref{theo:Equivalence_Simplicial_Homotopy_Category}, one can then transport those results to $\Top_P$ and its homotopy category to obtain stratified versions of the classical simplicial approximation theorems.
	\begin{proposition}
    \label{theo:sdP_Approximation}
	    	Let $ K \in \sS_P$ be finite and $ L \in \sS_P$. 
	    	Then, for any morphism
	    	$\phi\colon K \to {L}$  in the homotopy category $\Ho{\sS}_P$ 
	    	there exists an $n \in \mathbb N$ and a morphism $f\colon \sd^n_P(K)\to L$ such that the diagram
	    	\begin{center}
	    	\begin{tikzcd}
	    	&\arrow[ld, "\lv_P^n"'] \sd^n_P{{K}} \arrow[rd, "f"]& \\
	    	{K}  \arrow[rr, "\phi"]& & {L}
	    	\end{tikzcd}
	    	\end{center}
	    	commutes in $\Ho{\sS_P}$.
	\end{proposition}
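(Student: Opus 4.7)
The plan is to combine three ingredients: Corollary~\ref{cor:Exi_P_fibrant_replacement}, which exhibits $\Exi_P$ as a fibrant replacement functor in $\sS_P$; the compactness of the finite simplicial set $K$ with respect to the filtered colimit $\Exi_P(L)=\colim_n\Ex_P^n(L)$; and the iterated adjunction $\sd_P^n\dashv\Ex_P^n$, together with the identification of the natural inclusion $\iota$ as the mate of $\lv_P$.

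First, since $K$ is cofibrant (every object of $\sS_P$ is) and $\Exi_P(L)$ is fibrant, the natural map $[K,\Exi_P(L)]_P\to\Ho\sS_P(K,\Exi_P(L))$ is a bijection. Composing with the isomorphism $\Ho\sS_P(K,L)\cong\Ho\sS_P(K,\Exi_P(L))$ induced by the weak equivalence $\iota^\infty_L\colon L\hookrightarrow\Exi_P(L)$, one picks a strict representative $\tilde\phi\colon K\to\Exi_P(L)$ of $\phi$. Since $K$ has only finitely many non-degenerate simplices and the colimit presenting $\Exi_P(L)$ is filtered along monomorphisms, $\tilde\phi$ factors as $j_n\circ\tilde\phi_n$ for some $n$, where $\tilde\phi_n\colon K\to\Ex_P^n(L)$ and $j_n\colon\Ex_P^n(L)\hookrightarrow\Exi_P(L)$ is the canonical inclusion. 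Taking the adjoint of $\tilde\phi_n$ under $\sd_P^n\dashv\Ex_P^n$ produces the candidate stratified map $f\colon\sd_P^n(K)\to L$.

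The main obstacle is verifying commutativity of the triangle in $\Ho\sS_P$, i.e.\ $[f]=[\phi]\circ[\lv_P^n]$. Both $j_n$ and $\iota_L^n$ are weak equivalences (as iterated SAEs, by Proposition~\ref{PropositionExPFibrantReplacement}), so from $[\iota^\infty_L]\circ[\phi]=[\tilde\phi]=[j_n]\circ[\tilde\phi_n]$ one infers $[\iota_L^n]\circ[\phi]=[\tilde\phi_n]$, and the desired equality reduces to $[\iota_L^n\circ f]=[\tilde\phi_n\circ\lv_P^n]$ in $\Ho\sS_P(\sd_P^n K,\Ex_P^n L)$. Taking adjoints under $\sd_P^n\dashv\Ex_P^n$, using the naturality of $\lv_P$ and the fact that $\iota_L^n$ is adjoint to $\lv_L^n$, these two sides correspond respectively to $f\circ\lv^n_{\sd_P^n K}$ and $f\circ\sd_P^n(\lv^n_K)$ as maps $\sd_P^{2n} K\to L$. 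By naturality of $\lv_P^n$ applied to $\lv^n_K\colon\sd_P^n K\to K$, we have $\lv^n_K\circ\lv^n_{\sd_P^n K}=\lv^n_K\circ\sd_P^n(\lv^n_K)$ as strict maps $\sd_P^{2n}K\to K$, and since $\lv^n_K$ is invertible in $\Ho\sS_P$ (Proposition~\ref{prop:LvP_WeakEquivalence}), this forces $[\lv^n_{\sd_P^n K}]=[\sd_P^n(\lv^n_K)]$, hence $[f\circ\lv^n_{\sd_P^n K}]=[f\circ\sd_P^n(\lv^n_K)]$.

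The remaining subtlety is to transport this equality of adjoints back to an equality in $\Ho\sS_P(\sd_P^n K,\Ex_P^n L)$. This follows from the fact that both $\sd_P$ and $\Ex_P$ preserve weak equivalences---for $\sd_P$ this is Proposition~\ref{prop:LvP_WeakEquivalence}, while for $\Ex_P$ it follows by a two-out-of-three argument applied to the naturality square of the trivial cofibration $\iota$ from Proposition~\ref{PropositionExPFibrantReplacement}---so that the adjunction $\sd_P^n\dashv\Ex_P^n$ descends to an adjunction of homotopy categories, and equality of adjoints in $\Ho$ translates back to equality of the original maps in $\Ho$.
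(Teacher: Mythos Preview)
Your proof is correct and follows essentially the same route as the paper: use $\Exi_P$ as a fibrant replacement to strictify $\phi$, factor through some $\Ex_P^n(L)$ by finiteness of $K$, take the adjoint under $\sd_P^n\dashv\Ex_P^n$, and verify commutativity via the identity $\varepsilon\circ\sd_P^n(\iota^n)=\lv_P^n$. The paper packages this last step into \cref{lem:Rel_Ex_Sd} and \cref{lem:tricky_Ex_diag}, whereas you compute the two adjoints inline and argue explicitly that the adjunction descends to $\Ho\sS_P$ (because both $\sd_P$ and $\Ex_P$ preserve weak equivalences); this makes the transport step more transparent but is otherwise the same argument.
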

	\begin{remark}\label{rem:relative_version_sdP_approximation}
	    \cref{theo:sdP_Approximation} also holds as a relative version. To be more precise this involves the following replacements:
	    Replace ${K}$ by a pair ${A} \hookrightarrow {K}$ and ${L}$ by some stratum preserving simplicial map $g: {A} \to {L}$. The role of $\sS_P$ is then taken by the under category $\sS_P^{A}$ with the induced model structure from \cite[Theorem 7.6.5]{hirschhornModel}. However, one needs to be mindful of the fact that, for the relative version, $\sd^n_P$ maps into $\sS_P^{\sd_P^n(A)}$. In particular, the final commutativity condition holds in the homotopy category of the latter. Aside from this, the proof is formally identical. 
	\end{remark}
	As an immediate corollary of this result (its relative version), and \cref{theo:Equivalence_Simplicial_Homotopy_Category} we obtain the following corollary, which we prove first. 
    \begin{proposition}
    \label{cor:cof_approx}
    Suppose we are given a commutative diagram in $\Top_P$:
    \begin{equation*}
	    	    \begin{tikzcd}
	    	    \RealP{A}
	    	    \arrow{r}{\RealP{g}}
	    	    \arrow[hookrightarrow,swap]{d}{\RealP{i}}
	    	    &\RealP{L}
	    	    \\
	    	    \RealP{K}
	    	    \arrow[swap]{ur}{\phi} \spacecomma
	    	    \end{tikzcd}
	    	\end{equation*}
	    	where $A,K$ and $L$ are stratified simplicial sets, with $A$ and $K$ finite, $i\colon A\hookrightarrow K$ is an inclusion, $g\colon A\to L$ some arbitrary map in $\sS_P$ and $\phi\colon \RealP{K}\to\RealP{L}$ some map in $\Top_P$.
Then, there exists an $n \in \mathbb N$ and a map $\hat f\colon \sd^n_P(K) \to  L\in\sS_P$ such that:
	     \begin{itemize}
	         \item 	     the following diagram commutes in $\sS_P$\begin{equation*}
	         \begin{tikzcd}
	         \sd_P^n(A)
	         \arrow[swap]{d}{\lv^n_P}
	         \arrow{dr}{\hat{f}_{|\sd_P^n(A)}}
	         \\
	          A
	         \arrow[swap]{r}{g}
	         &L
	         \end{tikzcd}
	     \end{equation*}
	     \item the following diagram commutes in $ \Ho\Top_P^{\RealP{\sd^n_P(A)}}$
	     \begin{equation}\label{Eq:DiagramSimplicialApprox}
	         \begin{tikzcd}
	         \RealP{\sd_P^n(K)}
	         \arrow[swap]{d}{\RealP{\lv^n_P}}
	         \arrow{dr}{\RealP{\hat{f}}}
	         \\
	         \RealP{K}
	         \arrow[swap]{r}{\phi}
	         &\RealP{L}
	         \end{tikzcd}
	     \end{equation}
	     \end{itemize}

	    In particular, for $n \geq 1$, Diagram (\ref{Eq:DiagramSimplicialApprox}) can even be assumed to commute up to stratified homotopy relative to $\RealP{\sd_P^n(A)}$, since then $\RealP{\sd_P^n(i)}$ is a cofibrant object in $\Top^{\RealP{\sd_P^n(A)}}_P$.

	\end{proposition}
	\begin{proof}
	    Note that, by subdividing the source once, we may without loss of generality assume that $i$ is such that its realization is cofibrant in $\Top_P$. As every object is fibrant, this means that morphisms in the homotopy category $\Ho\Top_P^{|A|}$ from $\RealP{i}$ to $\RealP{g}$ agree with homotopy classes of maps in $\Top_P^{\RealP{A}}$ (i.e. homotopy classes rel $\RealP{A}$).
	   Using this and (a relative version of) \cref{theo:Equivalence_Simplicial_Homotopy_Category} we obtain
	    \[ 
	    [\RealP{i}, \RealP{g}]^{\RealP{A}}_P = \Ho{\Top_P^{\RealP{A}}}(\RealP{i}, \RealP{g}) \cong \Ho{\sS_P}^{A}(i,g).
	    \]
	    where the left hand side denotes relative stratum preserving homotopy classes and the right hand side bijection is given by realization.
	    Now, apply \cref{theo:sdP_Approximation}, to obtain the result.
	\end{proof}

    We now move on to the proof of \cref{theo:sdP_Approximation}. We are going to prove the nonrelative version. The relative proof is structurally almost identical. First, we need two equations involving subdivision and $\Ex_P$ which are easily verified. Recall, that we denote $\iota^n: {L} \hookrightarrow {Ex}^n_P(L)$ the natural inclusion induces by pulling back a simplex along $\lv_P^n$. 

    \begin{lemma}\label{lem:Rel_Ex_Sd}
    Denote by $\eta$ and $\varepsilon$ the unit and counit of $\sd^n_P \dashv \Ex^n_P$ respectively. Then, the equations
    \begin{align*}
        \varepsilon \circ \sd_P^n(\iota^n) &= \lv_P^n \\
        \Ex^n_P(\lv_P^n) \circ \eta &= \iota^n
    \end{align*}
    hold.
    \end{lemma}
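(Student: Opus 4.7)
The plan is to reduce both equations to the defining relationship between $\iota^n$ and $\lv_P^n$ together with the standard triangle identities of the adjunction $\sd_P^n \dashv \Ex_P^n$. Recall that $\iota\colon 1_{\sS_P}\to \Ex_P$ was defined precisely as the map corresponding to $\lv_P\colon \sd_P\to 1_{\sS_P}$ under the adjunction $\sd_P \dashv \Ex_P$; iterating, $\iota^n_L\colon L\to \Ex_P^n(L)$ is the transpose of $\lv_P^n\colon \sd_P^n(L)\to L$ under $\sd_P^n \dashv \Ex_P^n$. Explicitly, this means
\begin{equation*}
    \iota^n_L = \Ex_P^n(\lv_P^n) \circ \eta_L,
\end{equation*}
which is exactly the second equation.

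For the first equation, I would plug the above description of $\iota^n_L$ into the composition and then use naturality of the counit together with one of the two triangle identities. Concretely:
\begin{equation*}
    \varepsilon_L \circ \sd_P^n(\iota^n_L)
    = \varepsilon_L \circ \sd_P^n\bigl(\Ex_P^n(\lv_P^n) \circ \eta_L\bigr)
    = \varepsilon_L \circ \sd_P^n \Ex_P^n(\lv_P^n) \circ \sd_P^n(\eta_L).
\end{equation*}
Naturality of $\varepsilon$ applied to the morphism $\lv_P^n\colon \sd_P^n(L)\to L$ gives
\begin{equation*}
    \varepsilon_L \circ \sd_P^n \Ex_P^n(\lv_P^n) = \lv_P^n \circ \varepsilon_{\sd_P^n(L)},
\end{equation*}
and the triangle identity $\varepsilon_{\sd_P^n(L)} \circ \sd_P^n(\eta_L) = \id_{\sd_P^n(L)}$ then collapses the remaining composition to $\lv_P^n$, yielding the desired equality.

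There is no real obstacle here, since everything follows by unfolding the definition of $\iota^n$ as an adjoint transpose and applying formal properties of adjunctions. The only thing worth checking carefully is that the iterated $\iota^n$ (built, in the excerpt, as the colimit transition map $\iota^n\colon K \hookrightarrow \Ex_P^n(K)$ obtained by iterating $\iota$) indeed agrees with the transpose of $\lv_P^n$ under $\sd_P^n \dashv \Ex_P^n$. This is a straightforward induction: the case $n=1$ is the definition, and for the inductive step one uses that $\lv_P^{n+1} = \lv_P \circ \sd_P(\lv_P^n)$ together with the standard compatibility between composition of adjunctions and composition of transposes, so that $\iota^{n+1} = \Ex_P(\iota^n) \circ \iota$ matches $\Ex_P^{n+1}(\lv_P^{n+1}) \circ \eta^{n+1}$ on the nose.
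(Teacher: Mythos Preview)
Your proposal is correct and is essentially the natural argument: the paper does not actually give a proof of this lemma, merely stating that the two equations ``are easily verified''. Your unpacking via the triangle identities and the definition of $\iota^n$ as the adjoint transpose of $\lv_P^n$ is exactly what one would do; note that the sentence immediately preceding the lemma already describes $\iota^n$ as ``the natural inclusion induced by pulling back a simplex along $\lv_P^n$'', so your inductive check that the iterated $\iota$ agrees with the transpose of $\lv_P^n$ is in fact implicit in the paper's conventions.
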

    As an immediate corollary we obtain:
    \begin{corollary}\label{lem:tricky_Ex_diag}
   Let ${K}, {L} \in \sS_P$. Then, the following diagram of bijections commutes:
    \begin{center}
    \begin{tikzcd}[column sep= tiny]
    &\Ho{\sS_P}({K}, {L}) \arrow[rd] \arrow[ld]& \\
     \Ho{\sS_P}({K}, \Ex^n_P(L)) \arrow[rr] && \Ho{\sS_P}(\sd_P^n(K), {L})
    \end{tikzcd},
    \end{center}
     where the right diagonal is given by pre-composing with $\lv^n$, the left diagonal by post-composing with $\iota^n$ and the bottom horizontal is the adjunction map.

        \end{corollary}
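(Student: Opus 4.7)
The plan is to prove the corollary by first verifying that each of the two diagonal maps is a bijection, then checking commutativity of the triangle by a direct computation using the adjunction relations of Lemma \ref{lem:Rel_Ex_Sd}; the bijectivity of the bottom horizontal map will follow by $2$-out-of-$3$.

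For the two diagonals, I would argue as follows. Proposition \ref{prop:LvP_WeakEquivalence} ensures that $\lv_P\colon \sd_P(M) \to M$ is a weak equivalence for every $M \in \sS_P$, so iteration yields that $\lv_P^n\colon \sd_P^n(K) \to K$ is a weak equivalence. Passing to $\Ho\sS_P$, the map $\lv_P^n$ becomes an isomorphism, so pre-composition with $\lv_P^n$ is a bijection $\Ho\sS_P(K,L) \to \Ho\sS_P(\sd_P^n(K), L)$. Similarly, Proposition \ref{PropositionExPFibrantReplacement} asserts that $\iota\colon M \to \Ex_P(M)$ is a trivial cofibration for every $M \in \sS_P$, so iterating shows that $\iota^n\colon L \to \Ex_P^n(L)$ is a weak equivalence, and post-composition with $\iota^n$ induces a bijection $\Ho\sS_P(K,L) \to \Ho\sS_P(K, \Ex_P^n(L))$.

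For commutativity, it suffices to check that the triangle commutes already at the level of hom-sets in $\sS_P$; the statement at the level of $\Ho\sS_P$ then follows. Given $\phi \in \sS_P(K, L)$, the downward-left route produces $\iota^n \circ \phi \in \sS_P(K, \Ex_P^n(L))$, which the adjunction transposes to $\varepsilon_L \circ \sd_P^n(\iota^n \circ \phi)$. By functoriality of $\sd_P^n$ this equals $\varepsilon_L \circ \sd_P^n(\iota^n) \circ \sd_P^n(\phi)$, and the first equation of Lemma \ref{lem:Rel_Ex_Sd} rewrites this as $\lv_P^n \circ \sd_P^n(\phi)$. Naturality of $\lv_P^n$ then gives $\phi \circ \lv_P^n$, which is precisely the image of $\phi$ under the downward-right map. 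Hence the triangle commutes in $\sS_P$, and a fortiori in $\Ho\sS_P$.

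Once commutativity is established, the adjunction map $\Ho\sS_P(K,\Ex_P^n(L)) \to \Ho\sS_P(\sd_P^n(K), L)$ is a bijection by $2$-out-of-$3$, completing the proof. The argument is essentially formal; there is no serious obstacle, the main substance is contained in Lemma \ref{lem:Rel_Ex_Sd} and in the already-established facts that $\lv_P^n$ and $\iota^n$ are weak equivalences.
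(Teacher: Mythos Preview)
Your argument is correct and follows essentially the same route as the paper: both verify commutativity by computing the adjoint of $\iota^n \circ \phi$ as $\varepsilon \circ \sd_P^n(\iota^n) \circ \sd_P^n(\phi)$, applying the identity $\varepsilon \circ \sd_P^n(\iota^n) = \lv_P^n$ from Lemma~\ref{lem:Rel_Ex_Sd}, and then using naturality of $\lv_P^n$. You are somewhat more thorough than the paper in that you explicitly verify the two diagonals are bijections (via the weak equivalences $\lv_P^n$ and $\iota^n$) and deduce bijectivity of the adjunction map by $2$-out-of-$3$, whereas the paper leaves these implicit.

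One small point of phrasing: the sentence ``it suffices to check that the triangle commutes already at the level of hom-sets in $\sS_P$'' is not quite right as stated, since an arbitrary morphism in $\Ho\sS_P(K,L)$ need not be represented by a morphism in $\sS_P(K,L)$. However, your computation uses only functoriality of $\sd_P^n$ and naturality of $\varepsilon$ and $\lv_P^n$, all of which descend to $\Ho\sS_P$ (because $\sd_P^n$ and $\Ex_P^n$ preserve weak equivalences). So the computation is in fact valid for any $\phi \in \Ho\sS_P(K,L)$, and you could simply drop that sentence and run the argument directly in the homotopy category, as the paper does.
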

    \begin{proof}
        Let $\phi \in \Ho\sS_P({K}, {L})$
        Let $\hat \phi$ be the adjoint morphism to $\iota^n \circ \phi$. It is given by $\varepsilon \circ \sd^n(\iota^n \circ \phi)$. By \cref{lem:Rel_Ex_Sd} we have 
        \begin{align*}
            \varepsilon \circ \sd^n(\iota^n \circ \phi)   &= \lv_P^n \circ \sd_P^n( \phi) \\
                                                    &= \phi \circ \lv_P^n.
        \end{align*}
        In particular, this shows the required commutativity.
        \end{proof}
    We now have everything necessary available to derive \cref{theo:sdP_Approximation}.
    \begin{proof}[Proof of \cref{theo:sdP_Approximation}]\label{proof:approximation_A}
      By \cref{cor:Exi_P_fibrant_replacement}, $\iota^\infty: {L} \hookrightarrow \Ex_P^\infty {L}$ defines a fibrant replacement of ${L}$ Now, consider the following commutative diagram:
    \begin{center}
        \begin{tikzcd}
        \sS_P( {K}, {L}) \arrow[d] \arrow[r]& \Ho{\sS_P}( {K}, {L}) \arrow[d]\\
        \sS_P({K},\Ex^\infty_P(L)) \arrow[r]& \Ho{\sS_P({K}, \Ex^\infty_P(L))}
        \end{tikzcd}
    \end{center}
    with the verticals given by postcomposition with $\iota^{\infty}$. 
    Since $\Ex_P^\infty$ is fibrant, the lower horizontal is surjective.  
    As ${K}$ is finite, $\sS_P({K}, \Ex_{P}^\infty (L)) = \varinjlim \sS_P(K, \Ex_P^n(L))$. In particular, for any $\phi\in\Ho\sS_P(K,L)$,  we find some $f'\colon {K} \to \Ex_P^n(L)$ mapping to the same element as $\phi$ in $\Ho{\sS_P({K},\Ex_P^\infty {L})}$. As $\iota^{\infty}$ is a weak equivalence and thus postcomposing with it gives a bijection in the homotopy category, we get that in particular $f' = \iota^n \circ \phi$ in $\Ho{\sS}_P(K,\Ex_P^n(L))$. 
    Next, consider the following diagram, which is commutative by \cref{lem:tricky_Ex_diag}:
    \begin{center}
        \begin{tikzcd}
        \sS_P(  K,{L}) 
        \arrow[rr] 
        \arrow[d] 
        &&  \Ho{\sS_P}({K},{L}) 
        \arrow[ld, "\cong"] 
        \arrow[dd,"\cong"] 
        \\
        \sS_P(\sd_P^n(K), {L})
        \arrow[r]
        \arrow[d, "\cong"] 
        &  \Ho{\sS}_P(\sd_P^n(K), {L}) 
        \arrow[rd, "\cong"]
        \\
        \sS_P({K}, \Ex^n_P(L)) 
        \arrow[rr] 
        &&\Ho{\sS_P}({K}, \Ex^n_{P}(L))
        \end{tikzcd}
    \end{center}
    Bijections are marked by $\cong$.
    We have shown that $f'\in\sS_P(K,\Ex_P^n(L))$ and $\phi\in\Ho\sS_P(K,L)$ have the same image in $\Ho\sS_P(K,\Ex_P^n(L))$. But the commutativity of the diagram gives that the map $f\colon \sd^n_P(K)\to L\in\sS_P$, adjoint to $f'$, and $\phi\in\Ho\sS_P(K,L)$ must have the same image in $\Ho\sS_P(\sd_P^n(K),L)$, which concludes the proof. 
    \end{proof}

\section{Simplicial Homotopy links and vertically stratified complexes}
\label{Section:LastHolink}
Consider again the following diagram of categories with weak equivalences:
\begin{center}
\begin{tikzcd}
& \Diag_P \arrow[ld] \arrow[rd]& \\
\sS_P \arrow[ru, shift left = 2, "D_P"] \arrow[rr, "\RealP{-}", shift left = 1] & & \Top_P \arrow[ll, "\Sing_P", shift left = 1] \arrow[ul, shift right = 2, "D_P^{\Top}", swap ] \spaceperiod
\end{tikzcd}
\end{center}
In \cref{Section:Real_Char_WE} we have seen that weak equivalences in $\sS_P$ are detected by (combinatorial) links and that (up to realization), those naturally have the same weak homotopy type as the homotopy links in $\Top_P$. As a particular consequence, we obtained the fact that the realization functor $\RealP{-}$ characterizes weak equivalences. The functor $D_P^{\Top}$ characterizes weak equivalences by definition and it is the content of \cref{Section:FSAE}  that $\Sing_P$ characterizes weak equivalences. We have also shown (\cref{cor:CP_reflects_WE,cor:Real_CP_reflects_WE}) that $C_P$ and thus $C_P^{\Top}=\RealP{C_P(-)}$ characterize weak-equivalences. To complete this picture, it remains to investigate the functor $D_P\colon\sS_P \to \Diag_P$. Rephrasing this question in terms of links, we need to compare the combinatorial homotopy link $\Hol$ to the other notions of (homotopy) link, which we have already shown to agree (up to the Quillen equivalence $\Real{-} \dashv \Sing$). There is an obvious natural comparison map
\[ \Hol_{\I}(K) = \Map(\Delta^{\I}, K)  \to 
\Map(\RealP{\Delta^{\I}}, \RealP{K}) = \Sing(\HolIP( \RealP{K}) \]
given by realization.  The goal of this section is to prove the following theorem.
\begin{theorem}\label{theo:simHolink_v_Top_Hol}
Let $K \in \sS_P$. Then the natural inclusion
\[\Hol_\I(K) \hookrightarrow \Sing(\HolIP(\RealP{K})) \]
is a weak equivalence of simplicial sets. Hence, equivalently, so is the adjoint map
\[ \Real{\Hol_\I(K)} \to \HolIP(\RealP{K}).\]
\end{theorem}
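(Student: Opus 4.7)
The strategy I would follow is to first reformulate the statement in a more tractable way, then reduce the general case to a particularly nice class of objects where explicit approximation arguments are available. Using \cref{Rem:Comparing_Holink_Intro}, the target $\Sing(\HolIP(\RealP{K}))$ is naturally isomorphic to $\Hol_{\I}(\Sing_P(\RealP{K}))$, and under this identification, the comparison map factors as $\Hol_{\I}$ applied to the unit of adjunction $K \to \Sing_P(\RealP{K})$. By \cref{lem:Unit_Counit_RealP_SingP_WE}, this unit is a weak equivalence in $\sS_P$. However, by \cref{theo:CMF_sSetP}, weak equivalences in $\sS_P$ are detected by $\Hol_{\I}$ only \emph{after} fibrant replacement, so we cannot conclude directly. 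The content of the theorem is precisely that this fibrant replacement is not needed when computing $\Hol_{\I}$.

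To bypass the fibrant replacement, I would introduce the class of vertically stratified CW-complexes (as announced in \cref{Section:Vertical}). These are stratified CW-complexes $X \in \Top_P$ built inductively by attaching cells of the form $\RealP{\Delta^{\I}} \times D^n$ along $\RealP{\Delta^{\I}} \times S^{n-1}$, but in a rigidly stratified manner: each cell comes equipped with a flag $\I$ and an attaching map whose interior lands in a single stratum compatibly with $\I$. The first main technical step is to show that this class is rich enough to model the homotopy category, namely, every $X \in \Top_P$ is weakly equivalent to some $\RealP{K}$ for a stratified simplicial set $K$ whose realization is vertically stratified, and moreover that vertical maps and vertical stratified homotopies between such objects model $\Ho\Top_P(\RealP{K}, \RealP{L})$. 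This will likely follow from iterated cell attachment using the generating cofibrations of \cref{theo:CMF_TopP} together with the results of \cref{Section:Hammock}.

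The second and harder technical step is a strong simplicial approximation theorem for this vertical class (the content of \cref{Section:Last_Approximation}). Specifically, I would prove that for $K$ with $\RealP{K}$ vertically stratified, every stratum-preserving continuous map $\RealP{\Delta^{\I}} \times \Real{\partial\Delta^n} \to \RealP{K}$ that is already the realization of a simplicial map on a suitable sub-diagram admits a stratum-preserving extension to $\RealP{\Delta^{\I}} \times \Real{\Delta^n}$ which is, up to stratified homotopy rel boundary, the realization of a simplicial map $\Delta^{\I} \times \Delta^n \to K$. The crucial point, which distinguishes this from the general approximation theorem of \cref{cor:cof_approx}, is that no subdivision of the source is required; this rigidity is possible precisely because the cell structure of $\RealP{K}$ is vertical, so that images of small simplices can be pushed into cells of matching flag type without breaking stratum-preservation. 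Translating this extension property directly into homotopy groups, it yields the theorem for vertical $K$.

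For arbitrary $K \in \sS_P$, I would then use step one to choose a vertically stratified CW-complex $\RealP{K'}$ together with a weak equivalence $\RealP{K'} \to \RealP{K}$ in $\Top_P$, which by \cref{Cor:Realization_Preserve_Weak_Equivalences} comes from a weak equivalence $K' \to K$ (perhaps after passing to $\sd_P$ if necessary and invoking \cref{prop:LvP_WeakEquivalence}). The square
\begin{equation*}
\begin{tikzcd}
\Hol_{\I}(K') \arrow{r} \arrow{d} & \Sing(\HolIP(\RealP{K'})) \arrow{d} \\
\Hol_{\I}(K) \arrow{r} & \Sing(\HolIP(\RealP{K}))
\end{tikzcd}
\end{equation*}
commutes; the top horizontal is a weak equivalence by the vertical case, the right vertical is a weak equivalence because $\RealP{K'} \to \RealP{K}$ is one in $\Top_P$ (which characterizes weak equivalences of all homotopy links), and one shows the left vertical is a weak equivalence by a diagrammatic argument built on \cref{cor:CP_reflects_WE} applied to $D_P$. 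Two-out-of-three then yields the bottom horizontal, proving the theorem. The main obstacle throughout will be the strong approximation theorem for vertical complexes, since this is where the absence of subdivision forces one to exploit the geometric rigidity of the vertical cell structure in a nontrivial way.
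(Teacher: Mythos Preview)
Your proposal has the right intuition about vertical objects but contains a genuine circularity in the reduction step, and the approximation statement you aim for is slightly off target compared to what is actually needed.

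\textbf{The circularity.} In your final square, to run two-out-of-three you need the left vertical $\Hol_{\I}(K') \to \Hol_{\I}(K)$ to be a weak equivalence whenever $K' \to K$ is one in $\sS_P$. But this is precisely the statement that $D_P$ preserves weak equivalences, i.e.\ \cref{Cor:Diagram_Preserve_WE}, which the paper derives \emph{from} the theorem you are trying to prove. Your appeal to \cref{cor:CP_reflects_WE} does not help: that result concerns the left adjoint $C_P$, not $D_P$, and there is no formal way to transfer it. Concretely, if you take $K' = \sd_P(K)$ with the last vertex map (which is indeed vertical by \cref{Ex:Sd_P_Vertically_Filtered_sS}), you would need to know that $\Hol_{\I}(\lv_P)$ is a weak equivalence before fibrant replacement, and this is not available at this point of the argument.

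\textbf{The approximation theorem.} Your step two proposes a simplicial approximation with \emph{no} subdivision of the source, and this is too strong: even for vertical targets one cannot in general approximate a continuous map $\Real{\Delta^n} \times \RealP{\Delta^{\I}} \to \RealP{K}$ by a simplicial map without subdividing. What the paper actually proves (\cref{prop:another_approximation}) is an approximation where one subdivides only the non-stratified factor $S$ via ordinary $\sd^k$, leaving $\Delta^{\I}$ untouched. This is exactly what is needed to compare homotopy groups via \cref{lem:compute_strat_ho_gr}, and crucially it works for \emph{arbitrary} $K$, not just vertical ones: the vertical structure enters by first lifting along $\lv_P\colon \sd_P(K) \to K$ on the target side, then applying the vertical approximation \cref{lem:vertical_simplicial_approx} to land in $\sd_P(K)$, and finally composing back with $\lv_P$. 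Thus the paper avoids your reduction step entirely by building the approximation argument so that it applies to all $K$ at once.
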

As an immediate corollary of this result and \cref{Cor:Realization_Preserve_Weak_Equivalences}, one obtains:

\begin{corollary}\label{Cor:Diagram_Preserve_WE}
   The functor $D_P: \sS_P \to \Diag_P$ characterizes weak equivalences. 
\end{corollary}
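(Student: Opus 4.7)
The plan is to derive the corollary by chaining together the characterization results already established, with Theorem~\ref{theo:simHolink_v_Top_Hol} serving as the bridge between the simplicial and topological homotopy links. Concretely, for $f \colon K \to L$ in $\sS_P$, I will show that the following conditions are all equivalent:
(i) $f$ is a weak equivalence in $\sS_P$;
(ii) $\RealP{f}$ is a weak equivalence in $\Top_P$;
(iii) for every regular flag $\I$, $\HolIP(\RealP{f})$ is a weak equivalence in $\Top$;
(iv) for every regular flag $\I$, $\Sing \HolIP(\RealP{f})$ is a weak equivalence in $\sS$;
(v) for every regular flag $\I$, $\Hol_{\I}(f)$ is a weak equivalence in $\sS$;
(vi) $D_P(f)$ is a weak equivalence in $\Diag_P$.

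First I would note that (i)$\Leftrightarrow$(ii) is exactly Corollary~\ref{Cor:Realization_Preserve_Weak_Equivalences}, and (ii)$\Leftrightarrow$(iii) is immediate from the definition of weak equivalences in $\Top_P$ given by Theorem~\ref{theo:CMF_TopP}. The equivalence (iii)$\Leftrightarrow$(iv) follows because $\Sing \colon \Top \to \sS$ is the right functor of a Quillen equivalence in which every topological space is fibrant, so it both preserves and reflects weak equivalences on all of $\Top$. Similarly, (v)$\Leftrightarrow$(vi) is the definition of weak equivalence in the projective model structure on $\Diag_P=\Fun(R(P)^{\op},\sS)$ provided by Theorem~\ref{theo:CMF_DiagP}.

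The only step that uses the main result of the section is (iv)$\Leftrightarrow$(v). For this, fix a regular flag $\I$ and apply Theorem~\ref{theo:simHolink_v_Top_Hol} naturally in $K$: the comparison map fits into a commutative square
\begin{equation*}
    \begin{tikzcd}
        \Hol_{\I}(K) \arrow[r, "\sim"] \arrow[d, "\Hol_{\I}(f)"'] & \Sing\big(\HolIP(\RealP{K})\big) \arrow[d, "\Sing \HolIP(\RealP{f})"] \\
        \Hol_{\I}(L) \arrow[r, "\sim"] & \Sing\big(\HolIP(\RealP{L})\big)
    \end{tikzcd}
\end{equation*}
in which the horizontal arrows are weak equivalences by Theorem~\ref{theo:simHolink_v_Top_Hol}. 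Two-out-of-three then yields the desired equivalence between the left and right verticals being weak equivalences.

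There is no substantial obstacle here beyond checking that the comparison map $\Hol_{\I}(K) \to \Sing(\HolIP(\RealP{K}))$ is natural in $K$, which is evident from its construction as the $\Real{-} \dashv \Sing$ adjunction map. The corollary is thus a formal bookkeeping consequence of Theorem~\ref{theo:simHolink_v_Top_Hol} combined with Corollary~\ref{Cor:Realization_Preserve_Weak_Equivalences} and the defining descriptions of weak equivalences in $\Top_P$ and $\Diag_P$; all the genuine work has already taken place in establishing the comparison of homotopy links.
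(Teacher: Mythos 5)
Your proposal is correct and follows essentially the same route as the paper: reduce to $\RealP{f}$ via Corollary~\ref{Cor:Realization_Preserve_Weak_Equivalences}, unwind the definition of weak equivalences in $\Top_P$ through the homotopy links, and transfer to the simplicial homotopy links $\Hol_{\I}$ via the natural weak equivalence of Theorem~\ref{theo:simHolink_v_Top_Hol} and two-out-of-three. Your version merely makes explicit the intermediate $\Sing$ step and the naturality square that the paper leaves implicit.
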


\begin{proof}
By \cref{Cor:Realization_Preserve_Weak_Equivalences}, a map $f\colon K\to L$ in $\sS_P$ is a weak-equivalence if and only if $\RealP{f}\colon \RealP{K}\to\RealP{L}$ is a weak-equivalence in $\Top_P$. This implies that $f$ is a weak-equivalence if and only if for all regular flags $\I$, $f$ induces weak-equivalences $\HolIP(\RealP{K})\to\HolIP(\RealP{L})$. By \cref{theo:simHolink_v_Top_Hol} this is equivalent to asking that $f$ induces weak-equivalences  $D_P(K)(\I)=\Hol_{\I}(K)\to\Hol_{\I}(L)=D_P(L)(\I)$, which concludes the proof.
\end{proof}

\begin{remark}\label{rem:Henriques}
\cref{Cor:Diagram_Preserve_WE} applies in particular to fibrant replacement maps $K\to K^{\fib}$ (for example, the map $K\hookrightarrow \Exi_P(K)$). This means that the map $\Hol_{\I}(K)\to\Hol_{\I}(K^{\fib})$ is a weak-equivalence, or in other words, that the naïve simplicial homotopy link and the fibrantly defined homotopy link coincide. This result provides insight as to why the model structure on $\sS_P$ described in \cite{Henriques} and the one under studied in this paper coincide, even though they have very different descriptions. Philosophically, the former has a class of weak-equivalences defined from the naïve homotopy links $\Hol_{\I}(K)$ while the latter has a class of weak-equivalences defined from $\Hol_{\I}(K^{\fib})$, and the weak-equivalence between those mapping spaces imply that the two model structures indeed have the same class of weak-equivalences. In fact, they coincide.
\end{remark}

\subsection{Sketch of proof of \cref{theo:simHolink_v_Top_Hol}}
We are going to prove \cref{theo:simHolink_v_Top_Hol} through a comparison of homotopy groups using simplicial approximation style results. Note that using simplicial approximation to produce homotopies between simplicial maps leads to an uncommon type of cylinder object. To handle these cylinder objects, we define sd-homotopies.  \cref{rem:Weird_Cylinders}, at the end of this subsection, provides some insight into sd-homotopies.

\begin{definition}
Let $S,S'$ be simplicial sets, $k\geq 0$, and $f,f'\colon \sd^k(S)\to S'$ be two simplicial maps. Then $f$ and $f'$ are \define{sd-homotopic}, if there exists a simplicial map
\begin{equation*}
    H\colon \sd^k(S\times \Delta^1)\to S'
\end{equation*}
such that $H$ restricts on either end of the cylinder to $f$ and $f'$. Such a map is called an \define{sd-homotopy}. If $f$ and $f'$ are pointed maps, a \define{pointed sd-homotopy} is instead a pointed map
\begin{equation*}
    H\colon \sd^k(S\wedge\Delta^1_{+})\to S',
\end{equation*}
where $\wedge$ stands for the smash product, and $\Delta^1_+$ stands for the $1$-simplex with a freely adjoined base-point.\\
Now, let $K\in \sS_P$ be a stratified simplicial set, $\I\in R(P)$ a regular flag and $f,f'\colon \sd^k(S)\times\Delta^{\I}\to K$ two stratified simplicial maps. The maps $f$ and $f'$ are called \define{(stratified) sd-homotopic} if there exists a stratum preserving simplicial map
\begin{equation*}
    H\colon \sd^k(S\times \Delta^1)\times\Delta^{\I}\to K,
\end{equation*}
 called a \define{(stratified) sd-homotopy in $\sS_P$}, such that $H$ restricts on either end of the cylinder to $f$ and $f'$. If in addition $*\in S$ and $\Delta^{\I}\to K$ are pointed, and $f$ and $f'$ are pointed maps, then a \define{stratified, pointed sd-homotopy} is instead a pointed stratified map
\begin{equation*}
    H\colon \sd^k(S\wedge\Delta^1_{+})\times\Delta^{\I}\to K.
\end{equation*}
(Stratified) pointed sd-homotopies generate equivalence relations on the sets of pointed maps $\sS^*(\sd^k(S),S')$ and $\sS_P^{\Delta^{\I}}(\sd^k(S)\times\Delta^{\I},K)$. We write $\sim_*$ for both equivalence relations defined this way.
\end{definition}

\begin{remark}
\label{rem:Homotopies_are_Quasi_Homotopies}
    Let $f,f'\colon \sd^k(S)\to S'$ be two simplicial map, related by a simplicial homotopy $H\colon \sd^k(S)\times\Delta^1\to S'$. Note that there is a simplicial map
    \begin{equation*}
        Q\colon\sd^k(S\times\Delta^1)\to\sd^k(S)\times\sd^k(\Delta^1)\xrightarrow{\Id\times\lv^k}\sd^k(S)\times\Delta^1.
    \end{equation*}
    Precomposing $H$ with $Q$ gives a sd-homtopy 
\begin{equation*}
    H\circ Q\colon \sd^k(S\times\Delta^1)\to S'
\end{equation*}
between $f$ and $f'$. In particular, if two maps are homotopic, they are also sd-homotopic. The analogous statement holds for stratified homotopies as well as in the pointed case.
\end{remark}

\begin{lemma}\label{lem:compute_strat_ho_gr}
Let $\Delta^{\I}\to K \in \sS_P^{\Delta^{\I}}$ be an $\I$ pointed stratified simplicial set. Consider $\Hol_{\I}(K)$ with the induced pointing.
Then, for each $n\geq 0$ there are natural bijections
\begin{align*}
\varinjlim \sS_P^{\Delta^{\I}}(\sd^k(S^n) \times \Delta^{\I}, K)/{\sim_*} &\cong \varinjlim\sS^*(\sd^k(S^n), \Hol_{\I}(K))/{\sim_*} \\ & \cong \pi_n(\Hol_{\I}(K))
\end{align*}
where $S^n$ is some triangulation of the $n$-sphere.
The first bijection is induced by the adjunction $- \times \Delta^{\I} \dashv \Map(\Delta^{\I}, -)$, and the second by composing with an inverse to $\lv^k$  (in the homotopy category).
\end{lemma}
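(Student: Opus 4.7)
The plan is to handle the two bijections separately: the first is a formal consequence of the adjunction $-\times \Delta^{\I} \dashv \Map(\Delta^{\I},-) = \Hol_{\I}$, while the second is essentially a reformulation of the classical simplicial approximation theorem, adapted to sd-homotopies. Throughout I write $L := \Hol_{\I}(K)$ for brevity, pointed at the vertex corresponding to the structure map $\Delta^{\I} \to K$.

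For the first bijection, the adjunction $- \times \Delta^{\I} \dashv \Hol_{\I}$ gives a natural bijection $\sS_P^{\Delta^{\I}}(\sd^k(S^n) \times \Delta^{\I}, K) \cong \sS^*(\sd^k(S^n), L)$ on pointed hom-sets, since the basepoint $\Delta^{\I} \to K$ corresponds under the adjunction to a chosen vertex of $L$. The same adjunction applied to an object of the form $\sd^k(S^n \wedge \Delta^1_+) \times \Delta^{\I}$ identifies stratified pointed sd-homotopies in $\sS_P$ with pointed sd-homotopies of simplicial sets. This bijection is manifestly compatible with the maps $\lv \colon \sd^{k+1} \to \sd^k$, so it descends to the colimits. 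This finishes the first part.

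For the second bijection, I will construct a comparison map
\[ \Phi \colon \varinjlim_k \sS^*(\sd^k(S^n), L)/{\sim_*} \longrightarrow \pi_n(L) \]
by $[f] \mapsto [\Real{f}] \circ [\Real{\lv^k}]^{-1}$, where $\Real{\lv^k}$ is a weak equivalence by the (non-stratified analogue of) \cref{prop:LvP_WeakEquivalence}, hence admits a pointed homotopy inverse. The map $\Phi$ is well defined on sd-homotopy classes because realization sends any sd-homotopy to an honest pointed homotopy between the realizations (this uses that $\Real{-}$ preserves products and the quotient presentation of $\wedge$). Compatibility with the filtered colimit follows from the relation $\lv^{k+1} = \lv \circ \lv^k$, so $\Phi$ is well defined on the colimit.

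To see that $\Phi$ is surjective, represent $[g] \in \pi_n(L) = \pi_n(\Real{L})$ by a pointed map $g\colon S^n \to \Real{L}$. Classical simplicial approximation applied to $g$ (with $S^n$ some fixed triangulation of the $n$-sphere) produces, for sufficiently large $k$, a pointed simplicial map $f\colon \sd^k(S^n) \to L$ together with a pointed homotopy $\Real{f} \simeq g \circ \Real{\lv^k}$; hence $\Phi([f]) = [g]$. For injectivity, if $f, f'\colon \sd^k(S^n) \to L$ satisfy $\Phi([f]) = \Phi([f'])$, then there is a pointed homotopy $H\colon \Real{\sd^k(S^n)} \times [0,1] \to \Real{L}$ from $\Real{f}$ to $\Real{f'}$. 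Applying the relative simplicial approximation theorem to $H$ (with the boundary already simplicial via $f \sqcup f'$) produces, for some $\ell$, a pointed simplicial map $\sd^{\ell}(\sd^k(S^n) \wedge \Delta^1_+) \to L$ that restricts to $f \circ \lv^{\ell}$ and $f' \circ \lv^{\ell}$ on the two ends. Precomposing with the canonical comparison map $\sd^{k+\ell}(S^n \wedge \Delta^1_+) \to \sd^{\ell}(\sd^k(S^n) \wedge \Delta^1_+)$ (induced by iterated last vertex maps on the cylinder factor) yields the required pointed sd-homotopy, showing that $[f]$ and $[f']$ agree in the colimit.

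The main obstacle is the bookkeeping for the last step: sd-homotopies as defined here live on $\sd^k(S \wedge \Delta^1_+)$ rather than on $\sd^k(S) \wedge \Delta^1_+$, so some care is required to match the output of simplicial approximation with the correct normal form. This is resolved by passing to further subdivisions and exploiting that iterated subdivision commutes with smash products up to natural comparison maps whose realizations are homeomorphisms, which suffices since we are only detecting equivalence classes.
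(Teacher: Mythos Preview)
Your argument is correct. The first bijection is handled identically to the paper: the simplicial adjunction $-\times\Delta^{\I}\dashv\Map(\Delta^{\I},-)$ carries pointed sd-homotopies to pointed sd-homotopies by construction, and this passes to the colimits.

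For the second bijection you take a different (but equally valid) route from the paper. The paper stays entirely inside $\sS$: it uses the adjunction $\sd\dashv\Ex$, the fact that $\Ex^{\infty}$ is a fibrant replacement, and finiteness of $S^n$ to identify the colimit with $[S^n,\Ex^{\infty}L]_*=\pi_n(L)$, referring to \cref{rem:Weird_Cylinders} and the commutativity checks in the proof of \cref{theo:sdP_Approximation} for well-definedness. You instead pass through realization, compute $\pi_n(L)$ as $\pi_n(\Real{L})$, and invoke classical (pointed, relative) simplicial approximation for surjectivity and injectivity. Both arguments work; the paper's approach is more self-contained in the simplicial world and reuses machinery already developed in \cref{Section:Hammock}, while yours is closer to the classical topological picture. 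Note that the comparison map you use for injectivity, $\sd^{k+\ell}(S^n\wedge\Delta^1_+)\to\sd^{\ell}(\sd^k(S^n)\wedge\Delta^1_+)$, is obtained by applying $\sd^{\ell}$ to the map $\alpha$ of \cref{rem:Homotopies_are_Quasi_Homotopies}; this is exactly the sort of cylinder bookkeeping the paper warns about in that remark and handles explicitly in the proof of \cref{prop:another_approximation}.
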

\begin{proof}
The first map is a bijection since, by construction, the simplicial adjunction 
$-\times \Delta^{\I} \dashv \Map(\Delta^{\I}, -)$ preserve sd-homotopies. The second follows from the adjunction $\sd\dashv\Ex$, the fact that $\Exi$ defines a fibrant replacement in $\sS$, and that $S^n$ is a finite simplicial set (see \cref{rem:Weird_Cylinders} for more details). Note, that the commutativity conditions needed to ensure that the map from the colimit is well defined were already checked in the proof of \cref{theo:sdP_Approximation}, specifically, in \cref{lem:tricky_Ex_diag}.
\end{proof}
We can now conclude the proof of \cref{theo:simHolink_v_Top_Hol}.
\begin{proof}[proof of \cref{theo:simHolink_v_Top_Hol}]
Let $K$ be a stratified simplicial set, together with some pointing $\phi\colon  \Delta^{\I} \to K$ corresponding to a basepoint in $\Hol_{\I}({K})$. Consider the commutative diagram 
\[
    \begin{tikzcd}
    \varinjlim \sS_P^{\Delta^{\I}}(\sd^k(S^n) \times \Delta^{\I}, K )/{\sim_*}
    \arrow[r] 
    \arrow[d, "\cong"]
    & \left[\RealP{S^n \times \Delta^{\I}}, \RealP{K} \right ]_P^{\RealP{\Delta^{\I}}}
    \arrow[d, "\cong"]
    \\
    \varinjlim \sS^*(\sd^k(S^n), \Hol_{\I}(K))/{\sim_*} 
    \arrow[d, "\cong" ]
    & \left[ \Real{S^n}, \HolIP(\RealP{K}) \right ]^{*} 
    \arrow[d, "\cong"]
    \\
     \pi_n(\Hol_{\I}(K)) 
     \arrow{r}
    &  \pi_n(\HolIP(\RealP{K}))\spacecomma
    \end{tikzcd}
\]
where the bottom horizontal map is induced by $\Hol_{\I}({K}) \to \Sing(\HolIP(\RealP{K}))$ and the top horizontal
is given by first realizing and then precomposing with a homotopy inverse to $\Real{\lv^k}\times 1_{\Delta^{\I}}$. Note that the top horizontal map is well-defined since (stratified) sd-homotopies realize to (stratified) homotopies, by the fact that $\Real{\sd^k(S^n\times \Delta^1) \times \Delta^{\I}}$ is a cylinder for $\Real{\sd^k(S^n) \times \Delta^{\I}}$. 

All the vertical maps are already known to be bijections. To finish the proof of \cref{theo:simHolink_v_Top_Hol}, it suffices to show that the top horizontal map is a bijection. This is a consequence of \cref{prop:another_approximation}. Indeed, the direct statement of \cref{prop:another_approximation} gives surjectivity, while the homotopy statement shows that if two pointed maps $f,f'\colon \sd^k(S^n)\times\Delta^{\I}\to K$ realize to homotopic maps, then they are related by a pointed sd-homotopy $\sd^{k'}(sd^k(S^n)\wedge\Delta^1_+)\times\Delta^{\I}\to K$. By \cref{rem:Homotopies_are_Quasi_Homotopies}, this also means that $f\circ(\lv^{k'}\times\Id_{\Delta^{\I}})\sim_*f'\circ(\lv^{k'}\times\Id_{\Delta^{\I}})$, which proves injectivity.
\end{proof}

\begin{proposition}\label{prop:another_approximation}
Let $S \in \sS^\star$ be a pointed finite simplicial set, $\I$ a regular flag and $K \in \sS_P^{\Delta^\I}$ a pointed stratified simplicial set.  Then, for any pointed stratum preserving map $\phi\colon\RealP{S \times \Delta^\I} \to \RealP{K}$ and $k \gg 0$, there exists a pointed stratified simplicial map $f\colon\sd^k(S) \times \Delta^\I \to  K$ such that \[
    \phi \circ \RealP{\lv^k \times 1_{\Delta^\I}} \simeq_P \RealP{ f} \text{ rel }*\times\RealP{\Delta^{\I}}.
\]
Conversely, if any two pointed stratified simplicial maps $f_0,f_1: S \times \Delta^\I \to {K}$ fulfill 
\[\RealP{f_0} \simeq_P \RealP{f_1}\text{ rel }*\times\RealP{\Delta^{\I}},
\]
then, for $k \gg 0$, there exists a pointed sd-homotopy $H\colon\sd^k(S \wedge \Delta^1_+) \times \Delta^{\mathcal I} \to  K$ between $f_0\circ(\lv^k\times\Id_{\Delta^{\I}})$ and $f_1\circ(\lv^k\times\Id_{\Delta^{\I}})$.
\end{proposition}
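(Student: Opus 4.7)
The plan is to reduce this stricter approximation statement to the general stratified simplicial approximation theorem \cref{cor:cof_approx}. The key difficulty is that the latter naturally produces an approximating map whose source is the stratified subdivision $\sd_P^n(S\times\Delta^\I)$, whereas here we require a source of the form $\sd^k(S)\times\Delta^\I$---ordinary iterated subdivision applied only to the $S$-factor. These are two different triangulations of $\Real{S}\times\Real{\Delta^\I}$, both stratified via the projection onto $\Real{\Delta^\I}$.

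For the direct statement, I would proceed in three steps. First, regard $S\times\Delta^\I$ as a finite pointed object of $\sS_P^{\Delta^\I}$, pointed via the inclusion $\star\times\Delta^\I\hookrightarrow S\times\Delta^\I$ together with $\phi_0\colon\Delta^\I\to K$. Applying the pointed, relative form of \cref{cor:cof_approx} yields, for some $n\geq 0$, a pointed stratum-preserving simplicial map
\[\hat f\colon \sd_P^n(S\times\Delta^\I)\to K\]
with $\hat f|_{\sd_P^n(\star\times\Delta^\I)}=\phi_0\circ\lv_P^n$ and $\RealP{\hat f}\simeq_P \phi\circ\RealP{\lv_P^n}$ rel $\star\times\RealP{\Delta^\I}$. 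Second, I would construct, for $k\gg n$, a pointed stratum-preserving simplicial map
\[\beta_{k,n}\colon \sd^k(S)\times\Delta^\I\to \sd_P^n(S\times\Delta^\I)\]
such that $\lv_P^n\circ\beta_{k,n}$ is stratum-preservingly homotopic to $\lv^k\times 1_{\Delta^\I}$ rel $\star\times\Delta^\I$. Third, set $f:=\hat f\circ\beta_{k,n}$; the required homotopy follows by composing the homotopy from Step 1 with the realization of the homotopy from Step 2.

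The construction of $\beta_{k,n}$ is the main technical step. Both source and target triangulate the same stratified space, $\Real{S}\times\Real{\Delta^\I}$ stratified via the projection to $\Real{\Delta^\I}$, so the identity of this space supplies a stratum-preserving continuous map from $\Real{\sd^k(S)\times\Delta^\I}$ to $\RealP{\sd_P^n(S\times\Delta^\I)}$. Exploiting the vertical structure of $\sd^k(S)\times\Delta^\I$---its stratification factors through the projection to $\Delta^\I$ and is trivial along the $S$-direction---together with the explicit pullback description of $\sd_P$ over $\sd_P(N(P))$, one can approximate this identity by a stratum-preserving simplicial map inductively over the cells of $S$. For $k$ large enough the mesh of $\sd^k(S)$ becomes so small that each simplex of $\sd^k(S)\times\Delta^\I$ lies stratum-preservingly inside the open star of some vertex of $\sd_P^n(S\times\Delta^\I)$, allowing a stratum-preserving choice on vertices to extend to the required simplicial map while matching $\phi_0\circ\lv_P^n$ on the basepoint slice. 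This is exactly the kind of inductive argument for which the framework of vertically stratified CW-complexes is designed.

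For the converse, given a stratum-preserving homotopy $H\colon \RealP{S\times\Delta^\I}\times [0,1]\to\RealP{K}$ between $\RealP{f_0}$ and $\RealP{f_1}$ rel $\star\times\RealP{\Delta^\I}$, I would reinterpret it as a pointed stratum-preserving map $\RealP{(S\wedge\Delta^1_+)\times\Delta^\I}\to\RealP{K}$ whose restrictions to the two ends of the cylinder are $\RealP{f_0}$ and $\RealP{f_1}$, and apply the direct statement to $S\wedge\Delta^1_+$ in place of $S$, using a further relative version that pins the boundary data to $f_0\cup f_1$, to produce for $k\gg 0$ a stratum-preserving simplicial map $\sd^k(S\wedge\Delta^1_+)\times\Delta^\I\to K$ equal to $f_i\circ(\lv^k\times 1_{\Delta^\I})$ on the two ends---exactly a pointed sd-homotopy. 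The main obstacle throughout is the combinatorial construction of $\beta_{k,n}$: while the geometric idea is a simple simplicial approximation between two triangulations of the same space, executing it in a stratum-preserving and pointed fashion requires a controlled inductive argument over the skeleta of $S$ that makes essential use of the vertical structure.
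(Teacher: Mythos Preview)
Your overall strategy is sound, but the central technical step---the construction of $\beta_{k,n}$---contains a genuine gap. You argue that ``for $k$ large enough the mesh of $\sd^k(S)$ becomes so small that each simplex of $\sd^k(S)\times\Delta^\I$ lies stratum-preservingly inside the open star of some vertex of $\sd_P^n(S\times\Delta^\I)$.'' This is false as stated: only the $S$-factor is being subdivided, so the diameter of simplices in $\sd^k(S)\times\Delta^\I$ never tends to zero, and a naive open-star simplicial approximation argument does not apply. What is actually needed is the observation that the \emph{target} $\sd_P^n(S\times\Delta^\I)$ is vertically stratified (\cref{Ex:Sd_P_Vertically_Filtered_sS}), so that after verticalizing the continuous map (\cref{Prop:Vertical_Approx_CW}) it factors through a product $K_\I\times\Delta^\I$; then one only needs classical simplicial approximation on the map $\Real{S}\to\Real{K_\I}$ between the first factors. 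This is precisely \cref{lem:vertical_simplicial_approx}, and the construction you want for $\beta_{k,n}$ is essentially \cref{prop:existence_ell}.

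Once this is fixed your route does work, but it is more circuitous than the paper's for the direct statement. Rather than changing the source via $\beta_{k,n}$ and then mapping to $K$, the paper changes the \emph{target}: it lifts $\phi$ along the weak equivalence $\RealP{\lv_P}\colon\RealP{\sd_P(K)}\to\RealP{K}$ to a map $\hat\phi$ into the vertically stratified object $\sd_P(K)$, applies \cref{lem:vertical_simplicial_approx} once to obtain $\hat f\colon\sd^k(S)\times\Delta^\I\to\sd_P(K)$, and sets $f=\lv_P\circ\hat f$. This avoids building $\beta_{k,n}$ altogether for Part~1. Your approach does have the merit that the map $\beta_{k,n}$ (the paper's $\ell$) is exactly what is needed anyway for the converse, where the boundary data $f_0\cup f_1$ on $S\vee S$ must be matched after passing to $\sd^k$; the paper indeed constructs $\ell$ there (\cref{prop:existence_ell}) and concatenates sd-homotopies to control the restriction. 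Your sketch ``a further relative version that pins the boundary data to $f_0\cup f_1$'' is the right idea but hides this work.
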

Note, that in comparison to \cref{cor:cof_approx}, the left hand side only subdivides in the nonstratified part of $S^n \times \Delta^{\I}$. That such a more efficient subdivision suffices, is a consequence of the particularly simple shape of this stratified simplicial set. It is a special example of a vertically stratified object.
We will study those in details in \cref{Section:Vertical}.
This will serve a dual purpose. First off, we use these objects to obtain a proof of \cref{prop:another_approximation} in \cref{Section:Last_Approximation}. Secondly, they give a simple and convenient model for the homotopy category of stratified spaces (see \cref{Cor:CW_Model}).

\begin{remark}
\label{rem:Weird_Cylinders}
Simplicial approximation theorems, such as \cref{cor:cof_approx}, allow one to produce a simplical maps $f\colon \sd^k(S)\to S'$ from the data of a continuous map $\phi\colon \Real{S}\to\Real{S'}$. The two maps will then be related by a (topological) homotopy
\begin{equation*}
    H\colon\Real{\sd^k(S)}\times [0,1]\to \Real{S'}
\end{equation*}
relating $\Real{f}$ and $\phi\circ\Real{\lv^k}$.
In its relative version, one can in addition assume that if $\phi$ was already simplicial on some subobject $A\subset S$, i.e. $\phi_{|\Real{A}}=\Real{g}$, with $g\colon A\to S'$, then $f$ can be chosen such that $f_{\sd^k(A)}=g\circ\lv^k$, and the homotopy $H$ can then be taken relative to $\Real{\sd^k(A)}$. \\
One common use of the relative statement is when one has a pair of simplicial maps $f,f'\colon S\to S'$ whose realization happen to be homotopic, through some map $H'\colon \Real{S\times\Delta^1}\cong\Real{S}\times [0,1]\to \Real{S'}$. Through relative simplicial approximations, one gets a map
\begin{equation*}
    H\colon \sd^k(S\times\Delta^1)\to S',
\end{equation*}
which restricts to $f\circ\lv^k$ and $f'\circ \lv^k$ on either side of the cylinder $\sd^k(S\times\Delta^1)$. Now, note that $H$ is not an elementary simplicial homotopy (see \cref{def:Strat_Homotopies_simplicial}), but instead an sd-homtopy. In particular, unless one is given an appropriate simplicial map $\sd^{k'}(S)\times\sd^{k''}(\Delta^1)\to\sd^k(S\times\Delta^1)$, one can not deduce that, after enough subdivisions, the maps $f\circ \lv^k$ and $f'\circ\lv^k$ become simplicially homotopic in the usual sense. It does not seem unreasonable to assume that such a map can generally be exposed, in fact in the unordered simplicial complex case, such a result follows from the material presented in \cite[Chapter 3, Section 4]{spanier1989algebraic}.
We have no need for such a result in the following proof, however, and will work with the alternative cylinder instead.
Of course, $f$ and $f'$ are still equal in the homotopy category, and if $S'$ is fibrant, this is enough to show that they must be related through a simplicial homotopy. 

One way to derive these type of simplicial approximation results, which we already employed in the proof of \cref{theo:sdP_Approximation}, is by leveraging the $\Exi$ functor. Let us again illustrate this for homotopies. Let $f,f'\colon S\to S'$ be two maps who become equal in the homotopy category (this is equivalent to asking that they realize to homotopic maps). $\iota\colon S'\to \Exi(S')$ be the usual inclusion. Then, since $\Exi$ is a fibrant replacement functor, $f\circ\iota$ and $f'\circ \iota$ must be related through a simplicial homotopy $\hat H\colon S\times\Delta^1\to \Exi(S')$. Now, if $S$ is finite, the map $\hat H$ must factor through $\Ex^n(S')$ for some $n\geq 0$. Using the adjunction $\sd^n\dashv\Ex^n$, we get an sd-homtopy
\begin{equation*}
    H\colon \sd^n(S\times \Delta^1)\to S'
\end{equation*}
which restricts on either side of the cylinder $\sd^n(S\times \Delta^1)$ to $f\circ\lv^n$ and $f'\circ\lv^n$. Again, note that, a priori, this is not enough to conclude that after enough subdivisions, $f\circ \lv^n$ and $f'\circ\lv^n$ become simplicially homotopic in the usual sense. 
\end{remark}

\subsection{Vertically stratified objects}
\label{Section:Vertical}

Throughout this subsection, if $\sigma\colon \Delta^n\to S$ is a possibly degenerate simplex, $\widehat{\sigma}$ stands for the unique non-degenerate simplex of $S$, of which $\sigma$ is a degeneracy.
\begin{definition}
A \define{$P$-labelled simplicial set} is the data of
\begin{itemize}
    \item a simplicial set $S$;
    \item a labelling map, $\lambda_S\colon S_{\nd}\to N(P)_{\nd}$,
\end{itemize}
such that, for any $\sigma\subset\tau$ in $S$, $\lambda_S(\tau)\subset\lambda_S(\sigma)$. A \define{label preserving map} $f\colon (S,\lambda_S)\to (S',\lambda_{S'})$ is a simplicial map $f\colon S\to S'$, such that for all $\sigma\in S_{\nd}$, $\lambda_S(\sigma)\subset \lambda_{S'}(\nondegen{f(\sigma)})$. 
We denote by $\PsS$ the category of $P$-labelled simplicial sets with label preserving maps. 
\end{definition}

\begin{example}\label{ex:Naive_Subdivision_is_P_labelled}
Let $K\in \sS_P$ be a stratified simplicial set. Consider the (non-stratified) simplicial set $\sd(K)$. Define a labelling map, $\lambda\colon \sd(K)_{\nd}\to N(P)_{\nd}$ as follows. If $(\mu,\sigma)$ is a vertex in $\sd(K)$, with  $\mu \subset \Delta^n$ and $\sigma\colon \Delta^n\to K$, let $\lambda(\mu,\sigma)=\nondegen{\varphi_K\circ\sigma\circ\mu}$. For higher dimensional simplices, set $\lambda((\mu_0,\dots,\mu_k),\sigma)=\lambda(\mu_0,\sigma)$. Then $(\sd(K),\lambda)$ is a $P$-labelled simplicial set, and for any stratum preserving simplicial map $f\colon K\to L$, $\sd(f)\colon (\sd(K),\lambda)\to(\sd(L),\lambda)$ is a label-preserving map.
\end{example}
\begin{definition}
Let $\lab{S}$ be a $P$-labelled simplicial set. Its \define{verticalization} is the inclusion of stratified simplicial sets $V(S,\lambda_S)\hookrightarrow S\times N(P)$, where $V(S,\lambda_S)$ is defined as the following subset:
\begin{equation*}
    V\lab{S}=\bigcup\limits_{\sigma\colon \Delta^n\to S \textnormal{ , n.d.} }\Im(\sigma)\times\lambda_S(\sigma)\subset S\times N(P) \spacecomma
\end{equation*}
where the union is taken over all non-degenerate simplices of $S$. 
If $f \colon \lab{S} \to \lab{S'}$ is a label preserving map, then the map $f \times 1_{N(P)}$ restricts to a stratified map $V(f)\colon V\lab{S} \to V\lab{S'}$ and $V$ defines a functor $\PsS \to \sS_P$ in this fashion.
\end{definition}

Next, let us pay some more attention to the stratified simplicials sets which lie in the essential image of the verticalization functor.
\begin{definition}\label{def:pre-vert-ss}
A \define{pre-verticalization} on a stratified simplicial set $K$ is the data of 
\begin{itemize}
    \item a simplicial set $\bar{K}$;
    \item a monomorphism in $\sS_P$, $K\hookrightarrow \bar{K}\times N(P)$.
\end{itemize}
A \define{vertical map} between stratified simplicial sets equipped with pre-verticalizations $K\subset \bar{K}\times N(P)$ and $L\subset\bar{L}\times N(P)$ is a stratum preserving map $f\colon K\to L$  such that there exists a simplicial map $\bar{f}\colon \bar{K}\to\bar{L}$ making the following diagram commute:
\begin{equation*}
    \begin{tikzcd}[column sep=30 pt]
    K
    \arrow{r}{f}
    \arrow[hookrightarrow]{d}
    &L
    \arrow[hookrightarrow]{d}
    \\
    \bar{K}\times N(P)
    \arrow{r}{\bar{f}\times \Id_{N(P)}}
    &\bar{L}\times N(P) \spaceperiod
    \end{tikzcd}
\end{equation*}
\end{definition}
\begin{definition}\label{def:VerticalSimplicialSets}
A \define{vertically stratified simplicial set} is a stratified simplicial set, $K$, equipped with a pre-verticalization, $K\hookrightarrow \bar{K}\times N(P)$, which is isomorphic to the verticalization of a $P$-labelled simplicial set through some vertical map:
\begin{equation*}
    \begin{tikzcd}
    K
    \arrow[hookrightarrow]{d}
    \arrow{r}{\simeq}
    &V(S,\lambda_S)
    \arrow[hookrightarrow]{d}
    \\
    \bar{K}\times N(P)
    \arrow{r}{\simeq}
    &S\times N(P)\spaceperiod
    \end{tikzcd}
\end{equation*}
\end{definition}

\begin{remark}
Note that for a stratified simplicial set, having a pre-verticalization is not enough to be a vertically stratified simplicial set. All stratified simplicial sets admit a tautological pre-verticalization, $K\to K\times N(P)$. However, we are only interested in those pre-verticalizations that come from taking the verticalizations of $P$-labelled simplicial sets. Furthermore, when considering vertical maps between those, one need not keep track of the associated simplicial map $\bar{f}\colon \bar{K}\to \bar{L}$ since in those cases, if such a map exists, it is unique. To see this, observe that for a vertically stratified simplicial set $K\to \bar{K}\times N(P)$, the composition $K\to\bar{K}\times N(P)\to\bar{K}$ must be surjective, leaving at most one choice for $\bar{f}$ (see also the fully faithfulness part of \cref{prop:relation_lab_diag_strat}).
\end{remark}

\begin{example}\label{Ex:Sd_P_Vertically_Filtered_sS}
Let $K\in \sS_P$ be a stratified simplicial set, and $(\sd(K),\lambda)$ the $P$-labelled simplicial set from \cref{ex:Naive_Subdivision_is_P_labelled}. Then $V(\sd(K),\lambda)=\sd_P(K)\subset \sd(K)\times N(P)$. Furthermore, if $f\colon K\to L$ is a stratified map, then $\sd_P(f)=V(\sd(f))$ is a vertical map. See \cref{fig:Subdivision_As_Vertical_Stuff}.
\end{example}
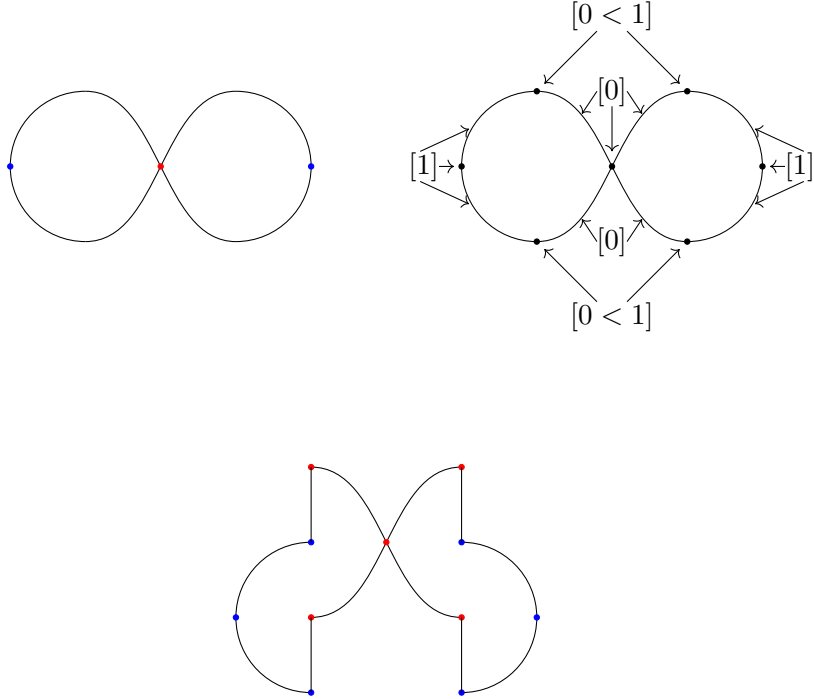
\begin{figure}
\begin{tikzpicture}
\draw[black] (0,0) arc (90 :270 : 1cm);
\draw[black] (0,0) ..controls (1,0) and (1,-2)..(2,-2);
\draw[black] (0,-2) ..controls (1,-2) and (1,0)..(2,0);
\draw[black] (2,-2) arc (-90 :90 : 1cm);
\filldraw[red](1,-1) circle (1pt);
\filldraw[blue](-1,-1) circle (1pt);
\filldraw[blue](3,-1) circle (1pt);

\draw[shift={(6,0)},black] (0,0) arc (90 :270 : 1cm);
\draw[shift={(6,0)},black] (0,0) ..controls (1,0) and (1,-2)..(2,-2);
\draw[shift={(6,0)},black] (0,-2) ..controls (1,-2) and (1,0)..(2,0);
\draw[shift={(6,0)},black] (2,-2) arc (-90 :90 : 1cm);
\filldraw[shift={(6,0)},black](1,-1) circle (1pt);
\filldraw[shift={(6,0)},black](-1,-1) circle (1pt);
\filldraw[shift={(6,0)},black](3,-1) circle (1pt);
\filldraw[shift={(6,0)},black] (0,0) circle (1pt) (2,0) circle (1pt) (0,-2) circle (1pt) (2,-2) circle (1pt);
\node[shift={(6,0)}] at (-1.5,-1) {$[1]$};
\draw[shift={(6,0)},->] (-1.55,-1.2)--( -0.9,-1.5);
\draw[shift={(6,0)},->] (-1.55,-0.8)--( -0.9,-0.5);
\draw[shift={(6,0)},->] (-1.3,-1)--( -1.1,-1);

\node[shift={(6,0)}] at (3.5,-1) {$[1]$};
\draw[shift={(6,0)},->] (3.55,-1.2)--( 2.9,-1.5);
\draw[shift={(6,0)},->] (3.55,-0.8)--( 2.9,-0.5);
\draw[shift={(6,0)},->] (3.3,-1)--( 3.1,-1);

\node[shift = {(6,0)}] at (1, 0) {$[0]$};
\draw[shift = {(6,0)},->] (0.8,0)--(0.6,-0.3);
\draw[shift = {(6,0)},->] (1.2,0)--(1.4,-0.3);
\draw[shift = {(6,0)},->] (1,-0.2)--(1,-0.8);

\node[shift = {(6,0)}] at (1, -2) {$[0]$};
\draw[shift = {(6,0)},->] (0.8,-2)--(0.6,-1.7);
\draw[shift = {(6,0)},->] (1.2,-2)--(1.4,-1.7);

\node[shift = {(6,0)}] at (1, 1) {$[0<1]$};
\draw[shift = {(6,0)},->] (0.8,0.8)--(0.1,0.1);
\draw[shift = {(6,0)},->] (1.2,0.8)--(1.9,0.1);

\node[shift = {(6,0)}] at (1, -3) {$[0<1]$};
\draw[shift = {(6,0)},->] (0.8,-2.8)--(0.1,-2.1);
\draw[shift = {(6,0)},->] (1.2,-2.8)--(1.9,-2.1);

\draw[shift={(3,-6)},black] (0,0) arc (90 :270 : 1cm);
\draw[shift={(3,-5)},black] (0,0) ..controls (1,0) and (1,-2)..(2,-2);
\draw[shift={(3,-5)},black] (0,-2) ..controls (1,-2) and (1,0)..(2,0);
\draw[shift={(3,-6)},black] (2,-2) arc (-90 :90 : 1cm);
\filldraw[shift={(3,-5)},red](1,-1) circle (1pt);
\filldraw[shift={(3,-6)},blue](-1,-1) circle (1pt);
\filldraw[shift={(3,-6)},blue](3,-1) circle (1pt);
\filldraw[shift={(3,-6)},blue] (0,0) circle (1pt) (2,0) circle (1pt) (0,-2) circle (1pt) (2,-2) circle (1pt);
\filldraw[shift={(3,-5)},red](0,0) circle (1pt) (2,0) circle (1pt) (0,-2) circle (1pt) (2,-2) circle (1pt);
\draw[shift={(3,-6)}] (0,0)--(0,1) (0,-2)--(0,-1) (2,0)--(2,1) (2,-2)--(2,-1);
\end{tikzpicture}
\caption{The figure $8$ as a stratified simplicial set over $P=\{0<1\}$, its subdivision as a $P$-labelled simplicial set, and the associated vertically stratified simplicial set, $\sd_P(K)$.}
\label{fig:Subdivision_As_Vertical_Stuff}
\end{figure}

\begin{remark}\label{rem:Vertical_Are_Diagrams}
    Vertically stratified simplicial sets can be characterized in another way. They are precisely the images of cofibrant objects in $\Diag_P$ under the functor $C_P\colon \Diag_P\to \sS_P$. Indeed, one can go from a $P$-labelled simplicial set, $\lab{S}$, to a (cofibrant) diagram, $F$, by setting $F(\I)=\{\sigma\in S\ |\ \Delta^{\I}\subset \lambda_S(\nondegen{\sigma})\}$, for regular flags $\I$,. Conversely, given a cofibrant diagram $F$, set $S=\cup_{\I}F(\I)$, and $\lambda_{S}(\sigma)=\max\{\Delta^{\I}\ |\ \sigma\in F(\I)\}$. To see why this labelling map is well-defined, recall from \cref{prop:Cofibrant_Diagrams} that cofibrant objects in $\Diag_P$ are precisely the diagrams, $F$, such that:
    \begin{itemize}
        \item For all $\I'\subset \I$, the map $F(\I)\to F(\I')$ is a monomorphism.
        \item If $\I_0\subset \I_1,\I_1'$, then $F(\I_0)\supset F(\I_1)\cap F(\I_1')\not=\emptyset$ if and only if there exist $\I_2$ such that $\I_1,\I_1'\subset \I_2$, and in this case, $F(\I_1)\cap F(\I_1')=F(\I_2)$, for the smallest such $\I_2$. 
    \end{itemize}
     One then checks that the verticalization process corresponds to applying $C_P$ to the corresponding diagram.

     This relation between vertically stratified simplicial sets and diagrams fits into a larger picture, which is explored in more details in \cref{section:appendixB}, more specifically \cref{prop:properties_of_U_functors}.
\end{remark}

\begin{proposition}\label{prop:simplicial_vertical_approx}
Let $f\colon S\times\Delta^{\I}\to L$ in $\sS_P$ be a stratum preserving simplicial map between vertically stratified simplicial sets. There exists a stratum preserving homotopy, $H\colon (S\times\Delta^{\I})\times\Delta^1\to L$ such that $H_0=f$ and $H_1$ is a vertical map. Furthermore, if $f$ is already vertical on $A\times\Delta^{\I}\subset S\times\Delta^{\I}$, then the homotopy $H$, can be taken relative to $A\times\Delta^{\I}$.
\end{proposition}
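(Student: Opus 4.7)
The plan is to proceed by a cellular induction on the non-degenerate simplices of $S$ lying outside $A$. Since stratified homotopies glue along pushouts of inclusions, attaching one non-degenerate simplex at a time reduces the problem to the local case $(S, A) = (\Delta^n, \partial\Delta^n)$, with $f|_{\partial\Delta^n\times\Delta^{\I}}$ already vertical and determined by some $\bar g \colon \partial\Delta^n \to \bar L$.

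For the vertical extension $\bar f \colon \Delta^n \to \bar L$, I would examine the $(n+k)$-shuffle
$$\mu = [(v_0, q_0), (v_0, q_1), \dots, (v_0, q_k), (v_1, q_k), \dots, (v_n, q_k)]$$
of $\Delta^n \times \Delta^{\I}$, whose stratification has non-degenerate flag $\I$. Verticality of $L$ forces the $\bar L$-component $\bar \mu$ of $f(\mu)$ to satisfy $\lambda(\nondegen{\bar \mu}) \supseteq \I$. The top $n$-face of $\mu$ is $\Delta^n \times \{q_k\}$, so its image under $f$ has the form $(\bar f_k, \mathrm{const}_{q_k})$ for some $\bar f_k \colon \Delta^n \to \bar L$, and I set $\bar f := \bar f_k$. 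Since $\bar f(\Delta^n)$ is a face of $\bar \mu$, the monotonicity of the labelling yields $\lambda(\nondegen{\bar f(\Delta^n)}) \supseteq \I$, so $\bar f \times \Id_{\Delta^{\I}}$ is a well-defined vertical map into $L$. Its boundary restriction equals $\bar g$ by the initial verticality of $f$ on $\partial \Delta^n \times \Delta^{\I}$.

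For the homotopy $H \colon \Delta^n \times \Delta^{\I} \times \Delta^1 \to L$, write $f(v_i, q_j) = (c_{i,j}, q_j)$. On a non-degenerate simplex where $t$ jumps from $0$ to $1$ at index $s$,
$$\tau = [(v_{i_0}, q_{j_0}, 0), \dots, (v_{i_{s-1}}, q_{j_{s-1}}, 0), (v_{i_s}, q_{j_s}, 1), \dots, (v_{i_m}, q_{j_m}, 1)],$$
I define $H(\tau)$ to have $N(P)$-component $[q_{j_0}, \dots, q_{j_m}]$ and $\bar L$-component $[c_{i_0, j_0}, \dots, c_{i_{s-1}, j_{s-1}}, c_{i_s, k}, \dots, c_{i_m, k}]$. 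This $\bar L$-simplex is obtained as the face of $f$ applied to the enlarged simplex of $\Delta^n \times \Delta^{\I}$ given by inserting the full staircase from $q_{j_s}$ up to $q_k$ at the vertex $v_{i_s}$. Monotonicity of $\lambda$ applied to this enlarged simplex forces the label of the non-degenerate $\bar L$-component to contain $\{q_{j_0}, \dots, q_{j_{s-1}}, q_{j_s}, q_{j_s+1}, \dots, q_k\}$, which in turn contains $\{q_{j_0}, \dots, q_{j_m}\}$ since the $j_l$ are non-decreasing and bounded by $k$. Hence $H(\tau)$ indeed lies in $L$, it reduces to $f$ at $t=0$, and to $\bar f \times \Id$ at $t=1$.

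The principal technical obstacle is the combinatorial verification that this prescription is natural with respect to faces and degeneracies, so that it assembles into an actual simplicial map, and that $H$ is constant in $t$ on $\partial \Delta^n \times \Delta^{\I} \times \Delta^1$. The latter is automatic from the formula: on $\partial \Delta^n$ verticality of $f$ gives $c_{i,j} = \bar g(v_i)$ independently of $j$, so the $\bar L$-component in the formula for $H$ ceases to depend on $s$, yielding the required relativity. Once this local step is established, gluing along the cellular induction concludes the proof.
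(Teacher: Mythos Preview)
Your formula is exactly the paper's. The paper packages it as a single simplicial map $R^n\colon \Delta^n\times\Delta^{\I}\times\Delta^1\to\Delta^n\times\Delta^{\I}$, natural in $n$, given on a simplex $([(x_0,p_0),\dots,(x_m,p_m)],l)$ by leaving the vertices at $t=0$ alone and replacing $p_i$ by the top element $p_{\max}=q_k$ at vertices with $t=1$. After first observing that $f$ factors through $S'\times\Delta^{\I}\subset L$ (where $S'\subset\bar L$ is the subcomplex of simplices with label $\supseteq\I$), the homotopy is just $\hat H=\hat f\circ R^n$ paired with the identity on $\Delta^{\I}$. Your prescription $c_{i,j}\rightsquigarrow c_{i,k}$ at $t=1$ is precisely this composite, and your argument for landing in $L$ via labels is the same reduction to $S'$ in disguise. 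The relative statement falls out because if $\hat f$ already factors through the projection to $\Delta^n$, precomposing with $R^n$ changes nothing.

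Where your write-up is weaker is the cellular-induction framing. You claim the induction reduces to $(\Delta^n,\partial\Delta^n)$ with $f$ \emph{already vertical on the boundary}, but this is not what the induction hands you: the boundary of a newly attached cell sits in previously processed cells, where the original $f$ is not vertical at $t=0$ --- only $H_1$ is. A homotopy built rel boundary on each cell in this situation would not glue to a single $\Delta^1$-homotopy on $S\times\Delta^{\I}$; it would give a concatenation. What actually carries the argument is exactly the naturality in $\Delta^n$ that you flag as the ``principal technical obstacle'': once that is verified, your formula assembles directly to a global $H$ on $S\times\Delta^{\I}\times\Delta^1$ with no induction at all, and the constancy on $A\times\Delta^{\I}$ follows from your observation that $c_{i,j}$ is independent of $j$ there. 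I would drop the cellular induction entirely and isolate the natural map $R^n$, as the paper does; the proof then becomes a two-line verification.
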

\begin{proof}
We give an explicit definition of the homotopy for the case $S = \Delta^n$ which is natural in $n$ and hence extends to the general case.
Note that $f$ factors through $S' \times \Delta^{\I} \subset L$, where $L=V\lab{\bar{L}}$, and $S'\subset \bar{L}$ is spanned by the simplices $\sigma$ such that $\Delta^{\I}\subset\lambda_{\bar{L}}(\sigma)$.
Hence, we may without loss of generality assume $L$ to be of the form  $S' \times \Delta^{\I}$ for some $S' \in \sS$. 
In particular, as $f$ is stratum preserving, it is uniquely determined by the map $\hat f: \Delta^n \times \Delta^{\I} \to S'$ obtained by composing with the projection to $S'$, and we may equivalently show the existence of a simplicial map (natural in $\Delta^n$ and $S'$) $\hat H\colon \Delta^n \times \Delta^{\I} \times \Delta^1 \to S'$ such that $\hat H_0 = \hat f$ and $\hat H_1$ factors through $\Delta^n \times \Delta^{\I} \to \Delta^n$. 
Any $k$-simplex in $\Delta^n \times \Delta^\I \times \Delta^1$ is given by the data of a flag 
\[(x_0, p_0) \leq \dots \leq (x_k,p_k)\] 
with $x_i \in [n]$ and $p_i \in \I$, together with some $l \in [k+1]$ indicating whether the vertices $(x_i, p_i)$ project to $0$ or $1 \in \Delta^1$. More explicitely, $l$ stands for the $k$-simplex of $\Delta^1$, $[0,\dots,0,1,\dots 1]$, where the first $1$ appears at the $l$-th position. Note that a simplex $ ([(x_0,p_0), \dots (x_{k},p_{k})],l)$ lies in the sub-object   $\Delta^n \times \Delta^\I \times \{0\}$ (resp. $\Delta^n \times \Delta^\I \times \{1\}$) if and only if  $l=k+1$ (resp. $l=0$).
Using this, consider the (not stratum preserving) simplicial map 
\begin{align*}
    R^n\colon\Delta^n  \times \Delta^\I \times \Delta^1 &\to \Delta^n \times \Delta^\I \\
     ([(x_0,p_0), \dots (x_{k},p_{k})],l) &\mapsto  [ (x_0,p_0), \dots , (x_l,p_l), (x_{l+1},p_m) , \dots , (x_{k},p_m) ], 
     \end{align*}
  where $p_m$ is the maximum of $\I$. It is immediate from the definition, that $R^n$ is natural in $n$ and furthermore, that $R^n_0 = 1$ and $R^n_1$ factors through $\Delta^n \times \Delta^{\I} \to \Delta^n$. 
Now, finally define $\hat H$ as the composition \[\hat H: \Delta^n  \times \Delta^\I \times \Delta^1 \xrightarrow{R^n} \Delta^n \times \Delta^\I \xrightarrow{\hat f} S',\] which fufills the requirements by the respective properties of $R^n$, and is natural in n. Finally, for the relative statement, note that if $\hat f$ already factors through $\Delta^n  \times \Delta^\I \to \Delta^n$, i.e. $f$ is vertical on $\Delta^n \times \Delta^\I$, then we have a commutative diagram 
\[
\begin{tikzcd}
\Delta^n \times \Delta^\I \times \Delta^1 \arrow[r, "R^n"] \arrow[d, "\pi_{\Delta^n \times \Delta^\I}"] & \Delta^n \times \Delta^\I \arrow[d] \arrow[rd, "\hat f"] & \\
\Delta^n \times \Delta^\I  \arrow[r]& \Delta^n \arrow[r, dashed]& S'
\end{tikzcd}
\]
In other words, $\hat H$ factors through $\pi_{\Delta^n \times \Delta^\I}: \Delta^n \times \Delta^\I \times \Delta^1 \to \Delta^n \times \Delta^\I$ and thus is a constant homotopy, which shows the relative statement.
\end{proof}

The theory of $P$-labelled and of vertically stratified simplicial sets has a topological counterpart, which we briefly introduce here.

\begin{definition}
A \define{$P$-labelled CW-complex} is the data of 
\begin{itemize}
    \item a CW-complex, $T$;
    \item a labelling map, $\lambda_T\colon \{\text{cells of $T$}\}\to N(P)_{\nd}$.
\end{itemize}
Such that, for any pair of cells $e_{\alpha},e_{\beta}$ such that $e_{\alpha}\cap \overline{e_{\beta}}\not = \emptyset$, $\lambda_T(e_{\beta})\subset \lambda_T(e_{\alpha})$.
A \define{label preserving map} $f\colon \lab{T}\to\lab{{T'}}$ is a continuous map $f\colon T\to {T'}$, such that for any cells $e_{\alpha}\in T, e_{\beta}\in {T'}$ such that $f(e_{\alpha})\cap e_{\beta}\not = \emptyset$, one has $\lambda_T(e_{\alpha})\subset \lambda_{T'}(e_{\beta})$. We denote the category of $P$-labeled CW-complexes by $\PCW$.
\end{definition}
\begin{example}\label{Ex:RealizingPlabeledGivesPlabeled}
Given a $P$-labelled simplicial set $\lab{T}$, its realization admits the structure of a $P$-labelled CW-complex with a cell for each non-degenerate simplex, which is given the same label as the simplex.
\end{example}

As for $P$-labelled simplicial sets, we are interested in the verticalization of a $P$-labelled CW-complex.
\begin{definition}
Let $(T,\lambda_T)$ be a $P$-labelled CW-complex. Its \define{verticalization} is the inclusion $V(T,\lambda_T)\hookrightarrow T\times \RealP{N(P)}$, where $V(T,\lambda_T)$ is the following subset:
\begin{equation*}
    V\lab{T}=\bigcup\limits_{e_{\alpha}\in \{\text{cells of $T$}\}}e_{\alpha}\times\RealP{\lambda_T(e_{\alpha})}\subset T\times\RealP{N(P)}. 
\end{equation*}
Extending to morphisms in the obvious way, this construction defines a functor $V\colon \PCW \to \Top_P$, which factors through $V\colon \PCW\to \Top_{N(P)}$.
\end{definition}

Again, it can be useful to study the objects in the essential image of the verticalization functor. These lead to a particularly convenient class of stratified spaces which admit a cell decomposition similar to classical CW-complexes (see \cref{Rem:Vertical_CW_Cell_Complexes}). We give a brief outlook into the resulting theory here, without going too much into detail. The definitions require a definition of a \define{pre-verticalization} and \define{vertical map}, which we omit here, since they are entirely analogous to the simplicial definition in \cref{def:pre-vert-ss}, replacing $N(P)$ by $\RealP{N(P)}$.

\begin{definition}\label{def:vert_CW}
A \define{vertically stratified CW-space} is a stratified space equipped with a pre-verticalization, which is vertically isomorphic to the verticalization of a $P$-labeled CW-complex. When such an isomorphism is fixed we speak of a \define{vertically stratified CW-complex}. We will consider those objects in the categories $\Top_P$ and $\Top_{N(P)}$.
\end{definition}

\begin{remark}\label{Rem:Vertical_CW_Cell_Complexes}
    Vertically stratified CW-spaces can also be characterized explicitly through the existence of a cell decomposition. Though, in this case, one has to be careful to also impose verticality conditions on the gluing of cells. Since the cells are of shape $B^n\times \RealNP{\Delta^{\I}}$, for $n\geq 0$, and regular flags $\I$, and are glued along their boundaries $S^{n-1}\times \RealNP{\Delta^{\I}}$, this implies that vertically stratified CW-spaces are actual cell complexes in $\Top_{N(P)}$ (and in $\Top_P$). In particular, they are cofibrant objects.
\end{remark}

\begin{example}\label{ex:real_sd_P_is_vert}
    Following \cref{Ex:Sd_P_Vertically_Filtered_sS,Ex:RealizingPlabeledGivesPlabeled}, given a stratified simplicial set $K\in\sS_P$, we have a $P$-labelled CW-complex $(\Real{\sd(K)},\Real{\lambda})$. Its verticalization gives a vertically stratified CW-structure on $\RealP{\sd_P(K)}$:
    \begin{equation*}
        \RealP{\sd_P(K)}\hookrightarrow\Real{\sd(K)}\times\RealP{N(P)}.
    \end{equation*}
    See \cref{prop:relation_lab_diag_strat} for a complete picture of the relationship between labelled objects, vertical objects and diagrams indexed over $R(P)$.
\end{example}

\begin{proposition}\label{prop:vertical_approx}
    Let $f\colon X\to Y$ be a map in $\Top_{N(P)}$ between vertically stratified CW-spaces, $X \cong V \lab{\bar{X}}$ and $Y\cong V\lab{\bar{Y}}$. Then $f$ is strongly stratified homotopic to a vertical map. Furthermore, if $\bar{A} \subset \bar{X}$ is a subcomplex equipped with the induced labelling, then the homotopy can be taken relative to $A=V\lab{\bar{A}}\subset X$. The analogous statement also holds for maps and homotopies in $\Top_P$.
\end{proposition}

\begin{proof}
    Here, we give a proof using the cellular structure of vertically stratified CW-spaces. A proof going through abstract homotopy theory is given in \cref{proof:alternative_proof_in_appendix}.
    We first prove the statement for maps in $\Top_{N(P)}$ by the usual induction on skeletons argument. Let us write $X^n=V(\bar{A}\cup \bar{X}^n,\lambda_{\bar{X}})$, for $n\geq -1$. We will construct a sequence of maps, $g_i\colon X\to Y$, $i\geq -1$, with $g_{-1}=f$, and $g_n$ vertical on $X^n$, and $g_{n}$ and $g_{n-1}$ homotopic relative to $X^{n-1}$, for $n\geq 0$, through some homotopy in $\Top_{N(P)}$, $H_n$. By subdividing the interval, one can then concatenate the homotopies $H_n$, to a homotopy $H\colon X\times [0,1)\to Y $. By construction, the map $g\colon x\mapsto \lim_{t\to 1} H(x,t)$ is well defined, vertical and extends $H$ to $X \times [0,1]$. Hence, $g$ is as required in the statement of the proposition.

    Now, let $n\geq 0$, and assume $g_{n-1}$ has been constructed. Let $e_{\alpha}$ be some $n$-cell of $X$ of shape $B^n\times\Delta^{\I}$ and define the homotopy
    \begin{align*}
        H'_{n,\alpha}\colon B^n\times\RealNP{\Delta^{\I}}\times[0,1]&\to Y\subset{\bar{Y}}\times\RealNP{N(P)}\\
        ((x,\xi),s)&\mapsto (\pr_{\bar{Y}}(g_{n-1}(x,s(1,0,\dots,0)+(1-s)\xi)),\xi)
    \end{align*}
    Assembling the $H'_{n,\alpha}$ gives a homotopy on the $n$-skeleton $H'_n\colon X^n\times [0,1]\to Y$, which is constant on the $n-1$ skeleton, by the induction hypothesis. But now, by \cref{Rem:Vertical_CW_Cell_Complexes}, the inclusion $X^n\to X$ is a cofibration in $\Top_{N(P)}$, and thus has the homotopy extension property. This means that $H'_n$ extends to the desired homotopy $H_n\colon X\times [0,1]\to Y$, between $g_{n-1}$ and some map $g_n$ with the required properties, which concludes the proof for $\Top_{N(P)}$.

    For maps in $\Top_P$, first note that, as noticed in Remark \ref{Rem:Vertical_CW_Cell_Complexes}, $X$ and $Y$ are cofibrant objects, in $\Top_P$, and in fact come from cofibrant objects in $\Top_{N(P)}$. Since all objects in $\Top_P$ and $\Top_{N(P)}$ are fibrant, and since $\varphi_P\circ -\colon \Top_{N(P)}\to \Top_P$ is a Quillen equivalence between the two model categories, this implies that we have canonical bijections
\begin{align*}
    \Hom_{\Top_P}(X,Y)/ {\simeq_P}&\cong \Hom_{\Ho\Top_P}(X,Y)\\
    &\cong \Hom_{\Ho\Top_{N(P)}}(X,Y)\\
    &\cong \Hom_{\Top_{N(P)}}(X,Y)/ {\simeq_{N(P)}}.
\end{align*}
Where $\simeq_P$ and $\simeq_{N(P)}$ stand for the homotopy relations in $\Top_P$ and $\Top_{N(P)}$ respectively. In particular, this implies that there must exist $f'\colon X\to Y$ a map in $\Top_{N(P)}$ homotopic to $f$ as a map in $\Top_P$. 
For the relative statement, consider the relative categories $\Top_P^A$, and $\Top^A_{N(P)}$. The Quillen equivalence between $\Top_P$ and $\Top_{N(P)}$ passes to the relative categories, and the above argument then gives the existence of a map in $\Top_{N(P)}^A$, $f'\colon X\to Y$, homotopic to $f$ as a map of $\Top_P^A$. In other words, it is homotopic to $f$ relative to $A$.
We then conclude the proof by applying the first part of the proposition to $f'$.
\end{proof}

\begin{definition}
Two vertical maps between vertically stratified CW-spaces, $f,g\colon X\to Y$, are said to be \define{vertically homotopic} if there exists a vertical map $H\colon X\times [0,1]\to Y$ which is a homotopy between $f$ and $g$. Write $f\simeq_{V}g$ when $f$ and $g$ are vertically homotopic, and let $\mathrm{VCW}_P/{\simeq_{V}}$ be the category whose objects are vertically stratified CW-spaces and whose set of morphisms between $X$ and $Y$ is the set of vertical maps quotiented by the equivalence relation $\simeq_{V}$. 
\end{definition}
Now, as a corollary of \cref{prop:vertical_approx}, we obtain that the homotopy category $\Ho \Top_P$ may equivalently be described as the homotopy category of vertically stratified CW-spaces $\mathrm{VCW}_P /{\simeq_{V}}$, and vertical maps and homotopies. The comparison is given by the forgetful functor, sending a vertically stratified CW-space $X\hookrightarrow \bar{X}\times \RealP{N(P)}$ to the stratified space $X$.
\begin{theorem}\label{Cor:CW_Model}
the forgetful functor induces an equivalence of categories
\begin{equation*}
    \mathrm{VCW}_P/{\simeq_{V}}\cong \Ho\Top_P.
\end{equation*}
\end{theorem}

\begin{proof}
By classical results, we know that $\Ho\Top_P\cong \Top_P^{\cof}/{\simeq_P}$. Furthermore, for all $X\in \Top_P$, $X$ is weakly equivalent to $\RealP{\sd_P(\Sing_P(X))}$, which admits the structure of a vertically stratified CW-complex by \cref{ex:real_sd_P_is_vert}. Since stratified spaces admitting the structure of a vertically stratified CW-complexes are cofibrant objects, when $X$ is also a cofibrant object it is in fact homotopy equivalent to the vertically stratified CW-complex  $\RealP{\sd_P(\Sing_P(X))}$.
Thus, one can restrict from the subcategory of cofibrant objects, to the subcategory of vertically stratified CW-spaces (while still retaining all stratum preserving maps and homotopies). But then, by \cref{prop:vertical_approx}, it is enough to consider only vertical maps, and, from the relative case, we see that it is also enough to consider only vertical homotopies between vertical maps, which concludes the proof.
\end{proof}

\begin{remark}
\cref{Cor:CW_Model} implies that vertically stratified CW-complexes and vertical maps (and equivalently labeled CW-complex with label preserving maps, see \cref{section:appendixB}, specifically \cref{Cor:lab_CW_Model}) are a model for the homotopy category of stratified spaces. This is particularly convenient for a few reasons.
    \begin{itemize}
        \item As illustrated in the proof of \cref{prop:vertical_approx}, vertically stratified CW-complexes allow for inductive arguments on their skeletons, giving access to reasonably elementary and topological proofs.
        \item Vertically stratified CW complexes allow for a nice interpretation of the homotopy links, and the stratified homotopy groups. Indeed, let $X\cong V\lab{\bar{X}}\subset \bar{X}\times\RealP{N(P)}$ be a vertically stratified CW-complex, let $\I$ be a regular flag, and let $\bar{X}_{\I}\subset \bar{X}$ be the subcomplex containing all cells $e_{\alpha}\subset \bar{X}$, such that $\Delta^{\I}\subset\lambda_{\bar{X}}(e_{\alpha})$. Then, there is a homotopy equivalence
        \begin{equation*}
        \HolIP(X)\simeq \bar{X}_{\I}
        \end{equation*}
        where the map from $\HolIP(X)$ to $\bar{X}_{\I}$ is given by evaluating at the $0$-th vertex, then projecting to $\bar{X}$, while its homotopy inverse is the map sending a point $x\in\bar{X}_{\I}$ to the composition $\{x\}\times\RealP{\Delta^{\I}}\hookrightarrow \bar{X}_{\I}\times\RealP{\Delta^{\I}}\hookrightarrow X$. In particular, the $p$-stratum of $X$ is homotopy equivalent to the subobject $\bar{X}_p\subset \bar{X}$, and for all regular flags $\I=\{p_0<\dots<p_n\}$, the above equivalence can be rewritten as 
        \begin{equation*}
        \HolIP(X)\simeq\bar{X}_{p_0}\cap\dots\cap \bar{X}_{p_n}.
        \end{equation*}
        In particular, for a vertically stratified CW-complex, its strata - and homotopy links - can be interpreted as subobjects - and intersections of those subobjects - of the associated $P$-labelled CW-complex.
         Even more, a pointing $x$ of $\bar{X}$ in a cell with label $\Delta^{\I}$ gives a stratified pointing $\phi\colon \RealP{\Delta^{\I}}\to X$ (and all pointing of $X$ come from restrictions of such pointings, up to homotopy). Thus, the stratified homotopy groups of \cite{douteauEnTop} associated to the pointing $\phi$ are nothing more than the data of the homotopy groups of the $\bar{X}_{\I'}$, for $\I'\subset \I$, with respect to the pointing $x$.
        \item Given a vertically stratified CW-complex $X \cong V\lab{\bar X}$ its underlying (non-stratified) homotopy type is that of the CW-complex $\bar{X}$. Indeed, to see this, consider the diagram of spaces associated to $V\lab{X}$, $F\colon \I\mapsto \bar{X}_\I$ defined as in the previous bulletpoint (in the appendix this construction is denoted $U^{\Top}$, see \cref{prop:properties_of_U_functors} for more details).
        Then, the underlying space of $X \cong V\lab{\bar X}$ is a simplicial model for the homotopy colimit of $F$ (compare for ex \cite[Sec. 18.1]{hirschhornModel}). At the same time $\bar{X}$ is the regular colimit of $F$. Furthermore, it follows immediately from the construction of $F$, that it is a cofibrant diagram in the projective model structure, on the category of functors from $R(P)^{\op}$ to $\Top$ \cref{prop:properties_of_U_functors}. 
        In particular (see for example \cite[Prop 9.11]{Dugger2008APO}), we have weak equivalences of topological spaces:
        \[
        X = \hocolim F \xrightarrow{\sim} \colim F = \bar{X}.
        \]
        The analogous result holds for vertically stratified simplicial sets.
    \end{itemize}
\end{remark}
\subsection{One last approximation theorem}
\label{Section:Last_Approximation} We now move on to the proof of \cref{prop:another_approximation}.
We first need a technical lemma on how to concatenate sd-homotopies.
\begin{lemma}\label{lem:concat_sd_ho}
Let $S,S'$ be simplicials sets and
Let $H\colon \sd^k( S \times \Delta^1) \to S'$ as well as $H':\sd^k( S \times \Delta^1) \to S'$ be sd-homotopies between simplicial maps $f$ and $g$, and $g$ and $h$ respectively. Then, there exists a concatenated sd-homotopy $H'' \colon \sd^{k+2}( S \times \Delta^1) $ from $f \circ \lv^2$ to $h \circ \lv^2$. The analogous result for pointed and stratified sd-homotopies also holds. 
\end{lemma}
\begin{proof}
We prove the non-pointed non-stratified case. The other cases work completely analogously. 
First consider a gluing \begin{equation*}
H\cup H'\colon \sd^{k}(S \times \Delta^1) \cup_{\sd^{k}(S)}\sd^{k}(S \times \Delta^1) \to S' \spacecomma
\end{equation*}
gluing $H$ to $H'$ along $g$.
The left hand side of this map is equivalently given by 
\[
\sd^{k}(S \times ( \Delta^1 \cup_{1,0} \Delta^1))= \sd^{k}(S \times ( \Delta^1 \cup_{1,0} \Delta^1))  \spaceperiod
\]
Note, that we may naturally identify
\[
\sd^2(\Delta^1) = \Delta^1 \cup_{1,1} \Delta^1 \cup_{0,0} \Delta^1 \cup_{1,1} \cup \Delta^1.
\]
Using this identification, one obtains a map
\[
c\colon \sd^2(\Delta^1) \to ( \Delta^1 \cup_{1,0} \Delta^1)
\]
given by collapsing the second and the fourth interval of $\sd^2(\Delta^1)$ to a point. In particular, this map maps left endpoint to left endpoints and right endpoints to right endpoints. From $c$, we in turn now obtain the composition
\begin{equation*}
   c' \colon \sd^{2}(S \times  \Delta^1)\to\sd^{2}(S)\times \sd^{2}(\Delta^1) \xrightarrow{\lv^2 \times c} S \times ( \Delta^1 \cup_{1,0} \Delta^1 )  
\end{equation*}
mapping the left boundary of the subdivided cylinder $\sd^{2}(S \times \Delta^1)$ to the left boundary of the glued cylinder $S \times ( \Delta^1 \cup_{1,0} \Delta^1)$ and analogously for the right boundary. Precomposing $H' \cup H'$, with $\sd^k(c')$ produces an sd-homotopy $H''$ as required.
\end{proof}
Next, we show the following.
\begin{lemma}\label{lem:vertical_simplicial_approx}
Let $S\in \sS$ be a finite simplicial set with simplicial subset $S_0 \subset S$. Further, let $K \cong V(\bar K, \lambda_{\bar K}) \in \sS_P$ be a vertically stratified simplicial set, and $\I$ a regular flag of $P$. Assume that we are given a map in $\Top_{P}$, $\phi\colon \RealP{S\times\Delta^{\I}}\to \RealP{K}$ and a vertical map in $\sS_P$ $g\colon S_0\times\Delta^{\I}\to K$, such that 
\[
\phi|_{\RealP{{S_0} \times \Delta^\I}} = \RealP{g}.
\]

Then for $k\gg 0$ there exists a vertical, simplicial map $ f\colon \sd^k(S)\times\Delta^{\I}\to {K} $, such that 
\[
f|_{\sd^k(S_0) \times \Delta^\I} = g \circ (\lv^k \times 1_{\Delta^\I}) 
\]
and such that $\RealP{f}$ is stratum preserving homotopic to $\phi \circ\RealP{\lv^k\times\Id_{\Delta^{\I}}}$ relative to $\RealP{\sd^k({S_0})\times\Delta^{\I}}$.
\end{lemma}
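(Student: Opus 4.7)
The plan is to first use the topological vertical approximation theorem to replace $\phi$ by a vertical map, and then apply the classical simplicial approximation theorem at the level of the underlying unstratified bar-simplicial set. Write $K = V(\bar K, \lambda_{\bar K})$ with $\bar K$ a $P$-labelled simplicial set. Applying \cref{Prop:Vertical_Approx_CW} to $\phi$, which is already vertical on $\RealP{S_0 \times \Delta^{\I}}$ because $g$ is a vertical simplicial map, yields a vertical map $\phi_1$ together with a homotopy $\phi \simeq_P \phi_1$ relative to $\RealP{S_0 \times \Delta^{\I}}$. To obtain the strongly stratum preserving version, I would first lift $\phi$ through the Quillen equivalence $\varphi_P \circ -\colon \TopNP \to \Top_P$ and then apply the $\TopNP$ version of \cref{Prop:Vertical_Approx_CW}.

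The verticality of $\phi_1$ means it is induced by a continuous map $\bar\phi_1 \colon \Real{S} \to \Real{\bar K}$. Since every simplex of $S$ (viewed with its vertical label coming from $S \times \Delta^{\I} \subset S \times N(P)$) carries the constant label $\Delta^{\I}$, the map $\bar\phi_1$ is forced to factor through the subcomplex $\Real{\bar K_{\I}} \subset \Real{\bar K}$ spanned by the non-degenerate simplices $\sigma$ satisfying $\Delta^{\I} \subseteq \lambda_{\bar K}(\sigma)$. Moreover, $\bar\phi_1$ restricts on $\Real{S_0}$ to the realization of the underlying simplicial map $\bar g \colon S_0 \to \bar K_{\I}$ of $g$. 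Applying the classical relative simplicial approximation theorem to the continuous map $\bar\phi_1 \colon \Real{S} \to \Real{\bar K_{\I}}$ relative to $\Real{S_0}$, I would produce, for $k$ sufficiently large, a simplicial map $\bar f \colon \sd^k(S) \to \bar K_{\I}$ with $\bar f|_{\sd^k(S_0)} = \bar g \circ \lv^k$ together with a homotopy $H \colon \Real{\sd^k(S)} \times [0,1] \to \Real{\bar K_{\I}}$ between $\Real{\bar f}$ and $\bar\phi_1 \circ \Real{\lv^k}$, constant on $\Real{\sd^k(S_0)}$.

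The verticalization of $\bar f$ is then the required vertical simplicial map $f \coloneqq (\bar f \times \Id_{\Delta^{\I}})|_{\sd^k(S) \times \Delta^{\I}} \colon \sd^k(S) \times \Delta^{\I} \to K$; its verticality and the agreement with $g \circ (\lv^k \times \Id_{\Delta^{\I}})$ on $\sd^k(S_0) \times \Delta^{\I}$ are immediate from the construction. Taking the product of $H$ with $\RealP{\Delta^{\I}}$ and restricting to $\RealP{K}$ yields a vertical, hence strongly stratum preserving, homotopy between $\RealP{f}$ and $\phi_1 \circ \RealP{\lv^k \times \Id_{\Delta^{\I}}}$ relative to $\RealP{\sd^k(S_0) \times \Delta^{\I}}$; concatenating with the pullback along $\lv^k \times \Id_{\Delta^{\I}}$ of the reversed homotopy $\phi \simeq_P \phi_1$ from the first step then produces the required homotopy. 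The main technical point is checking that $\bar\phi_1$ genuinely factors through $\Real{\bar K_{\I}}$ — this is essentially a bookkeeping consequence of verticality combined with the uniform label $\Delta^{\I}$ on $S \times \Delta^{\I}$ — after which the remainder of the proof reduces to classical simplicial approximation and straightforward manipulations of vertical structures via \cref{Rem:Vertical_Objects_Are_Diagrams}.
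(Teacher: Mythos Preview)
Your proposal is correct and follows essentially the same approach as the paper: first apply \cref{Prop:Vertical_Approx_CW} to replace $\phi$ by a vertical map relative to $S_0 \times \Delta^{\I}$, observe that the underlying map $\bar\phi_1$ factors through the subcomplex $\bar K_{\I}$ of simplices with label containing $\Delta^{\I}$, then apply classical (unstratified) relative simplicial approximation to $\bar\phi_1$, and finally take the product with $\Delta^{\I}$. The paper phrases the third step as the nonstratified case of \cref{theo:sdP_Approximation}, but this is exactly classical simplicial approximation; your explicit remark about lifting through the Quillen equivalence $\varphi_P \circ -$ to obtain the \emph{strongly} stratum preserving homotopy is a detail the paper leaves implicit.
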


\begin{proof}
By \cref{prop:vertical_approx}, there exists a vertical map $h\colon \RealP{S\times\Delta^{\I}}\to \RealP{K}$, that is stratified homotopic to $\phi$ relative to $\RealP{{S_0}\times\Delta^{\I}}$.  
The image of $\bar{h}$ must be contained in the cells with labels containing $\Delta^{\I}$. We write $K_{\I}$ for the corresponding subsimplicial set of  $\bar K$. The situation can be summed up by the following commutative diagram:
\begin{equation*}
\begin{tikzcd}
\Real{{S_0}}
\arrow{r}{\Real{\bar{g}}}
\arrow[hookrightarrow]{d}
&\Real{K_{\I}}
\\
\Real{S}
\arrow[swap]{ur}{\bar{h}}
\end{tikzcd}
\end{equation*}
Now, by the non-stratified, relative version of \cref{theo:sdP_Approximation} (see also \cref{rem:relative_version_sdP_approximation}), there exists some $k\geq 0$ and some simplicial map $h'\colon \sd^k(S)\to K_{\I}$ such that the following diagram commutes:
\begin{equation*}
\begin{tikzcd}
\sd^k({S_0})
\arrow{r}{\bar{g}\circ\lv^k}
\arrow[hookrightarrow]{d}
&K_{\I}
\\
\sd^k(S)
\arrow[swap]{ur}{h'}
\end{tikzcd}
\end{equation*}
and such that $\Real{h'}$ is homotopic to $\bar{h}\circ\Real{\lv^k}$ relative to $\sd^k({S_0})$. But now the composition \[\sd^k(S)\times\Delta^{\I} \xrightarrow{h' \times 1_{\Delta^\I}} K_{\I} \times \Delta^{\I} \hookrightarrow {K} \] is the desired map.
\end{proof}
We may now combine the previous two lemmas, to obtain the following proposition.
\begin{proposition}\label{prop:existence_ell}
Let $S$ be a finite pointed simplicial set and $\I$ some regular flag in $P$. Then, for any $S_0\subset S$ a simplicial subset containing the base point, there exists a pointed vertical simplicial map
$\ell \colon \sd^k(S)\times\Delta^{\I}\to \sd_P(S\times\Delta^{\I})$
which restricts to a pointed vertical simplicial map
$ \ell_0\colon\sd^k({S_0})\times\Delta^{\I}\to \sd_P({S_0}\times\Delta^{\I}) $
such that the diagram
\begin{equation}
\label{eq:Diagram_l}    
\begin{tikzcd}
\sd^k(S_0)\times\Delta^{\I} \arrow[rd, "\lv \times 1_{\Delta^\I}"'] \arrow[r, "\ell_0"] & \sd_P(S_0\times\Delta^{\I}) \arrow[d, "\lv_P"]\\
 & S_0 \times \Delta^\I 
\end{tikzcd}
\end{equation}
commutes up to pointed sd-homotopy. 
\end{proposition}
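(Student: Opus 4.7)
The plan is to construct $\ell$ as a vertical simplicial approximation of a topological homotopy inverse to $\lv_P$, and then convert the resulting topological homotopy into a pointed sd-homotopy using $\Exi$ as a fibrant replacement in pointed simplicial sets.

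First I would fix a convenient pointing of $\sd_P(\Delta^{\I})$. Writing $\I = [p_0 < \dots < p_n]$, I would define a map $s_0\colon \Delta^{\I} \to \sd_P(\Delta^{\I})$ on vertices by $p_i \mapsto [(\{0,\dots,i\}, p_i)]$; a direct computation shows that this extends to a stratum preserving simplicial map satisfying $\lv_P \circ s_0 = 1_{\Delta^{\I}}$. Composing with the inclusion $\sd_P(\Delta^{\I}) \hookrightarrow \sd_P(S \times \Delta^{\I})$ equips the latter with the structure of a pointed object of $\sS_P^{\Delta^{\I}}$ with respect to which $\lv_P$ becomes a pointed map.

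Next I would construct $\ell_0$ and $\ell$ in sequence via \cref{lem:vertical_simplicial_approx}. For $\ell_0$, I would pick a homotopy inverse $\phi_0\colon \RealP{S_0 \times \Delta^{\I}} \to \RealP{\sd_P(S_0 \times \Delta^{\I})}$ of $\Real{\lv_P}$ extending $\Real{s_0}$ on $\RealP{\Delta^{\I}}$; such an extension exists by a standard model category argument using that the basepoint inclusion is a cofibration in $\Top_P$ and that every object of $\Top_P$ is fibrant. Applying \cref{lem:vertical_simplicial_approx} with subcomplex $\{*\}$ and base vertical map $s_0$ then yields a pointed vertical simplicial map $\ell_0\colon \sd^{k_0}(S_0) \times \Delta^{\I} \to \sd_P(S_0 \times \Delta^{\I})$. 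For the extension, I would view $\ell_0$ as landing in $\sd_P(S \times \Delta^{\I})$ via inclusion and construct a topological extension $\phi\colon \RealP{\sd^{k_0}(S) \times \Delta^{\I}} \to \RealP{\sd_P(S \times \Delta^{\I})}$ of $\Real{\ell_0}$ by precomposing any homotopy inverse of $\Real{\lv_P}$ with $\RealP{\lv^{k_0} \times 1}$ and modifying it on a neighborhood of $\RealP{\sd^{k_0}(S_0) \times \Delta^{\I}}$ via the homotopy extension property. Applying \cref{lem:vertical_simplicial_approx} a second time produces $\ell$ restricting to $\ell_0$.

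The main obstacle is verifying that diagram \eqref{eq:Diagram_l} commutes up to pointed sd-homotopy. By construction, $\lv_P \circ \ell_0$ and $\lv^{k_0} \times 1_{\Delta^{\I}}$ realize to stratum preserving homotopic pointed maps $\RealP{\sd^{k_0}(S_0) \times \Delta^{\I}} \to \RealP{S_0 \times \Delta^{\I}}$. Via the simplicial adjunction $- \times \Delta^{\I} \dashv \Hol_{\I}(-)$, these correspond to pointed simplicial maps $\sd^{k_0}(S_0) \to \Hol_{\I}(S_0 \times \Delta^{\I})$ whose realizations are pointed homotopic. Post-composing with the classical fibrant replacement $\Hol_{\I}(S_0 \times \Delta^{\I}) \hookrightarrow \Exi(\Hol_{\I}(S_0 \times \Delta^{\I}))$ yields two pointed simplicial maps into a fibrant pointed simplicial set, which are therefore simplicially homotopic. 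Since $\sd^{k_0}(S_0)$ is finite, this homotopy factors through some $\Ex^{k''}$, and its adjoint under $\sd^{k''} \dashv \Ex^{k''}$ gives a pointed sd-homotopy between $\lv_P \circ \ell_0 \circ (\lv^{k''} \times 1)$ and $\lv^{k_0 + k''} \times 1_{\Delta^{\I}}$. Replacing $\ell$ and $\ell_0$ by their precompositions with $\lv^{k''} \times 1_{\Delta^{\I}}$ and setting $k = k_0 + k''$ then produces maps meeting all the requirements.
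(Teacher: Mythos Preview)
Your overall strategy—build $\ell_0$ and $\ell$ by vertical simplicial approximation of a homotopy inverse to $\lv_P$, then produce the sd-homotopy by adjointing to $\Hol_\I$ and invoking $\Exi$—differs from the paper's. The paper stays entirely in the stratified world: it first replaces $\lv_P\circ\ell_0$ by a \emph{vertical} map via \cref{prop:simplicial_vertical_approx}, and then applies \cref{lem:vertical_simplicial_approx} directly to the topological homotopy (now between two vertical maps) to obtain a simplicial one; finally the two pieces are concatenated. Your approach instead tries to reduce the sd-homotopy construction to a statement about classical simplicial sets.

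There is, however, a genuine gap in your reduction. You assert that the adjoints $\hat f,\hat g\colon \sd^{k_0}(S_0)\to \Hol_\I(S_0\times\Delta^\I)$ have pointed homotopic realizations. But what you actually know is that the \emph{topological} adjoints $\Real{\sd^{k_0}(S_0)}\to \HolIP(\RealP{S_0\times\Delta^\I})$ are homotopic, and these factor through the comparison map $\Real{\Hol_\I(S_0\times\Delta^\I)}\to \HolIP(\RealP{S_0\times\Delta^\I})$. To lift the homotopy back, you need this comparison map to be a weak equivalence—and for a general target $K$ that is precisely \cref{theo:simHolink_v_Top_Hol}, the theorem this proposition is meant to feed into. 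The argument is salvageable because for the \emph{specific} target $S_0\times\Delta^\I$ one can identify $\Hol_\I(S_0\times\Delta^\I)\cong S_0^{\Delta^\I}$ and $\HolIP(\RealP{S_0\times\Delta^\I})\cong \Real{S_0}^{\Real{\Delta^\I}}$, and then check by hand (using that $B\to B^{\Delta^n}$ is always a weak equivalence via an explicit simplicial contraction) that the comparison map is a weak equivalence here. But this step must be spelled out; as written the passage is circular.

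A smaller point: your proposed section $s_0(p_i)=(\{0,\dots,i\},p_i)$ does not define a simplicial map into $\sd_P(\Delta^\I)$, since the pair of vertices $(\{0,\dots,i\},p_i)$, $(\{0,\dots,i{+}1\},p_{i+1})$ fails the defining condition for a $1$-simplex of $\sd_P(\Delta^\I)$. The correct vertical section sends $p_i\mapsto (\{0,\dots,n\},p_i)$.
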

Before we begin with the proof, let us quickly remark on the somewhat specific wording of \cref{prop:existence_ell}.
\begin{remark}
\cref{prop:existence_ell} is at the heart of the proof of \cref{prop:another_approximation}. We effectively use it in two ways. First, for the existence part of \cref{prop:another_approximation}, one uses the case $S_0=S$ guaranteeing the existence of a global $l$ with certain commutativity properties. Then, for the uniqueness part (up to sd-homotopy) one considers the case where $S$ is a cylinder with boundary $S_0$. In that case, it is crucial that the restriction of the global map $\ell$ to $\sd^k(S_0)\times\Delta^{\I}$ has its image in $\sd_P(S_0\times\Delta^{\I})$, however, the commutativity property is then only needed for that restriction.
\end{remark}

\begin{proof}[Proof of \cref{prop:existence_ell}]
To simplify notation, we omit all exponents from last vertex maps. Note, that $\RealP{\lv_P}\colon \RealP{\sd_P(S \times \Delta^\I)} \to \RealP{ S \times \Delta^\I}$ is a weak equivalence between cofibrant, fibrant objects with respect to the model structure on $\TopP^{\RealP{\Delta^\I}}$. In particular, it has an inverse, up to stratum preserving pointed homotopy equivalence. Denote these by $\gamma_S$ and $\gamma_{S_0}$ respectively. By naturality of $\lv_P$ the diagram 
    \[
    \begin{tikzcd}
    \RealP{{S_0} \times \Delta^\I} \arrow[r, "\gamma_{S_0}"] \arrow[d, hook] &\RealP{\sd_P{({S_0} \times \Delta^\I})} \arrow[d, hook] \\
        \RealP{S \times \Delta^\I} \arrow[r, "\gamma_S"] &\RealP{\sd_P(S \times \Delta^\I)}
    \end{tikzcd}
     \]
     is commutative in $\Ho\Top_P^{\Delta^{\I}}$. Now, since all objects involved are cofibrant, the diagram is commutative up to pointed stratum preserving homotopy. Now, first apply \cref{lem:vertical_simplicial_approx} to $\gamma_{S_0}$ with the role of the simplicial subset taken by the basepoint. For some $k' \geq 0$, we obtain a pointed vertical simplicial map $ \ell^1_{S_0}\colon \sd^{k'}({S_0}) \times \Delta^\I \to \sd_P({S_0} \times \Delta^\I)$ such that the following solid diagram commutes up to pointed, stratum preserving homotopy. To reduce the overload of notation, we omit the indices $P$ for stratified realizations.
    \[
    \begin{tikzcd}[column sep = large]
    \Real{\sd^{k'+k''}({S_0}) \times \Delta^\I}
    \arrow[d, hook, dotted] 
    \arrow[r, "\Real{\lv \times 1_{\Delta^\I}}", dotted]  
    &  \Real{\sd^{k'}({S_0}) \times \Delta^\I} 
    \arrow[d, hook]  
    \arrow[rr, bend left = 30, "\Real{\ell^1_{{S_0}}}"]{}{}
    \arrow[r, "\Real{\lv \times 1_{\Delta^\I}}"] 
    &\Real{{S_0} \times \Delta^\I} 
    \arrow[r, "\gamma_{S_0}"] 
    \arrow[d, hook] 
    &[-5 pt]\Real{\sd_P{({S_0} \times \Delta^\I})} 
    \arrow[d, hook] \\
    \Real{\sd^{k'+k''}(S) \times \Delta^\I} \arrow[rrr, bend right = 30, "\Real{\ell^2_S}", dotted]   \arrow[r, "\Real{\lv \times 1_{\Delta^\I}}", dotted]   & \Real{\sd^{k'}(S) \times \Delta^\I} \arrow[rr, bend right = 30, "\tilde \gamma_S", dashed ] \arrow[r, "\Real{\lv \times 1_{\Delta^\I}}"]    &   \Real{S \times \Delta^\I} \arrow[r, "\gamma_S"] &\Real{\sd_P{(S \times \Delta^\I})}.
    \end{tikzcd}
    \]
    Now, in this diagram the vertical left solid arrow is a cofibration in $\Top_P^{\RealP{\Delta^\I}}$ and all objects are cofibrant fibrant. Since the outer solid arrow diagram is commutative up to stratum preserving pointed homotopy, the homotopy extension property of cofibrations against fibrant objects gives the existence of a dashed map $\tilde \gamma_S$, making the square commute on the nose, and making the lower right triangle commute up to stratum preserving homotopy.
    Now, apply \cref{lem:vertical_simplicial_approx} to $\tilde \gamma_S$ and $\ell^1_{S_0}$ to obtain a pointed vertical simplicial map $\ell^2_S$ in the dotted part of the diagram, for $k''$ sufficently large. Again, the most outer part of the complete diagram commutes on the nose (in $\sS_P$) and the lower left triangle commutes up to stratum preserving pointed homotopy. 
    
    Now $\ell^2_{S}$ is not yet the map we are looking for, since we still need to check that \eqref{eq:Diagram_l} commutes up to pointed sd-homotopy.
    Write $\ell^2_{S_0}=\ell^1_{S_0}\circ (\lv^{k''}\times\Id_{\Delta^{\I}})$, for the restriction of  $\ell^2_{S}$ to $\sd^{k'+k''}({S_0}) \times \Delta^\I$. 
    We want to construct a simplicial homotopy between $\lv_P\circ\ell^2_{S_0}$ and $\lv\times\Id_{\Delta^{\I}}$, but to apply \cref{lem:vertical_simplicial_approx} we need two vertical simplicial maps. What we will do instead is, first replace $\lv_P\circ\ell^2_{S_0}$ by a vertical map, and then, exhibit a sd-homotopy from the latter to $\lv\times\Id_{\Delta^{\I}}$. Finally we will concatenate the two sd-homotopies using \cref{lem:concat_sd_ho}.
    By \cref{prop:simplicial_vertical_approx} there exists a simplicial pointed stratum preserving homotopy 
    \begin{equation*}
        H_1\colon \sd^{k'+k''}(S_0)\wedge\Delta^1_+\times\Delta^{\I}\to S_0\times\Delta^{\I}
    \end{equation*}
    from $\lv_P \circ \ell^2_{S_0}$ to a basepoint preserving vertical simplicial map $f\colon \sd^{k'+k''}(S_0)\times\Delta^{\I}\to S_0\times\Delta^{\I}$.  Note that, by construction, $\lv_P\circ\ell^2_{S_0}$ realizes to a map that is stratum preserving pointed homotopic to the realization of $\lv \times 1_{\Delta^\I}$, and so it must also be true of $f$. More specifically, there must exist a pointed stratified homotopy 
    \begin{equation*}
    H'\colon \RealP{\sd^{k'+k''}(S_0)\wedge\Delta^1_+\times\Delta^{\I}}\to \RealP{S_0\times\Delta^{\I}}     
    \end{equation*}
    Between $\RealP{f}$ and $\RealNP{\lv\times\Id_{\Delta^{\I}}}$. Since $f$ and $\lv\times\Id_{\Delta^{\I}}$ are both pointed vertical maps, they define a vertical map
    \begin{equation*}
        f\vee (\lv\times\Id_{\Delta^{\I}})\colon \sd^{k'+k''}(S_0)\times\Delta^{\I}\vee \sd^{k'+k''}(S_0)\times\Delta^{\I}\to S_0\times\Delta^{\I}
    \end{equation*}
     We can now apply the relative version of \cref{lem:vertical_simplicial_approx}
    to $H'$ and $f\vee (\lv \times\Id_{\Delta^{\I}})$ to get a simplicial map
    \begin{equation*}
        H_2\colon sd^{k'''}(\sd^{k'+k''}(S_0)\wedge\Delta^1_+)\times\Delta^{\I}\to S_0\times\Delta^{\I}
    \end{equation*}
    Now consider boundary preserving maps
    \begin{align*}
    \alpha_1\colon &\sd^{k'+k''}(S_0\wedge\Delta^1_+) \to \sd^{k'+k''}(S_0)\wedge\Delta^1_+\\
    \alpha_2 = \sd^{k'''}(\alpha_1)\colon & \sd^{k'+k''+k'''}(S_0\wedge\Delta^1_+)\to \sd^{k'''}(\sd^{k'+k''}(S_0)\wedge\Delta^1_+).
    \end{align*}
 The compositions $H_1'=H_1\circ(\alpha_1\times\Id_{\Delta^{\I}})\circ (\lv\times\Id_{\Delta^{\I}})$ and $H_2'=H_2\circ (\alpha\times\Id_{\Delta^{\I}})$ give pointed sd-homotopies respectively between $\lv_P\circ \ell^2_{S_0}\circ(\lv\times\Id_{\Delta^{\I}})$ and $f\circ (\lv\times\Id_{\Delta^{\I}})$ and between $f\circ (\lv\times\Id_{\Delta^{\I}})$ and  $\lv\times\Id_{\Delta^{\I}}$. Using \cref{lem:concat_sd_ho} we may concatenate these sd-homotopies
to a sd-homotopy
 \begin{equation*}
     H\colon \sd^{k'+k''+k'''+2}(S_0\wedge\Delta^1_+)\times \Delta^{\I}\to S_0\times\Delta^{\I},
 \end{equation*}
 between $\lv_P\circ\ell^2_{S_0}\circ (\lv\times\Delta^{\I})$ and $\lv\times\Delta^{\I}$.
 
  Finally, we take $k=k'+k''+k'''+2$, and $\ell$ to be the composition
 \begin{equation*}
     \begin{tikzcd}
     \sd^{k}(S)\times\Delta^{\I}
     \arrow{r}{\lv\times\Id_{\Delta^{\I}}}
     &\sd^{k'+k''}(S)\times\Delta^{\I}
     \arrow{r}{\ell^2_S}
     &\sd_P(S\times\Delta^{\I}).
     \end{tikzcd}
 \end{equation*}
 As, we have just proven, the restriction of this map to $\sd^{k}(S_0)\times\Delta^{\I}$ satisfies the homotopy commutativity of \eqref{eq:Diagram_l}.
    \end{proof}
   We now have all the necessary technical results to prove \cref{prop:another_approximation}.
   
    \begin{proof}[proof of \cref{prop:another_approximation}]
    We first prove the direct statement. Note that a pointing  $\Delta^{\I}\to K$ canonically lifts along $\lv_P$ to a pointing $\Delta^{\I}\to\sd_P(K)$. The lift is given by 
    the composition 
    \[
    \Delta^{\I} \to \sd_P (\Delta^{\I}) \to \sd_P(K)
    \]
    where the first map is specified by sending the the maximal non-degenate simplex $\mu$ of $\Delta^{\I}$ to $[(\mu,p_0),\dots,(\mu,p_n)]$, 
    for $\I = [ p_0 < \cdots < p_n]$.
    We then have a diagram in the pointed category 
    \[
    \begin{tikzcd}
    \RealP{S \times \Delta^\I} \arrow[rd, "\phi"] \arrow[r, dashed, "\hat \phi"]& \RealP{\sd_P(K)} \arrow[d, "\RealP{\lv_P}"]\\
    & \RealP{{K}}.
    \end{tikzcd}
    \]
    Note, that $\RealP{\lv_P}$ is a weak equivalence between fibrant objects in $\TopP^{\RealP{\Delta^\I}}$, and thus, $\phi$ admits a lift,
    $\hat \phi$ , making the above diagram commutative up to stratum preserving pointed homotopy. Now, apply the relative version of \cref{lem:vertical_simplicial_approx} to $\hat \phi$, to produce a pointed map $\hat f\colon \sd^k(S)\times \Delta^\I \to \sd_P(K)$. The map $f=\lv_P\circ\hat f\colon \sd^k(S)\times\Delta^{\I}\to K$ is then the desired map. Indeed, we have the following pointed stratum preserving homotopies:
    
    \[\RealP{f}=\RealP{\lv_P \circ \hat f}  \simeq_{P} \RealP{\lv_P} \circ \hat \phi \circ \RealP{ \lv \times 1_{\Delta^\I}}   \simeq_{P} \phi \circ  \RealP{ \lv \times 1_{\Delta^\I}}.
       \]
       
    Now, for the homotopy statement, take ${S'_0}:=S \vee S \hookrightarrow S \wedge \Delta^{1}_{+} =: S'$, and set 
    \[ g'=f_0 \cup_{\Delta^{\I}} f_1: {S_0}' \times \Delta^{\I} \to {K} \]
    and let  $ H'\colon \RealP{S'\times\Delta^{\I}}\to \RealP{K}$ be the stratified pointed homotopy between $\RealP{f_0}$ and $\RealP{f_1}$. Next, consider the commutative diagram
    \[
    \begin{tikzcd}[column sep = 60pt]
    \RealP{\sd_P ({S'_0} \times \Delta^\I)} 
    \arrow[r, "\RealP{\sd_P(g')}"] 
    \arrow[d, hook] 
    & \RealP{\sd_P{ {K}}} 
    \arrow[d, "\RealP{\lv_P}"] \\
    \RealP{\sd_P( S' \times \Delta^\I) } 
    \arrow[ru, dashed, "\phi'"]
    \arrow[r, "H'\circ \RealP{\lv_P}"] 
    & \RealP{{K}} \spaceperiod
    \end{tikzcd}
    \]
    Since the left hand vertical is a cofibration in $\Top_P$ and the right hand vertical is a weak equivalence between fibrant objects, there must exist some lift up to homotopy, $\phi'$, making the upper left triangle commute on the nose and the lower right triangle commute up to stratified homotopy relative to $\RealP{\sd_P ({S'_0} \times \Delta^\I)}$. Now, precompose this diagram with $\ell$ from \cref{prop:existence_ell}, for $k\gg 0$ to obtain the following commutative diagram:
    \begin{equation*}
        \begin{tikzcd}[column sep = 60pt]
        \RealP{\sd^{k}(S'_0)\times\Delta^{\I}} 
        \arrow{r}{\RealP{\ell_0}}
        \arrow[hookrightarrow]{d}
    &\RealP{\sd_P ({S'_0} \times \Delta^\I)} 
    \arrow[r, "\RealP{\sd_P(g')}"] 
    \arrow[d, hook] 
    & \RealP{\sd_P{ {K}}} 
    \\
    \RealP{\sd^{k}(S')\times\Delta^{\I}} 
        \arrow{r}{\RealP{\ell}}
    &\RealP{\sd_P( S' \times \Delta^\I) } 
    \arrow[ru, "\phi'"]
    \end{tikzcd}
    \end{equation*}
    
    Set $\phi= \phi' \circ \RealP{\ell}$ and $g = \sd_P(g') \circ \ell_0$. Now, apply \cref{lem:vertical_simplicial_approx} to $\phi$ and $g$. For $k' \gg 0$, we obtain a simplicial map 
    \begin{equation*}
            H^1\colon \sd^{k+k'}(S')\times \Delta^\I \to \sd_P(K)
    \end{equation*}
    whose restriction to $\sd^{k+k'}(S_0')\times\Delta^{\I}$ is given by $\sd_P(g') \circ \ell_0 \circ ( \lv \times \Id_{\Delta^\I})$. By naturality of $\lv_P$ one has $\lv_P\circ\sd_P(g')=g'\circ\lv_P$. In particular, $\lv_P\circ H^1\colon \sd^{k+k'}(S')\times \Delta^\I \to K$ restricts on the boundary to $g'\circ\lv_P\circ \ell_0\circ(\lv\times\Id_{\Delta^{\I}})$.
    By \cref{prop:existence_ell}, this map is sd-homotopic to $g' \circ (\lv \times \Id_{\Delta^\I})$ through a map of the form
    \begin{equation*}
     H^2\colon \sd^{k+k'} ({S'_0} \wedge \Delta^1_+) \times \Delta^{\I}  \to K
    \end{equation*}
      Now, the two sd-homotopies $\lv_P \circ H^1$ and $H^2$ can be concatenated. More explicitely, consider the inclusion $i_1\colon S'_0\to S'$ and $i_2\colon S'_0=S'_0\wedge\{0\}_+\to S'_0\wedge\Delta^1_+$. These maps induce inclusions \begin{align*}
          i'_1\colon &\sd^{k+k'} (S'_0)\times\Delta^{\I}\hookrightarrow \sd^{k+k'} (S')\times\Delta^{\I}\\
          i'_2\colon &\sd^{k+k'}(S'_0)\times\Delta^{\I}\hookrightarrow \sd^{k+k'}(S'_0\wedge\Delta^1_+)\times\Delta^{\I}
      \end{align*}
      The restrictions $\lv_P\circ H^1\circ i'_1$ and $H^2\circ i'_2$ are both equal to $g'\circ\lv_P\circ\ell_0\circ(\lv\times\Id_{\Delta^{\I}})$, by construction. In particular, $\lv_P\circ H^1$ and $H^2$ can be glued along $\sd^{k+k'} (S'_0)\times\Delta^{\I}$ to obtain a map
      \[ H^3\colon \sd^{k+k'}(S' \cup_{{S'_0}} (S'_0\wedge\Delta^1_+) ) \times \Delta^\I \to {K},
    \] 
     which restricts to $g'\circ(\lv\times\Id_{\Delta^{\I}})=(f_0\cup_{\Delta^{\I}}f_1) \circ (\lv \times 1_{\Delta^\I})$ on the boundary. Now to turn this map into an sd-homotopy, consider the following composition:
     \begin{equation*}
        \alpha\colon\sd^{k+k'+2}(S\wedge \Delta^1_+)\to \sd^{k+k'}(S\wedge\sd^2(\Delta^1_+))\to \sd^{k+k'}(S'\cup_{S'_0}(S'_0\wedge\Delta^1_+)),
    \end{equation*}
    where the left hand map comes from the natural map $\sd^2(S\wedge \Delta^{1}_+)\to S\wedge\sd^2(\Delta^1_+)$ and the right hand map is the one suggested by \cref{fig:Cylinders}. To construct the latter explicitely, it is enough to note that there is an isomorphism \[ 
      S' \cup_{{S'_0}} (S'_0\wedge\Delta^1_+) \cong S \wedge (\Delta^1\cup_{1,0} \Delta^1\cup_{1,1} \Delta^1)_+,
      \]
      and a boundary preserving map $\sd^2(\Delta^1)\to \Delta^1\cup_{1,0} \Delta^1\cup_{1,1} \Delta^1$. Now, the desired pointed sd-homotopy is given by the composition
      \begin{equation*}
        H=H^3\circ(\alpha\times\Id_{\Delta^{\I}})\colon \sd^{k+k'+2}(S\wedge\Delta^1_+)\times\Delta^{\I}\to K.
    \end{equation*}
\end{proof}

\begin{figure}[h]
\begin{tikzpicture}
\draw (0,0)--(0,1);
\filldraw[black] (0,0) circle (1pt);

\draw[shift= {(1.5,0)}] (0.5,1)--(0,0)--(-0.5,1);
\filldraw[shift= {(1.5,0)},black] (0,0) circle (1pt);

\filldraw[shift= {(3,0)}, black,opacity=0.1](0.5,1)--(0,0)--(-0.5,1);
\draw[shift= {(3,0)}] (0.5,1)--(0,0)--(-0.5,1);
\filldraw[shift= {(3,0)},black] (0,0) circle (1pt);
\draw[shift= {(3,0)},->] (-0.2,0.8)--(0.2,0.8);
\node[shift= {(3,0)}] at (0,-0.5) {$H_1$};

\filldraw[shift= {(5,0)}, black,opacity=0.1](-1,0)--(0,0)--(-0.5,1);
\filldraw[shift= {(5,0)}, black,opacity=0.1](1,0)--(0,0)--(0.5,1);
\draw[shift= {(5,0)}] (0.5,1)--(0,0)--(-0.5,1);
\draw[shift= {(5,0)}] (-1,0)--(0,0)--(1,0);
\filldraw[shift= {(5,0)},black] (0,0) circle (1pt);
\draw[shift= {(5,0)},->] (-0.75,0.15)--(-0.5,0.65);
\draw[shift= {(5,0)},->] (0.75,0.15)--(0.5,0.65);
\node[shift= {(5,0)}] at (0,-0.5) {$H_2$};

\filldraw[shift= {(7.5,0)}, black,opacity=0.1](0.5,1)--(0,0)--(-0.5,1);
\filldraw[shift= {(7.5,0)}, black,opacity=0.1](-1,0)--(0,0)--(-0.5,1);
\filldraw[shift= {(7.5,0)}, black,opacity=0.1](1,0)--(0,0)--(0.5,1);
\draw[shift= {(7.5,0)}] (0.5,1)--(0,0)--(-0.5,1);
\draw[shift= {(7.5,0)}] (-1,0)--(0,0)--(1,0);
\filldraw[shift= {(7.5,0)},black] (0,0) circle (1pt);
\draw[shift= {(7.5,0)},->] (-0.75,0.15)--(-0.5,0.65);
\draw[shift= {(7.5,0)},->] (0.75,0.15)--(0.5,0.65);
\draw[shift= {(7.5,0)},->] (-0.2,0.8)--(0.2,0.8);
\node[shift= {(7.5,0)}] at (0,-0.5) {$H_3$};

\filldraw[shift={(10,0)}, black, opacity=0.1](-1,0)--(-0.8,0.8)--(0,1)--(0.8,0.8)--(1,0);
\draw[shift={(10,0)}](-1,0)--(0,0)--(-0.8,0.8)--(0,0)--(0,1)--(0,0)--(0.8,0.8)--(0,0)--(1,0);
\filldraw[shift= {(10,0)},black] (0,0) circle (1pt);
\draw[shift= {(10,0)},->] (-0.8,0.15)--(-0.7,0.5);
\draw[shift= {(10,0)},->] (-0.15,0.8)--(-0.5,0.7);
\draw[shift= {(10,0)},->] (0.15,0.8)--(0.5,0.7);
\draw[shift= {(10,0)},->] (0.8,0.15)--(0.7,0.5);
\node[shift= {(10,0)}] at (0,-0.5) {$H_3\circ\alpha$};

\end{tikzpicture}

\caption{From left to right, schematic pictures of the simplicial sets, $S$, $S'_0$, $S'$, $S'_0\wedge\Delta^1_+$, the union $S'\cup_{S'_0}(S'_0\wedge\Delta^1_+)$ and the smash product $S\wedge\sd^2(\Delta^1_+)$. Pointing are indicated by large dots, and arrows indicate direction of homotopies.}
\label{fig:Cylinders}
\end{figure}
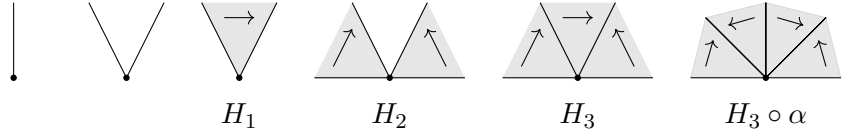

\section*{Acknowledgements}

The first author was supported by a grant from the Fondation Sciences Mathématiques de Paris, while a post-doc at IRIF, UMR 8243, Université de Paris,  during part of the writing of this article, and is now receiving support from Dan Petersen's Wallenberg Academy Fellowship. The second author was supported by a grant from the Landesgraduiertenförderung Baden-Württemberg.
The authors would also like to thank David Chataur for helpful discussions during the writing of this article.

\appendix

\section{Incompatible criteria for model structures over $\Top_P$}
\label{section:appendix}

The goal of this section is to prove the following proposition, and its corrolary.

\begin{proposition}\label{prop:Nonexistence_Appendix}
For $P$ a non-discrete poset, there exists no model structure on $\Top_P$ satisfying all of the following properties:
\begin{enumerate}
    \item \label{Item:Mono_Are_Cof_Appendix}Realizations of monomorphisms are cofibrations.
    \item \label{Item:Strat_HE_WE_Appendix}Stratified homotopy equivalences are weak equivalences.
    \item \label{Item:WE_Induce_WE_Appendix} For a weak equivalence, the induced map between classical homotopy links is also
a weak-equivalence.
\end{enumerate}
\end{proposition}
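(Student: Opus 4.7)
The plan is to derive a contradiction by identifying a specific stratum preserving map that conditions (1) and (2), combined with the stability of trivial cofibrations under pushouts, would force to be a weak-equivalence, while a direct computation of its induced map on some classical homotopy link would exhibit that it cannot be a weak-equivalence, contradicting (3).

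Concretely, I would take $P=\{0<1\}$ and the flag $\J=[0\leq 0\leq 1]$. The admissible horn inclusion $\Lambda^{\J}_1\hookrightarrow\Delta^{\J}$ is a monomorphism in $\sS_P$ and a stratified homotopy equivalence, so by (1) and (2) its realization $i\colon\RealP{\Lambda^{\J}_1}\to\RealP{\Delta^{\J}}$ is a trivial cofibration in any hypothetical model structure satisfying these two conditions. Pushing $i$ out along a suitable stratum preserving map $g\colon\RealP{\Lambda^{\J}_1}\to Z$ then yields a trivial cofibration $Z\to Z'$ by the standard axiom that trivial cofibrations are preserved under cobase change, and by (3) the induced map $\hol_{0<1}(Z)\to\hol_{0<1}(Z')$ must be a weak-equivalence of topological spaces. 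The core task is to exhibit $(Z,g)$ for which this is visibly false.

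The main technical obstacle, and the step I expect to be the principal difficulty, is precisely the choice of $(Z,g)$. The most naive attempt --- taking $Z=\RealP{\Delta^{\J}}$ with $g=i$, producing two triangles glued along the horn --- is not enough: by \cref{theo:Link_Summary} the classical homotopy link of either space is weakly equivalent to the simplicial link $\Link{[0<1]}$ of the corresponding pushout in $\sS_P$, and a direct combinatorial inspection shows this link is again a contractible tree. A more subtle choice is needed: one promising direction is to look for $(Z,g)$ in which $Z$ already carries some nontrivial global structure (a $\pi_0$-component or a loop in its classical homotopy link) that the cobase change along $i$ affects non-trivially, rather than the purely local phenomenon that is flattened by the fiberwise collapse at the endpoints of the $0$-stratum. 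An alternative line of attack, which I expect to be cleaner, is to work over a larger poset such as $\{0<1<2\}$ and combine the axioms with \cref{ex:links_not_enough}: the inclusion $\partial\Delta^{\J}\hookrightarrow\Delta^{\J}$ for a regular $3$-chain flag $\J$ induces a weak-equivalence on every pairwise homotopy link while failing to do so on the $3$-ary generalized homotopy link, and one may hope to show via (1), (2) and a suitable cellular argument that this inclusion is forced into the class of trivial cofibrations, thereby producing a trivial cofibration whose effect on some classical $\hol_{p<q}$ can be made to differ after one additional pushout. Once a concrete $(Z,g)$ is pinned down the remainder of the argument --- invoking (1) and (2) to produce the trivial cofibration, pushout stability of trivial cofibrations, and (3) to derive the contradiction --- is entirely formal.
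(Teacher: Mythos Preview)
Your overall strategy is exactly right and matches the paper: show that an admissible horn inclusion realizes to a trivial cofibration by (1) and (2), push it out along some $g\colon\RealP{\Lambda^{\J}_k}\to Z$, and contradict (3) by computing a classical homotopy link. The gap is that you have not found $(Z,g)$, and the directions you sketch will not produce one.

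Any choice of $g$ landing in a realization $\RealP{L}$ cannot work. Indeed, $\RealP{-}$ preserves pushouts, so the resulting square is the realization of a pushout of $\Lambda^{\J}_k\hookrightarrow\Delta^{\J}$ in $\sS_P$; that pushout is a trivial cofibration in $\sS_P$, and by \cref{Cor:Realization_Preserve_Weak_Equivalences} its realization is a weak-equivalence in the model structure of \cref{theo:CMF_TopP}, hence induces weak-equivalences on \emph{all} homotopy links, in particular the classical ones. This rules out your first attempt and also the second line via $\partial\Delta^{\J}\hookrightarrow\Delta^{\J}$ over $\{0<1<2\}$: even granting a cellular argument forcing that inclusion to be a trivial cofibration, any further pushout along a simplicial map keeps you inside realizations, where (3) is automatically satisfied. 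Note also that (3) only speaks of \emph{pairwise} homotopy links, so the failure in \cref{ex:links_not_enough} is invisible to it.

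The paper's key idea is therefore to leave the world of realizations entirely. It takes $Z=X$ to be a space whose $0$-stratum contains a sequence of isolated points accumulating at a vertex $a$ (so $X$ is not stratum-preserving homeomorphic to any $\RealP{K}$), arranged on two triangular flaps meeting along an edge through $a$. Gluing in the missing $3$-simplex $\RealP{\Delta^{\J}}$ along an admissible horn produces $Y$. One then computes $\pi_0(\hol_{0<1})$ directly: exit-paths from $a$ are encoded, up to finitely many entries, by binary sequences in $X$ (recording on which side of each isolated singular point the path passes) but by ternary sequences in $Y$ (a new ``middle'' option appears, allowing the path to zig-zag through the filled cell). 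An exit-path in $Y$ that alternates infinitely often between the outer options on opposite flaps has no preimage class in $X$, so $\pi_0(\hol_{0<1}(X))\to\pi_0(\hol_{0<1}(Y))$ is not surjective. This is the missing construction; once you have it, the formal part of your argument goes through verbatim.
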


\begin{remark}\label{Rem:StrongerNonExistence}
In fact, we will prove a slightly stronger version of \cref{prop:Nonexistence_Appendix}, where condition \ref{Item:WE_Induce_WE_Appendix} is replaced by the following:
\begin{enumerate}
    \item [(3')] \label{Item:WeakerCondition}  For some fixed flag, $\I= \{ p <q \} $, the homotopy link functor $\HolIP$ sends weak equivalences to weak equivalences of topological spaces. 
\end{enumerate}
\end{remark}

\begin{corollary}
If $P$ is a non-discrete poset, then the model structure on $\Top_P$ transported from $\sS_P$ along the adjunction $\RealP{-}\colon\sS_P\leftrightarrow\Top_P\colon\Sing_P$ does not exist.
\end{corollary}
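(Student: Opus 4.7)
The plan is to assume such a model structure on $\Top_P$ exists and derive a contradiction using the specific admissible horn inclusion from \cref{rec:SingP_Not_Quillen}. Take $P=\{0<1\}$, $\J=[0\leq 0\leq 1]$, and let $\iota\colon \RealP{\Lambda^{\J}_1}\hookrightarrow \RealP{\Delta^{\J}}$ be the realization of the admissible horn inclusion pictured in \cref{Fig:Non_Fibrancy}. The crucial fact, established in \cref{rec:SingP_Not_Quillen}, is that $\iota$ admits no stratum preserving section, so $\Sing_P(\RealP{\Lambda^{\J}_1})$ is not fibrant in $\sS_P$.

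The first step is to observe that under hypotheses (1) and (2) the realization functor $\RealP{-}\colon \sS_P\to \Top_P$ becomes a left Quillen functor with respect to the model structure of \cref{theo:CMF_sSetP} and the hypothetical model structure. Indeed, the generating cofibrations $\partial\Delta^{\J'}\hookrightarrow \Delta^{\J'}$ of $\sS_P$ are monomorphisms, so by (1) their realizations are cofibrations; while the generating trivial cofibrations $\Lambda^{\J'}_k\hookrightarrow \Delta^{\J'}$ are simultaneously monomorphisms and stratified homotopy equivalences (by definition of admissibility), so by (1) and (2) their realizations are trivial cofibrations. Consequently $\Sing_P$ is right Quillen and preserves fibrant objects of $\Top_P$.

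The strategy will then be to show that $\RealP{\Lambda^{\J}_1}$ must itself be fibrant in the hypothetical model structure, which yields the desired contradiction since $\Sing_P(\RealP{\Lambda^{\J}_1})$ would then have to be fibrant in $\sS_P$. To establish fibrancy, I would factor the identity of $\RealP{\Lambda^{\J}_1}$ as a trivial cofibration $j\colon \RealP{\Lambda^{\J}_1}\to Z$ followed by a fibration $p\colon Z\to \RealP{\Lambda^{\J}_1}$, and then use condition (3) together with the specific geometry of $\RealP{\Lambda^{\J}_1}$ (its $0$-stratum is the closed edge $\{0,1\}$ meeting its $1$-stratum only at the vertex $1$, so $\hol_{0<1}(\RealP{\Lambda^{\J}_1})$ consists entirely of paths starting at vertex $1$) to construct a stratum preserving retract of $p$ over $\RealP{\Lambda^{\J}_1}$, thereby promoting $\RealP{\Lambda^{\J}_1}$ to a retract of a fibrant object and hence fibrant.

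The main obstacle is precisely this fibrancy step: condition (3) preserves only classical (pairwise) homotopy links and not, say, all generalized holinks or the strata as finer invariants, so extracting enough rigidity from $\hol_{0<1}(Z)\simeq \hol_{0<1}(\RealP{\Lambda^{\J}_1})$ to force the existence of a stratified retract is the delicate point. I expect this to come down to a careful analysis of how a stratum preserving map $Z\to \RealP{\Lambda^{\J}_1}$ can be perturbed into a deformation retract, exploiting the fact that the $1$-stratum of $\RealP{\Lambda^{\J}_1}$ is a half-open interval radiating from the single point at which it meets the $0$-stratum.
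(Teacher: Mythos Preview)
Your opening observations are fine and match the paper: conditions (1) and (2) of \cref{prop:Nonexistence_Appendix} make $\RealP{-}$ left Quillen, hence $\Sing_P$ right Quillen, and you correctly identify that $\Sing_P(\RealP{\Lambda^{\J}_1})$ is not fibrant. The gap is in the fibrancy argument for $\RealP{\Lambda^{\J}_1}$, which is the entire content of your proposal.

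Your factorization of the identity as $p\circ j$ already makes $\RealP{\Lambda^{\J}_1}$ a retract of $Z$ (via $j$ and $p$), so the issue is whether $Z$ is fibrant. But $p$ is a fibration over $\RealP{\Lambda^{\J}_1}$, not over the terminal object, so nothing forces $Z$ to be fibrant. If instead you factor $\RealP{\Lambda^{\J}_1}\to *$ to get a fibrant $Z$, producing a retraction $Z\to\RealP{\Lambda^{\J}_1}$ requires lifting the identity against the trivial cofibration $j$, which is exactly the fibrancy of $\RealP{\Lambda^{\J}_1}$ you are trying to prove: the argument is circular. Condition (3) tells you only that $\hol_{0<1}(Z)$ is weakly contractible, and this is far too weak to manufacture a stratified retract onto $\RealP{\Lambda^{\J}_1}$; an arbitrary fibrant replacement $Z$ could be enormous and there is no reason its points should map back compatibly. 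In fact there is no contradiction to be found here: in the would-be transported structure, $\RealP{\Lambda^{\J}_1}$ is simply \emph{not} fibrant (by definition, since $\Sing_P$ of it is not), and that is perfectly consistent with the model category axioms.

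The paper's argument locates the obstruction elsewhere. It verifies that the transported structure would satisfy (1), (2), (3) and then invokes \cref{prop:Nonexistence_Appendix}, whose proof rests on the pathological \cref{Ex:Appendix_pathological}: a stratified space $Y$ (not the realization of any stratified simplicial set) containing a subspace $X$ such that $X\hookrightarrow Y$ is a pushout of a realized admissible horn inclusion, hence a trivial cofibration by (1) and (2), yet $\pi_0\HolIP(X)\to\pi_0\HolIP(Y)$ fails to be a bijection, violating (3). The essential idea you are missing is that the failure of the transported structure is not visible on realizations of stratified simplicial sets alone; one must push out into a genuinely non-triangulable stratified space to see it.
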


Let us first deduce the corollary from the proposition.
\begin{proof}
Assume that the transported model structure on $\Top_P$ exists. Then $\RealP{-}\dashv \Sing_P$ is a Quillen adjunction and so the functor $\RealP{-}$ must preserve cofibrations, which implies that the model structure satisfies \eqref{Item:Mono_Are_Cof_Appendix}. Furthermore, $\Sing_P$ preserves stratified homotopy equivalences, so the model structure must satisfy \eqref{Item:Strat_HE_WE_Appendix}. Finally, if $f\colon X\to Y$ is a weak-equivalence, then, by definition, $\Sing_P(f)$ must be a weak-equivalence in $\sS_P$. By \cref{Cor:Diagram_Preserve_WE}, this implies that $f$ induces weak-equivalences  $\Hol_{\I}(\Sing_P(X))\to\Hol_{\I}(\Sing_P(Y))$ for all regular flags $\I$. By \cref{Rem:Comparing_Holink_Intro}, this implies that $f$ induces weak-equivalences $\HolIP(X)\to\HolIP(Y)$ for all regular flags $\I$. In particular, the model category must also satisfy \eqref{Item:WE_Induce_WE_Appendix}. But then, it satisfies simultaneously \eqref{Item:Mono_Are_Cof_Appendix}, \eqref{Item:Strat_HE_WE_Appendix} and \eqref{Item:WE_Induce_WE_Appendix}, which is a contradiction.
\end{proof}

\cref{prop:Nonexistence_Appendix} also has implications for other model structures.
\begin{remark}
Consider Haine's model stucture on the category of stratified simplicial sets, $\sS_P^{\text{Joyal-Kan}}$ \cite{haine2018homotopy}. It can not be transported along the adjunction $(\RealP{-},\Sing_P)$ to a model structure on $\Top_P$. Indeed, supposing this is possible, this hypothetical model structure would satisfy \eqref{Item:Mono_Are_Cof_Appendix}, since the cofibrations in $\sS_P^{\text{Joyal-Kan}}$ are the monomorphisms. Furthermore, it would satisfy \eqref{Item:Strat_HE_WE_Appendix}, since $\Sing_P$ preserves stratified homotopy equivalences, and those are weak-equivalences in $\sS_P^{\text{Joyal-Kan}}$. But then, it can not satisfy \eqref{Item:WE_Induce_WE_Appendix}, by \cref{prop:Nonexistence_Appendix}. This implies that if such a model structure existed, the classical homotopy links would not be invariants of the stratified homotopy type. 

However, we can show that \textbf{some} homotopy links would have to be preserved, preventing the existence of this model structure altogether, by the strengthened version of \cref{prop:Nonexistence_Appendix} (see \cref{Rem:StrongerNonExistence}). 
Indeed, let us first assume that $P$ admits a successor pair, that is $p<q\in P$ such that there exists no $m\in P$ satisfying $p<m<q$. Then, by \cref{lem:haines_pairwise_links}, and recalling that 
\begin{equation*}
    \Sing(\HolIP(X))=\Hol_{\I}(\Sing_P(X)),
\end{equation*}
we see that $\HolIP$ sends weak equivalences to weak equivalences of spaces, for $\I=[p<q]$. The argument is slightly more subtle if $P$ does not admit successor pairs. Let $p<q\in P$, and consider stratified spaces of the form $X\to P$ such that the $m$ stratum of $X$ is empty for all $p<m<q$. Then, $\HolIP$ sends weak equivalences between such stratified spaces to weak equivalences of topological spaces, for $\I=[p<q]$. But this also leads to a contradiction when considering \cref{Ex:Appendix_pathological} as stratified over $\{p<q\}\subset P$.
\end{remark}

\begin{remark}
Similarly, let $\mathrm{Strat}^{N}$ be the category of such stratified spaces, which have no empty strata, as defined in \cite{nand2019simplicial}. In \cite{nand2019simplicial} the author asked the question, whether the structure on the model category for quasi categories $\sS^{Joyal}$ can be transferred to $\mathrm{Strat}^{N}$ along an adjunction defined using the functors $(\Real{-},\Sing_{\Strat})$. Arguing as in the proof of \cref{prop:Nonexistence_Appendix}, and using \cref{Ex:Appendix_pathological}, we may show that it is indeed not possible.

Assume that the transported model structure exists. Then, since cofibrations in $\sS^{\text{Joyal}}$ are monomorphisms, the model category would satisfy the analogue of \eqref{Item:Mono_Are_Cof_Appendix}. Furthermore, assume that $H\colon X\times [0,1]\to Y$ is a stratified  homotopy, between two stratified maps $f,g\colon X\to Y$, and consider the following composition 
\begin{equation}
    \Sing_{\text{Strat}}(X)\times \Sing([0,1])\to \Sing_{\text{Strat}}(X\times [0,1]))\to \Sing_{\text{Strat}}(Y).
    \end{equation}
This composition is a homotopy (in the Joyal model structure) between $\Sing_{\text{Strat}}(f)$ and $\Sing_{\text{Strat}}(g)$. This follows from the fact that $\Sing([0,1])$ is a cylinder in the Joyal model structure. In particular, in the transported model structure on $\Strat^N$, stratum preserving homotopy equivalences are weak-equivalences, i.e. the model category satisfies the analoge of \eqref{Item:Strat_HE_WE_Appendix}. 
Now, assume that $f \colon X \to Y$, is a stratum preserving map over the poset $P = \{ 0 < 1 \}$ such that $\Sing_{\text{Strat}}(f)$ is a Joyal equivalence in $\sS$. Then, the map $\Sing_P(f)\colon \Sing_P(X)\to\Sing_P(Y)$ is a map in $\sS_P$, and is a weak equivalence in the structure $\sS_P^{\text{Joyal}}$ (that is, the slice model category). But since $\sS_P^{\text{Joyal-Kan}}$ is a Bousfield localization of the former, $\Sing_P(f)$ is also a weak equivalence in $\sS_P^{\text{Joyal-Kan}}$. Finally, note that for $P=\{0<1\}$, the model structure on $\sS_P^{\text{Joyal-Kan}}$ coincides with the one studied in this paper, giving that $\Sing_P(f)$ must be a weak equivalence in $\sS_P$, and hence, must induce weak-equivalences
\begin{equation*}
    \HolIP(X)\to \HolIP(Y).
\end{equation*}
Applying the above to the map of \cref{Ex:Appendix_pathological} gives a contradiction, hence the transported model structure on $\Strat^N$ may not exist.
\end{remark}

\cref{prop:Nonexistence_Appendix} is a direct consequence of the following example.
\begin{example}\label{Ex:Appendix_pathological}
Consider the simplicial complex $Y$ with vertices $a,b,c,d,e,f$ generated by the simplices $\{a,b,c,d\}$, $\{a,c,e\}$ and $\{a,d,f\}$ and with the stratification over $P=\{0<1\}$ defined as follows. The entire segment $\{a,b\}$ is sent to $0$, then, consider two segments between $a$ and the middle of $\{d,f\}$ and between $a$ and the middle of $\{e,c\}$, which we will promptly identify with two copies of the interval $[0,1]$ (with $a$ at $0$). In those intervals, send all the points of the sequence $\frac{1}{2^{n+1}}$, for $n\geq 0$, to $0$ (any decreasing sequence converging to $0$ will give an isomorphic result). The remaining points are mapped to $1$.

Note that the stratified space obtained in this way is not isomorphic in $\Top_P$ to the realization of a stratified simplicial set. In particular, $Y$ does not come from a strongly stratified space. Nevertheless one can compute its homotopy link with respect to the flag $\I=[0<1]$. We will also consider the subspace $X\subset Y$ obtained by deleting the maximal simplex $\{a,b,c,d\}$ and its face $\{a,c,d\}$, see \cref{Fig:Example_Appendix}. 

We will only be interested in $\pi_0(\HolIP(X))$, and $\pi_0(\HolIP(Y))$ or in other words, in exit-paths up to stratum preserving homotopies. Note that any exit path starting from one of the isolated points is entirely determined - up to stratum preserving homotopy - by its starting point. Similarly, any point starting in the interval $(a,b]$ is equivalent to the exit-path spanning the segment $[b,d]$. 

On the other hand, there are a vast number of inequivalent classes of exit-paths starting from $a$. A particular set of exit-paths that can be easily described is those that get away from $a$ steadily (as in \cref{Fig:Example_Appendix}). For such an exit-path in $X$, it is enough to specify which face ($\{a,d,f\}$ or $\{a,c,e\}$) it lies on, and then for each isolated singular point if the path passes under, or over. This can be summarized as a binary sequence (where the first few terms, corresponding to points away from $a$ might be ill-defined).  One then checks that two such exit-paths in $X$ are equivalent if and only if they lie on the same face and their associated sequence differ in only finitely many places. This implies in particular that $\pi_0(\HolIP(X))$ is uncountable. 

Now consider $\pi_0(\HolIP(Y))$. The paths we just described in $X$ also exist as exit-paths in $Y$, but there also exist paths going back on forth between the faces $\{a,c,e\}$ and $\{a,d,f\}$. If one restricts to those paths that are moving away from $a$, it is still possible to parametrize those by ternary sequences, where the $n$-th entry indicates if the path passes to the left, right or middle of the $n$-th pair of isolated singular points. One can show again that two such paths are equivalent up to stratum-preserving homotopies if and only if their associated sequences differ only at finitely many places. But now, note that an exit-path in $Y$ zig-zaging infinitely many times between the left side of $\{a,c,e\}$ and the right side of $\{a,d,f\}$ can not be in the image of $\pi_0(\HolIP(X))\to\pi_0(\HolIP(Y))$. This implies that $\HolIP(X)\to\HolIP(Y)$ is not a weak-equivalence, and in turn that $X\to Y$ is not a weak-equivalence in $\Top_P$.
\end{example}
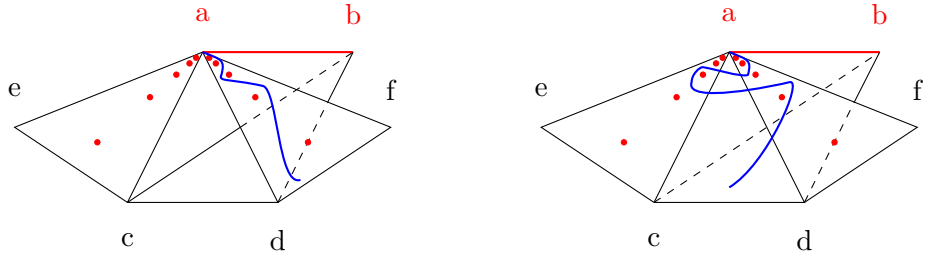
\begin{figure}[h]
\begin{tikzpicture}

\draw[black](0,-1)--(2,-1)--(1,1)--(-1.5,0)--(0,-1);
\draw[black](0,-1)--(1,1);
\draw[black](2,-1)--(3.5,0)--(1,1);
\draw[black](0,-1)--(1.5,0);
\draw[black,dashed](1.5,0)--(3,1);
\draw[red,thick](1,1)--(3,1);
\draw[black,dashed](2,-1)--(2.66,0.33);
\draw[black](2.66,0.33)--(3,1);

\filldraw[red] (1.0875,0.925) circle (1pt);
\filldraw[red] (1.175,0.85) circle (1pt);
\filldraw[red] (1.350,0.7) circle (1pt);
\filldraw[red] (1.700,0.40) circle (1pt);
\filldraw[red] (2.400,-0.20) circle (1pt);

\filldraw[red] (0.9125,0.925) circle (1pt);
\filldraw[red] (0.825,0.85) circle (1pt);
\filldraw[red] (0.650,0.7) circle (1pt);
\filldraw[red] (0.3,0.40) circle (1pt);
\filldraw[red] (-0.4,-0.20) circle (1pt);

\node at (1,1.5) {\begin{red}a\end{red}};
\node at (3,1.5) {\begin{red}b\end{red}};
\node at (0,-1.5) {c};
\node at (2,-1.5) {d};
\node at (-1.5,0.5) {e};
\node at (3.5,0.5) {f};

\draw[blue,thick](1,1).. controls (1.25,0.9) and (1.3,0.9).. (1.25,0.7)..controls (1.2,0.6) and (1.35,0.65).. (1.8,0.55)..controls (2,0.5) and (2,-0.8)..(2.3,-0.7);

\draw[shift={(7,0)},black](0,-1)--(2,-1)--(1,1)--(-1.5,0)--(0,-1);
\draw[shift={(7,0)},black](0,-1)--(1,1);
\draw[shift={(7,0)},black](2,-1)--(3.5,0)--(1,1);
\draw[shift={(7,0)},black,dashed](0,-1)--(3,1);
\draw[shift={(7,0)},red,thick](1,1)--(3,1);
\draw[shift={(7,0)},black,dashed](2,-1)--(2.66,0.33);
\draw[shift={(7,0)},black](2.66,0.33)--(3,1);

\filldraw[shift={(7,0)},red] (1.0875,0.925) circle (1pt);
\filldraw[shift={(7,0)},red] (1.175,0.85) circle (1pt);
\filldraw[shift={(7,0)},red] (1.350,0.7) circle (1pt);
\filldraw[shift={(7,0)},red] (1.700,0.40) circle (1pt);
\filldraw[shift={(7,0)},red] (2.400,-0.20) circle (1pt);

\filldraw[shift={(7,0)},red] (0.9125,0.925) circle (1pt);
\filldraw[shift={(7,0)},red] (0.825,0.85) circle (1pt);
\filldraw[shift={(7,0)},red] (0.650,0.7) circle (1pt);
\filldraw[shift={(7,0)},red] (0.3,0.40) circle (1pt);
\filldraw[shift={(7,0)},red] (-0.4,-0.20) circle (1pt);

\node[shift={(7,0)}] at (1,1.5) {\begin{red}a\end{red}};
\node[shift={(7,0)}] at (3,1.5) {\begin{red}b\end{red}};
\node[shift={(7,0)}] at (0,-1.5) {c};
\node[shift={(7,0)}] at (2,-1.5) {d};
\node[shift={(7,0)}] at (-1.5,0.5) {e};
\node[shift={(7,0)}] at (3.5,0.5) {f};

\draw[shift={(7,0)},blue,thick](1,1).. controls (1.25,0.9) and (1.3,0.9).. (1.25,0.7).. controls (1.2,0.65) and (0.7,0.8).. (0.65,0.8)..controls (0.6,0.8) and (0.5,0.7)..(0.5,0.6)..controls (0.4,0.3) and (1.6,0.55)..(1.8,0.6)..controls (2,0.65) and (1.5,-0.5)..(1,-0.8);

\end{tikzpicture}
\caption{the simplicial complexes $X$ and $Y$ with their pathological stratifications over $P=\{0<1\}$. The isolated singular points admit a as an accumulation point. Two exit-paths starting from $a$ have been represented in blue. }
\label{Fig:Example_Appendix}
\end{figure}

\begin{proof}[Proof of \cref{prop:Nonexistence_Appendix}]
Assume that there exists a model structure on $\Top_P$ satisfying \eqref{Item:Mono_Are_Cof_Appendix} and \eqref{Item:Strat_HE_WE_Appendix}. This implies that  $\RealP{\Lambda^{\J}_k}\to\RealP{\Delta^{\J}}$ is a trivial cofibration in this model structure for any admissible horn inclusion $\Lambda^{\J}_k\to \Delta^{\J}$ since it is simultaneously the realization of a cofibration and a stratified homotopy equivalence (see \cite[Proposition 1.13]{douSimp}). Now note that the inclusion $X\hookrightarrow Y$ from \cref{Ex:Appendix_pathological} can be obtained as the following pushout
\begin{equation*}
    \begin{tikzcd}
         \RealP{\Lambda^{\J}_k}
         \arrow{r}
         \arrow{d}
         &X
         \arrow{d}
         \\
         \RealP{\Delta^{\J}}
         \arrow{r}
         &Y
    \end{tikzcd}
\end{equation*}
where $\Lambda^{\J}_k\to\Delta^{\J}$ is an admissible horn inclusion. In particular, $X\to Y$ is the pushout of a trivial cofibration, so it must be a trivial cofibration. But as demonstrated in \cref{Ex:Appendix_pathological}, the inclusion $X\to Y$ does not induce weak-equivalences on all classical homotopy links. In particular, the model structure can not satisfy \eqref{Item:WE_Induce_WE_Appendix}, which concludes the proof.
\end{proof}

 \begin{remark}
One might object, that \cref{Ex:Appendix_pathological} is pathological insofar, as the $0$-stratum considered as a subspace in the usual subspace topology is not $\Delta$-generated. It may be possible to pass to an appropriate tamer subcategory of $\Top_P$, which would then allow for the structure of a model category fulfilling all of the requirements of \cref{prop:Nonexistence_Appendix}. The main point of this article however, is that even without attempting to change the underlying point-set topological framework and defining such a model structure, many of the consequences of its existence may nevertheless be obtained.
\end{remark}
\begin{lemma}\label{lem:haines_pairwise_links}
Let $\I = \{ p <q\}$ be a flag in $P$ and let $X\in \sS_P^{Joyal-Kan}$ be such that $X_m$ is empty, for all $ p < m < q$. Then, for any weak equivalence $X \to Y$ in $\sS_P^{Joyal-Kan}$, the induced map on simplicial homotopy links, $\Hol_{\I}(X) \to \Hol_{\I}(Y)$, is a weak equivalence in the Kan model structure on simplicial sets.
\end{lemma} 

\begin{proof}
Recall that the fibrant objects in $\sSJK_P$ are those that admit the right lifting property against all stratified horn inclusions $\Lambda^{\J}_k$ which are either inner horn inclusions, or admissible horn inclusions. Call those horn inclusions weakly admissible. Now, given a weak-equivalence in $\sSJK_P$, $f\colon X\to Y$, where the $m$-stratum of $X$ is empty for all $p<m<q$, consider fibrant replacements for $X$ and $Y$ obtained by the small object argument applied to the set of weakly admissible horn inclusions. We get a commutative diagram of weak equivalences
\begin{equation*}
    \begin{tikzcd}
         X
         \arrow{r}{f}
         \arrow[hookrightarrow]{d}
         &Y
         \arrow[hookrightarrow]{d}
         \\
         X^{\text{fib}}
         \arrow{r}{f^{\text{fib}}}
         & Y^{\text{fib}}
    \end{tikzcd}
\end{equation*}
Now, since $\sSJK_P$ is a left Bousfield localization of $\sS_P$ (see \cref{rem:Haine_Structure_is_localization}), and since the map $f^{\text{fib}}$ is a weak equivalence between fibrant object in $\sSJK_P$, it must also be a weak equivalence in $\sS_P$. But then, by \cref{Cor:Diagram_Preserve_WE}, $f^{\text{fib}}$ must induce a weak equivalence $\Hol_\I(X^{\text{fib}})\to \Hol_\I(Y^{\text{fib}})$. By two out of three, it is thus enough to show \cref{lem:haines_pairwise_links} for maps of the form $X\hookrightarrow X^{\text{fib}}$ (noting that the $m$-strata of $Y$ must also be empty for $p<m<q$, since weak equivalences in $\sSJK_P$ preserve the homotopy type of strata).
Now, by \cref{theo:Intro_Holink}, together with the (pseudo)-naturality of $\Link{\I}$ with respect to monomorphisms (see \cref{prop:Links_and_Links}), we can reduce to comparing $\Link{\I}(X)$ and $\Link{\I}(X^{\fib})$. Finally, since the map $X\hookrightarrow X^{\text{fib}}$
is a transfinite composition of pushouts along weakly admissible horn inclusions, and since $\Link{\I}$ preserves colimits, it is enough to prove \cref{lem:haines_pairwise_links}
for maps $X\hookrightarrow Y$ obtained by pushing out along a single weakly admissible horn inclusion, $\Lambda^{\J}_k\to \Delta^{\J}$. Now, note that since $X$ has empty $m$ strata for all $p<m<q$, so must have $\Lambda^{\J}_k$, and $\Delta^{\J}$. Finally, it is enough to show that 
\begin{equation*}
    \Link{\I}(\Lambda^\J_k)\hookrightarrow \Link{\I}(\Delta^\J)
\end{equation*}
is a weak equivalence of simplicial sets, whenever $\Delta^{\J}$ has empty $m$-strata, for $p<m<q$.
Let $\J=[p_0\leq \dots \leq p_n]$, and consider the following cases.
\begin{itemize}
\item if $\Lambda^{\J}_k$ is an admissible horn, then this follows from \cref{lem:Link_preserve_trivial_cofibrations}.
\item else, $\Lambda^{\J}_k$ is an inner horn, and thus $n\geq 2$. But then, the only vertices that can possibly be in $\Link{\I}(\Delta^\J)$ but not in $\Link{\I}(\Lambda^\J_k)$ are those corresponding to the simplex $\Delta^\J$
and to its face, $\Delta^{\J'}=d_k(\Delta^{\J})$. Those would be in $\Link{\I}(\Delta^\J)$ if and only if they degenerate from $\Delta^{\I}$. If the former degenerates from $\Delta^{\I}$, $\Lambda^{\J}_k$ is an admissible horn. If the latter degenerates from $\Delta^{\I}$, 
then $\Delta^{\J'}=[p_0,\dots,\widehat{p_k},\dots,p_n]$ with $p_i=p$ or $q$ for all $i\not =k$. But since $p_{k-1}\leq p_k\leq p_{k+1}$,
and since $\Delta^{\J}$ has empty $m$-strata for all $p<m<q$, then $p_k=p$ or $q$, and the horn is also admissible. Finally, in all the remaining cases, we have $\Link{\I}(\Lambda^{\J}_k)=\Link{\I}(\Delta^{\J})$.
\end{itemize}
\end{proof}

\section{Relating labellings, vertical stratifications and diagrams.}
\label{section:appendixB}

In \cref{rem:Vertical_Are_Diagrams} we have already hinted at the fact that
$P$-labelled simplicial sets can be thought of as a particularly concise description of certain cofibrant diagrams in $\Diag_P$. Let us now expand on this and make the relationship between labelled objects, diagrams and vertical objects precise. 
Before we do so, let us quickly remark on the topological counterpart of $\Diag_P$. We denote $\Diag_P^{\Top} := \mathrm{Fun}(R(P)^{\mathrm{op}}, \Top)$.
\begin{remark}\label{rem:diag_top}
Just like its simplicial counterpart $\Diag_P^{\Top}$ can be equipped with the projective model structure (use for example \cite[Thrm 11.6.1]{hirschhornModel}). Assigning to a $P$-stratified space $X$ the diagram given by its (topological) homotopy links $\I \mapsto \HolIP(X)$, and conversely sending a diagram $F$ to $\int^{\I} F(\I) \times \RealP{\Delta^{\I}}$ then defines a Quillen adjunction 
\[D'_P \colon \Diag_P^{\Top} \leftrightarrow \Top_P \colon C'_P.\]
We obtain a diagram of Quillen functors
\[
\begin{tikzcd}
         & \arrow[ld, shift left = 4pt ]\Diag^{\Top}_P \arrow[rd, "C'_P"]& \\
         \Diag_P \arrow[ru] \arrow[rr, "C^{\Top}_P"]&& \arrow[ll, shift left = 4pt, "D^\Top_P"] \arrow[lu, "D'_P",  shift left = 4 pt ]\Top_P,
\end{tikzcd}
\]
where the left diagonals are induced by the adjunction $\Real{-} \dashv \Sing$. Both the left adjoint, as well as the right adjoint part of this diagram commute up to natural isomorphism. Hence, it follows from the two out of three property for Quillen equivalences that $C'_P \dashv D'_P$ is also a Quillen equivalence.
\end{remark}
Next, let us describe the relationship between labelled objects and diagrams defined on $R(P)^{\op}$.
    Both $P$-labeled simplicial sets, as well as CW-complexes can readily be equipped with a functor 
    \begin{align*}
        U\colon \PsS &\to \Diag_P, \\
        U^{\Top}\colon \PCW &\to \Diag_P^{\Top}
    \end{align*}
    respectively.
    In case of a $P$-labeled CW-complex $\lab{T}$, the diagram $U^{\Top}\lab T \in \Diag_P^{\Top}$, at a regular flag $\I$, is given by the unions of cells
    \[U^{\Top}\lab T(\I) = \bigcup_{e_{\alpha},\ {\Delta^{\I}} \subset \lambda_T(e_\alpha)} e_{\alpha},\]
    and structure maps are given by inclusions. One extends this construction to morphisms in the obvious way.
    The definition for simplicial sets is analogous, replacing open cells by non-degenerate simplices. The precise behavior of the $U$ functors is then described in the following proposition:
    \begin{proposition}\label{prop:properties_of_U_functors}
     The functors $U$ and $U^{\Top}$ are fully faithful, and fit into a commutative diagram 
    \[
    \begin{tikzcd}
        \PsS\arrow[d] \arrow[r, "U"]& \Diag_P \arrow[d]\\
        \PCW \arrow[r, "U^\Top"]& \Diag_P^{\Top}
    \end{tikzcd}
    \]
    with verticals induced by realization. 
    Both $U$ and $U^{\Top}$ factor thought the respective subcategories of cofibrant diagrams.
    Furthermore, $U$ induces an equivalence of categories $\PsS \xrightarrow{\sim} \Diag_P^{\cof}$.
    \end{proposition}
    \begin{proof}
    Commutativity is easily verified from the constructions, while fully faithfulness is an immediate consequence of the definition of labelled maps. That both $U$ functors have image in the subcategories of cofibrant objects follows from the characterization of generating cofibrations in a projective model structure in \cite[Thm. 11.6.1]{hirschhornModel}. Finally, that every cofibrant diagram in $\Diag_P$ is in fact (up to natural isomorphism) of the form $U\lab S$, for $\lab S \in \PsS$, follows from the characterization of cofibrant diagrams in \cref{prop:Cofibrant_Diagrams} (see also \cref{rem:Vertical_Are_Diagrams}).
    \end{proof}
    In this sense, $P$-labeled simplicial sets are a particularly concise, but equivalent, description of cofibrant diagrams. Since not every absolute cell complex is a CW-complex, the analogous essential surjectivity fails for $P$-labelled CW-complexes. However, as a consequence of \cref{Cor:CW_Model}, essential surjectivity is restored when passing to homotopy categories. \\
    \\
    Next, let us study the precise relationship of diagrams with verticalization.
    One easily verifies that, the two diagrams
   \[ \begin{tikzcd}
        \PsS \arrow[rd, "V"'] \arrow[rr, "U"]& & \Diag_P \arrow[ld, "C_P"] \spacecomma \\ 
        & \sS_P&
    \end{tikzcd}
    \begin{tikzcd}
          \PCW \arrow[rd, "V"']\arrow[rr, "U^\Top"]& & \Diag^{\Top}_P \spacecomma \arrow[ld, "C'_P"] \\ 
        & \Top_P&
    \end{tikzcd}
    \]
    commute up to natural isomorphism. In this sense, we can think of verticalization as an alternative description of the $C_P$ functors. Now, clearly the verticalization functors are not full. We can amend this by passing to the vertical setting. Denote by $\mathrm{V}\sS_P$ and $\mathrm{VCW}_P$ respectively the categories of vertically stratified simplicial sets and CW-spaces, with vertical maps. Using the natural pre-verticalizations which the verticalization functors come with, we obtain lifts 
    \begin{align*}
        V \colon \PsS &\to \mathrm{V}\sS_P \\
        \lab{S} &\to \{ V\lab{S} \hookrightarrow S \times N(P) \}, \\
        V \colon \PCW &\to \mathrm{VCW}_P \\
        \lab{T} &\to \{ V\lab{T} \hookrightarrow T \times \RealP{N(P)}\},
    \end{align*}
    denoted the same by abuse of notation. It turns out, that both these functos induce equivalences of categories. One may immediately verify from the definition of label preserving maps, that these two functors are fully faithful. They are essentially surjective, by construction of the categories of vertical objects. \\
    Hence, verticalization defines equivalences between $P$-labeled and vertical objects. In this sense, using \cref{prop:properties_of_U_functors}, the vertical categories give a description of (certain) cofibrant diagrams, which is more intrinsic to $\sS_P$ or $\Top_P$.
    Together with \cref{prop:properties_of_U_functors} we may summarize the whole situation in the following proposition. 
    \begin{proposition}\label{prop:relation_lab_diag_strat}
    The following diagram of functors commutes up to natural isomorphism.
        \begin{equation}\label{diag:overview_over_vert_and_lab}
        \begin{tikzcd}
        \sS_P  \arrow[d]& \arrow[l] \mathrm{V}\sS_P \arrow[d]& \arrow[l,"V"', "\sim"] \PsS \arrow[r, "U", "f.f."'] \arrow[d]& \Diag_P^{\cof} \arrow[lll, bend right = 20, "C_P"]  \arrow[d] \\
        \Top_P   & \arrow[l] \mathrm{VCW}_P    & \arrow[l, "V"', "\sim"] \PCW \arrow[r, "U^{\Top}", "f.f"']& \Diag_P^{\Top,\cof} \arrow[lll, bend left = 20, "C'_P"']
    \end{tikzcd}
    \end{equation}
    Here, all verticals are induced by realization and the left two horizontals are given by the forgetful functors forgetting the pre-verticalizations. Both $V$ functors define equivalences of categories. The $U$ functors are both fully faithful, and the upper one is even an equivalence of categories.
    \end{proposition}
The fact that $P$-labeled CW-complexes embed fully faithfully into $\Diag^{\Top,\cof}_P$ (\cref{prop:properties_of_U_functors}), allows one to model the homotopy category $\Ho \Top_P$ through these objects. To see this, we first need a definition of label preserving homotopy.
\begin{definition}
Let $f,g\colon \lab{T}\to \lab{T'}$
be two label preserving maps between $P$-labelled CW-complexes. A \define{label preserving homotopy} from $f$ to $g$ is a map,
\begin{equation*}
    H\colon T\times [0,1]\to T',
\end{equation*}
such that for all cells $e_{\alpha}\in T$ and $e_{\beta}\in T'$, if $H(e_{\alpha}\times [0,1])\cap e_{\beta}\not=\emptyset$, then $\lambda_T(e_{\alpha})\subset \lambda_{T'}(e_{\beta})$. Equivalently, $H$ is a label-preserving map for the induced labelling on $T\times [0,1]$. If such a homotopy exists, we say $f$ and $g$ are label preserving homotopic, writing $f \simeq_{\text{lab}}g$.
We write $\PCW/{\simeq_{\text{lab}}}$ for the category whose objects are $P$-labelled CW-complex and whose sets of morphisms between $\lab{T}$ and $\lab{T'}$ is $\PCW(\lab{T},\lab{T'})/{\simeq_{\text{lab}}}$. 
\end{definition}
\begin{remark}\label{rem:desc_of_homotopies}
    Note, that under the fully faithful functor $V\colon \PCW \to \mathrm{VCW}_P$, $P$-labelled homotopies correspond one to one to vertical homotopies. Furthermore, under the fully faithful functor $U^{\Top}\colon \PCW \to \Diag^{\Top,\cof}$ they correspond one to one to those homotopies in $\Diag_P^\Top$ which are defined through the cylinder given by $(F \times [0,1])( \I) =  F( \I) \times [0,1]$.
\end{remark}
We may use \cref{prop:relation_lab_diag_strat} to obtain a more high-level proof of part of \cref{prop:vertical_approx}. We focus on the $\Top_P$ part here. A proof for $\Top_{N(P)}$ works analogously.
\begin{proof}[Alternative proof of \cref{prop:vertical_approx}]\label{proof:alternative_proof_in_appendix}
Let us first focus on the absolute case.
By \cref{rem:desc_of_homotopies} and \cref{prop:relation_lab_diag_strat}, we may equivalently show that in the following following commutative diagram of categories
\[
\begin{tikzcd}
    \PCW \arrow[d] \arrow[rr, bend left = 20, "V"] \arrow[r, "U^{\Top}", "f.f."']& \Diag_P^{\Top, \cof} \arrow[r, "C'_P"] \arrow[d]  & \Top^{\cof}_P \arrow[d] \\
     \PCW/{\simeq_{\text{lab}}} \arrow[r,]  & \Diag_P^{\Top, \cof}/_{\sim} \arrow[r] &               \Ho\Top_P ,
\end{tikzcd}
\]
the lower horizontal composition is fully faithful. Here, by $\Diag_P^{\Top, \cof}/_{\sim}$ we denote the category obtained by identifying morphisms which are homotopic through the cylinder given by $(F \times [0,1])(\I) = F(\I) \times [0,1]$. By \cref{prop:relation_lab_diag_strat}, the left upper horizontal is fully faithful. Using the second part of \cref{rem:desc_of_homotopies}, we also obtain that the left lower horizontal is fully faithful. 
Since every object in $\Diag^{\Top}_P$ is fibrant, we have a natural equivalence $\Diag_P^{\Top, \cof}/_{\sim} \cong \Ho \Diag^{\Top}_P$. Now, finally note that $C'^{\Top}_P \colon \Diag^{\Top}_P \to \Top_P$ is the left part of a Quillen equivalence (see \cref{rem:diag_top}). It follows, that the last remaining lower horizontal arrow is also fully faithful, finishing the proof of the absolute case. The relative case follows analogously, after verifying that $U^{\Top}$ maps inclusion $\lab{A} \hookrightarrow \lab{T}$ (where $\lambda_{A}$ is the induced labelling) to a cofibration in the projective model structure on $\Diag_P^{\Top}$.
\end{proof}
Finally, we may also use \cref{prop:relation_lab_diag_strat} together with \cref{rem:desc_of_homotopies} to obtain the following alternative version of \cref{Cor:CW_Model}, which allows one to perform stratified homotopy theory in the labelled setting.
\begin{corollary}\label{Cor:lab_CW_Model}
Verticalization induces an equivalence of categories
\begin{equation*}
    \PCW/{\simeq_{\textnormal{lab}}}\cong \Ho\Top_P.
\end{equation*}
\end{corollary}

\bibliographystyle{alpha}
	\bibliography{aqebib}
\end{document}